\newtheorem{thm}{Theorem}
\newtheorem{lem}[thm]{Lemma}
\newdefinition{rmk}{Remark}
\newdefinition{exm}{Example}
\newtheorem{definition}{Definition}
\newproof{pf}{Proof}
\newcommand\scalemath[2]{\scalebox{#1}{\mbox{\ensuremath{\displaystyle #2}}}}
\title{Functors of Differential Calculus in Diolic Algebras}
\author[1]{Jacob Kryczka\corref{cor1}%
\fnref{fn1}}
\ead{jacob.kryczka@etud.univ-angers.fr}
\address[1]{LAREMA, UMR 6093, du CNRS, Department of Mathematics, Université d'Angers, MathSTIC, France 
}
\date{June 2020}
\begin{document}

\begin{abstract}
We pose a new algebraic formalism for studying differential calculus in vector bundles. This is achieved by studying various functors of differential calculus over arbitrary graded commutative algebras (DCGCA) and applying this language to a particularly simple class of two-component graded objects introduced in this work, that we call \emph{Diolic algebras}. A salient feature of this \emph{conceptual} approach to calculus is that it recovers many well-known objects and notions from ordinary differential, symplectic and Poisson geometry but also provides some unique aspects, which are of their own independent interest.
\end{abstract}
\begin{keyword}
Differential calculus in graded algebras\sep Diolic algebras \sep graded Poisson structures.
\end{keyword}
\maketitle

\section{Introduction}
It is known that vector bundles can be studied as projective modules over smooth function algebras (\cite{Nes},\cite{Fat}) and in a new perspective posed in this work, a type of graded algebras. These algebras will be the fundamental objects of our study (definition \ref{DioleDef}) and are essentially $\mathbb{Z}$-graded commutative algebras $\mathscr{A}:=\mathscr{A}_0\oplus \mathscr{A}_1,$ with $\mathscr{A}_0:=A,$ a smooth commutative (unital) $\mathbb{K}$ algebra (in the sense of \cite{KraVerb}), $\mathscr{A}_1:=P$ an $A$-module and $\mathscr{A}_i:=\emptyset, i\neq 0,1.$ Multiplications in $\mathscr{A}$ are defined using the algebra structure  $\mathscr{A}_0\cdot\mathscr{A}_0=A\cdot A\subset\mathscr{A}_0$ on $A$, and the $A$-module structure in $P$, as $\mathscr{A}_0\cdot\mathscr{A}_1=A\cdot P\subset\mathscr{A}_1$, while all other products $\mathscr{A}_i\cdot\mathscr{A}_j$ are trivial. A graded algebra with these properties is what we refer to as a \emph{diole algebra}. 

In this setting, covariant derivatives of connections in $P$ appear as degree zero graded derivations of $\mathscr{A}$, geometrically known as Der-operators \cite{Fat}. In this way, we find an analogue of differential forms as skew-symmetric multi-linear functions on Der-operators, which lead to the so-called the Der-complex \cite{Rub01}. These types of complexes are important for physics and geometry since they provide an algebraic description of gauge algebras and gauge groups and because their cohomologies encode essential (global) data about internal structures supplied to vector bundles. In particular, Der-complexes contain information about the structure group and the corresponding cohomological invariants of projective modules, for instance Atiyah classes, Chern classes and Chern-Simons invariants.

Geometrically, a diole algebra is canonically associated with a given vector bundle $\pi:E\rightarrow M,$ over a smooth manifold. This diole algebra is defined as $\mathscr{A}=C^{\infty}(M)\oplus \Gamma(\pi),$ where $\Gamma(\pi)$ is the $C^{\infty}(M)$-module of smooth sections,
and thus contains all essential information about $\pi.$ In this way the study of vector bundles is equivalent to the study of differential calculus in the category of dioles. Therefore, the algebra of dioles and other graded algebras associated with a vector bundle (symmetric/exterior powers of the bundle etc.) will provide various analogs of the classical de Rham and Spencer complexes. The Der-complex is an example of this, being an analogue of degree $1$ of the de Rham complex.

Simplifications that come from this algebraic approach may be illustrated by theory of Lie algebroids. One may easily observe that a Lie algebroid is nothing but a degree $-1$ Poisson structure over an algebra of dioles (lemma \ref{DioleLieAlgbd}). In fact, when we have a Lie algebroid in the algebraic context, we may work in a more earnest way using the equivalent description as Lie-Rinehart algebras (see \cite{BruzzoLie}). In other words, a Lie-Rinehart algebra is a graded analogue of a Poisson manifold in the diolic category and this interpretation reveals new details not obvious in the standard geometric approach.
 
The efficacy of the diolic formalism is exemplified by allowing us to work directly with physical fields, in contrast to the modern approach to gauge theories which employ the heavy constructions of principle and associated bundles. This is due to the fact that Der-operators and the related calculus form a natural language to work with connections. For instance, a connection in a vector bundle $\pi:E\rightarrow M$ is algebraically interpreted as an $A$-module homomorphism  $\nabla:D(A)\rightarrow \mathrm{Der}(P),$ from the module of derivations of $A$, to the $A$-module of Der-operators in $P$
 with $A=C^{\infty}(M)$ and $P=\Gamma(\pi)$ (see \cite{Fat}).
This is physically very meaningful as, for instance, the gauge fields in Yang-Mills theory, electromagnetism etc. are suitable connections on principal bundles (or their associated bundles). One may employ the diole algebra approach and DCGCA to work directly with these fields and their connection-related differential calculus. This may yield a more satisfactory mathematical formalization of Yang-Mills theories and other non-abelian gauge theories. In this way we expect to reveal various important subtleties that are currently hidden in the geometric approach based on principal bundles and anticipate various new instruments for studying Yang-Mills theories.

\subsection{The Hamiltonian formalism over graded algbras}
One of the physical aspects of the proposed formalism is the diolic generalization of the standard algebraic Hamiltonian formalism of classical mechanics \cite{KraVin}, whose arena of action is the cotangent space $T^*M$ (the phase space), over a smooth manifold (the configuration space) $M$.
These generalizations are made possible by employing the algebraic theory of differential operators which is based on the transition from standard differential calculus over smooth manifolds to differential calculus over graded commutative algebras. Namely, these generalizations are based on the fact that when passing from the `geometric' picture to the `algebraic' we find that the \textit{algebra of symbols} $\text{Smbl}_
*(A)=\bigoplus_{n\geq 0}\text{Smbl}_n(A),$ where $\text{Smbl}_n(A):=\text{Diff}_n(A)/\text{Diff}_{n-1}(A)$ as discussed in \cite{KraVin},
is a good substitute for the algebra of functions on $T^*M$. In particular, the algebra $\text{Smbl}_*(A)$ naturally carries a Lie algebra structure with respect to the bracket $\{\text{smbl}_k(\Delta),\text{smbl}_{\ell}(\nabla)\}:=\text{smbl}_{k+\ell-1}\big([\Delta,\nabla]\big),$ where $\text{smbl}_i(\Box)\in\text{Smbl}_i(A)$ is the main symbol of the operator $\Box.$ The so defined bracket determines a derivation,  $X_{\mathfrak{s}}:=\{\mathfrak{s},.\}$ of the algebra $\text{Smbl}_*(A),$ for some fixed symbol $\mathfrak{s}$, and $X_{\mathfrak{s}}$ is interpreted as a Hamiltonian vector field on $\text{Spec}_{\mathbb{R}}\big(\text{Smbl}_*(A)\big).$

Geometrically,
$\text{Spec}_{\mathbb{R}}\big(\text{Smbl}_* (C^{\infty}(M))\big)$ is canonically identified with the total space of the
cotangent bundle $T^*M$, (see, for instance \cite{Nes}) with the above bracket coinciding with the standard Poisson bracket on $T^*M$. 
Derivations $X_{\mathfrak{s}}$ in this context are Hamiltonian vector fields in the usual sense. In this way we can develop analogues of the Hamiltonian formalism over graded commutative algebras by interpreting the algebra $\text{Smbl}_{*}(A)$ to be the algebra of observables of coordinates/momenta of the considered system. This algebraic interpretation allows to develop the full Hamiltonian formalism over graded commutative algebras, for instance over super-manifolds, just by passing to notions of graded derivations, graded differential operators, etc.

Graded Lie algebras play a fundamental role in this language not only because of their importance in the context of super-symmetry, but also because they are adequate for many models of internal structures of fields, particles and other physical notions. A combination of differential calculus with these graded Lie algebras automatically respects these internal structures.
 So, the corresponding algebraic Hamiltonian formalism does as well. 

The above algebraic interpretation of the Hamiltonian formalism also reveals important relations to the theory of propagations of fold-type singularities of partial differential equations. In this framework solution singularities of an equation $\mathscr{E}$ are described by a system of differential equations, one of which is the Hamilton-Jacobi equation associated with the main symbol of $\mathscr{E}$. This is well known in geometrical optics and as it was already observed by Schrödinger, is intimately related to the \textit{quantization problem} (see \cite{Vin02,Vin03,Vin04}). Our expectation is that this optics-mechanics analogy, if considered in the context of a suitable graded algebra, will lead to a more satisfactory approach to the quantization problem. More exactly, in this graded framework we may investigate and address the question: "what are such singularities related with the quantization problem?" 
The current geometric theory of propagation of singularities doesn't take into consideration the internal structures which are most interesting to physics and we expect that by using a graded approach we can describe more interesting singularities which respect these internal structures.
Achieving this description demands a generalization from the algebra of dioles to a related type of algebra called a \emph{triolic algebra}, but this is beyond the scope of the current paper and is a focus of our future work.

\subsection{Dioles, trioles and general algebraic extensions}

The notion of a trivial extension of a ring $R$ over a field $\mathbb{K}$ of characteristic zero, by an $R$-module $P$ has been used extensively in many areas of mathematics; it is ubiquitous in Ring theory, Homological algebra, Representation theory and Category theory, with further applications to higher categorical formulations of geometry (see for instance \cite{PorVez}). This extension $R[P]$, is defined to have an underlying additive group $R\oplus P$ with multiplication law
$(r_1,p_1)\cdot (r_2,p_2):=(r_1r_2,r_1p_2+r_2p_1),$ for $r_1,r_2\in R,p_1,p_2\in P.$
We will be interested in the situation when $R$ is a commutative unital $\mathbb{K}$-algebra, $A.$

\begin{exm}
Let $P$ be an $A$-module and consider the symmetric algebra of $P$ over $A$, $\text{Sym}_A^*(P)$. Since $\text{Sym}_A^0(P)=A,\text{Sym}_A^1(P)=P,$ we have an isomorphism of graded $A$-algebras,
$\text{Sym}_A^*(P)/\bigoplus_{n\geq 2}\text{Sym}_A^n(P)\cong A\oplus P.$
\end{exm}

\begin{exm}
Let $P$ be a free $A$-module with a basis $V.$ Then $A\oplus P$ is isomorphic to $A[X_1,..,X_V]/\{X_v\}_{v\in V}^2,$ where $X_i$ are some indeterminates labelled by elements of the basis.
In particular, $A\oplus A\cong A[X]/X^2$ is the ring of dual numbers of $A.$ 
\end{exm}

In this paper we re-examine the notion of a trivial extension and study it in a different context. A priori, there is no reason to stop at square-zero extensions and one can generalize to extensions of arbitrary length. Without entering into details, such an extension of length $n$, is given, for some family of $A$-modules $P_1,..,P_n$ endowed with $A$-bilinear maps $g=\{g_{i,j}:P_i\otimes_AP_j\rightarrow P_{i+j}\},$ to be the object whose underlying abelian group is $A\oplus P_I:=A\oplus P_1\oplus..\oplus P_n$ with multiplication given by 
$(p_0,p_1,..,p_n)\cdot (\tilde{p}_0,\tilde{p}_1,..,\tilde{p}_n)=\sum_{j+k=i}g_{j,k}\big(p_j,\tilde{p}_k\big).$
Defined in this way,
 $A\oplus P_I$ is a commutative algebra with unit $1_{A\oplus P_I}=(1_A,0,0...,0).$ More than this, it is a sub-ring of the formal triangular matrix ring,
$$
 \scalemath{0.8}{
\begin{pmatrix}
A & P_1 & P_2 & ...& P_n
\\
0 & A & P_1 & ...  & P_{n-1}
\\
. & . & ...& . & .
\\
0 & 0 & ...& ...& A
\end{pmatrix}}.
$$

\begin{rmk}
Suppose $A$ is a field $\mathbb{K}$ and $P_i=\mathbb{K}$ for all $i$. Then such families of formal triangular matrices coincide with upper-triangular Toeplitz matrices.
\end{rmk}

The novel objects introduced in this paper provide a re-imagination of a square zero extension in the language of graded commutative algebra, where we have renamed them \emph{diolic algebras}, to emphasize our change in viewpoint which is suitable for our purposes. Interesting features appear at the next logical generalization, what we call triolic algebras. The latter are related to $2$-trivial square extensions in an entirely analogous manner to which dioles relate with square zero extensions. Similar definitions arise for $n$-olic algebras and these generalizations are under investigation in a sequel work.

\subsubsection{The untold story of square zero extensions}
Objects of the form $R\oplus P$ and more recently, their $n$-trivial generalizations $R\oplus P_I$ have played an important role in numerous areas of mathematics; however, one large part of their story remains $-$ their \emph{differential calculus}, in the sense of \cite{Vin01},\cite{KraVerb},\cite{Ver}.
The main purpose of this work is to study the representability of the various functors of
algebraic calculus but over arbitrary \emph{graded commutative} algebras as in \cite{Ver}, \cite{KerKra01} and \cite{KerKra02}, 
where we follow closely the notions therein and apply these constructions to a particular class of two component graded algebras. In doing so, we achieve a self-contained algebraic Calculus over $1$-trivial extensions. This is achieved by viewing the datum of a square zero extension of a smooth geometric commutative algebra $A$ by a geometric $A$-module (in the sense of \cite{Nes}) \emph{not} as some "infinitesimally thickened" algebra $A\oplus P$, as is traditionally done $-$ for instance in algebraic geometry $-$ but entirely as their own species of mathematical creatures.

The formalism developed in this paper provides new insight into the differential geometry of vector bundles as aspects of differential Calculus over a single graded algebra. We believe this approach is rather satisfactory for it is grounded in the conceptual nature of applying the logic algebra of differential calculus of \cite{Vin01}, over diolic algebras.

More than this, we see that being diolic as a mathematical object is surprisingly ubiquitous. Indeed, Lie algebras of diolic type and Poisson algebras of diolic type in the category of $\mathbb{K}$-modules, contain the information of Lie algebra representations and Lie algebroids, respectively. 
Moreover, by understanding a Poisson vertex algebra as a Poisson algebra object in the category of $\mathbb{K}[\partial]$-modules, one finds that a Poisson vertex algebra of diolic type is precisely a Courant-Dorfman algebra and when the underlying bilinear form is invariant, we recover Courant algebroids. It is our expectation that similar interesting manifestations of usual mathematical structures will occur as we further investigate $n$-olic algebras.

\subsection{Outline of the paper}
In what follows all constructions are algebraic, which is to say they take place over arbitrary (graded) modules over (graded) algebras, but this enables a free passage to a geometric setting by implementing the $C^{\infty}$-Swan theorem \cite{Nes}. The content of this paper is organized as follows.
In Section 2 we collect some facts about differential calculus in arbitrary graded commutative algebras. In Section 3, we introduce the category of diolic algebras and modules over them. In Section 4, we describe the functors of graded calculus over diolic algebras, paying special attention to Poisson structures. 

The graded perspective we adopt on diole algebras results in the corresponding differential calculus exhibiting the following significant features when our diole algebra is the canonical one associated with a vector bundle $\pi.$ On one hand, it describes a decomposition of our objects of calculus (ie. derivations, differential operators etc) into those operators which act in the total space of the vector bundle and which project surjectively onto the base manifold, which is to say that they possess a meaningful notion of symbol map, and into a vector-valued calculus.
On the other hand, we find that the functors of calculus over a diole algebra admit non-trivial descriptions in negative degrees if and only if our vector bundle is of rank $1.$ 
Consequently, our diolic formalism unites and generalizes familiar features of vector bundle calculus as being part of the calculus over a single object. 
In the case of a line bundle the differential calculus over diole algebras yields a conceptual home for studying Jacobi geometry and related structures, for instance Jacobi structures and Jacobi algebroids.
As a particular case of our constructions, we recover the Atiyah sequence of a vector bundle and we indicate how our formalism naturally suggests a generalization of this sequence to include differential operators of higher degree $k>1.$
In Section 5, we describe some aspects of the algebraic Hamiltonian formalism (in the sense of \cite{KraVin}) over diolic algebras by characterizing the symbols of diolic differential operators and providing a local description of the corresponding diolic Poisson bracket.

\section*{Acknowledgements}
This work contains a part of the authors PhD thesis in preparation at LAREMA, Department of Mathematics.
This program is supported by the Government of Canada student aid service (NSLS). It is my pleasure to thank these organizations. 

The subject of this thesis work was elaborated and proposed by Professor Alexandre Vinogradov, who passed away on 20 September 2019.
I wish to express my deepest gratitude for his willingness to engage in discussions during the early stages of this work. Professor Vinogradov was a positive figure who played a very significant role in the development of my scientific interests. It is very meaningful and of great importance to me that I had the opportunity to interact with him on a personal and scientific level for many years. It is my sincerest wish to dedicate this work to him. 

I would like to thank my advisor, Professor Volodya Rubtsov for his encouragement, guidance and endless patience during the preparation of this paper.
A part of this work was completed during the authors visit to the University of Salerno, and the author would like to thank Professor Luca Vitagliano for his efforts in facilitating this work, both during this visit and during the many Diffiety Institute summer schools. It is my pleasure to also thank: Dr. Denys Dutykh of CNRS LAMA, University of Chambery, for taking the time to read the early version of this work and providing detailed feedback, as well as for showing general interest and support in my academic endeavours, Dr. Geoffrey Powell of LAREMA for useful discussions and my thesis committee members, Professor Joseph Krasil'shchik of Moscow Independent University and Dr. Vladimir Salnikov of CNRS La Rochelle, to whom I am indebted for their willingness to oversee this doctoral project.

\subsubsection{Conventions}
\label{ssec:conventions}
Let $(\mathrm{C},\otimes)$ be a symmetric monoidal, closed and (co)-complete category and let $G$ be an abelian group. Let $\mathrm{C}^{G}:=\prod_{g\in G}\mathrm{C}$ be the category of $G$-graded objects in $\mathrm{C},$ with $\mathrm{gcAlg}(C):=\mathrm{cAlg}(\mathrm{C}^{G})$ the corresponding commutative algebra objects in $\mathrm{C}^G$, or simply $G$-graded commutative algebras in $\mathrm{C}.$ In particular, we will consider the category $\mathrm{Vect}_{\mathbb{K}}$ of vector spaces over a ground field $\mathbb{K}$ of characteristic zero. A vector space $V$ is said to be $G$-\emph{graded} if $V=\bigoplus_{g\in G} V_g$ with $V_g\in\mathrm{Vect}_{\mathbb{K}},$ with $V_0:=\mathbb{K}.$
Elements $v\in V_g$ are said to be \emph{homogeneous of degree} $g\in G.$
A vector subspace $W\subset V$ is a \emph{graded sub-space} if $W=\bigoplus_{g\in G} W_g$ with $W_g=W\cap V_g.$ Given such a subspace, the quotient $V/W=\bigoplus_{g\in G}V_g/W_g$ is also $G$-graded.
The natural operations of tensor product and internal homomorphism in $\mathrm{Vect}_{\mathbb{K}}$ also inherit a well-defined notion of grading.
Namely for two $G$-graded vector spaces $V,U$ the we have $V\otimes_{\mathbb{K}}U$ is $G$-graded with homogeneous degree $(V\otimes_{\mathbb{K}}U)_g=\bigoplus_{h+f=g}V_h\otimes_{\mathbb{K}}U_f.$ Similarly, $\text{Hom}_{\mathbb{K}}(V,U)$ obtains a $G$-grading with homogeneous degree $g$ piece $\text{Hom}_{\mathbb{K}}(V,U)_g:=\{f:V\rightarrow U| \mathbb{K}-\text{linear } f(V_h)\subset U_{g+h}, h\in G\}.$
We also have a well-defined graded binary co-product $V\oplus U$ with $(V\oplus U)_g:=V_g\oplus U_g.$
A $G$-graded vector space $A$ is said to be a $G$-\emph{graded algebra} if it is endowed with a multiplication map $A_g\cdot A_h\subset A_{g+h},$ for $g,h\in G.$
If this graded algebra is unital, the unit is necessarily in $A_0.$
Consider a pairing $\mu:G\times G\rightarrow \mathbb{K}/\{0\},$ sending $(g_1,g_2)$ to $\mu(g_1,g_2)$ such that $\mu(g_1,g_2)^{-1}=\mu(g_2,g_1)$ and $\mu(g_1+g_2,g_3)=\mu(g_1,g_3)\mu(g_2,g_3).$
Such a map is called a \emph{sign form} or a \emph{commutation factor}. From the definition it readily follows that $\mu(g_1,g_2+g_3)=\mu(g_1,g_2)\mu(g_1,g_3)$ and $\mu(g,0)=\mu(0,g)=1.$
A pair consisting of an additive abelian group $G$ and a commutation factor is called a \emph{grading group}, $\mathcal{G}=(G,\mu).$
A $\mathcal{G}$-graded algebra $A$ is \emph{graded commutative} if the multiplication operation obeys the \emph{Koszul sign rule}
\begin{equation}
    a\cdot b=\mu(a,b)b\cdot a.
\end{equation}
For the remainder of our paper, we will care about a particular simple commutation factor, which is also the standard one familiar to graded or super-geometry. Namely, when $G=\mathbb{Z}$ or $\mathbb{Z}_2,$ there is only one nontrivial commutation factor-the super-commutation factor $\mu(g_1,g_2):=(-1)^{g_1g_2}.$ 

We denote by $\mathrm{Alg}(\mathbb{K})$ the category of $\mathbb{K}$-algebras, and $\text{gcAlg}(\mathbb{K})$ the  category of graded commutative unital $\mathbb{K}$-algebras.
Given a graded $\mathbb{K}$-algebra $A$, a $G$-\emph{graded} $A$ \emph{module} $P$ is a graded $\mathbb{K}$-vector space $P=\bigoplus_{g\in G} P_g$ with an action by $A,$ denoted homogeneously as $A_g\cdot P_h\subset P_{g+h}.$ The category of left (resp. right) graded $A$-modules is denoted $\mathrm{gMod}^<(A)$ (resp. $\mathrm{gMod}^>(A)$).

\section{Differential calculus over graded algebras}
\label{sec:Calc}
We provide some preliminaries on the functors of graded calculus \cite{Vin01}, referring the reader to \cite{Kra01}, \cite{KraVerb} for details in the un graded setting and to \cite{KerKra01},\cite{KerKra02},\cite{Ver} for the graded setting.
\begin{definition}
\label{gradedderivationdefinition}
Let $\mathcal{R}$ be a ring and let $\mathscr{A}$ be a $\mathcal{G}$-graded commutative $\mathcal{R}$-algebra and let $\mathscr{P}$ be a left $\mathscr{A}$-module. A graded $\mathcal{R}$-module homomorphism 
$X:\mathscr{A}\rightarrow \mathscr{P}$ is a \emph{degree k derivation} of $\mathscr{A}$ with values in $\mathscr{P}$ if it satisfies the following graded Leibniz rule,
$$X(ab)=X(a)b+\mu(a,X)aX(b)\equiv \mu(X+a,b)bX(a)+\mu(a,X)aX(b),$$
for all $a,b\in\mathscr{A}.$
\end{definition}
\begin{rmk}
When our grading group is $\mathbb{Z}$ or $\mathbb{Z}_2$, then $\mu(a,b)=(-1)^{a\cdot b}$ is the usual commutation factor and definition \ref{gradedderivationdefinition} is the usual one. We write all commutation factors as $(-1)^{a\cdot b}$ in the following.
\end{rmk}
Denoting the collection of such objects as $D(\mathscr{P})_{\mathcal{G}}:=D(\mathscr{A},\mathscr{P})_{\mathcal{G}},$ we can study its categorical properties. 
\begin{lem}
The assignment $\mathscr{P}\longmapsto D(\mathscr{A},\mathscr{P})_{\mathcal{G}},$ defines a functor
$D(\mathscr{A},-)_{\mathcal{G}}:\mathrm{gMod}^{\mathcal{G}}(\mathscr{A})\rightarrow \mathrm{gMod}^{\mathcal{G}}(\mathscr{A}).$
\end{lem}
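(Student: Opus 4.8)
The plan is to exhibit $D(\mathscr{A},-)_{\mathcal{G}}$ as a sub-functor of the internal-hom functor $\mathrm{Hom}_{\mathcal{R}}(\mathscr{A},-)$, which already carries a natural $\mathcal{G}$-grading (its degree-$k$ piece consisting of those $\mathcal{R}$-linear maps sending $\mathscr{A}_h$ into $\mathscr{P}_{k+h}$) and a left $\mathscr{A}$-module structure via $(a\cdot X)(b):=a\cdot X(b)$. I would first establish the object assignment, namely that $D(\mathscr{A},\mathscr{P})_{\mathcal{G}}$ is a graded $\mathscr{A}$-submodule. Closure under addition of derivations of equal degree is immediate from $\mathcal{R}$-linearity of the Leibniz identity of definition \ref{gradedderivationdefinition}, and the fact that the $\mathcal{G}$-homogeneous components of a derivation are again derivations follows by comparing the homogeneous parts of $X(ab)$ degree-by-degree, so the grading restricts. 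The one substantive point is closure under the module action: for homogeneous $a$ and a degree-$k$ derivation $X$, one must verify that $a\cdot X$ again satisfies the Leibniz rule, now in degree $|a|+k$. Expanding $(a\cdot X)(bc)=a\,X(bc)$ and comparing with $(a\cdot X)(b)\,c+(-1)^{|b|(|a|+k)}b\,(a\cdot X)(c)$, the two sides agree exactly after moving $b$ past $a$ using graded commutativity of $\mathscr{A}$; the extraneous factor $(-1)^{|a||b|}$ produced there cancels the sign introduced by the shifted degree $|a|+k$, leaving the required $(-1)^{|b|k}$.

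For the action on morphisms, given a degree-zero morphism $f:\mathscr{P}\rightarrow\mathscr{Q}$ of graded $\mathscr{A}$-modules I would set $D(\mathscr{A},f)_{\mathcal{G}}(X):=f\circ X$. Post-composition by a degree-zero graded map preserves degrees, so $f\circ X$ has the same degree as $X$, and it is a derivation: applying $f$ to $X(ab)=X(a)b+(-1)^{|a|k}a\,X(b)$ and using that $f$ is $\mathscr{A}$-linear of degree zero (so $f(am)=a\,f(m)$ with no sign, and $f(m\,b)=f(m)\,b$ via the bimodule relation $m\,b=(-1)^{|m||b|}b\,m$) reproduces the Leibniz identity for $f\circ X$. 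The same linearity shows $f\circ(a\cdot X)=a\cdot(f\circ X)$, together with additivity, so $D(\mathscr{A},f)_{\mathcal{G}}$ is a degree-zero morphism in $\mathrm{gMod}^{\mathcal{G}}(\mathscr{A})$.

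Finally, functoriality is formal: $D(\mathscr{A},\mathrm{id})=\mathrm{id}$ and $D(\mathscr{A},g\circ f)=D(\mathscr{A},g)\circ D(\mathscr{A},f)$ both follow from associativity of composition, since $f\mapsto f\circ(-)$ is strictly compatible with identities and composites. The only place where genuine care is needed — and hence the main obstacle — is the sign bookkeeping in the two Leibniz verifications above; everything there hinges on the graded commutativity of $\mathscr{A}$ and on the degree-zero $\mathscr{A}$-linearity of $f$ being used to absorb precisely the signs dictated by the commutation factor $\mu$. No deeper structural input is required.
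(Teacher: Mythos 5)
Your proposal is correct. The paper in fact states this lemma without proof, treating it as a standard fact of graded calculus, so there is no argument to compare against; your verification (exhibiting $D(\mathscr{A},-)_{\mathcal{G}}$ as a graded $\mathscr{A}$-submodule of the internal hom, checking that $(a\cdot X)(b):=aX(b)$ stays a derivation with the sign from graded commutativity cancelling the sign from the degree shift $|a|+k$, defining the morphism action by post-composition, and noting functoriality is formal) is exactly the standard argument the paper leaves implicit, and your sign bookkeeping in both Leibniz checks is accurate.
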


Equipped with the graded commutator defined in the usual way, the $\mathcal{G}$-graded module $D(\mathscr{A})$ is a graded Lie algebra.

Moreover, this functor is representable and the representative objects are graded differential forms.
\begin{thm}
\label{RepofDers}
Let $\mathscr{A}\in\mathrm{gcAlg}(\mathbb{K}),$ and let $\mathscr{P}\in\mathrm{gMod}^{\mathcal{G}}(\mathscr{A}),$ be a graded left $\mathscr{A}$-module. 
There is a universal object $\Lambda^1(\mathscr{A})$ equipped with a derivation of degree zero, $d$, such that for any graded derivation $\Delta \in D_{\mathcal{G}}(\mathscr{P}),$ there is a unique graded module homomorphism $h^{\Delta},$ making the diagram
\[
\begin{tikzcd}
\mathscr{A}\arrow[r, "\Delta"] \arrow[dr,"d"] & \mathscr{P} \\
&  \Lambda^1(\mathscr{A})\arrow[u, "h^{\Delta}"]
\end{tikzcd}
\]
commute.
\end{thm}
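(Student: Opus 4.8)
The plan is to construct the representing object $\Lambda^1(\mathscr{A})$ explicitly as a graded module of Kähler differentials and then verify the universal property directly, following the ungraded construction while tracking the grading and the Koszul signs dictated by the commutation factor.

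First I would realize $\Lambda^1(\mathscr{A})$ as a quotient of a free module. Let $F$ be the free graded left $\mathscr{A}$-module on homogeneous symbols $\hat{a}$, one for each homogeneous $a\in\mathscr{A}$, with $\deg\hat{a}:=\deg a$ so that the assignment $a\mapsto\hat a$ has degree zero. Let $N\subset F$ be the graded submodule generated by
\begin{align*}
\widehat{a+b}-\hat a-\hat b,\qquad \widehat{\lambda a}-\lambda\hat a,\qquad \widehat{ab}-\hat a\cdot b-a\cdot\hat b,
\end{align*}
taken over homogeneous $a,b\in\mathscr{A}$ (with $a,b$ of equal degree in the first relation) and $\lambda\in\mathbb{K}$, where $\hat a\cdot b$ denotes the right action, equal to $(-1)^{|a||b|}b\cdot\hat a$ in the left module. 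Set $\Lambda^1(\mathscr{A}):=F/N$ and define $d:\mathscr{A}\to\Lambda^1(\mathscr{A})$ on homogeneous elements by $d(a):=[\hat a]$, extended $\mathbb{K}$-linearly. The first two families of relations make $d$ a degree-zero $\mathbb{K}$-linear map, and the third is exactly the graded Leibniz rule of Definition \ref{gradedderivationdefinition} for a degree-zero operator (so that the factor $\mu(a,d)=1$ drops out); hence $d\in D_{\mathcal{G}}(\Lambda^1(\mathscr{A}))$.

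Next I would verify the universal property. Given a homogeneous derivation $\Delta\in D_{\mathcal{G}}(\mathscr{P})$ of degree $k$, define $\tilde h:F\to\mathscr{P}$ on generators by $\tilde h(\hat a):=\Delta(a)$ and extend it to a graded $\mathscr{A}$-module homomorphism of degree $k$, i.e. one satisfying $\tilde h(a\cdot m)=(-1)^{k|a|}\,a\cdot\tilde h(m)$. Additivity and $\mathbb{K}$-linearity of $\Delta$ annihilate the first two families of relations, while the graded Leibniz rule $\Delta(ab)=\Delta(a)b+\mu(a,\Delta)\,a\Delta(b)$, with $\mu(a,\Delta)=(-1)^{k|a|}$, annihilates the third. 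Thus $\tilde h$ factors through a graded module homomorphism $h^{\Delta}:\Lambda^1(\mathscr{A})\to\mathscr{P}$ of degree $k$ with $h^{\Delta}\circ d=\Delta$. Uniqueness is immediate, since the classes $d(a)=[\hat a]$ generate $\Lambda^1(\mathscr{A})$ as an $\mathscr{A}$-module and the relation $h^{\Delta}(d(a))=\Delta(a)$ pins $h^{\Delta}$ down on this generating set.

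The step I expect to demand the most care is the sign bookkeeping in the third relation and in the extension of $\tilde h$: one must confirm that the sign produced by commuting the module action past the degree-$k$ map $\tilde h$ coincides precisely with the factor $\mu(a,\Delta)$ appearing in the Leibniz rule, so that the Leibniz generators of $N$ really do lie in $\ker\tilde h$ and $h^{\Delta}$ is well defined on the quotient. A more conceptual alternative that packages these signs automatically is to take $\Lambda^1(\mathscr{A})=I/I^2$, where $I=\ker\big(\mathscr{A}\otimes_{\mathbb{K}}\mathscr{A}\xrightarrow{\mathrm{mult}}\mathscr{A}\big)$ is the graded augmentation ideal and $d(a)=[1\otimes a-a\otimes 1]$; the grading is inherited from the tensor product as fixed in Section \ref{ssec:conventions}, and representability then follows from the universal property of this graded ideal.
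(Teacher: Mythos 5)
Your construction is exactly the paper's: it too realizes $\Lambda^1(\mathscr{A})$ as the free graded module on symbols $\delta_a$ (your $\hat a$) modulo the same three families of relations, sets $d(a)$ to be the class of $\delta_a$, and obtains $h^{\Delta}$ by factoring $\Delta$ through the quotient, with uniqueness from the fact that the $d(a)$ generate; your third relation $\widehat{ab}-\hat a\cdot b-a\cdot\hat b$ is precisely the paper's $\delta_{ab}-a\delta_b-(-1)^{a\cdot b}b\delta_a$ rewritten via the right action. The only difference is that you make explicit the Koszul-sign convention $\tilde h(a\cdot m)=(-1)^{k|a|}\,a\cdot\tilde h(m)$ needed for the Leibniz generators to lie in $\ker\tilde h$, a verification the paper leaves as ``checked in a rather straightforward manner.''
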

One may verify the existence of such a universal graded module by considering the free graded module generated by symbols $\{\delta_a:a\in\mathscr{A}\},$ denoted by $\tilde{\Lambda}(\mathscr{A}).$ The grading is inherited from that on $\mathscr{A}$ as $\delta_a\in\tilde{\Lambda}(\mathscr{A})^i,$ when $a\in\mathscr{A}^i.$ Consider the ideal $\mathscr{I}_0,$ generated by elements $\delta_a$ subject to the relations,
\begin{equation*}
\mathscr{I}_0:\begin{cases}
\delta_{a+b}-\delta_a-\delta_b\\
\delta_{\lambda b}-\lambda\delta_b\\
\delta_{ab}-a\delta_b-(-1)^{a\cdot b}b\delta_a,\\
\end{cases}
\end{equation*}
for any homogeneous elements $a,b\in\mathscr{A}$ and for any $\lambda \in k.$ 
Consider quotient module $\Lambda^1(\mathscr{A}):=\tilde{\Lambda}^1(\mathscr{A})/\mathscr{I}_0,$ endowed with the natural projection $\tilde{\pi}:\tilde{\Lambda}(\mathscr{A})\rightarrow\tilde{\Lambda}(\mathscr{A})/\mathscr{I}_0.$ We then set $d(a):=\tilde{\pi}(\delta_a),$ for every $a\in\mathscr{A}.$ It now remains to demonstrate that $\Lambda^1(\mathscr{A})$ has a well-defined graded $\mathscr{A}$-module structure, that $d$ is a derivation of degree $0$ and that for any graded derivation $\Delta$ there is a unique graded homomorphism satisfying the above universal property. These details are checked in a rather straightforward manner.

Consequently any derivation $\Delta$ is represented uniquely as $\Delta=h^{\Delta}\circ d$. The $\mathscr{A}$-module $\Lambda^1(\mathscr{A})$ is generated by the image of $d$. Since $d$ is a derivation of degree $0$ it follows that $\Lambda^1(\mathscr{A})=\bigoplus_g\Lambda^1(\mathscr{A}_g)=\bigoplus_g\Lambda^1(\mathscr{A})_g.$ Moreover, for any $i\geq 0$ we define $\Lambda^i(\mathscr{A}):=\Lambda^1(\mathscr{A})\wedge..\wedge \Lambda^1(\mathscr{A})$ ($i$ times). Here the wedge must be understood in the graded sense of \cite{Ver}.
The universal differential $d$ extends to the entire de Rham algebra as a graded derivation with respect to this product by setting $d(da):=0$ for any $a\in \mathscr{A}$ and $d(\omega\wedge \theta):=d\omega \wedge \theta +(-1)^{p}\omega\wedge d\theta,$ where $\omega\in \Lambda^p(\mathscr{A}).$ Consequently, as in the ungraded situation we obtain the \emph{graded de Rham complex} of the $\mathcal{G}$-graded algebra $\mathscr{A},$  
\begin{equation}
0\rightarrow\mathscr{A}\xrightarrow{d_{\mathscr{A}}}\Lambda^1(\mathscr{A})\rightarrow...\rightarrow \Lambda^i(\mathscr{A})\xrightarrow{d_{\mathscr{A}}}\Lambda^{i+1}(\mathscr{A})\rightarrow...,
\end{equation}
whose cohomology is a $\mathcal{G}\oplus\mathbb{Z}$-graded module denoted by  $H_{dR}(\mathscr{A}).$

By restricting the differential $d_{\mathscr{A}}^0$ to a homogeneous component, say $g\in G,$ and extending linearly by the graded Leibniz rule, we may specify a homogeneous de Rham complex, called the de Rham complex of $\mathscr{A}$ of degree $g.$

The next functor of algebraic differential calculus that we require is the functor of differential operators.
 Recall that
a degree $g$ homomorphism of graded $\mathcal{R}$ modules is an $\mathcal{R}$-module homomorphism
$\varphi\colon\mathscr{P}\rightarrow\mathscr{Q},$
such that
$\varphi\big(\mathscr{P}_{h}\big)\subseteq \mathscr{Q}_{g+h},$
for all $h\in G.$ We write 
the $\mathcal{R}$-module of degree $g$ morphisms as
$\text{Hom}_{\mathcal{R}}^g\big(\mathscr{P},\mathscr{Q}\big),$ and the totality of graded morphisms as 
$\text{Hom}_{\mathcal{R}}^{\mathcal{G}}\big(\mathscr{P},\mathscr{Q}\big):=\bigoplus_{g\in G}\text{Hom}_{\mathcal{R}}^g\big(\mathscr{P},\mathscr{Q}\big).$ These
constitute a $G$-graded $\mathcal{R}$-module.

In $\text{Hom}_{\mathcal{R}}^{\mathcal{G}}\big(\mathscr{P},\mathscr{Q}\big),$ we have two graded $\mathscr{A}$-module structures. These are the left and right multiplications, denoted by $<,>$ respectively, which are defined as 
\begin{eqnarray*}
a^<\varphi&:=&a_{\mathscr{Q}}\circ\varphi,
\\
a^>\varphi&:=&(-1)^{a\cdot \varphi}\varphi\circ a_{\mathscr{P}},
\end{eqnarray*}
where $a_{\mathscr{P}},a_{\mathscr{Q}}$ denote the endomorphisms given by multiplication by $a$ of $\mathscr{P},\mathscr{Q}.$ For instance $a_{\mathscr{P}}:\mathscr{P}\rightarrow \mathscr{P}, p\mapsto a_{\mathscr{P}}(p):=ap$ and we understand that this endomorphism shifts the degree by $\deg(a).$

For a given $a\in\mathscr{A}$ of homogeneous degree, we set for an arbitrary $\varphi\in\text{Hom}_{\mathcal{R}}^{\mathcal{G}}(\mathscr{P},\mathscr{Q}),$ 
\begin{equation}
\delta_a\varphi:=\big[a,\Delta]=a^<\varphi-a^>\varphi,
\end{equation}
where $[-,-]$ denotes the graded commutator
$[a,b]=a\circ b-(-1)^{a\cdot b}b\circ a,$
for any $a,b$ graded elements. Using this definition, one can see that $\delta_a$ defines a morphism 
$\delta_a:\text{Hom}_{\mathcal{R}}^{\mathcal{G}}\big(\mathscr{P},\mathscr{Q}\big)\rightarrow \text{Hom}_{\mathcal{R}}^{\mathcal{G}}\big(\mathscr{P},\mathscr{Q}\big),$ of degree $\deg(a).$ We may can consider nested graded commutators and set
$$\delta_{a_0,a_1,..,a_k}:=\delta_{a_0}\circ \delta_{a_1}\circ...\circ\delta_{a_k},$$
for any $a_0,..,a_k\in\mathscr{A}$, homogeneous elements. It is not difficult to see that these satisfy the relation $\delta_{ab}=a\delta_b+(-1)^{ab}\delta_ab,$ for $a,b\in\mathscr{A}.$

\begin{definition}
\label{diffopdef}
Let $\nabla\in \mathrm{Hom}_{\mathcal{R}}^{\mathcal{G}}(\mathscr{P},\mathscr{Q}).$ Then $\nabla$ is said to be a \emph{graded linear differential operator} of order $\leq k$, if 
$\delta_{a_0,a_1,..,a_k}(\nabla)=0,$
for all $a_0,a_1,..,a_k\in\mathscr{A}.$
\end{definition}
The set of all graded differential operators of order $\leq k,$ of homogeneous degree $g\in G$ is denoted by 
$\text{Diff}_{k}(\mathscr{P},\mathscr{Q})_g,$ with the totality of such order $k$ operators denoted by 
$\text{Diff}_{k}(\mathscr{P},\mathscr{Q})_{\mathcal{G}}:=\bigoplus_{g\in G}\text{Diff}_{k}(\mathscr{P},\mathscr{Q})_g.$
This is closed with respect to both module structures and is therefore a graded $\mathscr{A}$-bimodule\footnote{When the reference to the underlying left, right or bi-module structures is not relevant, we will simply write $\text{Diff}_{k}(-,-)_{\mathcal{G}}$.}, denoted by $\text{Diff}_{k}^{>,<}(\mathscr{P},\mathscr{Q})_{\mathcal{G}},$ for all $k\geq 0.$

Analogous to the non-graded situation we have the standard order filtration, defined by the following chain of natural inclusions,
\begin{equation}
\label{eqn:orderfil}
\text{Diff}_{0,\mathcal{G}}\subset \text{Diff}_{1,\mathcal{G}}\subset..\subset\text{Diff}_{k,\mathcal{G}}\subset \text{Diff}_{k+1,\mathcal{G}}\subset...
\end{equation}
We therefore may consider the (filtered) module of \textit{all} graded differential operators, as the direct limit of this sequence $
\text{Diff}(\mathscr{A},\mathscr{A})_{\mathcal{G}}:=\bigcup_{k\geq 0}\text{Diff}_{k}(\mathscr{A},\mathscr{A})_{\mathcal{G}}.$
The associated graded algebra with respect to the filtration above, is called the algebra of \emph{symbols} (see below for details).
Such a colimit is closed with respect to the graded commutator of graded differential operators
$$[-,-]:\text{Diff}_{k}(\mathscr{A},\mathscr{A})_g\otimes_{\mathbb{K}}\text{Diff}_{\ell}(\mathscr{A},\mathscr{A})_h\rightarrow \text{Diff}_{k+\ell-1}(\mathscr{A},\mathscr{A})_{g+h},$$
which is defined in the usual way on homogeneous elements
$[\Delta_g,\Delta_h]:=\Delta_g\circ \Delta_h-(-1)^{g\cdot h}\Delta_h\circ\Delta_g.$
\begin{lem}
We let $\chi^n$ be the ordered subset of $n$ natural numbers and let $\chi=(i_1,..,i_{\ell})$ be an ordered subset, meaning $i_1<i_2<..<i_{\ell}$ with $\ell\leq n.$ We let $\bar{\chi}$ denote the ordered compliment. Moreover, we write $|\chi|=\ell,$ for the length.
We then have
$$\delta_{a_{\chi^n}}(\Delta)(b)=\sum_{|\chi|\leq n}(-1)^{|\chi|+<a_{\bar{\chi}},a_{\chi}>+\Delta\cdot a_{\bar{\chi}}}a_{\chi}\cdot \Delta(a_{\bar{\chi}}\cdot b).$$
\end{lem}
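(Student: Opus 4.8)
The plan is to prove the identity by induction on $n$, peeling the outermost factor off the composition $\delta_{a_{\chi^n}}=\delta_{a_1}\circ\cdots\circ\delta_{a_n}$. For $n=0$ the assertion reduces to $\delta_{\emptyset}(\Delta)(b)=\Delta(b)$, which is exactly the single empty-subset term on the right (there $a_\chi=a_{\bar\chi}=1$ and every exponent vanishes). For $n=1$ it is the defining relation $\delta_{a_1}(\Delta)(b)=(a_1^<\Delta)(b)-(a_1^>\Delta)(b)=a_1\Delta(b)-(-1)^{a_1\cdot\Delta}\Delta(a_1b)$, which already displays the two-term shape---one subset containing the index $1$, one omitting it---that the induction will propagate.

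For the inductive step I would set $\Psi:=\delta_{a_2,\dots,a_{n+1}}(\Delta)$ and use $\delta_{a_{\chi^{n+1}}}(\Delta)=\delta_{a_1}(\Psi)$, so that $\delta_{a_1}(\Psi)(b)=a_1\,\Psi(b)-(-1)^{a_1\cdot\Psi}\,\Psi(a_1b)$. Expanding both $\Psi(b)$ and $\Psi(a_1b)$ by the inductive hypothesis over the subsets $\chi\subseteq\{2,\dots,n+1\}$, the summand $a_1\,\Psi(b)$ prepends the index $1$ to each $\chi$, hence enumerates precisely the subsets of $\{1,\dots,n+1\}$ containing $1$, while $-(-1)^{a_1\cdot\Psi}\Psi(a_1b)$ feeds $a_1$ into the argument, that is into the complement, enumerating the subsets that omit $1$. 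Adding the two families reconstitutes the full sum over all subsets of $\{1,\dots,n+1\}$; it then remains only to verify that the prefactors match term by term.

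The genuinely delicate part is the sign reconciliation: the three exponent contributions $|\chi|$, $<a_{\bar\chi},a_\chi>$ and $\Delta\cdot a_{\bar\chi}$ of the target must be assembled from three distinct sources. These are the alternation between the branches $a_1^<$ and $a_1^>$ of $\delta_{a_1}$; the Koszul factor $(-1)^{a_1\cdot\Psi}$ carried by the right branch, where $\deg\Psi=\deg\Delta+\sum_{i\geq2}\deg a_i$; and the reordering signs incurred when the enlarged left product $a_1a_\chi$ and inner product $a_{\bar\chi}a_1$ are rewritten in increasing-index order as $a_{\{1\}\cup\chi}$ and $a_{\{1\}\cup\bar\chi}$. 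I expect the interplay between $(-1)^{a_1\cdot\Psi}$ and the shuffle sign $<a_{\bar\chi},a_\chi>$ to be the main obstacle, since it is exactly here that the graded commutativity of $\mathscr{A}$, the ordering convention on subsets, and the chosen convention for $\delta_a$ must be made mutually consistent; comparing the $n=0$ and $n=1$ cases is needed to pin down the overall normalisation of the $(-1)^{|\chi|}$ factor.

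As a conceptual cross-check one may bypass the induction: since $\delta_{a_i}=a_i^<-a_i^>$ and the left/right multiplications commute pairwise up to Koszul signs (a short computation gives $a^<b^<=(-1)^{a\cdot b}b^<a^<$, $a^>b^>=(-1)^{a\cdot b}b^>a^>$ and $a^<b^>=(-1)^{a\cdot b}b^>a^<$), the product $\prod_{i=1}^{n}(a_i^<-a_i^>)$ expands into $2^n$ monomials indexed by the set $\chi$ of positions carrying a left multiplication. Sorting each monomial so that all left multiplications precede all right multiplications produces $a_\chi^<\circ a_{\bar\chi}^>$ weighted by $<a_{\bar\chi},a_\chi>$ and the branch-parity sign, and evaluating on $\Delta$ and then on $b$ recovers the stated expression; this makes the combinatorics transparent at the cost of the same sign computation flagged above.
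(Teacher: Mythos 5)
Your two strategies---induction peeling off $\delta_{a_1}$, or the direct expansion of $\prod_{i=1}^{n}(a_i^<-a_i^>)$ via the commutation rules for left and right multiplications---are both viable, and the second is essentially the standard argument (the paper itself states this lemma without proof, citing the literature, so there is no in-text proof to compare against). The problem is that your proposal stops exactly where the lemma's only real content begins: you identify the sign reconciliation as ``the genuinely delicate part'' and ``the main obstacle,'' and then never carry it out. Neither the term-by-term matching in the inductive step nor the sorting signs in the direct expansion are computed, so as written this is a plan, not a proof.

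Worse, running the base-case check that you yourself set up exposes a defect you did not catch. You correctly computed $\delta_{a_1}(\Delta)(b)=a_1\Delta(b)-(-1)^{a_1\cdot\Delta}\Delta(a_1b)$, but never compared it with the stated right-hand side at $n=1$, which is
\begin{equation*}
(-1)^{1}\,a_1\Delta(b)+(-1)^{0+\Delta\cdot a_1}\,\Delta(a_1b)
=-\bigl(a_1\Delta(b)-(-1)^{\Delta\cdot a_1}\Delta(a_1b)\bigr)
=-\,\delta_{a_1}(\Delta)(b).
\end{equation*}
So the identity, read literally with the paper's definition $\delta_a\varphi=a^<\varphi-a^>\varphi$, already fails for $n=1$: each index placed in the argument carries one minus sign from the $-a_i^>$ branch, so the branch-parity factor is $(-1)^{|\bar\chi|}$, not $(-1)^{|\chi|}$; equivalently the stated formula is off by the overall factor $(-1)^n$. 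Carrying your second approach through to the end gives
\begin{equation*}
\delta_{a_{\chi^n}}(\Delta)(b)=\sum_{\chi}(-1)^{|\bar\chi|+\epsilon(\chi)+\Delta\cdot a_{\bar\chi}}\,a_\chi\,\Delta(a_{\bar\chi}\cdot b),
\qquad
\epsilon(\chi)=\sum_{i\in\bar\chi,\ j\in\chi,\ i<j}a_i\cdot a_j,
\end{equation*}
where $\epsilon(\chi)$ is the Koszul sign of the unshuffle $(a_1,\ldots,a_n)\mapsto(a_\chi,a_{\bar\chi})$, i.e.\ only the \emph{inverted} pairs; this is the only reading of $<a_{\bar\chi},a_{\chi}>$ under which the formula can hold, since taking instead the full block sign $\deg a_{\bar\chi}\cdot\deg a_\chi$ makes even the $n=2$ case fail (the $\chi=\{1\}$ term acquires a spurious $(-1)^{a_1\cdot a_2}$). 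A complete write-up therefore must: fix this interpretation of $<a_{\bar\chi},a_{\chi}>$; replace $(-1)^{|\chi|}$ by $(-1)^{|\bar\chi|}$ (or flag the factor $(-1)^n$ as a correction to the statement); verify the three commutation rules you asserted; and prove the evaluation identity $\bigl(a_\chi^<\circ a_{\bar\chi}^>(\Delta)\bigr)(b)=(-1)^{\Delta\cdot a_{\bar\chi}}a_\chi\Delta(a_{\bar\chi}b)$, in which the cross terms among the $a_j$, $j\in\bar\chi$, cancel against reordering the product $a_{\bar\chi}$ into increasing order. None of these steps is difficult, but none of them appears in your text, and the ``normalisation to be pinned down from the base cases'' is precisely the point where the statement, as printed, breaks.
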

If our differential operator is of order $m<n,$ the above expression reduces to $$\Delta(a_{\chi^n}b)=-\sum_{0<|\chi|\leq n}(-1)^{|\chi|+<a_{\bar{\chi}},a_{\chi}>+\Delta\cdot a_{\chi}}a_{\chi}\cdot \Delta(a_{\bar{\chi}}b).$$
Note that by using these results, in particular, the relation
$\delta_a(\Delta\circ\nabla)=\delta_a(\Delta)\circ\nabla+(-1)^{a\cdot \nabla}\Delta\circ\delta_a(\nabla),$ we obtain a map
$\text{Diff}_{s}^{>,<}(\mathscr{A},\mathscr{B})_g\otimes_{\mathbb{K}}\text{Diff}_{t}^{>,<}(\mathscr{B},\mathscr{C})_h\rightarrow \text{Diff}_{s+t}^{>,<}(\mathscr{A},\mathscr{C})_{g+h},$
which is precisely the composition of differential operators. 
\begin{rmk}
One should establish this fact using some care in a more categorical fashion, via the so-called gluing morphisms of \cite{KraVerb} adapted to the graded setting. These are suitable graded natural transformations and the discussion of these is treated carefully in \cite{Kry}.
\end{rmk}
The associated graded object to the filtration (\ref{eqn:orderfil}) is now discussed. For each $k\geq 0,$ one sets $\text{Smbl}_{k}:=\text{Diff}_k/\text{Diff}_{k-1},$ with $\text{Diff}_{-1}:=\emptyset,$ 
and the entire associated graded object
$$\text{Smbl}(\mathscr{A})_{\mathcal{G}}:=\bigoplus_{k\geq 0}\text{Smbl}_{k}(\mathscr{A},\mathscr{A})_{\mathcal{G}}\equiv \bigoplus_{k\geq 0}\bigoplus_{g\in\mathcal{G}}\text{Smbl}_{k}(\mathscr{A},\mathscr{A})_g,$$
a $\mathcal{G}\oplus\mathbb{Z}$-graded commutative associative algebra, is the algebra of \textit{graded symbols}.
For $\Delta_g \in \text{Diff}_{k}(\mathscr{A})_g,$ denote the image of the projection
$\text{smbl}_{k,g}:\text{Diff}_{k}(\mathscr{A})_g\rightarrow \frac{\text{Diff}_{k}(\mathscr{A})_g}{\text{Diff}_{k-1}(\mathscr{A})_g}=\text{Smbl}_{k}(\mathscr{A})_g,$
by
$\text{smbl}_{k,g}(\Delta_g):=\mathfrak{s}_{k,g}(\Delta_g).$

\begin{rmk}
The quotient homomorphism above is the restriction of $\text{smbl}_{\bullet,*}$ to the $k$'th filtered component of the order filtration and $g$'th homogeneous graded component,
$\text{smbl}_{k,g}=\text{smbl}_{\bullet,*}|_{\mathrm{Diff}_{k}(\mathscr{A})_g}.$
\end{rmk}
The commutative algebra structure is given by the map
\begin{equation}
\label{symbolalgebrastructure}
\star:\text{Smbl}_{k}(\mathscr{A})_i\times \text{Smbl}_{\ell}(\mathscr{A})_j\rightarrow \text{Smbl}_{k+\ell}(\mathscr{A})_{i+j}, \hspace{2mm} \text{smbl}_{k,i}(\Delta)\star\text{smbl}_{\ell,j}(\nabla):=\text{smbl}_{k+\ell,i+j}\big(\Delta\circ\nabla)
\end{equation}
for $\Delta\in \text{Diff}_{k}(\mathscr{A})_i,\nabla\in \text{Diff}_{\ell}(\mathscr{A})_j.$
The graded commutativity of the graded associative algebra $\text{Smbl}_{\bullet,*}(\mathscr{A},\mathscr{A}),$ is expressed in 
 $\mathfrak{s}_{k,i}(\Delta)\star\mathfrak{s}_{\ell,j}(\nabla)=(-1)^{i\cdot j}\mathfrak{s}_{\ell,j}(\nabla)\star\mathfrak{s}_{k,i}(\Delta),$
for symbols of graded differential operators $\Delta,\nabla$ of orders $k,\ell$ and graded degrees $i,j$, respectively. This follows from
$\delta_{a_{\chi^n}}(\Delta\circ\nabla)=\sum_{|\chi|\leq n}(-1)^{\nabla\cdot a_{\chi}}\delta_{a_{\chi}}(\Delta)\circ\delta_{a_{\bar{\chi}}}(\nabla),$
  where $|a_{\chi}|=\sum_{j=1}^{\ell}\deg(a_{i_{j}}).$ 

\begin{lem}
\label{symbolalg}
Let $\mathscr{A}$ be a graded commutative algebra. Then the collection of graded symbols $\mathrm{Smbl}_{\bullet,*}(\mathscr{A})$ form a graded commutative algebra a graded Lie algebra with respect to the bracket,
$$\{\mathrm{smbl}_{k,i}(\Delta),\mathrm{smbl}_{\ell,j}(\nabla)\}:=\mathrm{smbl}_{k+\ell-1,i+j}\big([\Delta,\nabla]\big),$$
where $\mathrm{smbl}_{k,i}(\Delta)\in\mathrm{Smbl}_{k,i}(\mathscr{A}),\mathrm{smbl}_{\ell,j}(\nabla)\in\mathrm{Smbl}_{\ell,j}(\mathscr{A}).$ Moreover, this bracket is a graded Poisson bracket.
\end{lem}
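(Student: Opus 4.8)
The plan is to reduce every assertion to a corresponding identity for the graded commutator of operators in $\mathrm{Diff}(\mathscr{A})_{\mathcal{G}}$ and then to transport it through the projections $\mathrm{smbl}_{k,i}$. The graded commutative algebra structure is already supplied by the product $\star$ of \eqref{symbolalgebrastructure} together with the graded commutativity recorded just above the statement, so the remaining work is to produce the bracket, verify the graded Lie axioms, and establish compatibility with $\star$.

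First I would check that $\{\,\cdot\,,\cdot\,\}$ is well defined. The decisive input is the order-reduction property recorded above, namely that the graded commutator maps $\mathrm{Diff}_{k}(\mathscr{A})_i\otimes_{\mathbb{K}}\mathrm{Diff}_{\ell}(\mathscr{A})_j$ into $\mathrm{Diff}_{k+\ell-1}(\mathscr{A})_{i+j}$, so that $\mathrm{smbl}_{k+\ell-1,i+j}([\Delta,\nabla])$ is meaningful. To see independence of the chosen representatives, I replace $\Delta$ by $\Delta+\Delta_0$ with $\Delta_0\in\mathrm{Diff}_{k-1}(\mathscr{A})_i$; then $[\Delta_0,\nabla]\in\mathrm{Diff}_{k+\ell-2}(\mathscr{A})_{i+j}$ lies in the kernel of $\mathrm{smbl}_{k+\ell-1,i+j}$, and symmetrically in the second slot. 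Hence the operation descends to a well-defined bilinear map $\mathrm{Smbl}_{k,i}\otimes\mathrm{Smbl}_{\ell,j}\to\mathrm{Smbl}_{k+\ell-1,i+j}$.

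Next I would descend the graded Lie axioms from the identities satisfied by the commutator in the associative algebra $\mathrm{Diff}(\mathscr{A})_{\mathcal{G}}$. Graded antisymmetry is immediate from $[\Delta,\nabla]=-(-1)^{i\cdot j}[\nabla,\Delta]$ after applying $\mathrm{smbl}_{k+\ell-1,i+j}$, and here I would stress that only the $\mathcal{G}$-degrees $i,j$ enter the sign, consistently with the commutativity sign of $\star$, while the $\mathbb{Z}$ order-grading governs only the filtration shift $k+\ell\mapsto k+\ell-1$. For the graded Jacobi identity I would invoke the graded Jacobi identity for the operator commutator, project it into $\mathrm{Smbl}_{k_1+k_2+k_3-2}$, and use the well-definedness just proved to identify nested brackets of symbols with symbols of nested commutators. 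The graded Poisson compatibility then follows from the derivation property $[\Delta,\nabla\circ\Theta]=[\Delta,\nabla]\circ\Theta+(-1)^{i\cdot j}\nabla\circ[\Delta,\Theta]$: applying $\mathrm{smbl}$ termwise and using $\mathfrak{s}(\nabla\circ\Theta)=\mathfrak{s}(\nabla)\star\mathfrak{s}(\Theta)$ from \eqref{symbolalgebrastructure} yields
$$\{\mathfrak{s}(\Delta),\mathfrak{s}(\nabla)\star\mathfrak{s}(\Theta)\}=\{\mathfrak{s}(\Delta),\mathfrak{s}(\nabla)\}\star\mathfrak{s}(\Theta)+(-1)^{i\cdot j}\mathfrak{s}(\nabla)\star\{\mathfrak{s}(\Delta),\mathfrak{s}(\Theta)\},$$
the required graded Leibniz rule, so that $\{\,\cdot\,,\cdot\,\}$ is a graded biderivation of $\star$ and $\mathrm{Smbl}_{\bullet,*}(\mathscr{A})$ is a graded Poisson algebra.

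The hard part will be the sign bookkeeping forced by the two independent gradings. One must verify that the commutation factors produced by Definition \ref{diffopdef} and by the operator identities are exactly those demanded of a $\mathcal{G}$-graded Poisson bracket: in particular, that both the antisymmetry and the Leibniz signs depend only on the $\mathcal{G}$-degrees, while the $\mathbb{Z}$ order-grading contributes no sign but only the degree shift. The genuinely substantive ingredient is the order-reduction of the commutator, which is what makes the bracket land in $\mathrm{Smbl}_{k+\ell-1}$ and makes it well defined; granting this (it is drawn from the preceding development, as are the purely formal graded Jacobi and graded Leibniz identities for any graded associative commutator), everything else is a matter of keeping the Koszul signs consistent throughout.
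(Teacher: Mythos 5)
Your proposal is correct and follows exactly the route the paper intends: the paper in fact states Lemma \ref{symbolalg} without an explicit proof, relying on the immediately preceding facts that the graded commutator maps $\mathrm{Diff}_{k}(\mathscr{A})_i\otimes_{\mathbb{K}}\mathrm{Diff}_{\ell}(\mathscr{A})_j$ into $\mathrm{Diff}_{k+\ell-1}(\mathscr{A})_{i+j}$ and that $\star$ is induced by composition, which is precisely the descent argument you carry out. Your write-up simply supplies the details the paper leaves implicit (well-definedness on representatives, projection of the graded Jacobi identity, and the Leibniz rule via $[\Delta,\nabla\circ\Theta]=[\Delta,\nabla]\circ\Theta+(-1)^{i\cdot j}\nabla\circ[\Delta,\Theta]$), with the signs handled consistently with the paper's conventions.
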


The utility of this bracket is apparent. For instance, we can construct a suitable notion of a  Hamiltonian derivation in the graded formalism as follows.
 Let $\Delta\in \text{Diff}_{k}(\mathscr{A})_i,$ be some graded differential operator of order $k$ and degree $i$ with symbol $\text{smbl}_{k,i}(\Delta):=\mathfrak{s}_{k,i}(\Delta).$ We may define a map in terms of the above graded Poisson bracket as
\begin{equation*}
H_{\mathfrak{s}_{k,i}(\Delta)}:\text{Smbl}_{\bullet,*}(\mathscr{A})\rightarrow \text{Smbl}_{\bullet,*}(\mathscr{A}),\hspace{2mm}
\mathfrak{s}_{m,j}(\nabla) \longmapsto \big\{\mathfrak{s}_{k,i}(\Delta),\mathfrak{s}_{m,j}(\nabla)\big\},
\end{equation*}
for $\nabla\in \text{Diff}_{m}(\mathscr{A})_j.$
\begin{lem}
$H_{\mathfrak{s}_{k,i}(\Delta)}$ is a graded derivation of the algebra of graded symbols. 
\end{lem}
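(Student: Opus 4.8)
The plan is to recognize that asserting $H_{\mathfrak{s}_{k,i}(\Delta)}$ is a graded derivation of $(\mathrm{Smbl}_{\bullet,*}(\mathscr{A}),\star)$ amounts to verifying the graded Leibniz identity
\[
H_{\mathfrak{s}}(\sigma\star\tau)=H_{\mathfrak{s}}(\sigma)\star\tau+(-1)^{i\cdot p}\,\sigma\star H_{\mathfrak{s}}(\tau),
\]
where $\mathfrak{s}=\mathfrak{s}_{k,i}(\Delta)$ and $p$ is the $G$-degree of $\sigma$. Since Lemma \ref{symbolalg} already records that $\{-,-\}$ is a graded Poisson bracket, this identity is nothing other than the Leibniz axiom of that bracket with its first slot frozen at $\mathfrak{s}$, so at the formal level the statement is immediate. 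I would nonetheless exhibit the mechanism behind it, since that is where the two gradings interact.

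First I would lift everything to the level of operators. Writing $\sigma=\mathfrak{s}_{\ell,p}(\nabla_1)$ and $\tau=\mathfrak{s}_{m,q}(\nabla_2)$, the definition of $\star$ (equation \ref{symbolalgebrastructure}) and of the bracket give $\sigma\star\tau=\mathfrak{s}(\nabla_1\circ\nabla_2)$ and $H_{\mathfrak{s}}(\sigma)=\mathfrak{s}([\Delta,\nabla_1])$. The core input is the graded derivation property of the commutator on the associative algebra of operators,
\[
[\Delta,\nabla_1\circ\nabla_2]=[\Delta,\nabla_1]\circ\nabla_2+(-1)^{i\cdot p}\,\nabla_1\circ[\Delta,\nabla_2],
\]
which is a direct expansion — the statement that $\mathrm{ad}_{\Delta}$ is a graded derivation of composition — and whose sign depends only on the $G$-degrees $i,p$.

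Next I would push this identity through the symbol projection, where the order bookkeeping is the point to check: composition adds orders while the commutator lowers order by one, so $[\Delta,\nabla_1\circ\nabla_2]$ lies in $\mathrm{Diff}_{k+\ell+m-1}$, and each of the two right-hand terms lies in the same filtered component $\mathrm{Diff}_{k+\ell+m-1}$. Hence applying the $\mathbb{K}$-linear projection $\mathrm{smbl}_{k+\ell+m-1,\,i+p+q}$ term by term is legitimate, and by \ref{symbolalgebrastructure} it converts $[\Delta,\nabla_1]\circ\nabla_2$ and $\nabla_1\circ[\Delta,\nabla_2]$ into the $\star$-products $\mathfrak{s}([\Delta,\nabla_1])\star\mathfrak{s}(\nabla_2)$ and $\mathfrak{s}(\nabla_1)\star\mathfrak{s}([\Delta,\nabla_2])$, yielding exactly the desired Leibniz identity.

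The main obstacle is bookkeeping across the two gradings rather than any deep analytic step: one must confirm that the commutation-factor sign in the operator identity involves only the $G$-degrees $i,p$ — matching the $\star$-commutativity sign, which by the relation preceding \ref{symbolalg} is likewise insensitive to the order grading — and that the bracket carries $G$-degree $0$, so that freezing $\mathfrak{s}$ in the first argument introduces no further sign. One should also record the degenerate cases in which a right-hand operator drops strictly below the top order $k+\ell+m-1$: there its symbol simply vanishes, consistently with both sides of the identity.
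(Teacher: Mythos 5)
Your proposal is correct. The paper in fact gives no proof of this lemma at all --- it is stated as an immediate consequence of Lemma \ref{symbolalg} (the bracket being a graded Poisson bracket, hence satisfying the Leibniz rule in each slot), which is precisely the shortcut you identify at the outset; your subsequent operator-level verification (that $[\Delta,-]$ is a graded derivation of composition, that all three terms lie in $\mathrm{Diff}_{k+\ell+m-1}$, and that the projection $\mathrm{smbl}_{k+\ell+m-1,\,i+p+q}$ intertwines $\circ$ with $\star$ and $[-,-]$ with $\{-,-\}$, with signs governed solely by the $G$-degrees) correctly supplies the details the paper leaves implicit.
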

 We often call $H_{\mathfrak{s}_{k,i}(\Delta)}$ the \emph{Hamiltonian vector field} corresponding to the \emph{Hamiltonian} $\mathfrak{s}_{k,i}(\Delta).$ Moreover, we have a map
$H:\text{Smbl}_{\bullet,*}(\mathscr{A})\rightarrow D\big(\text{Smbl}_{\bullet,*}(\mathscr{A})\big)_{\mathcal{G}},$ which sends a Hamiltonian to the corresponding Hamiltonian vector field $
\mathfrak{s}_{k,i}(\Delta)\mapsto H_{\mathfrak{s}_{k,i}(\Delta)}.$
Such an assignment
defines a derivation of $\text{Smbl}_{\bullet,*}(\mathscr{A})$ with values in the $\text{Smbl}_{\bullet,*}(\mathscr{A})$-module $D_{\mathcal{G}}\big(\text{Smbl}(\mathscr{A}),\text{Smbl}(\mathscr{A})\big).$
The totality of graded Hamiltonian derivations define a graded sub-$\text{Smbl}(\mathscr{A})_{\bullet}$- module of $D\big(\text{Smbl}(\mathscr{A})\big),$ often denoted by
$\text{Ham}_{\mathcal{G}}(\mathscr{A}).$
\subsubsection{Biderivations, multiderivations and additional functors of DCGCA}
In addition to first order graded derivations, we will need to understand multiderivations as well, paying special attention to bi-derivations. To this end, let us introduce and give precise meaning to a functor,
\begin{eqnarray}
\label{eqn:BiDer}
D_2^{\mathcal{G}}:\mathrm{gMod}^{\mathcal{G}}(\mathscr{A})&\rightarrow&\mathrm{Mod}^{\mathcal{G}}(\mathscr{A}) \nonumber
\\
\mathscr{P}&\mapsto& D_2^{\mathcal{G}}(\mathscr{P}):=D\big(\mathscr{A},D(\mathscr{P})\subset \text{Diff}_{1}^>(\mathscr{P})_{\mathcal{G}}\big),
\end{eqnarray}
where $\text{Diff}_1^>$ is the functor of first order differential operators supplied with its right module structure. Elements of this module are viewed as graded derivations $\Delta:\mathscr{A}\rightarrow\text{Diff}_1^>(\mathscr{A}),$ such that $\Delta(\mathscr{A})\subset D(\mathscr{A}).$ This functor co-represents the module of two-forms in the graded algebraic setting. Indeed, this functorial algebraic definition coincides with the usual one in the following way. Setting $\mathscr{P}=\mathscr{A}$ for simplicity, given a map $\Delta:\mathscr{A}\times\mathscr{A}\rightarrow\mathscr{A}$ such that
$\Delta(a,-)$ is a derivation of $\mathscr{A}$, one has the graded Leibniz rule,
$$\Delta(ab,-)=(-1)^{\Delta b+ab}b^{>}\Delta(a,-)+(-1))^{a\Delta}a^{>}\Delta(b,-),$$
for $a,b\in \mathscr{A},$ where $a^>(-)$ denotes the right module action.

Now for every $\Delta \in D_2^{\mathcal{G}}(\mathscr{P})$ there is a naturally associated mapping, $
\tilde{\Delta}:\mathscr{A}\times\mathscr{A}\rightarrow\mathscr{P},\hspace{2mm}
(a,b)\longmapsto \tilde{\Delta}(a,b):=\big[\Delta(a)\big](b),$
for all homogeneous $a,b\in\mathscr{A}.$

\begin{lem}
\label{biderdual}
The map $\tilde{\Delta}$ is a graded bi-derivation which is graded-skew symmetric. That is we have
\begin{enumerate}
\item  (Derivation in first entry) 
$\tilde{\Delta}(a_1\cdot a_2,b)=(-1)^{a_1\cdot \Delta}a_1\cdot\tilde{\Delta}(a_2,b)+(-1)^{a_2\cdot(\Delta+a_1)}\tilde{\Delta}(a_1,b)\cdot a_2,$

\item  (Derivation in second entry)
$\tilde{\Delta}(a,b_1\cdot b_2)=(-1)^{b_1\cdot(\Delta+a)}b_1\cdot\tilde{\Delta}(a,b_2)+(-1)^{b_2\cdot (b_1+\Delta+a)}b_2\tilde{\Delta}(a,b_1),$

\item  (Graded skew-symmetricity)
$\tilde{\Delta}(a,b)=-(-1)^{a\cdot b}\tilde{\Delta}(b,a).$
\end{enumerate}
\end{lem}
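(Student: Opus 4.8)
The plan is to extract the three assertions directly from the three ingredients built into the definition of $D_2^{\mathcal{G}}(\mathscr{P})$: that $\Delta$ is a graded derivation of $\mathscr{A}$, that each value $\Delta(a)$ lies in $D(\mathscr{P})$, and that the target module carries the \emph{right} structure $\mathrm{Diff}_1^>(\mathscr{P})$. I would prove them in the order (2), (3), (1), because graded skew-symmetry is exactly what collapses the first-entry Leibniz rule into the stated shape.

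The second-entry rule (2) is essentially a tautology. By construction $\Delta(a)$ is a graded derivation $\mathscr{A}\to\mathscr{P}$ of degree $\Delta+a$, so applying the two equivalent forms of the graded Leibniz rule of Definition \ref{gradedderivationdefinition} to $\Delta(a)(b_1b_2)$ and substituting $\Delta(a)(b_i)=\tilde\Delta(a,b_i)$ yields (2) verbatim; the only thing to check is that the commutation factors $(-1)^{b_1(\Delta+a)}$ and $(-1)^{(\Delta+a+b_1)b_2}=(-1)^{b_2(b_1+\Delta+a)}$ match, which is immediate.

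The crux is skew-symmetry (3), and it is the single step that genuinely exploits the right module structure. Here I would start from the Leibniz rule for $\Delta$ viewed as a derivation into $\mathrm{Diff}_1^>(\mathscr{P})$, namely $\Delta(ab)=(-1)^{\Delta b+ab}b^>\Delta(a)+(-1)^{a\Delta}a^>\Delta(b)$, unwind the right action through $[x^>\varphi](1)=(-1)^{x\varphi}\varphi(x)$, and evaluate both sides at the unit $1\in\mathscr{A}$. Since $\Delta(ab)\in D(\mathscr{P})$ is a genuine derivation it annihilates $1$, so the left-hand side vanishes and the right-hand side simplifies to
\[
0=\tilde\Delta(a,b)+(-1)^{ab}\tilde\Delta(b,a),
\]
which is (3). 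It is worth stressing that the identical computation performed with the \emph{left} module structure would not close up, so the appearance of $\mathrm{Diff}_1^>$ in the definition of $D_2^{\mathcal{G}}$ is precisely what forces graded skew-symmetry.

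Finally I would deduce the first-entry rule (1) from (2) and (3). Writing $\tilde\Delta(a_1a_2,b)=-(-1)^{(a_1+a_2)b}\tilde\Delta(b,a_1a_2)$ by (3), expanding the second slot of $\tilde\Delta(b,a_1a_2)$ by (2), and re-applying (3) to each summand reproduces the two terms $(-1)^{a_1\Delta}a_1\tilde\Delta(a_2,b)$ and $(-1)^{a_2(\Delta+a_1)}\tilde\Delta(a_1,b)a_2$; the spurious cross-terms proportional to $\tilde\Delta(a_1,a_2)$ cancel precisely by skew-symmetry, which is why (3) must be in hand first. The main obstacle is not conceptual but the disciplined bookkeeping of Koszul signs—especially the sign generated by the right action $x^>$ and by commuting module elements past the values of $\tilde\Delta$, following the paper's conventions throughout; the one non-formal idea in the whole argument is the unit-evaluation trick underlying (3).
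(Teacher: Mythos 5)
Your proof is correct, and it is essentially the argument the paper leaves implicit: Lemma \ref{biderdual} is stated with no proof at all, immediately after the displayed Leibniz rule $\Delta(ab,-)=(-1)^{\Delta b+ab}b^{>}\Delta(a,-)+(-1)^{a\Delta}a^{>}\Delta(b,-)$, and your three steps --- (2) from the fact that each value $\Delta(a)\in D(\mathscr{P})$ is a graded derivation of degree $\Delta+a$, (3) by evaluating that Leibniz rule at $1\in\mathscr{A}$ and using $\Delta(ab)(1)=0$ together with $[x^{>}\varphi](1)=(-1)^{x\cdot\varphi}\varphi(x)$, and (1) by the chain (3)--(2)--(3) --- are exactly the unwinding the paper intends, including the correct observation that the right module structure in $\mathrm{Diff}_1^{>}$ is what forces skew-symmetry. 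One cosmetic slip: in your route for (1) no cross-terms proportional to $\tilde\Delta(a_1,a_2)$ ever appear, since expanding the second slot of $\tilde\Delta(b,a_1a_2)$ by (2) produces only the two wanted summands; such cross-terms arise, and cancel by (3), only in the direct route where one expands $\Delta(a_1a_2)$ by the Leibniz rule for $\Delta$ and evaluates at $b$, so that remark is misplaced but harmless.
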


Using the functors defined above, one may immediately obtain various functors of graded multi-derivations, and multi-differential operators in an inductive manner.
For example, one may consider
$\mathfrak{D}_2(\mathscr{P}):=\text{Diff}_1^>\big(\mathscr{A},D(\mathscr{P})\subset \text{Diff}_1^>(\mathscr{P})\big)$ and notice that we have an embedding of graded $\mathbb{K}$-modules $D_2(\mathscr{P})\subset \mathfrak{D}_2(\mathscr{P}).$ This embedding is first order graded differential operator of degree zero, so it is meaningful to define the functors
$$D_k(\mathscr{P}):=D\big(\mathscr{A},D_{k-1}(\mathscr{P})\subset \mathfrak{D}_{k-1}^>(\mathscr{P})\big)$$ for each $k\geq 1$ where 
$\mathfrak{D}_k(\mathscr{P}):=\text{Diff}_1^>\big(\mathscr{A},D_{k-1}(\mathscr{P})\subset \mathfrak{D}_{k-1}(\mathscr{P})\big).$
\begin{rmk}
When $\mathscr{A}=A$ is an ungraded algebra, with $A=C^{\infty}(M)$, then $D_k(A)$ consists of $k$-vector fields on $M$.
\end{rmk}

In particular, we may compute the degree $3$  multiderivations from our knowledge of first and second degree derivations,  $D_3(-):=D\big(D_2(-)\subset \mathfrak{D}_2^>(-)\big).$
We refer the reader to \cite{Vin01},\cite{KraVerb}, for details, with the obvious generalizations to the graded setting.
We remark that one aspect of formalizing differential calculus in this manner is that we can generate many other new functors of differential calculus just by substituting in place of $D_k$ above, the functor $\text{Dif}_k(\mathscr{P}):=\{\Box\in\text{Diff}_k|\Box(1)=0\},$ in the definition of the graded multiderivations.
In this way, one arrives at various higher degree analogues of multi-derivation functors and their naturally associated complexes, for instance Diff-Spencer complexes or the higher order de Rham complexes of \cite{VezVin}.

 \subsubsection{The $\mathcal{G}$-graded Schouten bracket}
The totality of multiderivations of a graded algebra $D_*(\mathscr{A})_{\mathcal{G}}$ naturally carries a $\mathcal{G}\oplus \mathbb{Z}$ graded commutative algebra structure with respect to the (graded) wedge product $\Delta_g\wedge \nabla_h=(-1)^{g\cdot h+ij}\nabla_h\wedge \Delta_g$ for $\Delta_g\in D_i(\mathscr{A})_g,\nabla_h\in D_j(\mathscr{A})_h.$ This is defined by induction on $i+j$ as follows.
For $i+j=0,$ the quantity $\Delta_g\wedge\nabla_h$ is simply multiplication in $\mathscr{A}.$ For $i+j>0,$ the object $\Delta_g\wedge \nabla_h$ satisfies
$$(\Delta_g\wedge\nabla_h)(a)=(-1)^{a\nabla+j}\Delta_g(a)\wedge\nabla+\Delta_g\wedge\nabla_h(a).$$
Furthermore, it is well known that the graded algebra of multi-derivations has a natural bracket $-$ the \emph{Schouten bracket}, which is extended from the Lie algebra on $D_1(\mathscr{A})_{\mathcal{G}}$ as a graded derivation with respect to this wedge product, and endows $D_*(\mathscr{A})_{\mathcal{G}}$ with the structure of a graded Lie algebra.
This operation
\begin{equation}
\label{eqn:schouten}
[\![-,-]\!]:D_i(\mathscr{A})\otimes D_j(\mathscr{A})\rightarrow D_{i+j-1}(\mathscr{A}),
\end{equation}
is given on homogeneous components of degrees $g,h\in\mathcal{G}$ by
$[\![D_{i}(\mathscr{A})_g,D_{j}(\mathscr{A})_h]\!]\subseteq  D_{i+j-1}(\mathscr{A})_{g+h}.$
This bracket is inductively defined on the order $(i+j),$ by setting
$[\![X,a]\!]:=X(a),$ for $X\in D_1(\mathscr{A})$ and $a\in\mathscr{A}$ and for $i+j>1,$ we set
$
[\![a,\nabla_h]\!]:= (-1)^{a\cdot h +j}[\![\nabla_h,a]\!]=(-1)^{a\cdot h +j}\nabla_h(a),$
and
$$
[\![\Delta_g,\nabla_h]\!](a):= [\![\Delta_g,\nabla_h(a)]\!]-(-1)^{a\cdot h+j}[\![\Delta_g(a),\nabla_h]\!] ,(i+j)>0,
$$
for homogeneous $a\in\mathscr{A}, \Delta_g\in D_{i}(\mathscr{A})_g$ and $\nabla_h\in D_{j}(\mathscr{A})_h$.

\begin{lem}
The operation $[\![-,-]\!]$ is well-defined and on $D_1(\mathscr{A})$ coincides with the graded commutator. This bracket enjoys the properties,
\begin{enumerate}
    \item $[\![\Delta_g,\nabla_h]\!]+(-1)^{g\cdot h +
(i+1)(j+1)}[\![\nabla_h,\Delta_g]\!]=0,$
\item $[\![\Delta_g,\nabla_h \wedge \nabla_f]\!] = [\![\Delta_g,\nabla_h]\!]\wedge \nabla_f+(-1)^{g\cdot h+
(i+1)j}\nabla_h\wedge [\![\Delta_g,\nabla_h]\!],$
\item $
[\![\Delta_g,[\![\nabla_h,\Box_f]\!]]\!]=[\![[\![\Delta_g,\nabla_h]\!],\Box_f]\!]
+(-1)^{\Delta\nabla+(i+1)(j+1)}
 [\![\nabla_h,[\![\Delta_g,\Box_f]\!]]\!],$
\end{enumerate}
for $\Delta_g\in D_{i}(\mathscr{A})_g,\nabla_h\in D_j(\mathscr{A})_h,\Box_f\in D_k(\mathscr{A})_f.$
\end{lem}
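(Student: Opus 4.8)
The plan is to prove everything by induction on the total order of the arguments, treating well-definedness and the three structural properties as a single simultaneous induction. This is essentially forced by the recursive definition of $[\![-,-]\!]$, in which the value of a bracket on an element $a$ is expressed through brackets of strictly smaller total order. Thus in every step the mechanism is the same: evaluate the identity to be proved on a homogeneous $a\in\mathscr{A}$, rewrite each evaluated bracket via the defining recursion $[\![\Delta_g,\nabla_h]\!](a)=[\![\Delta_g,\nabla_h(a)]\!]-(-1)^{a\cdot h+j}[\![\Delta_g(a),\nabla_h]\!]$, and invoke the inductive hypothesis on the lower-order brackets produced, since $\nabla_h(a)$ and $\Delta_g(a)$ both have order one less than $\nabla_h$ and $\Delta_g$.

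First I would settle well-definedness together with the base case. To see that $[\![\Delta_g,\nabla_h]\!]$ genuinely lands in $D_{i+j-1}(\mathscr{A})_{g+h}$, it suffices to check that the assignment $a\mapsto[\![\Delta_g,\nabla_h]\!](a)$ is a graded derivation $\mathscr{A}\to D_{i+j-2}(\mathscr{A})$ of degree $g+h$; this is a direct verification of the graded Leibniz rule from the Leibniz rules for $\Delta_g,\nabla_h$ together with the inductive hypothesis that the lower-order brackets on the right are already multiderivations. For the coincidence with the graded commutator on $D_1(\mathscr{A})$, I would take $i=j=1$ and unwind the recursion: since $\nabla_h(a)\in\mathscr{A}$ and $\Delta_g(a)\in\mathscr{A}$, the two terms collapse through the base rules $[\![X,b]\!]=X(b)$ and $[\![b,\nabla_h]\!]=(-1)^{b\cdot h+j}\nabla_h(b)$ to $\Delta_g(\nabla_h(a))$ and $(-1)^{g\cdot h}\nabla_h(\Delta_g(a))$, reproducing $\big(\Delta_g\circ\nabla_h-(-1)^{g\cdot h}\nabla_h\circ\Delta_g\big)(a)$.

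Next I would establish the graded antisymmetry (1) and the graded Leibniz rule over the wedge product (2) by induction on $i+j$. In each case I form the difference of the two sides, evaluate on an arbitrary homogeneous $a$, rewrite each evaluated bracket via the recursion, and close the induction using the same property at strictly lower total order. The base cases $i+j\le 1$ are immediate from $[\![X,a]\!]=X(a)$ and $[\![a,\nabla_h]\!]=(-1)^{a\cdot h+j}[\![\nabla_h,a]\!]$, the latter being exactly antisymmetry in the lowest degree. I would prove (1) and (2) before attempting (3), because both are needed to control the signs in the Jacobi computation.

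The Jacobi identity (3) is the main obstacle, and here I would induct on the combined order $i+j+k$. I would introduce the graded Jacobiator $\mathcal{J}:=[\![\Delta_g,[\![\nabla_h,\Box_f]\!]]\!]-[\![[\![\Delta_g,\nabla_h]\!],\Box_f]\!]-(-1)^{\Delta\nabla+(i+1)(j+1)}[\![\nabla_h,[\![\Delta_g,\Box_f]\!]]\!]$, an element of $D_{i+j+k-2}(\mathscr{A})$, and show it vanishes by evaluating on a homogeneous $a\in\mathscr{A}$. Applying the recursion to the outer brackets pushes the evaluation onto arguments of strictly smaller order (replacing, say, $\Box_f$ by $\Box_f(a)$ and $\nabla_h$ by $\nabla_h(a)$), after which each resulting triple bracket falls under the inductive hypothesis for (3), while the cross-terms are reorganised using antisymmetry (1) and the Leibniz rule (2) already established. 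The delicate point throughout is the bookkeeping of the commutation factors $(-1)^{a\cdot h+j}$, $(-1)^{g\cdot h+(i+1)j}$, and their analogues: one must check that every sign picked up when a derivation is moved past an element $a$ or past another multiderivation conspires to leave $\mathcal{J}(a)=0$. I expect this sign reconciliation, rather than the structural logic of the induction, to be the genuinely laborious part of the argument.
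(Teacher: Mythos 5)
Your proposal is correct and follows essentially the same route as the paper, which disposes of this lemma with the remark that it may be established ``by straightforward induction on the order of the operators'': your simultaneous induction on total order, with the recursion $[\![\Delta_g,\nabla_h]\!](a)=[\![\Delta_g,\nabla_h(a)]\!]-(-1)^{a\cdot h+j}[\![\Delta_g(a),\nabla_h]\!]$ driving each step and properties (1)--(2) established before the Jacobi identity (3), is exactly that induction carried out explicitly. Your unwinding of the base case on $D_1(\mathscr{A})$ (where the signs $(-1)^{a\cdot h+1}$ and $(-1)^{(g+a)\cdot h+1}$ combine to $(-1)^{g\cdot h}$) is also correct, so the proposal is a faithful, more detailed rendering of the paper's argument.
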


One may establish this result by straightforward induction on the order of the operators. Furthermore, we have some additional properties which assist with computations.
\begin{lem}
For $\Delta_g\in D_i(\mathscr{A})_g,$ and $\nabla_h\in D_j(\mathscr{A})_h$, for all $a\in\mathscr{A}$ we have
\begin{enumerate}
    \item  $[\![\Delta_g,a\nabla_h]\!]=\Delta_g(a) \nabla+(-1)^{a\Delta}a[\![\Delta_g,\nabla_h]\!],$
    
    \item  $[\![a\Delta_g,\nabla_h]\!]=a[\![\Delta_g,\nabla_h]\!]+(-1)^{\Delta\nabla+i(j+1)}[\![a,\nabla_h]\!]\Delta_g,$

    \item  $[\![\Delta_g,\nabla_h\wedge a]\!]=[\![\Delta_g,\nabla_h]\!]a+(-1)^{\Delta\nabla+(i+1)j}\nabla_h\wedge\Delta_g(a),$
    
    \item  $[\![\Delta_g\wedge a,\nabla_h]\!]=\Delta_g\wedge [\![a,\nabla_h]\!]+(-1)^{a\nabla}[\![\Delta_g,\nabla_h]\!]\wedge a.$
\end{enumerate}

\end{lem}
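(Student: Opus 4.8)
The plan is to deduce all four identities from the graded Leibniz rule and the graded antisymmetry of $[\![-,-]\!]$ established in the preceding lemma, rather than repeating the induction on operator order from scratch. The organizing observation is that $\mathscr{A}=D_0(\mathscr{A})_{\mathcal{G}}$, so an element $a\in\mathscr{A}$ is exactly a $0$-derivation and scalar multiplication by $a$ coincides with the wedge product $a\wedge(-)$. This lets me insert $a$ into a wedge slot of the already-proven Leibniz rule and read off the special cases. Before doing so I would record the auxiliary identity $[\![\Delta_g,a]\!]=\Delta_g(a)$ for $\Delta_g\in D_i(\mathscr{A})_g$ and $a\in\mathscr{A}$: for $i=1$ this is the defining relation $[\![X,a]\!]=X(a)$, and for general $i$ it follows by feeding the defining relation $[\![a,\Delta_g]\!]=(-1)^{a\cdot g+i}\Delta_g(a)$ into graded antisymmetry, whereupon the antisymmetry factor $(-1)^{g\cdot a+(i+1)}$ together with the leading minus collapses everything to $\Delta_g(a)$.

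Identities (1) and (3) then drop out of the graded Leibniz rule directly. Applying that rule to $[\![\Delta_g,a\wedge\nabla_h]\!]$ with $a$ in the \emph{left} wedge factor gives (1): the first term is $[\![\Delta_g,a]\!]\wedge\nabla_h=\Delta_g(a)\nabla_h$, and since $a$ has order $0$ the Leibniz sign $(-1)^{g\cdot a+(i+1)\cdot 0}$ collapses to the stated $(-1)^{a\Delta}$. Applying the same rule to $[\![\Delta_g,\nabla_h\wedge a]\!]$ with $a$ in the \emph{right} wedge factor gives (3), the surviving sign now being $(-1)^{g\cdot h+(i+1)j}=(-1)^{\Delta\nabla+(i+1)j}$ because the factor sitting to the left is $\nabla_h$, of degree $h$ and order $j$. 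These are the two genuinely distinct placements of $a$ inside the wedge, and the difference in their prefactors is exactly the difference already recorded in the Leibniz rule.

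Identities (2) and (4) are the left-slot versions, and I would obtain (2) from (1) and (4) from (3) by applying graded antisymmetry twice. For (2), antisymmetry turns $[\![a\Delta_g,\nabla_h]\!]$ into a multiple of $[\![\nabla_h,a\Delta_g]\!]$, to which (1) applies since $a$ sits on the left; a second application of antisymmetry inside the resulting $[\![\nabla_h,\Delta_g]\!]$ term restores $[\![\Delta_g,\nabla_h]\!]$, and converting $\nabla_h(a)$ back into $[\![a,\nabla_h]\!]$ via the definition $[\![a,\nabla_h]\!]=(-1)^{a\cdot h+j}\nabla_h(a)$ produces the stated right-hand side. Identity (4) is entirely parallel, using (3) in place of (1) because there $a$ sits on the right.

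The main obstacle is purely the Koszul sign bookkeeping in this last step. One must track that $a\Delta_g$ carries degree $a+g$ and order $i$, and verify that the product of the signs $(-1)^{(a+g)h+(i+1)(j+1)}$ (from the first antisymmetry), $(-1)^{a\cdot h}$ and $(-1)^{a\cdot h+j}$ (from applying (1) and re-expressing $\nabla_h(a)$), and $(-1)^{h\cdot g+(i+1)(j+1)}$ (from the second antisymmetry) collapses, after cancelling the squared factors $(-1)^{2a\cdot h}$ and $(-1)^{2g\cdot h}$, to $+1$ on the term $a[\![\Delta_g,\nabla_h]\!]$ and to $(-1)^{\Delta\nabla+i(j+1)}$ on the term $[\![a,\nabla_h]\!]\Delta_g$. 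This collapse is where essentially all the content of the computation lies; everything else is formal. An independent check by direct induction on $i+j$ using the defining recursion for $[\![-,-]\!]$ is available as a cross-verification, but is strictly longer.
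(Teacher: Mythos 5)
Your proposal is correct, and I have checked the sign bookkeeping in the step you flag as the crux: for identity (2), the product of the antisymmetry factor $(-1)^{(a+g)h+(i+1)(j+1)}$, the factor $(-1)^{ah}$ from applying (1), the conversion factor $(-1)^{ah+j}$ from $\nabla_h(a)=(-1)^{ah+j}[\![a,\nabla_h]\!]$, and the second antisymmetry factor $(-1)^{gh+(i+1)(j+1)}$ does collapse to $+1$ on $a[\![\Delta_g,\nabla_h]\!]$ and to $(-1)^{gh+i(j+1)}$ on $[\![a,\nabla_h]\!]\Delta_g$, as required; the computation for (4) collapses analogously using $(i+1)(j+1)+i(j+1)\equiv j+1 \pmod 2$. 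Your route differs from what the paper (implicitly) offers: the paper states this lemma with no argument at all, following a preceding lemma that it says "may be established by straightforward induction on the order of the operators," so the intended proof is presumably another such induction on $i+j$ using the defining recursion for $[\![-,-]\!]$. Your derivation instead treats $a$ as an element of $D_0(\mathscr{A})$, identifies scalar multiplication with the wedge $a\wedge(-)$, and specializes the already-proven graded Leibniz rule and antisymmetry; this buys genuine economy (no re-running of the induction, and (2) and (4) come for free from (1) and (3)), at the mild cost of relying on two facts you correctly supply yourself: the identity $[\![\Delta_g,a]\!]=\Delta_g(a)$ for arbitrary order $i$ (which you derive from antisymmetry, and which also follows directly from the paper's definition $[\![a,\nabla_h]\!]:=(-1)^{ah+j}[\![\nabla_h,a]\!]=(-1)^{ah+j}\nabla_h(a)$), and the corrected form of the paper's Leibniz rule, whose printed statement contains a typo (the final bracket should read $[\![\Delta_g,\nabla_f]\!]$, not $[\![\Delta_g,\nabla_h]\!]$). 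Your version is the cleaner argument and is the one I would record.
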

Following \cite{KraVin}, we give a graded version of the general notion of a Poisson structure in the purely algebraic setting, where they are renamed as canonical structures.

\begin{definition}
A graded biderivation $\Pi$ which satisfies $[\![\Pi,\Pi]\!]=0,$ is called a \emph{(graded) canonical structure}. The pair $(\mathscr{A},\Pi)$ is a \emph{(graded) canonical algebra.}
\end{definition}

\begin{rmk}
Canonical algebras provide a purely algebraic generalization of smooth function algebras on manifolds with symplectic structure (see \cite{KraVin} for an explanation). In particular, pairs $(\mathscr{A},\Pi)$ provide a totally algebraic version of graded symplectic manifolds.
\end{rmk}

\subsubsection{The $\mathcal{G}$-graded Schouten-Jacobi bracket}
In analogy with the discussion of bi-derivations and higher degree multi-derivations given above, there is a canonical bracket on the space of multi-differential operators. 
To see this, let the functor of graded bi-differential operators, of order $1$ in each entry be defined for $\mathscr{A}$ any $\mathcal{G}$-graded algebra as the assignment
$$\text{Diff}_1\big(\mathscr{A},\text{Diff}_1^>(\mathscr{A},-)_{\mathcal{G}}\big)_{\mathcal{G}}:\text{gMod}^{\mathcal{G}}(\mathscr{A})\rightarrow \text{gMod}^{\mathcal{G}}(\mathscr{A}),$$
and denote such bi-differential operators as
\begin{equation}
\label{eqn:BiDiffops}
\text{Diff}_1^{(2)}(\mathscr{A},\mathscr{A})_{\mathcal{G}}:=\text{Diff}_1\big(\mathscr{A},\text{Diff}_1(\mathscr{A},\mathscr{A})_{\mathcal{G}}\big)_{\mathcal{G}}.
\end{equation}
These are skew-symmetric first order graded differential operators in each entry and this definition may be extended to higher orders in a straightforward way.
\begin{lem}
\label{equivbidiffop}
Any $\Box\in \mathrm{Diff}_1^{(2)}(\mathscr{A},\mathscr{A})_{\mathcal{G}}$ can be identified with a bilinear graded map $\tilde{\Box}:\mathscr{A}\times\mathscr{A}\rightarrow \mathscr{A},$ which is a graded differential operator of order $1$ in each entry.
\end{lem}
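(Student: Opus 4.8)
The plan is to prove the identification by an uncurrying (adjunction) argument that runs parallel to the proof of Lemma \ref{biderdual}, with the order-one vanishing condition playing the role that the biderivation/Leibniz identities played there. First I would fix $\Box\in\mathrm{Diff}_1^{(2)}(\mathscr{A},\mathscr{A})_{\mathcal{G}}$, which by definition (\ref{eqn:BiDiffops}) is a first order graded differential operator $\Box\colon\mathscr{A}\rightarrow\mathrm{Diff}_1^>(\mathscr{A},\mathscr{A})_{\mathcal{G}}$, and attach to it the bilinear map $\tilde{\Box}\colon\mathscr{A}\times\mathscr{A}\rightarrow\mathscr{A}$, defined on homogeneous elements by $\tilde{\Box}(a,b):=[\Box(a)](b)$ and extended bilinearly. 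Graded $\mathbb{K}$-bilinearity and the degree count $\deg\tilde{\Box}(a,b)=\deg\Box+\deg a+\deg b$ are immediate from the corresponding properties of $\Box$ and of evaluation. Since $\Box(a)\in\mathrm{Diff}_1(\mathscr{A},\mathscr{A})_{\mathcal{G}}$ for every homogeneous $a$, the map $\tilde{\Box}(a,-)$ is a graded differential operator of order one in its second argument \emph{by construction}, so that slot requires no further argument.

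The substantive point is that $\tilde{\Box}(-,b)$ is a first order operator in its first argument for each fixed homogeneous $b$. To establish this I would expand the defining vanishing $\delta_{a_0}\delta_{a_1}(\Box)=0$ and evaluate the resulting identity on $b$. Recalling $\delta_a\varphi=a^<\varphi-a^>\varphi$ and that the outer $\mathrm{Diff}_1$ is formed over the right module structure $>$ on $\mathrm{Diff}_1^>(\mathscr{A},\mathscr{A})_{\mathcal{G}}$, evaluation at $b$ converts each target module action through the rule $(a^>\nabla)(b)=(-1)^{a\cdot\nabla}\nabla(ab)$, so that the left action inserts $a$ into the second slot while the right action inserts it into the first. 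The outcome is a relation among the values $\tilde{\Box}(a_ia_j\,c,\,a_k b)$; invoking in addition the inner first order property of $\tilde{\Box}(c,-)$ (which is automatic) to resolve the second-slot insertions, I expect this relation to collapse exactly to $\delta_{a_0}\delta_{a_1}\big(\tilde{\Box}(-,b)\big)=0$, i.e.\ order one in the first entry. The composition rule $\delta_a(\Delta\circ\nabla)=\delta_a(\Delta)\circ\nabla+(-1)^{a\cdot\nabla}\Delta\circ\delta_a(\nabla)$ together with the explicit $\delta_{a_{\chi^n}}$-expansion recorded earlier are the tools that organize this computation.

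To upgrade $\Box\mapsto\tilde{\Box}$ into an identification I would exhibit its inverse. Given any graded bilinear $\tilde{\Box}$ which is a graded differential operator of order one in each entry, currying in the second slot produces $\Box(a):=\tilde{\Box}(a,-)$; order one in the second entry guarantees $\Box(a)\in\mathrm{Diff}_1(\mathscr{A},\mathscr{A})_{\mathcal{G}}$, while order one in the first entry guarantees, by running the evaluation-at-$b$ computation in reverse, that $\Box\in\mathrm{Diff}_1\big(\mathscr{A},\mathrm{Diff}_1^>(\mathscr{A},\mathscr{A})_{\mathcal{G}}\big)_{\mathcal{G}}$. These two assignments are visibly mutually inverse and respect the $\mathcal{G}$-grading, which yields the claimed identification.

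I expect the main obstacle to be precisely the bookkeeping of the Koszul signs and of the two interacting module structures: because the outer operator lives over the right structure $>$ of $\mathrm{Diff}_1^>$, evaluation at $b$ does not send the first order condition on $\Box$ \emph{directly} onto the first order condition on $\tilde{\Box}(-,b)$, but mixes the two arguments, and one recovers the clean order-one statement only after combining it with the (automatic) order-one property in the second slot. Verifying that the signs produced by commuting $a$ past $\Box$ and past $b$ recombine into the sign appearing in the definition of $\delta_a$ on $\mathrm{Hom}_{\mathcal{R}}^{\mathcal{G}}(\mathscr{A},\mathscr{A})$ is the delicate part. This is the graded differential-operator analogue of the sign analysis already carried out for biderivations in Lemma \ref{biderdual}, now with the milder order-one vanishing replacing the Leibniz identities.
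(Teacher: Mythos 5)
Your identification map is the same one the paper uses; in fact the paper offers no verification at all beyond stating $\tilde{\Box}_g(a,b)=[\Box_g(a)](b)$, and your observation that order one in the \emph{second} entry is automatic is correct. The genuine gap is precisely the step you defer to sign bookkeeping: with the right structure $>$ on the inner $\mathrm{Diff}_1$ (which is what the paper's definition of the functor specifies), the evaluated outer condition does \emph{not} collapse to the first-slot condition, and no bookkeeping can make it do so. Work in the ungraded case (all degrees zero, a special case of the graded one) and write $\delta^{(1)}_a,\delta^{(2)}_a$ for $\delta_a$ applied in the first, resp.\ second, slot of a bilinear map. Since the action of $a$ on $\mathrm{Diff}_1^>$ is precomposition, evaluating $\delta_{a_0}\delta_{a_1}(\Box)=0$ at $b$ gives
\begin{equation*}
\bigl(\delta^{(1)}_{a_0}-\delta^{(2)}_{a_0}\bigr)\bigl(\delta^{(1)}_{a_1}-\delta^{(2)}_{a_1}\bigr)\tilde{\Box}=0,
\end{equation*}
and combining this with the automatic relation $\delta^{(2)}_{a_0}\delta^{(2)}_{a_1}\tilde{\Box}=0$ leaves only
$\delta^{(1)}_{a_0}\delta^{(1)}_{a_1}\tilde{\Box}=\delta^{(1)}_{a_0}\delta^{(2)}_{a_1}\tilde{\Box}+\delta^{(2)}_{a_0}\delta^{(1)}_{a_1}\tilde{\Box}$, whose right-hand side need not vanish. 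It genuinely does not: take $\mathscr{A}=\mathbb{K}[t]$, $D=d/dt$, and $\Box(x):=D^2(x)\,\mathrm{id}+D(x)\,D$. Every value $\Box(x)$ lies in $\mathrm{Diff}_1$, a short computation gives $\delta_a(\Box)(x)=-\bigl(D(a)D(x)+D^2(a)x\bigr)\,\mathrm{id}-D(a)x\,D$ and then $\delta_{a_0}\delta_{a_1}(\Box)=0$, so $\Box\in\mathrm{Diff}_1\bigl(\mathscr{A},\mathrm{Diff}_1^>(\mathscr{A},\mathscr{A})\bigr)$; yet $\tilde{\Box}(x,1)=D^2(x)$, so $\tilde{\Box}$ is of order two in its first entry. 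Your inverse direction fails as well: $\tilde{\Box}(x,y):=D(x)D(y)$ is first order in each entry, but its curried form $\Box(x)=D(x)D$ satisfies $\delta_{a_0}\delta_{a_1}(\Box)(x)=-2D(a_0)D(a_1)x\,\mathrm{id}\neq 0$.

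The failure is structural, and it marks exactly where your analogy with Lemma \ref{biderdual} breaks down. The $>$ structure is the one for which $\mathrm{Diff}_k\mathrm{Diff}_l^>$ glues, via evaluation at $1$, onto operators of order $k+l$, so first-slot orders up to $k+l$ genuinely occur. What rescues the biderivation case is not the vanishing condition alone but the derivation properties: there the values kill $1$, the outer Leibniz rule forces $\tilde{\Delta}(1,-)=0$, these two facts yield graded skew-symmetry, and it is skew-symmetry that transports the automatic second-slot order to the first slot. A correct proof of the lemma must therefore incorporate a corresponding repair: either (i) restrict to operators normalized by $\tilde{\Box}(1,-)=\tilde{\Box}(-,1)=0$ (equivalently pass to $\mathrm{Dif}_1$, or impose the skew-symmetry which the paper asserts parenthetically but which, as the first counterexample shows, is not automatic), derive skew-symmetry from the displayed relation at $x=y=1$ exactly as in Lemma \ref{biderdual}, and then deduce first-slot order one from second-slot order one by flipping the arguments; or (ii) define the inner module with its left structure $\mathrm{Diff}_1^<$, for which $\delta_a$ of the curried map is literally $\delta^{(1)}_a$ of the uncurried one and the statement becomes a one-line tautology. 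As written, your plan fails at its central step, and the obstruction is not a matter of Koszul signs.
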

Explicitly, suppose $\Box_g\in\text{Diff}_1^{(2)}(\mathscr{A},\mathscr{A})_g,$ is of homogeneous degree $g$. Then $\Box_g^{A_h}:=\Box_g|_{A_h}:A_h\subset \mathscr{A}\rightarrow \text{Diff}_1(\mathscr{A},\mathscr{A})_{\mathcal{G}},$
is related to $\tilde{\Box}_g^{A_h,-}:A_h\times \mathscr{A}\rightarrow \mathscr{A},$ via the formula
$\tilde{\Box}_g(a,b)=\big[\Box_g(a)\big](b),$
for $a\in A_h,$ and $b\in\mathscr{A}.$

The map $\tilde{\Box}$ can be considered as a bilinear bracket operation on $\mathscr{A}$ by
$\{-,-\}:=\tilde{\Box}.$
The bracket is skew-symmetric and defines on $\mathscr{A}$ the structure of a $\mathbb{K}$-pre-Lie algebra (or left symmetric algebra), which means that for any triple $a_1,a_2,a_3\in\mathscr{A}$, we have
$a_1(a_2a_3)-a_2(a_1a_3)=(a_1a_2)a_3-(a_2a_1)a_3.$
In fact, the Jacobiator
\begin{eqnarray*}
\text{Jac}(\tilde{\Box})(a_1,a_2,a_3)&:=&\tilde{\Box}\big(\tilde{\Box}(a_1,a_2),a_3\big)- \tilde{\Box}\big(a_1,\tilde{\Box}(a_2,a_3)\big)-(-1)^{(a_1+g)(a_2+g)}\tilde{\Box}\big(a_2,\tilde{\Box}(a_1,a_3)\big)
\\
&\equiv&\big\{\{a_1,a_2\},a_3\big\} +\text{cyclic permutations} =0,
\end{eqnarray*}
and the algebra $\mathscr{A}$ is a $\mathbb{K}$-Lie algebra. 
The bracket $\{-,-\}$ then coincides with the canonical
Jacobi-Schouten bracket on the space of multi-differential operators. If $\tilde{\Box}$ has degree $g$, then this structure is defined by the ``master equation'' 
\begin{equation}
    \label{eqn:SchJac}
[\![\Box_g,\Box_g]\!]^{SJ}=2\text{Jac}(\tilde{\Box}_g)=0,
\end{equation}
Extending the definition \ref{eqn:BiDiffops} to higher orders, we see that $\text{Diff}_1^{(i)}(\mathscr{A},\mathscr{A})_g$ are those $g$-graded morphism $\mathscr{A}\rightarrow \text{Diff}_1^{(i-1)}(\mathscr{A},\mathscr{A}),$ such that $\Delta$ is a first order poly-differential operator. In fact, this means that $\Delta$ satisfies
$\Delta_g(ab)=(-1)^{ag}a\Delta_g(b)+\Delta_g(a)b-\Delta_g(1)ab.$
Moreover, it satisfies,
$$\tilde{\Delta}_g(a,b)=-(-1)^{(a+g)(b+g)}\tilde{\Delta}_g(b,a).$$
In this case lemma \ref{equivbidiffop} tells us that we have $\tilde{\Delta}_g(a_1,...,a_j)=\Delta_g(a_1,...,a_{j-1})(a_j).$
The graded product in $\mathscr{A}$ can be exteneded to $\text{Diff}_1^{(\bullet)}(\mathscr{A},\mathscr{A}):=\bigoplus_{i=0}^{\infty}\text{Diff}_{1}^{(i)}(\mathscr{A},\mathscr{A})_{\mathcal{G}},$ by setting
$\Box_h\wedge \Delta_g(a):=\Box_h\wedge \Delta_g(a)+(-1)^{ag+j}\Box_h(a)\wedge \Delta_g,$
for all $a\in\mathscr{A}$ and for $\Delta_g\in\text{Diff}_1^{(j)}(\mathscr{A},\mathscr{A})_g$ and for $\Box_h$ an element of arbitrary weight, and degree $h$.
This satisfies
$\Box_h\wedge \Delta_g=(-1)^{hg + ij}\Delta_g\wedge \Box_h,$ which equivalently reads
$\tilde{(\Box_h\wedge\Delta_g)}(a,b)=(-1)^{(a+g)h}\Delta_g(a)\Box_h(b)-(-1)^{ag}\Box_h(a)\Delta_g(b).$

Analogously to the case of the $\mathcal{G}$-graded Schouten-bracket, we obtain
an inductive definition of a new bracket denoted $[\![-,-]\!]^{SJ}$ which is called the \emph{Schouten-Jacobi} bracket. It is defined by setting $[\![a,b]\!]^{SJ}=0,$ for all $a,b\in\mathscr{A},$ viewed as order zero weight zero graded multi-differential operators.
Then we set
$[\![\Delta_g,a]\!]^{SJ}:=\Delta_g(a),$ as well as
$[\![a,\Delta_g]\!]^{SJ}=-(-1)^{ag+i}\Delta_g(a), a\in\mathscr{A}, \Delta_g\in\text{Diff}_{1}^{(i)}(\mathscr{A},\mathscr{A})_g.$ Finally, one sets 
$$[\![\Delta_g,\Box_h]\!]^{SJ}(a):=[\![\Delta_g,\Box_h(a)]\!]^{SJ}+(-1)^{ah+j-1}[\![\Delta_g(a),\Box_h]\!]^{SJ},\hspace{1mm} i+j>0.$$
Such a bracket defines a $\mathcal{G}$-graded Lie algebra structure on the $\mathscr{A}$-module $\text{Diff}_{1}^{(\bullet)}(\mathscr{A},\mathscr{A})_{\mathcal{G}},$ with the skew-symmetricity of this bracket with respect to the $\mathcal{G}\oplus \mathbb{Z}$-structure is 
$[\![\Delta_g,\Box_h]\!]^{SJ}=-(-1)^{gh+(i-1)(j-1)}[\![\Box_h,\Delta_g]\!]^{SJ},$
where $\Delta_g\in\text{Diff}_{1}^{(i)}(\mathscr{A},\mathscr{A})_g$ while $\Box_h\in\text{Diff}_{1}^{(j)}(\mathscr{A},\mathscr{A})_h.$
However, since
$$[\![\Delta_g,ab]\!]^{SJ}=\Delta(ab)=\Delta_g(a)b-(-1)^{ag}a\Delta_g(b)-\Delta_g(1)ab=[\![\Delta_g,a]\!]^{SJ}b+(-1)^{ag}a[\![\Delta_g,b]\!]^{SJ}-[\![\Delta_g,1]\!]^{SJ}(ab),$$
instead of having a graded Leibniz rule, one has 
$$[\![\Delta_g,\Box_{h_1}\wedge\Box_{h_2}]\!]^{SJ}=[\![\Delta_g,\Box_{h_1}]\!]^{SJ}\wedge \Box_{h_2}+(-1)^{gh_1+(i-1)j}\Box_{h_1}\wedge [\![\Delta_g,\Box_{h_2}]\!]^{SJ}-[\![\Delta_g,1]\!]^{SJ}\wedge \Box_{h_1}\wedge \Box_{h_2},$$
whih is called the ($\mathcal{G}$-graded) \emph{Jacobi-Leibniz rule}.

Consider now $\mathcal{G}$ being either $\mathbb{Z}$ or $\mathbb{N}$ or an appropriate integer power, ie. $\mathbb{Z}^n.$

\begin{definition}
A degree $g$ bi-differential first order operator $\Box_g\in\mathrm{Diff}_{1}^{(2)}(\mathscr{A},\mathscr{A})_g$ is said to be a \emph{(graded) canonical structure of Jacobi type}, in $\mathscr{A}$ if $g\cdot g$ is even and if 
$[\![\Box_g,\Box_g]\!]^{SJ}=0.$
\end{definition}

The above definition coincides upon restriction to the sub-functor $\text{Dif}_{1}^{(2)}\cong D_2,$ with the notion of a canonical structure of Poisson type (ie. Poisson structure). Actually, a Poisson structure is simply a graded-canonical structure of Jacobi type such that $\tilde{\Box}_g(1,-)=0.$ 
\section{Diolic commutative algebra}
\label{sec:dioles}
We now introduce the main objects of study in this paper, first in a general manner and then specifying to our case of interest. Let $(C,\otimes)$ be a category satisfying the assumptions of section \ref{ssec:conventions}.
\begin{definition}
\label{DioleDef}
 A \emph{diolic algebra internal to} $\mathrm{C}$, is an object $\mathscr{A}$ of $\mathrm{cAlg}\big(\mathrm{C}^{\mathbb{Z}}\big)$ such that $\mathscr{A}_i:=0,$ for all $i\neq 0,1$ and $\mathscr{A}_1\cdot \mathscr{A}_1:=0.$
\end{definition}
For the purposes of modelling the differential geometry of vector bundles in an algebraic capacity we present two
variations on the notion of an algebra of dioles that deserve our attention.

\begin{definition} A \emph{diolic} $\mathbb{K}$-\emph{algebra}, is a diolic algebra internal to $\mathrm{Vect}(\mathbb{K}).$
\end{definition}
Explicitly, a diolic $\mathbb{K}$-algebra is a $\mathbb{Z}$-graded algebra
$\mathscr{A}:=
\mathscr{A}_0\oplus \mathscr{A}_1,$
with homogeneous components prescribed as 
$\mathscr{A}_0:=A,\mathscr{A}_1:=P,$
and $\mathscr{A}_i:=\emptyset, \forall i\neq 0,1,$
for $A\in\text{Alg}(\mathbb{K}),P\in\text{Mod}(A),$ with the only nontrivial multiplications given by $A\cdot A\subseteq A,$ as well as the $\mathscr{A}_0\cdot \mathscr{A}_1=A\cdot P\subseteq P,$ the $A$-module multiplication in $P$. Moreover, we have the important property that $P\cdot P\subseteq \mathscr{A}_2=\emptyset,$ so in this sense, diolic algebras are square zero. 
\begin{rmk}
With the exception of sub-section (\ref{sssec:PVA}), we only study diolic algebras internal to $\mathbb{K}$-vector spaces, and will thus call them simply \emph{diolic algebras}, written as $\mathscr{A}=A\oplus P.$ 
\end{rmk}
One can define morphisms between diolic algebras as simply morphisms of graded commutative algebras, however for the purposes of modelling differential geometry, this class is too general. In particular, not all morphisms of diolic algebras yield a meaningful notion when we pass to the differential geometric setting, which is to say, are not realized as vector bundle maps. Moreover, we will focus our attention on diolic algebras which have a fixed degree zero component, constituting a full-subcategory of all diolic algebras.

To this end, consider $\text{Diole}_{\mathbb{K}}(A)$ which is defined as a subcategory of $\mathrm{gcAlg}^{\mathbb{Z}}$ as follows.
An object $\mathscr{A}\in\text{Diole}_{\mathbb{K}}(A)$ is a diole algebra with fixed degree zero component $A$, and where morphisms $f:A\oplus P\rightarrow A\oplus Q,$ are pairs $f:=(id_A,f_1)$ where $id_A$ is the identity algebra morphism and where $f_1:P\rightarrow Q$ is a morphism of $A$-modules.
This category is called the category of \textit{dioles over} $A.$ 
The assignment of such a category to for any commutative $\mathbb{K}$-algebra is a functorial one, since for any  morphism of commutative algebras $\phi:A\rightarrow B$ (to be thought of as arising from the pull-back $\varphi:=f^*:C^{\infty}(M)\rightarrow C^{\infty}(N)$ of a  smooth map of manifolds) we have a functor 
$\mathcal{D}i(\phi):\text{Diole}_{\mathbb{K}}(A)\rightarrow \text{Diole}_{\mathbb{K}}(B),$
which is induced by base change. Specifically, $\phi$ allows us to view $B$ as an $A$-module and consequently any $A$-module $P$ gives rise to $B\otimes_A P,$ which can be endowed with a $B$-module structure via the action $b_1\cdot (b\otimes p):=\mu_B(b_1.b)\otimes p.$
Consequently for any diole over $A$, say we have an induced diole over $B$. 
\begin{rmk}
Defined in this way, the graded differential calculus in the category $\text{Diole}_{\mathbb{K}}(A)$ recovers, among other things, the ungraded differential calculus in the category $\text{Mod}(A).$
\end{rmk}

Given an algebra of dioles over a fixed algebra $A$, we may consider what are modules over it (in the sense of subsection \ref{ssec:conventions}).
\begin{definition}
A \emph{diolic module}, $\mathscr{P}$ is a graded (left) $\mathscr{A}$-module.
\end{definition}
Consequently, this means that $\mathscr{P}$ is a $\mathbb{Z}$-graded $\mathbb{K}$-module with an $\mathscr{A}$-action, which reads 
$\mathscr{A}_0\cdot \mathscr{P}_g\subseteq \mathscr{P}_g,$ when restricted to the degree zero component of $\mathscr{A}$ and
$P\cdot \mathscr{P}_g\subseteq \mathscr{P}_{g+1},$
for all $g\in \mathbb{Z}.$ In particular, each component is an $A$-module.
We obtain an obvious category of $\mathscr{A}$-modules, to be denoted $\text{Mod}^{\mathbb{Z}}(\mathscr{A})$. 
By turning our attention to the condition $P\cdot \mathscr{P}_g\subseteq \mathscr{P}_{g+1},$ we may define another category of diolic modules which will be particularly interesting.

\begin{definition}
A \emph{truncated diolic module} is a diolic module $\mathscr{Q}=\mathscr{Q}_0\oplus\mathscr{Q}_1$ with $\mathscr{Q}_i:=\emptyset,\forall i\neq 0,1$ where $\mathscr{Q}_0,\mathscr{Q}_1\in\text{Mod}(A),$ endowed with an $A$-module homomorphism (degree zero), 
$\varphi:\mathscr{A}_1\otimes_A\mathscr{Q}_0\rightarrow \mathscr{Q}_1,
$
called the \emph{diolic structure homorphism} or simply the \emph{structure map.}
\end{definition}
We write $(\mathscr{Q},\varphi)$ for a truncated diolic module, or simply $\mathscr{Q}$ when the structure map is understood.
\begin{exm}
The pair $\big(\mathscr{A},id_P\big)$ is a truncated diolic module. Explicitly, $\mathscr{A}$ is a truncated diolic module over itself, with the trivial diolic structure map,
$id_P:P\cong P\otimes_A A\rightarrow P.$
\end{exm}

\begin{exm}
\label{wedgediole}
Consider $\Lambda^1(A)\oplus \Lambda^1(P)$ with the `wedge' morphism 
$\wedge:\Lambda^1(A)\otimes_A P\rightarrow \Lambda^1(P), \omega\otimes p\longmapsto \omega\wedge p.$
This is a truncated diolic module.
\end{exm}

\begin{exm}
Let $\phi:A\rightarrow B$ be a morphism of commutative $\mathbb{K}$-algebras. Let $\overline{\phi}:P\rightarrow Q$ be an $A$-module morphism between an $A$-module $P$ and a $B$-module $Q$ considered as an $A$-module via $\phi,$ denoted $Q^{<\phi}.$ Consider the diolic algebra over $A$ determined by $P$, $\mathscr{A}=A\oplus P.$ Then there is a truncated diolic module associated to the morphism $\overline{\phi}$ given by
$\mathscr{P}_{\overline{\phi}}:=B\oplus Q^{<\phi},$
with diolic structure map
$\beta_{\overline{\phi}}:B\otimes_A P\rightarrow Q^{<\phi},$ given by the universal property of scalar extensions. 
\end{exm}
The above example provides us with a conceptual environment for studying differential calculus along maps, so for instance, along sub-manifolds.

To complete the definition of the category of truncated diolic modules, we define what are morphisms. The appropriate notion is inherited from the usual notion of a morphism of graded $\mathscr{A}$-modules.
\begin{definition}
Let $(\mathscr{Q},\beta),(\mathscr{R},\gamma)$ be two truncated diolic modules over $\mathscr{A}.$ A \emph{morphism of truncated diolic modules}, $\Phi:(\mathscr{Q},\beta)\rightarrow(\mathscr{R},\gamma)$ is a pair
$\Phi=(\varphi_0,\varphi_1)$ where $\varphi_0:Q_0\rightarrow R_0$ and $\varphi_1:Q_1\rightarrow R_1$ are $A$-module homomorphisms, which satisfy the compatibility relation
$\varphi_1\circ \beta=\gamma\circ(id_P\otimes \varphi_0).$
\end{definition}
We write the compatibility relation for $p\otimes q_0\in P\otimes_A Q_0$ as
$\phi_1\big(\beta(p\otimes q_0)\big)=\gamma\big(p\otimes \varphi_0(q_0)\big)$ in $R_1.$
Denote the category of truncated diolic modules as $\tau\text{Mod}(\mathscr{A}),$ 
and call such an object $(\mathscr{Q},\varphi)$ \emph{projective} if each component $Q_i$ is a projective $A$-module.
Furthermore, we say it is \emph{non-degenerate} if the associated adjoint homomorphism to the structure map $\varphi^{\sharp}:P\rightarrow \text{Hom}_A( Q_0,Q_1),$ is an isomorphism of $A$-modules.

Diolic-like algebraic structures can be considered in a wider context than provided above and provide a rather self-contained environment for studying certain mathematical structures. Two prominent examples are Lie algebra objects in the category of diolic $\mathbb{K}$-algebras and
and Poisson algebra objects in the category of diolic $\mathbb{K}[\partial]$-algebras.

\subsubsection{Diolic Lie algebras}\label{sssec:DLAlg}
One can pass to a more general notion of being \emph{diolic} as a mathematical object - that is, graded and suitably truncated, encoding a square-zero property. Using this idea, we may ask what a Lie algebra object in the category of diolic algebras looks like.
\begin{definition}
A \emph{truncated diolic Lie algebra} is a graded $\mathbb{K}$-vector space $\mathfrak{g}=\mathfrak{g}_0\oplus \mathfrak{g}_1$ with a Lie bracket $[-,-]$ (a skew-symmetric operation satisfying the Jacobi identity) whose homogeneous components are defined by the operations $[\mathfrak{g}_0,\mathfrak{g}_0]\subseteq \mathfrak{g}_0,$ and $[\mathfrak{g}_0,\mathfrak{g}_1]\subseteq \mathfrak{g}_1,$ such that $\mathfrak{g}_1\cdot \mathfrak{g}_1=0.$
\end{definition}
Studying this object provides a concise formalism for studying Lie algebras for the following reason.
Consider the Jacobi identity for triples $a,b\in\mathfrak{g}_0$ and $p\in\mathfrak{g}_1.$
It looks as
$$\big[[a,b]_0,p\big]_1+\big[[b,p]_1,a\big]_1+\big[[p,a]_1,b\big]_1=0,$$
where we write $[-,-]_i$ for the situation when the sum of the degrees of the entries is $i.$ 
By rearranging this expression, we arrive at a more convenient format as
$\big[[a,b]_0,p\big]_1=\big[a,[b,p]_1\big]_1-\big[b,[a,p]_1\big]_1.$
Define for any $a\in\mathfrak{g}_0$ the morphisms
$\text{ad}_a:\mathfrak{g}_1\rightarrow\mathfrak{g}_1,\text{ad}_a(p):=[a,p]_1,$
which is an endomorphism of $\mathfrak{g}_1.$
The above relation becomes
$\text{ad}_{[a,b]_0}(p)=\big[\text{ad}_a,\text{ad}_b\big](p),$

Consequently any diolic Lie algebra contains information of a representation of $\mathfrak{g}_0$ in $\mathfrak{g}_1.$ Conversely, any representation of a Lie algebra determines a diolic Lie algebra. 
\begin{lem}
Representations of $\mathfrak{g}_0$ in $\mathfrak{g}_1$ and diolic Lie algebra structures over $\mathfrak{g}_0$ are in one-to-one correspondence.
\end{lem}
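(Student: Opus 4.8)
The plan is to establish a bijection between the set of diolic Lie algebra structures on a fixed graded space $\mathfrak{g}=\mathfrak{g}_0\oplus\mathfrak{g}_1$ (with $\mathfrak{g}_0$ a fixed Lie algebra) and the set of Lie algebra representations of $\mathfrak{g}_0$ on the vector space $\mathfrak{g}_1$. The excerpt has already done most of the forward direction, so I would organize the proof as two mutually inverse constructions and then verify that they are genuinely inverse to each other.

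First I would treat the forward map. Given a diolic Lie algebra structure $[-,-]$, the bracket $[-,-]_0$ on $\mathfrak{g}_0$ is a Lie bracket (this is the restriction of the Jacobi identity to triples in $\mathfrak{g}_0$, which is self-contained since $[\mathfrak{g}_0,\mathfrak{g}_0]\subseteq\mathfrak{g}_0$). For each $a\in\mathfrak{g}_0$ define $\operatorname{ad}_a\in\operatorname{End}(\mathfrak{g}_1)$ by $\operatorname{ad}_a(p):=[a,p]_1$; linearity in $a$ is clear, and the computation already carried out in the excerpt shows $\operatorname{ad}_{[a,b]_0}=[\operatorname{ad}_a,\operatorname{ad}_b]$, which is precisely the statement that $a\mapsto\operatorname{ad}_a$ is a Lie algebra homomorphism $\mathfrak{g}_0\to\mathfrak{gl}(\mathfrak{g}_1)$, i.e.\ a representation $\rho$. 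I would remark that the constraint $[\mathfrak{g}_1,\mathfrak{g}_1]=0$ is what makes the degree count work out so that no further brackets intrude.

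Next I would build the inverse map. Given a Lie algebra $\mathfrak{g}_0$ and a representation $\rho:\mathfrak{g}_0\to\mathfrak{gl}(\mathfrak{g}_1)$, define a bracket on $\mathfrak{g}=\mathfrak{g}_0\oplus\mathfrak{g}_1$ by declaring $[a,b]:=[a,b]_0$ for $a,b\in\mathfrak{g}_0$, $[a,p]:=\rho(a)(p)=-[p,a]$ for $a\in\mathfrak{g}_0,p\in\mathfrak{g}_1$, and $[p,q]:=0$ for $p,q\in\mathfrak{g}_1$, extended bilinearly. This is manifestly skew-symmetric and respects the prescribed grading of a truncated diolic Lie algebra. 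The one substantive verification is the Jacobi identity: there are four cases depending on how many of the three arguments lie in $\mathfrak{g}_1$. The all-$\mathfrak{g}_0$ case is the Jacobi identity in $\mathfrak{g}_0$; the one-$\mathfrak{g}_1$ case is exactly the homomorphism property of $\rho$ rewritten as $\rho([a,b]_0)=\rho(a)\rho(b)-\rho(b)\rho(a)$; the two-$\mathfrak{g}_1$ and three-$\mathfrak{g}_1$ cases collapse to $0$ because any bracket of two degree-$1$ elements vanishes and $\mathfrak{g}_2=0$.

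Finally I would check that the two constructions are mutually inverse, which is essentially tautological: starting from a structure, extracting $\rho$, and rebuilding the bracket returns the original operations entry by entry, and starting from $\rho$, building the bracket, and re-extracting the adjoint action returns $\rho$. I do not expect a serious obstacle here; the main point requiring genuine care is the case analysis in the Jacobi identity verification for the inverse construction, and in particular confirming that the degree-$1$ grading constraints ($\mathfrak{g}_1\cdot\mathfrak{g}_1=0$ together with $\mathfrak{g}_i=0$ for $i\neq 0,1$) force precisely the cases that vanish, so that no compatibility condition beyond ``$\rho$ is a representation'' is needed. This is where I would be most careful to make sure the bookkeeping matches the definition of a truncated diolic Lie algebra exactly.
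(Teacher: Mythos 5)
Your proposal is correct and follows essentially the same route as the paper: extracting the representation $a\mapsto\mathrm{ad}_a$ from the mixed Jacobi identity for the forward direction, and rebuilding the diolic bracket from $\rho$ for the converse. The only difference is that you spell out the case-by-case Jacobi verification and the mutual-inverse check for the converse, which the paper merely asserts; this added detail is sound and closes no actual gap in the mathematics, just in the exposition.
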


As is possible with any finite dimensional Lie algebra with a representation, we may study the corresponging Lie algebra cohomology.
In our graded framework, in the category of diolic Lie algebras, the degree $1$ Chevalley-Eilenberg complex for $\mathfrak{g}$ interpreted as a Lie algebra representation of $\mathfrak{g}_0$ on the finite dimensional vector space $\mathfrak{g}_1$ coincides with the ordinary Lie algebra cohomology. Namely for any representation
$\rho:\mathfrak{g}_0\rightarrow \text{End}(\mathfrak{g}_1)$ of a Lie algebra $\mathfrak{g}_0$ on a vector space $\mathfrak{g}_1$ we have the standard Lie algebra
cochain complex $C^{\bullet}(\mathfrak{g}_0,\mathfrak{g}_1)$ of $\mathfrak{g}_0$ with coefficients in $\mathfrak{g}_1$, where $C^p(\mathfrak{g}_0,\mathfrak{g}_1):=\text{Hom}\big(\bigwedge^p\mathfrak{g}_0,\mathfrak{g}_1)$ 
 and for $\tau$ such a $p$-cochain we have a differential $d$ defined in terms of the representation $\rho$ as
 \begin{eqnarray*}
 d^p\tau (a_0,...,a_{p+1})&=&\sum_{i=1}^{p+1}(-1)^{i-1}\rho(a_i)\tau\big(a_0,..,\hat{a}_i,..,a_{p+1}\big)
 \\
 &+&\sum_{i<j}(-1)^{i+j}\tau\big([a_i,a_j],a_1,..,\hat{a}_i,..,\hat{a}_j,..,a_{p+1}\big).
 \end{eqnarray*}
This is precisely the degree $1$ Chevalley-Eilenberg cochains. Later we will see that this degree $1$ Lie algeba cohomology which represents the Lie algebra cohomology of the diolic derivations, is dual to the \textit{Der-complex} of \cite{Rub01}.
In this way, we see that the graded  differential calculus over diolic algebras provides a new setting for studying commonplace mathematical items such as Lie algebra cohomologies.

\subsubsection{Diolic Poisson vertex algebras}\label{sssec:PVA}

Understanding a Poisson $\mathbb{K}$-algebra as a commutative unital algebra which is also a Lie $\mathbb{K}$-algebra, whose bracket satisfies the Leibniz rule as a derivation of $A$, one may consider also Poisson vertex algebras. These algebras constitute important objects of study and play a central role in conformal field theory (CFT) and include Lie-conformal algebras and the theory of $\lambda$-brackets, which describe the singular part of the operator product expansion in $2d$-CFT. 
These are Poisson algebra and Lie algebra objects in the category of $\mathbb{K}[\partial]$-modules, with $\partial$ some endomorphism.
Let $\mathcal{P}$ be a graded Poisson-vertex algebra with a commutative unital algebra structure. This means we have $\partial(a,b)=\partial a\cdot b+a\cdot \partial b$ and the structure of a Lie conformal algebra $\{-\lambda-\}$ which acts by derivations.
To study diolic PVA's, this amounts exactly to studying a PVA in degrees $0,1$ whose degree $1$ component is square zero.
The following is straightforwardly established by comparing definitions.

\begin{lem}
Let $\mathcal{P}$ be a diolic Poisson vertex algebra. This datum is equivalent to a Courant-Dorfman algebra structure on its components.
\end{lem}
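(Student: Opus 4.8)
The plan is to establish the claimed equivalence by carefully unpacking the definition of a diolic Poisson vertex algebra (PVA) into its homogeneous components and matching the resulting data with the axioms of a Courant-Dorfman algebra. First I would fix notation: write $\mathcal{P} = \mathcal{P}_0 \oplus \mathcal{P}_1$ with $\mathcal{P}_0 = R$ a commutative associative $\mathbb{K}[\partial]$-algebra, $\mathcal{P}_1 = E$ an $R$-module in the category of $\mathbb{K}[\partial]$-modules, and the square-zero condition $\mathcal{P}_1 \cdot \mathcal{P}_1 = 0$ enforced by the diolic hypothesis. The PVA structure supplies a $\lambda$-bracket $\{-\,_\lambda\,-\}$ which is a degree-preserving operation satisfying sesquilinearity in $\partial$, the appropriate skew-symmetry, the Leibniz rule with respect to the commutative product, and the graded Jacobi identity. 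My first task is to record what each of these axioms says once both arguments are assigned homogeneous degrees in $\{0,1\}$.

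Next I would decompose the $\lambda$-bracket by degree. The bracket $\{r\,_\lambda\,s\}$ on two degree-zero elements lands in $\mathcal{P}_0 = R$, giving the commutative associative algebra $R$ its own (ordinary) Poisson-vertex/Lie-conformal structure; in the Courant-Dorfman dictionary this will correspond to the ring $R$ together with a derivation-valued pairing or, more precisely, the symmetric bilinear form data once one accounts for sesquilinearity. The mixed bracket $\{r\,_\lambda\,e\}$ for $r \in R$, $e \in E$ lands in $\mathcal{P}_1 = E$ and encodes an $R$-action by derivations, i.e.\ the anchor together with the module action; the bracket $\{e\,_\lambda\,f\}$ for $e,f \in E$ must land in $\mathcal{P}_0 \oplus \mathcal{P}_1$, but the square-zero condition forces the $\mathcal{P}_1$-component of the product to vanish while the bracket component is unconstrained, so $\{e\,_\lambda\,f\}$ produces exactly the Dorfman-type bracket on $E$ (its degree-$1$ part) together with the symmetric pairing $\langle e,f\rangle \in R$ (extracted from the $\lambda$-dependence, typically as the $\lambda$-independent or $\partial$-symmetrized piece). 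I would then check, one axiom at a time, that the PVA skew-symmetry on $\{e\,_\lambda\,f\}$ reproduces the defining non-skewness relation of the Dorfman bracket expressed through the pairing and $\partial$, that the PVA Leibniz rule reproduces the Leibniz/derivation property of the Courant-Dorfman bracket over $R$, and that the graded Jacobi identity, evaluated on all degree-combinations $(0,0,0)$, $(0,0,1)$, $(0,1,1)$, $(1,1,1)$, yields respectively the compatibility of the $R$-structure, the representation property, the invariance of the pairing, and the Courant-Dorfman Jacobiator identity. The converse direction packages a given Courant-Dorfman algebra $(R, E, \partial, \langle-,-\rangle, [-,-])$ into a two-component $\mathbb{K}[\partial]$-module $\mathcal{P}_0 \oplus \mathcal{P}_1$ and reverses these assignments to define a $\lambda$-bracket, after which the same axiom-by-axiom verification shows it is a genuine diolic PVA.

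The main obstacle, and the step deserving the most care, is the bookkeeping of the $\lambda$-dependence in the mixed and degree-$(1,1)$ brackets. In a Courant-Dorfman algebra the data split into a symmetric pairing $\langle-,-\rangle \colon E \times E \to R$ and a (generally non-skew) bracket $[-,-] \colon E \times E \to E$, whereas the PVA $\lambda$-bracket bundles both into a single polynomial in $\lambda$; extracting the pairing as the coefficient structure in $\lambda$ (and verifying it is symmetric rather than skew, consistent with the PVA skew-symmetry once the sign rule $\{a\,_\lambda\,b\} = -(-1)^{|a||b|}\{b\,_{-\lambda-\partial}\,a\}$ is applied in degree $1+1$) requires the sign conventions and the action of $\partial$ to be tracked precisely. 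I expect that the graded Jacobi identity in the all-degree-one case $(1,1,1)$ is where the genuine content lies, since this is exactly the identity that forces the Courant-Dorfman bracket to satisfy its Jacobiator relation modulo the symmetrization of the pairing; everything else is a matter of matching definitions. Once the $\lambda$-grading and sign conventions are pinned down, the equivalence follows by comparing the two sets of axioms term by term in each degree.
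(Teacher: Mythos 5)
Your proposal takes essentially the same approach as the paper, which in fact offers no detailed proof at all: it simply asserts that the lemma ``is straightforwardly established by comparing definitions.'' Your axiom-by-axiom decomposition of the $\lambda$-bracket into homogeneous degrees --- extracting the symmetric pairing from the $\lambda$-dependence, the Dorfman bracket from the degree-$1$ component, recovering the non-skewness relation $[e,f]+[f,e]=\partial\langle e,f\rangle$ from PVA skew-symmetry, and the Courant-Dorfman Jacobiator from the $(1,1,1)$ case of the Jacobi identity --- is precisely that comparison carried out explicitly, and it is correct.
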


\subsubsection{`Artificial' diolic algebras}
Suppose that we wish to generate new diole algebras from existing ones in our category $\text{Diole}_{\mathbb{K}}(A).$ 
Defined as a sub-category of $\text{gcAlg}(\mathbb{K})$, the category of dioles over $A$ inherits various natural operations, for instance a monoidal structure. One problem that arises is that such a product \emph{does not} give us the diole associated to a tensor product of modules. That is, if $\otimes^D$ is the tensor product in the category of diolic algebras over $A,$ we have that $(A\oplus P)\otimes^D (A\oplus Q)\neq A\oplus (P\otimes Q).$
Nevertheless, we may take the tensor product of $A$-modules $P\otimes Q$ and simply `build'  a diolic algebra $\mathscr{A}_{\otimes}:=A\oplus (P\otimes Q).$ In this sense, this asssociated diole to the tensor product functor on the category of $A$-modules is said to be \emph{artificial}.
This is a general situation. Let $\mathcal{F}:\text{Mod}(A)\times..\times \text{Mod}(A)\rightarrow \text{Mod}(A)$ be some functor, which may take in finitely many entries, that is representable in the sub-category of vector bundles. 
Some common examples which arise are as follows.

\begin{exm}
\label{examples}
Let $\pi:E\rightarrow M$ and $\pi':E'\rightarrow M$ be two vector bundles with modules of sections $P,Q$, respectively. Then we may form the direct sum bundle, $\pi\oplus \pi':E\oplus E'\rightarrow M$ or the (two-fold) tensor product of two bundles $E\otimes E'.$ More generally take one may take $\bigotimes_{i=1}^nE_i,$ the Hom-bundle with $\text{Hom}(E,E')\xrightarrow{\text{Hom}(\pi,\pi')}M,$ the dual bundle. $E^{\vee}\xrightarrow{\pi^{\vee}}M,$
the $k$'th alernating bundle $\bigwedge^{(k)}E\xrightarrow{\wedge^k\pi}M,$ or the $k$'th symmetric bundle. $\bigodot^kE\xrightarrow{\odot^k\pi}M.$
\end{exm}

With these examples in mind, we pose the definition.

\begin{definition}
Let $\mathcal{F}\in\mathrm{Fun}\big(\prod^n\mathrm{Mod}(A),\mathrm{Mod}(A)\big)$ be an arbitrary functor on the category of vector bundles (or more generally, arbitrary $A$-modules).
The \emph{artificial diole associated to} $\mathcal{F}$, or simply the \emph{artificial} $\mathcal{F}$-\emph{diole}, is the diolic algebra over $A,$ defined as
$\mathscr{A}_{\mathcal{F}}:=A\oplus \mathcal{F}(P_1,P_2,..,P_n),$
for some $A$-modules $P_1,.,P_n.$
\end{definition}
This definition makes possible the ability to generate new algebras of dioles that are not readily obtainable from natural operations in the category $\mathrm{Diole}(A).$ So while it does not solve our tensor-product problem, it provides us with a means by which we may study the tensor product of modules in the setting of diole algebras.
Building similar artificial diolic algebras from suitable functors on the category of $A$-modules is a rather practical way of generating examples of various types of operators arising in differential geometry. Indeed, see example \ref{oplusdiole} below.

\section{Differential calculus in diolic algebras}
\label{sec:diolecalc}
We now undertake the task of computing the various functors of differential calculus given in section \ref{sec:Calc} in the category of base fixed diolic algebras.

\subsection{Diolic derivations}
Let us fix the notation. We denote an operator $\Delta$ of graded degree $g$, as $\Delta_g$, and its restriction to $\mathscr{A}_0=A,\mathscr{A}_1=P$ of the diolic algebra $\mathscr{A}$ will be denoted $\Delta_g^A:=\Delta_g|_{A},\Delta_g^P:=\Delta_g|_{P},$ and we might even omit the reference to the underlying grading $g$ when it is unambiguous. 
\begin{thm}
Let $D_{\mathbb{Z}}(\mathscr{A})$ be the module of ($\mathbb{Z}$)-graded derivations of a diolic algebra $\mathscr{A}=A\oplus P$. Assume that $\text{rank}(P)\neq 1.$ Then elements $X_0$ of
$
D(\mathscr{A})_0$ consist of pairs $X_0=(X_0^A,X_0^P)$ where $X_0^A\in D(A)$ is a derivation $X_0^P:P\rightarrow P$ is a $\mathbb{R}$-linear operator such that $X_0(ap)=X_0^A(a)p+a X_0^P(p),$ holds. Degree $1$ diolic derivations are $P$-valued derivations of $A,$ that is, we have an $A$-module isomorphism $D_1(\mathscr{A})\cong D(A,P).$ 
All other graded derivations are trivial. 
\end{thm}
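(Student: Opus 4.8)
The plan is to analyze homogeneous graded derivations degree by degree, exploiting that $\mathscr{A}$ is concentrated in degrees $0$ and $1$ together with the square-zero relation $P\cdot P=0$. First I would observe that a derivation $X_g$ of degree $g$ restricts to $\mathbb{K}$-linear maps $X_g\colon\mathscr{A}_0\to\mathscr{A}_g$ and $X_g\colon\mathscr{A}_1\to\mathscr{A}_{1+g}$. Since $\mathscr{A}_i=0$ for $i\neq 0,1$, the first map can be nonzero only for $g\in\{0,1\}$ and the second only for $g\in\{-1,0\}$; in particular $X_g=0$ whenever $|g|\geq 2$, so the only potentially nontrivial degrees are $g\in\{-1,0,1\}$.

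For $g=0$ I would write $X_0=(X_0^A,X_0^P)$ with $X_0^A\colon A\to A$ and $X_0^P\colon P\to P$. Applying the graded Leibniz rule to products in $A\cdot A$ shows $X_0^A\in D(A)$; applying it to $A\cdot P$ yields precisely the relation $X_0(ap)=X_0^A(a)p+aX_0^P(p)$, identifying $X_0^P$ as a Der-operator over $X_0^A$; and applying it to $P\cdot P=0$ gives $0=X_0^P(p)q+pX_0^P(q)$, which holds automatically since both summands already lie in $P\cdot P=0$, so no further constraint appears. For $g=1$ the map $X_1$ sends $A\to P$ and annihilates $P$; the Leibniz rule on $A\cdot A$ shows $X_1\in D(A,P)$, while its evaluation on $A\cdot P$ and on $P\cdot P$ reduces to identities valued in $P\cdot P=0$ and so imposes nothing. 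This produces the asserted isomorphism $D_1(\mathscr{A})\cong D(A,P)$.

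The delicate case, and the only place the rank hypothesis enters, is $g=-1$. Here $X_{-1}$ annihilates $A$ and is determined by its restriction $\xi:=X_{-1}|_P\colon P\to A$. Because $X_{-1}(a)=0$, the Leibniz rule on $A\cdot P$ collapses to $\xi(ap)=a\,\xi(p)$, so $\xi$ is in fact $A$-linear, i.e. $\xi\in\mathrm{Hom}_A(P,A)$. The crucial constraint comes from $P\cdot P=0$: expanding $0=X_{-1}(pq)=\xi(p)\,q-p\,\xi(q)$ and using graded commutativity gives $\xi(p)\,q=\xi(q)\,p$ for all $p,q\in P$. The final step is to show this forces $\xi=0$ when $\mathrm{rank}(P)\neq 1$. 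I would argue pointwise, using that $P$ is a geometric module so that $\xi$ vanishes iff it vanishes in every fibre: at $x\in M$ the relation reads $\xi_x(v)\,w=\xi_x(w)\,v$ in $E_x$, and if $\mathrm{rank}(P)\geq 2$ one may pick linearly independent $v,w$, forcing $\xi_x(v)=\xi_x(w)=0$ and hence $\xi_x=0$ for every $x$, so $\xi=0$ and $D(\mathscr{A})_{-1}=0$; the rank-one case is exactly where the identity $\xi(p)q=\xi(q)p$ becomes vacuous, which is why it is excluded. Deriving this symmetry constraint with the correct sign and converting it into the rank dichotomy is the main obstacle; the remaining cases are routine bookkeeping of the graded Leibniz rule.
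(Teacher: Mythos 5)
Your proof is correct, and its skeleton --- a degree-by-degree application of the graded Leibniz rule, with all degrees $|g|\geq 2$ killed because $\mathscr{A}$ is concentrated in degrees $0,1$ --- matches what the paper intends (the paper actually states this theorem without an inline proof, and the one delicate point, degree $-1$, is handled only later, in lemma \ref{deg-1derivations}). Where you genuinely diverge is in how the rank hypothesis is used. Both you and the paper derive from $P\cdot P=0$ the same symmetry constraint $\xi(p)\,q=\xi(q)\,p$ on the $A$-linear map $\xi:=X_{-1}|_P\in\mathrm{Hom}_A(P,A)$, with the correct sign $(-1)^{\deg X_{-1}\cdot\deg p}=-1$. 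The paper then chooses a basis $u_\alpha$ of $P$, expresses $\xi$ through its coefficients, and contracts indices to obtain the identity $(n-1)\,\xi=0$, where $n=\mathrm{rank}(P)$; you instead invoke geometricity of $P$ and evaluate the constraint in each fibre $E_x$, where two linearly independent vectors $v,w$ force $\xi_x(v)=\xi_x(w)=0$, hence $\xi_x=0$ whenever the fibre dimension is at least $2$. Your route is coordinate-free and makes transparent \emph{why} rank $1$ is the exceptional case (the constraint is vacuous exactly when any two fibre vectors are proportional), but it needs $P$ to be a geometric, finitely generated projective module so that $\xi$ descends to fibres and vanishing can be checked pointwise; the paper's contraction argument requires a (local) basis, i.e.\ $P$ free or projective of finite rank, but stays entirely algebraic and yields the quantitative statement $(n-1)\,\xi=0$ in one line. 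In the paper's standing setting, where $P$ is the module of sections of a vector bundle over a smooth manifold, both sets of hypotheses hold, so the two arguments are interchangeable; yours would additionally survive in any geometric context where local freeness is awkward to invoke, while the paper's is the more economical computation.
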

Those diolic derivations of degree zero are familiar objects - they are \emph{derivations in vector bundles}. Their geometric and local meaning is given below.

\paragraph{The Atiyah sequence}
In \cite{Ati} a robust piece of mathematical machinery was introduced to understand the existence of complex analytic connections in fiber bundles, commonly referred to as \emph{Atiyah sequences}. These sequences appear frequently in the study of the structure of the infinitesimal objects that correspond to Lie groupoids. In particular, they provide an example of a transitive Lie algebroid. 
Our description of diolic derivations indicates that locally there is the following sequence for projective diolic algebras
\begin{equation}
\label{eqn:DiAt1}
0\rightarrow \text{End}(P)\hookrightarrow \text{D}_0(\mathscr{A})\xrightarrow{\sigma} D(A)\rightarrow 0.
\end{equation}
This is a `diolic' reproduction of the usual Atiyah sequence for $P.$

Indeed, by letting $\text{Der}_m(\pi)$ denote the value of the  module $\text{Der}\big(\Gamma(\pi)\big)$ in a point $m\in M$ of the manifold, which is to say the quotient $\text{Der}\big(\Gamma(\pi)\big)/\mu_m\text{Der}\big(\Gamma(\pi)\big),$ where $\mu_m$ is the maximal ideal of $A$, determined by $m$ (consult \cite{Nes} for details), we see elements of $\text{Der}_m(\pi)$ are  $\mathbb{R}$-linear mappings $\overline{X}_m:\Gamma(\pi)\rightarrow E_m,$
 satisfying $\overline{X}_m(f s)=f(m)\overline{X}_m(s)+X_m(f)(s)(m), f\in A, s\in \Gamma(\pi),$ for each tangent vector $X_m\in T_m M.$
We thus find a short exact sequence
\begin{equation}
    \label{Atiyahsequence}
0\rightarrow \text{End}(E_m)\rightarrow \text{Der}_m(\pi)\rightarrow T_mM\rightarrow 0.
\end{equation}
By 'bundling' all fibres together as $\text{Der}(\pi)=\bigcup_{m\in M}\text{Der}_m(\pi),$ we obtain a vector bundle $\text{Der}(\pi)$ over $M$ with fibre $\text{Der}_m(\pi).$
Moreover each $C^{\infty}(M)$-module in \ref{Atiyahsequence} carries a natural Lie algebra structure, with $\text{End}(E_m)$
an ideal in $\text{Der}_m(\pi).$ Therefore, the Atiyah sequence is also a sequence of Lie algebra homomorphisms.
\begin{rmk}
If $P$ coincides with the module of smooth sections of some line bundle $L$ then every first order differential operator $\Gamma(L)\rightarrow \Gamma(L)$ is a derivation of $L$. In particular, consider the trivial line bundle $\mathbb{R}_M:=M\times\mathbb{R}\rightarrow M$. We have $\Gamma(\mathbb{R}_M)=C^{\infty}(M)$ and
first order differential operators $\Gamma(\mathbb{R}_M)\rightarrow \Gamma(\mathbb{R}_M)$ are equivalently derivations of
$\mathbb{R}_M$, given as $X+f:C^{\infty}(M)\rightarrow C^{\infty}(M)$ where $X$ is a vector field
on $M$ and $f\in C^{\infty}(M)$ is interpreted as an operator (multiplication by $f$). There is a natural direct sum decomposition of $C^{\infty}(M)$ -modules $\text{Der}(\mathbb{R}_M)=D(M)\oplus C^{\infty}(M),$ and the the projection $\text{Der}(\mathbb{R}_M)\rightarrow C^{\infty}(M)$ is given by $\Delta\longmapsto \Delta(1).$ 
\end{rmk}
Using our notion of an artifical diole algebra, we may study linear connections in various natural vector bundles by studying the degree zero diolic Atiayh sequence for such diole algebras.
\begin{exm}
\label{oplusdiole}
The artificial $\oplus$-diole, $\mathscr{A}_{\oplus}:=A\oplus \big(P\oplus P'\big)$ yields as $D(\mathscr{A}_{\oplus})_0$ those operators $\Delta\in\text{Der}(\pi)$, $\Delta'\in\text{Der}(\pi')$ such that they have shared scalar type symbol $X.$ Such operators are defined by $(\Delta\oplus \Delta')(p,p'):=\big(\Delta(p),\Delta'(p')\big).$
\end{exm}
\begin{rmk}
Similar analysis can be done for artifical dioles associated to the examples \ref{examples}. The corresponding diolic derivations are as one expects:
$\Delta\otimes\Delta'(p\otimes p'):=\Delta(p)\otimes p'+p\otimes\Delta'(p'),$ as well as 
$\text{Hom}(\Delta,\Delta')(\varphi))(p)=\Delta'\big(\varphi(p)\big)-\varphi(\Delta(p)\big),$ and similarly $\bigwedge^k\Delta(\wedge_I p_I):=\sum_I p_1\wedge..\Delta(s_i)\wedge..\wedge p_k,$ and $\odot^k\Delta\big(\bigodot_I p_I\big):=\sum_{i\in I}p_1\odot..\odot \Delta(p_i)\odot ..\odot p_k.$
\end{rmk}
Let us now investigate the various algebraic structures which exist naturally on $D(\mathscr{A})_{\mathcal{G}}.$
Unsurprisingly, we obtain a completely analogous situation to the non-graded formalism where $D(A)$ is an $A$-module and a Lie algebra with respect to the commutator of vector fields.

 \subsubsection{Module and other algebraic structures}
The fact that the set of vector fields on a manifold is a module over the ring of smooth functions is necessarily valid still for diolic algebras and this is consistent with our formalism.
\begin{lem}
$D(\mathscr{A})$ is a truncated diolic module.
\end{lem}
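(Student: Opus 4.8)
The plan is to read off the two components and the structure map directly from the classification of diolic derivations. By the Theorem on diolic derivations (under the standing hypothesis $\mathrm{rank}(P)\neq 1$), the graded module $D(\mathscr{A})$ is concentrated in degrees $0$ and $1$: its degree-zero part is $D(\mathscr{A})_0 = \mathrm{Der}(P)$ and its degree-one part is $D(\mathscr{A})_1 \cong D(A,P)$, with every other homogeneous piece trivial. Each of these is an $A$-module (derivations in a vector bundle form an $A$-module, and $D(A,P)$ is an $A$-module by postcomposition with multiplication in $P$), so $D(\mathscr{A}) = D(\mathscr{A})_0 \oplus D(\mathscr{A})_1$ already has the shape of a truncated object with $A$-module components. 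What remains is to produce the structure map $\varphi \colon \mathscr{A}_1 \otimes_A D(\mathscr{A})_0 \to D(\mathscr{A})_1$ and to check it is a degree-zero $A$-module homomorphism.

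First I would extract this structure map from the ambient graded $\mathscr{A}$-module structure on $D(\mathscr{A})$ guaranteed by the functoriality of $D(\mathscr{A}, -)_{\mathcal{G}}$. Concretely, for $p \in P = \mathscr{A}_1$ and $X_0 \in \mathrm{Der}(P) = D(\mathscr{A})_0$, the left action yields the degree-one operator $(p \cdot X_0)(a) := p\,X_0(a)$ for $a \in \mathscr{A}$. I would then verify this lands in $D(A,P)$: restricted to $A$ it gives $a \mapsto X_0^A(a)\,p$, which is a $P$-valued derivation of $A$ by a one-line Leibniz computation, while restricted to $P$ it vanishes because $X_0(p') \in P$ and $P \cdot P \subseteq \mathscr{A}_2 = 0$. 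Thus $p \cdot X_0 \in D(A,P) = D(\mathscr{A})_1$, and in fact $p \cdot X_0$ equals the image of $p \otimes \sigma(X_0)$ under the canonical map $P \otimes_A D(A) \to D(A,P)$, where $\sigma \colon \mathrm{Der}(P) \to D(A)$ is the symbol projection of the diolic Atiyah sequence \eqref{eqn:DiAt1}.

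Next I would check $A$-bilinearity of the assignment $(p, X_0) \mapsto p \cdot X_0$. Left $A$-linearity in $p$ is immediate from the $A$-module structure on $P$, while $A$-linearity in $X_0$ and balancing over $A$ follow because $\sigma$ is an $A$-module map and the action is that of the graded $\mathscr{A}$-module. Consequently the assignment factors through the tensor product to give a well-defined degree-zero $A$-module homomorphism $\varphi \colon P \otimes_A \mathrm{Der}(P) \to D(A,P)$, $p \otimes X_0 \mapsto p \cdot X_0$. This is exactly the diolic structure homomorphism, completing the identification of $D(\mathscr{A})$ as a truncated diolic module.

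I expect the only point requiring genuine care, rather than bookkeeping, to be the verification that the graded action $p \cdot X_0$ really restricts to zero on $\mathscr{A}_1$, since this is where the square-zero condition $P \cdot P = 0$ enters and is precisely what forces the product into the degree-one slot $D(A,P)$ instead of producing a higher-degree derivation. Once this is in hand, the derivation property on $A$ and the bilinearity are routine, and the structure map is essentially dictated by the Atiyah projection $\sigma$.
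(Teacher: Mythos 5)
Your proof is correct and follows essentially the same route as the paper: both identify the only nontrivial point as the $P$-action (structure map) on the degree-zero derivations, define $(p\cdot X_0)(a) := X_0^A(a)\,p$, and verify via the Leibniz rule that this lands in $D(A,P)$. Your additional checks (vanishing on $P$ via the square-zero property, $A$-bilinearity, and the identification through the symbol $\sigma$) are routine elaborations of what the paper leaves implicit.
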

\begin{proof}
The only non-trivial situation here is analyzing the $P$-module structure in $\text{Der}(P).$
Let $\Delta_0\in \text{Der}(P)$ be a der-operator, and let $p\in P.$ Recall this means $\Delta_0=(\Delta_0^A,\Delta_0^P),$ with $\sigma(\Delta_0^P)=\Delta_0^A.$ One has a well-defined $P$-module structure as
$$(p\cdot\Delta_0)(a)=\Delta_0^A(a)\cdot p,$$
where $p\cdot\Delta_0\in D_{1+0}(\mathscr{A})=D(P),$
since by definition, $(p\cdot \Delta_0)(ab)=\Delta_0^{A}(ab)\cdot p,$ for $a,b\in A$ and since $\Delta_0^A$ is a derivation, by using the Leibniz rule, we find
$(p\cdot \Delta_0)(ab)=\Delta_0^{A}(ab)\cdot p=\Delta_0^A(a)\cdot p b+a\Delta_0^{A}(b)\cdot p=(p\cdot \Delta_0)(a)b+a(p\cdot \Delta_0)(b).$
\end{proof}

\subsubsection{The case of line bundles}
A consequence of the graded algebraic formalism we are working in is the following.
\begin{lem}
\label{deg-1derivations}
A dioic algebra $\mathscr{A}$ admits a further component $D(\mathscr{A})_{-1}$ of degree $-1$ graded derivations if and only if $P$ is a projective module of rank $1.$ In this case such derivations are simply elements of the $A$-linear dual $P^{\vee}=\mathrm{Hom}_A(P,A).$
\end{lem}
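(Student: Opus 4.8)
The plan is to unwind what a degree $-1$ graded derivation $X_{-1}: \mathscr{A} \to \mathscr{A}$ must be, using the definition of diolic algebra together with the graded Leibniz rule of Definition \ref{gradedderivationdefinition}. Since $\mathscr{A} = A \oplus P$ is concentrated in degrees $0$ and $1$, a degree $-1$ derivation shifts degree down by one, so $X_{-1}(\mathscr{A}_0) = X_{-1}(A) \subseteq \mathscr{A}_{-1} = 0$, forcing $X_{-1}^A = 0$. The only possibly nontrivial piece is $X_{-1}^P := X_{-1}|_P : P = \mathscr{A}_1 \to \mathscr{A}_0 = A$. First I would record that $X_{-1}$ is therefore completely determined by an $\mathbb{R}$-linear (indeed $\mathbb{K}$-linear) map $P \to A$, and the content of the statement is exactly which such maps arise, together with when the component is nonzero.

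Next I would extract the constraint imposed by the Leibniz rule. The only nontrivial products in $\mathscr{A}$ mixing degrees are $a \cdot p$ for $a \in A$, $p \in P$. Applying the graded Leibniz rule to $X_{-1}(ap)$, with $X_{-1}$ of degree $-1$ and $a$ of degree $0$, gives $X_{-1}(ap) = X_{-1}(a)\,p \pm a\,X_{-1}(p) = a\,X_{-1}^P(p)$, since $X_{-1}(a) = 0$. This shows $X_{-1}^P : P \to A$ is precisely an $A$-module homomorphism, i.e.\ an element of $P^{\vee} = \mathrm{Hom}_A(P,A)$; conversely any such homomorphism, extended by zero on $A$, manifestly satisfies the graded Leibniz rule and so defines a degree $-1$ derivation. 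This already identifies $D(\mathscr{A})_{-1} \cong P^{\vee}$ as $A$-modules, independent of any rank hypothesis, so I must be careful: the genuine content must be a second constraint I have not yet used.

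The missing constraint is the square-zero condition $P \cdot P = 0$ together with the requirement that $X_{-1}$ be a derivation on \emph{all} products, and here I expect the rank-one condition to enter. The product $p \cdot p'$ for $p, p' \in P$ lands in $\mathscr{A}_2 = 0$, so $X_{-1}(p \cdot p') = X_{-1}(0) = 0$; but the Leibniz rule forces $X_{-1}(p \cdot p') = X_{-1}^P(p)\,p' \pm p\,X_{-1}^P(p')$, an element of $\mathscr{A}_0 \cdot \mathscr{A}_1 = P$, which must therefore vanish: $X_{-1}^P(p)\,p' = \pm X_{-1}^P(p')\,p$ for all $p, p' \in P$. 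This is the crucial identity. For a projective module it means any two values are proportional in a way compatible with the module structure, and I would argue (using projectivity and the pointwise/fibrewise picture afforded by the $C^\infty$-Swan theorem) that such a nonzero $A$-linear map $P \to A$ can exist satisfying this relation precisely when $P$ has rank $1$: when $\mathrm{rank}(P) \geq 2$ one can choose $p, p'$ fibrewise independent, forcing $X_{-1}^P = 0$, while for a line bundle the relation is automatically satisfied and every element of $P^\vee$ survives.

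The main obstacle I anticipate is the last step: showing cleanly that the bilinear identity $X_{-1}^P(p)\,p' = X_{-1}^P(p')\,p$ is equivalent to the rank-one condition. The forward direction (rank $\geq 2$ kills the derivation) is a fibrewise linear-independence argument and should be routine once localized at a point via the maximal-ideal evaluation used in the Atiyah-sequence discussion; the converse (rank $1$ makes every dual element admissible) requires checking that for a line bundle $P$ the identity holds identically, which follows because locally $p' = f p$ for a scalar $f \in A$ so both sides reduce to $X_{-1}^P(p)\,f\,p$. I would organize the proof so that the degree-shift and Leibniz computations are dispatched first as the easy identification $D(\mathscr{A})_{-1} \hookrightarrow \mathrm{Hom}_A(P,A)$, and then devote the bulk of the argument to the $P \cdot P = 0$ constraint, which is where the rank hypothesis is genuinely used and where the intended equivalence is proved.
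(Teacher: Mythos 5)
Your proposal is correct and follows essentially the same route as the paper: force $X_{-1}|_A=0$ by degree reasons, use the Leibniz rule on products $a\cdot p$ to identify the degree $-1$ component with a submodule of $P^{\vee}=\mathrm{Hom}_A(P,A)$, and then use the Leibniz rule on $p_1p_2=0$ to extract the constraint $X_{-1}^P(p_1)p_2-p_1X_{-1}^P(p_2)=0$, which is what forces rank one. The only difference is cosmetic and lies in the last step: the paper contracts indices in a local basis to obtain $(n-1)\Delta_{-1}=0$, whereas you evaluate fibrewise and invoke linear independence of values at a point --- these are equivalent local arguments, and your rank-one verification (locally $p'=fp$, so the skew-corrected identity holds identically) matches the paper's conclusion.
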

\begin{proof}
Noting that 
$\Delta_{-1}$ acts on homogeneous components with $\Delta_{-1}^P:P\rightarrow A$ only, since by definition of $\mathscr{A}$ we do not have any degree $-1$ component. The graded Leibniz rule on products of the form $a\cdot p$ yield 
$\Delta_{-1}(ap)=a\Delta_{-1}^P(p),a\in A,p\in P.$ So in fact, we have $\Delta_{-1}\in P^{\vee}.$ 
Now, to see that $P$ must have rank one, we compute the quantity $\Delta_{-1}^P(p_1p_2)$ using the graded Leibniz rule and the fact $p_1p_2:=0.$ We get
$0\equiv \Delta_{-1}^P(p_1p_2)=\Delta_{-1}^P(p_1)p_2-p_1\Delta_{-1}^P(p_2),$
which amounts to the relation
$\Delta_{-1}^P(p_1)p_2-p_1\Delta_{-1}^P(p_2)=0.$
Let $u_{\alpha}$ be a basis of $P.$ Write $(\Delta_{-1})_{\alpha}^{\gamma}$ the coefficients determining $\Delta_{-1}.$ Then the above relation amounts to $(\Delta_{-1})_{\alpha}^{\gamma}(u_{\gamma})u_{\beta}-u_{\gamma}\Delta_{\alpha}^{\beta}(u_{\beta})=0,$ which upon contracting indices gives us
$(n-1)(\Delta_{-1})_{\gamma}^{\alpha}=0,$
where $n$ is the rank of $P.$ This shows that unless $n=1,$ (ie. $P$ is a line bundle), there will be no degree $-1$ diolic derivations.
\end{proof}

\begin{rmk}
The above proposition tells us that when studying differential calculus in the category of diolic algebras over $A$, we should pay special consideration to those which coincide geometrically with the situation of having a  line bundle.
\end{rmk}

\subsubsection{Module-valued derivations}
We now want to characterize operators $\Delta:\mathscr{A}\rightarrow \mathscr{R},$ which are graded derivations 
thus generalizing the results obtained above.

\begin{lem}
The module of graded derivations of $\mathscr{A}$, with values in the truncated diolic module $(\mathscr{R},\varphi),$ has degree zero component $D(\mathscr{A},\mathscr{R})_0$ whose elements are pairs $X_0=(X_0^A,X_0^P)$ consisting of $X_0^A\in D(A,R_0)$ and $X_0^P\in D(P,R_1)$ such that $X_0^P(ap)=\varphi(X_0^A(a)\otimes p)+aX_0^P(p)$.
The degree $1$ component coincides with $R_1$-valued derivations of $A,D(\mathscr{A},\mathscr{R})_1\cong D(A,R_1).$
\end{lem}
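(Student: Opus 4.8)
The plan is to decompose a graded derivation $X \in D(\mathscr{A},\mathscr{R})$ into its homogeneous components and then, in each relevant degree, read off what the graded Leibniz rule of Definition \ref{gradedderivationdefinition} forces on the restrictions to $A = \mathscr{A}_0$ and $P = \mathscr{A}_1$. Since both $\mathscr{A}$ and $\mathscr{R}$ are concentrated in degrees $0$ and $1$, a homogeneous derivation of degree $g$ is completely determined by the pair $(X_g^A, X_g^P)$ with $X_g^A : A \to \mathscr{R}_g$ and $X_g^P : P \to \mathscr{R}_{g+1}$, and the only grading data I need are that $\mathscr{R}_i = 0$ for $i \neq 0,1$ together with $P \cdot P = 0$.

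First I would treat the degree zero case. Writing $X_0^A := X_0|_A$ and $X_0^P := X_0|_P$, the target degrees force $X_0^A : A \to R_0$ and $X_0^P : P \to R_1$. Evaluating the graded Leibniz rule on a product $ab$ of two degree-zero elements (so all commutation factors are trivial) gives $X_0(ab) = X_0^A(a)\, b + a\, X_0^A(b)$, that is, $X_0^A \in D(A, R_0)$. Evaluating it instead on a product $a \cdot p$ with $a \in A$, $p \in P$ yields $X_0^P(ap) = X_0^A(a)\cdot p + a\, X_0^P(p)$, where the first term is the action of $p \in \mathscr{A}_1$ on $X_0^A(a) \in R_0$; by the very definition of the truncated diolic module $(\mathscr{R}, \varphi)$ this action is exactly $\varphi(X_0^A(a) \otimes p)$. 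This reproduces the stated coupling relation $X_0^P(ap) = \varphi(X_0^A(a) \otimes p) + a X_0^P(p)$, exhibiting $X_0^P$ as a module-valued Der-operator over the symbol $X_0^A$.

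Next I would handle degree one. Here the target degrees force $X_1^A : A \to R_1$, while $X_1^P : P \to \mathscr{R}_2 = 0$ vanishes identically, so $X_1 = (X_1^A, 0)$. The Leibniz rule on $ab \in A$ (the sign $(-1)^{\deg a \cdot \deg X_1} = 1$ being trivial since $a$ has degree zero) shows $X_1^A \in D(A, R_1)$, giving a map $X_1 \mapsto X_1^A$ into $D(A, R_1)$; conversely any such derivation, extended by zero on $P$, satisfies the Leibniz rule in every degree and is thus a degree one graded derivation, so the assignment is an isomorphism of $A$-modules $D(\mathscr{A}, \mathscr{R})_1 \cong D(A, R_1)$.

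Finally, to confirm that these necessary conditions are also sufficient, I would verify the graded Leibniz rule on every family of homogeneous products, the only ones being $A \cdot A$, $A \cdot P$ and $P \cdot P$. The first two are precisely the relations already extracted, and the last is automatic: since $p_1 p_2 = 0$ in $\mathscr{A}$ while both $X(p_1)\cdot p_2$ and $p_1 \cdot X(p_2)$ land in $\mathscr{R}_2 = 0$, the rule reads $0 = 0$ in each degree. I expect the only real care to be demanded in the bookkeeping of the graded module actions — in particular recognizing the mixed term $X_0^A(a) \cdot p$ as an instance of the structure map $\varphi$ rather than an ordinary module product — and in checking that the vanishing of $\mathscr{R}_2$ and of $P \cdot P$ leaves no hidden compatibility obstruction. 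Once these are in hand, the argument is a direct degree-by-degree computation.
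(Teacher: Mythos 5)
Your proof is correct and follows exactly the route the paper intends: the paper states this lemma without a written proof, but its surrounding discussion (the characterization of degree-zero elements via the $\varphi$-der Leibniz rule, obtained ``by the graded Leibniz rule'') is precisely your degree-by-degree computation, including the identification of the mixed term $X_0^A(a)\cdot p$ with the structure map $\varphi$ and the observation that $\mathscr{R}_2=0$ and $P\cdot P=0$ impose no further conditions. Nothing is missing; your explicit verification of sufficiency and of the degree-one isomorphism only makes the omitted argument complete.
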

Those degree zero operators described above, are to be denoted as $\text{Der}_{\varphi}(P,R_1)$ which are called \textit{generalized} $\varphi$-\textit{der-operators} for the reason that, by the graded Leibniz rule, they are characterized by a $\varphi$-der Leibniz rule,
\begin{equation}
\label{eqn:phiderLeib}
X_0^P(ap)=\varphi(X_0^A(a)\otimes p)+aX_0^P(p),
\end{equation}
which shows that their symbols are not merely derivations $X_0^A\in D(A)$ as usual, but are instead $X_0^A\in D(A,R_0),$ twisted by $\varphi.$

We readily obtain a generalization of order one diolic Atiyah sequence, which now now has coefficients in the truncated diolic module $\mathscr{R}$. We call it the order one $\varphi$-diolic Atiyah sequence, which is 
\begin{equation}
\label{eqn:phiDiAt}
0\rightarrow \text{Hom}_A(P,R_1)\hookrightarrow D(\mathscr{A},\mathscr{R})_0\xrightarrow{\sigma_{\varphi}}D(A,R_0),
\end{equation}
where the map
$\sigma_{\varphi}:\text{Der}(\varphi,P,R_1)\rightarrow D(A,R_0),X_0\longmapsto \sigma_{\varphi}(X_0):=X_0^A,$
sends a $\varphi$-der-operator to its generalized symbol.
This map generalizes the ordinary symbol map $\sigma:\text{Der}(P)\rightarrow D(A).$

\begin{exm}
Consider the truncated module $\mathscr{P}_{\Lambda^1}:=(\Lambda^1(A)\oplus \Lambda^1(P),\wedge)$ of example \ref{wedgediole} and the de Rham differential $d:A\rightarrow \Lambda^1(A)$ for $A.$ Suppose $\nabla$ is a linear connection (ie. a splitting of the diolic Atiyah sequence), with covariant differntial $d_{\nabla}.$ Recall this is a map
$d_{\nabla}:P\rightarrow \Lambda^1(P)$ which takes when evaluated on $p\in P$ acts as $d_{\nabla}(p)(X):=\nabla_X(p)\in P.$
Then $d_{\nabla}$ is a degree zero derivation of the diolic algebra $\mathscr{A}$ with values in the truncated diolic module $\mathscr{P}_{\Lambda^1}.$ Namely
$d_{\nabla}\in D\big(\mathscr{A},\mathscr{P}_{\Lambda^1}\big)_0\cong \text{Der}\big(\wedge,P,\Lambda^1(P)\big),$
whose symbol is $\sigma_{\wedge}(d_{\nabla})=d,$ since we have the well known der-Leibniz rule,
$d_{\nabla}(ap)=da\wedge p+a d_{\nabla}(p)\equiv \wedge\big(da\otimes p\big)+ad_{\nabla}(p).$
\end{exm}
The above example shows that the notion of covariant differential, just as the notion of linear connection did, finds a natural conceptual home in the formalism of differential calculus over a diolic algebra. 
\begin{exm}
Consider the truncated diolic module $\mathscr{P}_{\overline{\phi}}$ associated with a morphism of $A$-modules.
One has that $\nabla:\mathscr{A}\rightarrow \mathscr{P}_{\overline{\phi}}$ is a degree zero diolic derivation if and only if it is a pair of operators where $\nabla_0^A\in D(A,B)$ satisfying $\nabla_0^A(ab)=\nabla_0^A(a)\phi(b)+\phi(a)\nabla_0^A(b)\in B,$ meaning it is a derivation along $\phi,$ denoted $D(A)_{\phi},$ and if $\nabla_0^P:P\rightarrow Q^<,$ satisfying 
$$\nabla_0^P(ap)=\beta_{\overline{\phi}}\big(\nabla_0^A(a)\otimes p\big)+a\cdot \nabla_0^P(p)\equiv \nabla_0^A(a)\overline{\phi}(p)+\phi(a)\nabla_0^P(p),$$
since the $A$-module structure in $Q$ is $a\cdot q:=\phi(a)q,$ and the morphism $\beta_{\overline{\phi}}$ is the unique morphism obtained by scalar extensions.
Consequently,
$D(\mathscr{A},\mathscr{P}_{\overline{\phi}}\big)_0$ consists of der-operators along $\overline{\phi}$ over derivations along $\phi.$ If, moreover the $B$-submodule generated by $\text{im}(\overline{\phi})\subset Q$ is faithful, every der-operator along $\overline{\phi}$ is over a unique derivation along $\phi$. We also have that $D(\mathscr{A},\mathscr{P}_{\overline{\phi}}\big)_1$ consists of those operators $\nabla_1^A:A\rightarrow Q^<$ which satisfy the $\phi$-Leibniz rule
$\nabla_1^A(ab)=\nabla_1^A(a)\phi(b)+\phi(a)\nabla_1^A(b),$
for $a,b\in A.$ 
\end{exm}

Finally, we characterize the diolic der-operators. These represent the algebraic description of graded linear connections in the algebraic formalism. To this end, let $(\mathscr{R},\beta)$ be a truncated diolic module. 

\begin{thm}
The $\mathscr{A}$-module of graded Der-operators $\mathrm{Der}(\mathscr{R})$ admits a homogeneous decomposition into degree $0$ and degree $1$ components, with $\nabla_0\in\mathrm{Der}(\mathscr{R})_0$ given by a pair $\nabla_0=(\nabla_0^{R_0},\nabla_0^{R_1})$ of Der-operators, $\nabla_0^{R_0}\in \mathrm{Der}(R_0), \nabla_0^{R_1}\in \mathrm{Der}(R_1),$
whose graded symbol is $\sigma_{\nabla_0}=X_0=(X_0^A,X_0^P)\in D(\mathscr{A})_0,$ with symbols $\sigma_{\nabla_0^{R_0}}=X_0^A=\sigma_{\nabla_0^{R_1}}$ satisfying the rule 
$\nabla_0^{R_1}\beta(p,r_0)=\beta\big(X_0^P(p),r_0\big)+\beta\big(p,\nabla_0^{R_0}(r_0)\big)\}.$
Those in degree $1$, $\nabla_1\in \mathrm{Der}(\mathscr{R})_1$ are operators $\nabla_1^{R_0}:R_0\rightarrow R_1$ satisfying $ \nabla_1^{R_0}(ar_0)=\beta\big(X_1^A(a),r_0\big)+a\nabla_1^{R_0}(r_0).$ That is, degree $1$ diolic Der-operators consist of $R_1$-valued derivations of $R_0$ whose vector-valued symbol $X_1^A\in D(A,P)$ satisfy a generalized $\beta$-Der-Leibniz rule.

\end{thm}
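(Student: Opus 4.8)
The plan is to probe a graded Der-operator against the two module structures that $\mathscr{A}$ and $\mathscr{R}$ carry, and to read the homogeneous data directly off the graded Der-Leibniz rule. Recall that a graded Der-operator of degree $g$ in $\mathscr{R}$ is a graded first order operator $\nabla\colon\mathscr{R}\rightarrow\mathscr{R}$ raising degree by $g$ whose symbol $\sigma(\nabla)=X$ is a graded derivation of $\mathscr{A}$ of the same degree, so that
$$\nabla(a\cdot r)=X(a)\cdot r+(-1)^{g\cdot |a|}\,a\cdot\nabla(r)$$
for homogeneous $a\in\mathscr{A}$ and $r\in\mathscr{R}$. Because $X\in D(\mathscr{A})$ and the classification of diolic derivations obtained above supports such symbols only in degrees $0$ and $1$ (the degree $-1$ exception being governed by Lemma \ref{deg-1derivations}), it suffices to treat $g=0$ and $g=1$. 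In each case I would restrict $\nabla$ to the homogeneous summands $R_0,R_1$ of $\mathscr{R}$ and collect the components.

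For $g=0$ write $\nabla_0=(\nabla_0^{R_0},\nabla_0^{R_1})$ with $\nabla_0^{R_0}\colon R_0\rightarrow R_0$, $\nabla_0^{R_1}\colon R_1\rightarrow R_1$, and symbol $X_0=(X_0^A,X_0^P)\in D(\mathscr{A})_0$. Evaluating the Der-Leibniz rule on products $a\cdot r_0$ and $a\cdot r_1$ with $a\in A$ shows at once that $\nabla_0^{R_0}$ and $\nabla_0^{R_1}$ are Der-operators of $R_0$ and $R_1$, each with symbol $X_0^A$, whence $\sigma_{\nabla_0^{R_0}}=X_0^A=\sigma_{\nabla_0^{R_1}}$. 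The decisive test is on a \emph{mixed} product $p\cdot r_0=\beta(p\otimes r_0)\in R_1$ with $p\in P$: here the Leibniz rule gives $\nabla_0^{R_1}\big(\beta(p\otimes r_0)\big)=X_0^P(p)\cdot r_0+p\cdot\nabla_0^{R_0}(r_0)$, and expressing the two $P$-actions through $\beta$ returns precisely the asserted coupling
$$\nabla_0^{R_1}\beta(p,r_0)=\beta\big(X_0^P(p),r_0\big)+\beta\big(p,\nabla_0^{R_0}(r_0)\big).$$

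For $g=1$ the component $R_1\rightarrow R_2=0$ is forced to vanish, so $\nabla_1$ reduces to a single map $\nabla_1^{R_0}\colon R_0\rightarrow R_1$ with symbol $X_1^A\in D(\mathscr{A})_1\cong D(A,P)$. Testing on $a\cdot r_0$ with $a\in A$ and writing $X_1^A(a)\cdot r_0=\beta(X_1^A(a)\otimes r_0)$ yields the generalized $\beta$-Der-Leibniz rule $\nabla_1^{R_0}(ar_0)=\beta\big(X_1^A(a),r_0\big)+a\,\nabla_1^{R_0}(r_0)$; testing on a mixed product $p\cdot r_0$ imposes no further condition, since both $X_1^A(p)$ and the $P$-action on $\nabla_1^{R_0}(r_0)\in R_1$ land in the vanishing summands $\mathscr{A}_2=0$ and $R_2=0$. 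This identifies the degree $1$ operators as $R_1$-valued derivations of $R_0$ carrying a vector-valued symbol.

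It remains to run the converse, verifying that any data of the two displayed shapes assemble into a genuine graded Der-operator; this is routine but must be checked for compatibility with the $A$-bilinearity of $\beta\colon P\otimes_A R_0\rightarrow R_1$. I expect the main obstacle to be exactly the mixed-product computation in degree $0$: it is the sole place where the structure map enters non-trivially and where the three pieces $X_0^P,\nabla_0^{R_0},\nabla_0^{R_1}$ become coupled, so one must ensure that the resulting relation is independent of the chosen representative of $p\otimes r_0$ and constrains $\nabla_0^{R_1}$ only on the image of $\beta$ rather than on all of $R_1$.
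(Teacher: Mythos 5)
Your proposal is correct, and it follows exactly the approach the paper intends: the paper in fact states this theorem \emph{without} proof, and your argument---restricting a graded Der-operator to the homogeneous summands of $\mathscr{R}$ and evaluating the graded Leibniz rule $\nabla(a\cdot r)=X(a)\cdot r+(-1)^{g\cdot|a|}a\cdot\nabla(r)$ on the products $a\cdot r_0$, $a\cdot r_1$, and the mixed product $p\cdot r_0=\beta(p\otimes r_0)$, with degrees outside $\{0,1\}$ ruled out by the truncation of $\mathscr{A}$ and $\mathscr{R}$ together with Lemma \ref{deg-1derivations}---is precisely the component-wise method the paper uses for its analogous results (the theorem on $D(\mathscr{A})$ and the lemma on $D(\mathscr{A},\mathscr{R})$). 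Your closing remark about verifying compatibility of the coupling relation with the $A$-bilinearity of $\beta$ is the one genuinely non-routine point in the converse direction, and it does go through (both sides of the relation are balanced in $p\otimes r_0$, and the remaining product $P\cdot R_1$ lands in $R_2=0$), so your proof actually supplies a justification the paper omits.
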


Clearly, these operators differ from the usual Der-operators in fibre bundles, which posses scalar type symbols, while our operators described above have symbols which are vector-valued operators. These types of operators arise naturally in the discussion of differential calculus which preserves inner structures in vector bundles. In particular, they arise naturally as an aspect of triolic differential calculus under investigation in a future work.

\subsection{Coordinates}
Let $\pi:E\rightarrow M$ be a rank $m$ vector bundle over $M$, a manifold of $\dim_{\mathbb{R}}(M)=n.$ Choose a chart $(U,x_1,...,x_n)$ in $M$ and a basis for local sections in $\pi$ as $e_{\alpha}.$
Der-operators are additive $\mathbb{R}$-linear operators $
X_0:P=\Gamma(\pi)\rightarrow P=\Gamma(\pi),$ which act on linear combinations $X_0\big(\sum_{i=1}^mp_ie_i\big)$, whose coefficients $p_i\in A=C^{\infty}(M), i=1,..,m.$ This action is uniquely determined on the basis  $e_{\alpha}$ by
$X_{0}^P(e_k)=\sum_{j=1}^mg_j^ke_k,$
for some functions $g_j^k\in C^{\infty}(M).$ Consequently in matrix notation,
$$\scalemath{0.83}{X_0(p)=\begin{pmatrix}
X &0& ..&0&\\
0 & X& ..& 0&\\
..&..&..&..\\
0& 0&..& X\\
\end{pmatrix}\begin{pmatrix}
p_1\\
p_2\\
.\\
p_m
\end{pmatrix}+\begin{pmatrix}
g_{11} & g_{12} &...& g_{1m}\\
g_{21} & g_{22} &...& g_{2m}\\
.. & ..& ...&..\\
g_{m1} & g_{m2} &...& g_{mm}\\
\end{pmatrix}
\begin{pmatrix}
p_1\\
p_2\\
.\\
p_m
\end{pmatrix}},$$
where $X:=X_{0}^A=\sum_{i=1}^nX_i\partial_i\in D(M),$ a vector field (scalar operator). Let $G=||g_{i,j}||, i,k=1...,m,$ be the matrix of functions and denote by $\mathbb{X}:=\mathbb{I}_{m\times m}X$, the diagonal matrix of scalar first order operators where $\mathbb{I}_{m\times m}$ is the $m\times m$ identity matrix.
Then in a more concise form we write
\begin{equation}
X_0=\mathbb{X}+G.
\end{equation}
This decomposition reflects the fact that locally we have a splitting of the short exact sequence (\ref{eqn:DiAt1}).
We may describe degree one diolic derivations locally in a similar fashion. Let
$X_{1}^A:A\rightarrow P,$ be one such $P$-valued derivation of $A$. It will take values, in the basis $e_{\alpha}$ of $P$, as
$X_1^A:f\longmapsto X_{1}^A(f):=\sum_{\alpha=1}^m\overline{X}^{\alpha}(f)e_{\alpha},$
where $\overline{X}^{\alpha}\in D(A)$ for each $\alpha=1,..,m.$
In matrix notation, in the basis $e_{\alpha}$ of $P$, we have
\begin{equation}
X_1^A(f)=\big(
\overline{X}^1(f), 
\overline{X}^2(f),..,
\overline{X}^m(f)
\big)^T.
\end{equation}

\subsubsection{Commutators}
Using the local description of diolic derivations, one may compute the relevant commutators. 
\begin{lem}
There are only two admissable commutators in the graded Lie algebra of diolic derivations, given by 
\begin{equation*}
\big[\mathbb{X}+G,\mathbb{Y}+H\big]=\big[\mathbb{X},\mathbb{Y}\big]-\mathbb{Y}(G)+\mathbb{X}(H)+\big[G,H\big],
\end{equation*}
for two degree zero derivations and 
\begin{equation}
 \scalemath{0.83}{
[\Box_0,\Delta_1](f)=\begin{pmatrix}
[\Box_{0}^A,Z^1](f)\\
[\Box_{0}^A,Z^2](f)\\
...\\
[\Box_{0}^A,Z^m](f)\\
\end{pmatrix}+\begin{pmatrix}
g_{11}& g_{12}&...& g_{1m}\\
..&.. &...& ..\\
.. & ..& ...&..\\
g_{m1} & g_{m2} &...& g_{mm}\\
\end{pmatrix}\begin{pmatrix}
Z^1(f)\\
Z^2(f)\\
...\\
Z^m(f)\\
\end{pmatrix}},
\end{equation}
for the commutator between a degree one and degree zero diolic derivation. Moreover, the $\mathscr{A}$-module of graded derivations of a diolic algebra, together with the above graded commutator constitute a diolic Lie algebra.
\end{lem}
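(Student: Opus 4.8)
The plan is to exploit the homogeneous decomposition $D(\mathscr{A})=D(\mathscr{A})_0\oplus D(\mathscr{A})_1$ (all other components vanishing since $\mathrm{rank}(P)\neq 1$) together with the graded commutator $[\Delta_g,\nabla_h]=\Delta_g\circ\nabla_h-(-1)^{g\cdot h}\nabla_h\circ\Delta_g$, and evaluate it on the two local normal forms established in the Coordinates subsection: a degree zero derivation $\mathbb{X}+G$ acting on column vectors of coefficients, and a degree one derivation $\Delta_1\colon f\mapsto\big(Z^1(f),\dots,Z^m(f)\big)^{T}$. Because the bracket shifts degree additively and $D(\mathscr{A})_g=0$ for $g\notin\{0,1\}$, the only brackets that can land in a non-trivial component are the $0$--$0$ one (landing in degree $0$) and the $0$--$1$ one (landing in degree $1$); the $1$--$1$ bracket would land in $D(\mathscr{A})_2=0$. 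This already accounts for the phrase ``only two admissible commutators'', and it remains to identify the two surviving brackets explicitly and to check the $1$--$1$ bracket vanishes.

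For two degree zero operators the sign $(-1)^{0\cdot 0}=1$ reduces the graded commutator to the ordinary commutator of operators on $P$, so I would simply expand $(\mathbb{X}+G)(\mathbb{Y}+H)-(\mathbb{Y}+H)(\mathbb{X}+G)$. The purely scalar part contributes $[\mathbb{X},\mathbb{Y}]$ and the purely zeroth order part contributes $[G,H]$. The crux is the mixed terms $\mathbb{X}\circ H$ and $\mathbb{Y}\circ G$: since $\mathbb{X}$ is a diagonal first order scalar operator (a derivation applied entry-wise) and $H$ is multiplication by a matrix of functions, the Leibniz rule yields $\mathbb{X}\circ H=\mathbb{X}(H)+H\circ\mathbb{X}$, where $\mathbb{X}(H)$ denotes entry-wise differentiation of the matrix $H$. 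After substituting this (and the analogous $\mathbb{Y}\circ G=\mathbb{Y}(G)+G\circ\mathbb{Y}$), the terms $H\circ\mathbb{X}$ and $G\circ\mathbb{Y}$ cancel between the two orderings, leaving exactly $[\mathbb{X},\mathbb{Y}]-\mathbb{Y}(G)+\mathbb{X}(H)+[G,H]$. Its scalar symbol is $[\mathbb{X},\mathbb{Y}]$ and its $\mathrm{End}(P)$ part is $\mathbb{X}(H)-\mathbb{Y}(G)+[G,H]$, confirming it is again a degree zero der-operator and matching the first displayed formula.

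For the mixed bracket I would first record the conceptual observation that a degree one derivation annihilates $P$, since $\Delta_1(P)\subseteq\mathscr{A}_2=0$; thus $\Delta_1$ is determined by its action on $A$. With $(-1)^{0\cdot 1}=1$ we have $[\Box_0,\Delta_1]=\Box_0\circ\Delta_1-\Delta_1\circ\Box_0$, and evaluating on $f\in A$, using that $\Box_0$ acts on $A$ by $\Box_0^A$ and on $P$ by $\Box_0^P$, gives $\Box_0^P\big(\Delta_1(f)\big)-\Delta_1\big(\Box_0^A(f)\big)$. Writing $\Box_0^P=\mathbb{X}+G$ with $\mathbb{X}=\Box_0^A\cdot\mathbb{I}$ and applying $\Box_0^A(Z^\alpha(f))-Z^\alpha(\Box_0^A(f))=[\Box_0^A,Z^\alpha](f)$ entry-wise, the scalar part collapses to the column vector $\big([\Box_0^A,Z^\alpha](f)\big)_\alpha$ while the zeroth order part produces $G\cdot\big(Z^\alpha(f)\big)_\alpha$, which is exactly the second displayed matrix expression. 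The remaining $1$--$1$ bracket carries sign $(-1)^{1\cdot 1}=-1$ and equals the anticommutator $\Delta_1\circ\Delta_1'+\Delta_1'\circ\Delta_1$; each composite factors as $A\to P\to\mathscr{A}_2=0$, so the bracket vanishes.

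Finally, to conclude that $\big(D(\mathscr{A}),[-,-]\big)$ is a \emph{diolic} Lie algebra I would invoke that $D(\mathscr{A})$ is already a graded Lie algebra under the graded commutator, so graded skew-symmetry and the graded Jacobi identity are automatic; it then suffices to match the defining conditions of a truncated diolic Lie algebra. The computations above give $[\,D(\mathscr{A})_0,D(\mathscr{A})_0\,]\subseteq D(\mathscr{A})_0$ and $[\,D(\mathscr{A})_0,D(\mathscr{A})_1\,]\subseteq D(\mathscr{A})_1$, while the vanishing of the $1$--$1$ bracket is precisely the square-zero condition $\mathfrak{g}_1\cdot\mathfrak{g}_1=0$; moreover every surviving bracket carries sign $+1$, so the graded bracket restricts to an honest skew-symmetric Lie bracket on $D(\mathscr{A})_0\oplus D(\mathscr{A})_1$, exactly as required. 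I expect the only genuine obstacle to be the mixed-term Leibniz manipulation $\mathbb{X}\circ H=\mathbb{X}(H)+H\circ\mathbb{X}$ in the degree zero computation, where one must carefully distinguish entry-wise differentiation of the matrix $H$ from ordinary matrix multiplication; by contrast the mixed and $1$--$1$ brackets reduce cleanly to the fact that degree one derivations vanish on $P$.
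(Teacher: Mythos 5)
Your proposal is correct and follows essentially the route the paper intends: the paper offers no written proof beyond the remark that one should ``use the local description of diolic derivations,'' and your matrix computations (the Leibniz manipulation $\mathbb{X}\circ H=\mathbb{X}(H)+H\circ\mathbb{X}$ for the degree-zero bracket, and the expansion $\Box_0^P(\Delta_1(f))-\Delta_1(\Box_0^A(f))$ with $\Box_0^P=\mathbb{X}+G$ for the mixed bracket) are exactly that computation carried out. Your additions --- the degree-counting argument showing the $1$--$1$ bracket lands in $D(\mathscr{A})_2=0$, and the verification that the surviving brackets satisfy the truncated diolic Lie algebra axioms with honest (ungraded) skew-symmetry --- fill in details the paper leaves implicit, and do so correctly.
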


\subsection{A de Rham-like complex in the Diolic formalism}
We now wish to exhibit that a canonical de Rham-like sequence associated with a vector bundle is naturally encoded in the graded de Rham complex of an algebra of dioles $C^{\infty}(M)\oplus \Gamma(\pi).$
In the algebraic situation, we have a morphism $f:\Lambda^k(A)\times P\rightarrow \{\omega:D(A)\times...\times D(A)\rightarrow P\}=:\Lambda^k(P),$ which is defined by $f(\omega,p)(X_1,..,X_k):=\mu\big(\omega(X_1,..,X_k),p\big),$ for all $X_i\in D(A),p\in P$, where $\mu:A\otimes P\rightarrow P$ is the $A$-module structure on $P.$ In the case when either $D(A)$ or $P$ are projective and finitely generated, we have that, the pair $(\Lambda^k(P),f)$ coincides with the tensor product $\Lambda^k(A)\otimes_A P$ which are called $P$-valued $k$-forms, or \emph{vector-valued} differential forms on $M,$ in the geometric situation.
There are variants of this notion, which we introduce now following the unique terminology of \cite{Fat}.

\begin{definition}
A \emph{semi-fat} differential $k$-form is an alternating $A$-linear morphism 
$\omega\colon\mathrm{Der}(P)\times..\times \mathrm{Der}(P)\rightarrow A,$
and the totality of such morphisms is denoted $\Lambda^k(P;A)$. In the geometric setting we write $\Lambda^k(\pi;M).$
\end{definition}

We have the following relation between semi-fat forms and ordinary differential forms.

\begin{lem}
If $P$ is a faithful $A$-module, then  ordinary forms can be interpreted as semi-fat forms.
\end{lem}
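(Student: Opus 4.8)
The plan is to realise the passage from ordinary to semi-fat forms as pull-back along the symbol map $\sigma\colon \mathrm{Der}(P)\to D(A)$ appearing in the diolic Atiyah sequence (\ref{eqn:DiAt1}), which sends a Der-operator $\Delta=(\Delta^A,\Delta^P)$ to its scalar symbol $\Delta^A=\Delta|_A$. First I would send an ordinary $k$-form $\omega\in\Lambda^k(A)$ to the assignment $\Phi(\omega)(\Delta_1,\dots,\Delta_k):=\omega(\sigma\Delta_1,\dots,\sigma\Delta_k)$ for $\Delta_i\in\mathrm{Der}(P)$. Since $\sigma$ is an $A$-module homomorphism and $\omega$ is alternating and $A$-multilinear, $\Phi(\omega)$ is again alternating and $A$-multilinear in its $\mathrm{Der}(P)$-arguments, hence a bona fide semi-fat $k$-form, and $\Phi$ is itself $A$-linear. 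This part — manufacturing a semi-fat form out of an ordinary one — is a routine verification that uses no hypothesis on $P$.

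The substance of the lemma is that $\Phi$ is \emph{injective}, so that $\Lambda^k(A)$ genuinely embeds as a submodule of $\Lambda^k(P;A)$ and the phrase ``interpreted as'' is warranted; this is exactly the step that uses faithfulness. I would first treat $k=1$, where $\Phi(\omega)=0$ says precisely that $\omega$ vanishes on the $A$-submodule $\mathrm{im}(\sigma)\subseteq D(A)$. The key input is that $\mathrm{im}(\sigma)$ is large: every vector field becomes liftable after multiplication, i.e. $I\cdot D(A)\subseteq \mathrm{im}(\sigma)$ for a suitable non-degenerate ideal $I$, while in the projective/geometric situation $P=\Gamma(\pi)$ of rank $\ge 1$ one has the stronger fact that $\sigma$ is onto, since connections exist and (\ref{eqn:DiAt1}) is short exact. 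Faithfulness of $P$ is then what allows one to cancel the ideal factor: it guarantees $\mathrm{Ann}_A(P)=0$, so that vanishing of $\omega$ on $I\cdot D(A)$ forces vanishing of $\omega$ on all of $D(A)$, whence $\omega=0$. For general $k$ I would reduce to the one-argument situation through the natural map $\Lambda^k(A)\hookrightarrow \mathrm{Alt}^k_A(D(A),A)$ and detect the form slot by slot.

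The main obstacle is this injectivity step, and within it the surjectivity (or near-surjectivity up to torsion) of the symbol map — the problem of lifting vector fields on the base to Der-operators in $P$. Over an arbitrary module this genuinely can fail, the obstruction being the Atiyah class; the honest general argument replaces onto-ness of $\sigma$ by the observation that $\mathrm{coker}(\sigma)$ is annihilated by a non-zero-divisor and that faithfulness yields $\mathrm{Hom}_A(\mathrm{coker}\,\sigma,A)=0$, giving $\ker\Phi=0$. In the geometric setting of the paper, faithfulness holds automatically for any bundle of positive rank and $\sigma$ is surjective, so the lifting argument goes through without incident and the embedding $\Lambda^k(A)\hookrightarrow \Lambda^k(P;A)$ is immediate.
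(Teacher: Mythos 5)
There is a genuine gap, and it sits precisely in the step you dismiss as routine. In the paper's definition, a semi-fat form takes its arguments in $\mathrm{Der}(P)$, whose elements are operators $\Delta\colon P\rightarrow P$ for which \emph{there exists} a derivation $X\in D(A)$ with $\Delta(ap)=X(a)p+a\Delta(p)$; the symbol $X$ is not part of the data. Your construction treats a Der-operator as a pair $(\Delta^A,\Delta^P)$, i.e.\ it silently identifies $\mathrm{Der}(P)$ with the middle term $D(\mathscr{A})_0$ of the Atiyah sequence, where the symbol \emph{is} part of the data. That identification is exactly what faithfulness buys: if $X_1,X_2$ are two symbols of the same $\Delta$, then $\big(X_1(a)-X_2(a)\big)p=0$ for all $a\in A$, $p\in P$, so $X_1-X_2$ is a derivation with values in $\mathrm{Ann}_A(P)$, and it vanishes precisely when $P$ is faithful. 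Without faithfulness the ``symbol map'' $\sigma\colon\mathrm{Der}(P)\rightarrow D(A)$ is not a well-defined map at all, and your formula $\Phi(\omega)(\Delta_1,\dots,\Delta_k)=\omega(\sigma\Delta_1,\dots,\sigma\Delta_k)$ can depend on the choice of symbols. So the well-definedness of $\Phi$ --- which you assert ``uses no hypothesis on $P$'' --- is the entire content of the lemma and is where the paper's proof invokes the hypothesis.

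The injectivity discussion that occupies most of your proposal is not what this lemma claims: the paper proves the monomorphism property in the \emph{next} lemma, under the stronger hypothesis that $P=\Gamma(\pi)$ is the module of sections of a vector bundle (so projective, with $\sigma$ surjective). Moreover, your proposed route from faithfulness alone to injectivity rests on three unjustified assertions: that $I\cdot D(A)\subseteq\mathrm{im}(\sigma)$ for some non-degenerate ideal $I$, that $\mathrm{coker}(\sigma)$ is annihilated by a non-zero-divisor, and that faithfulness forces $\mathrm{Hom}_A(\mathrm{coker}\,\sigma,A)=0$. None of these follows from faithfulness of $P$ for an arbitrary $A$-module: lifting of vector fields against $\sigma$ can fail (the obstruction being the Atiyah class, as you note), and faithfulness of $P$ says nothing about homomorphisms out of $\mathrm{coker}(\sigma)$. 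The correct reading is: faithfulness gives the interpretation map (this lemma), and projectivity/geometricity gives its injectivity (the following lemma).
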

\begin{proof}
Since $P$ is faithful every Der-operator has a unique symbol. Therefore we have an $A$-module homomorphism $\mathrm{Der}(P)\rightarrow D(A)$ sending $\overline{X}$ to the unique derivation $X.$ This
is well-defined and in its turn induces an $A$-module homomorphism
$\Lambda^k(A)\rightarrow \Lambda^k(P;A),$ as follows.
For $\omega\in \Lambda^k(A)$ we can view it as a semi-fat form, $\overline{\omega}$ by defining 
$\overline{\omega}\big(\overline{X}_1,..,\overline{X}_k\big):=\omega(X_1,..,X_k)$ for $\overline{X}_i\in \mathrm{Der}(P)$. 
\end{proof}
\begin{lem}
Suppose $P=\Gamma(\pi)$ is the module of sections of some vector bundle. Then $\Lambda^k(M)\rightarrow \Lambda^k(\pi;M)$ above is a momomorphism of $A$-modules.
\end{lem}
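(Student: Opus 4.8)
The plan is to verify $A$-linearity, which is immediate, and then to concentrate on injectivity, which is the real content of the statement. First I would recall from the preceding lemma that the map sends $\omega\in\Lambda^k(M)$ to the semi-fat form $\overline{\omega}$ determined by $\overline{\omega}(\overline{X}_1,\dots,\overline{X}_k):=\omega(X_1,\dots,X_k)$, where $X_i=\sigma(\overline{X}_i)$ is the unique symbol of the Der-operator $\overline{X}_i\in\mathrm{Der}(P)$. Well-definedness here relies on $P=\Gamma(\pi)$ being faithful, which holds for a bundle of positive rank since there are enough sections (multiply a local nonvanishing section by a bump function) to separate functions. The $A$-linearity of $\omega\mapsto\overline{\omega}$ then follows directly from the defining formula, since $\overline{a\omega}=a\,\overline{\omega}$.

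The crucial input is that the symbol map $\sigma\colon\mathrm{Der}(P)\to D(A)$ is surjective when $P=\Gamma(\pi)$. This is precisely the exactness on the right of the Atiyah sequence (\ref{Atiyahsequence}), equivalently the diolic version (\ref{eqn:DiAt1}); geometrically it amounts to the existence of a linear connection $\nabla$ in $\pi$, which furnishes a splitting $X\mapsto\nabla_X$ with $\sigma(\nabla_X)=X$ for every vector field $X\in D(A)$. Such a connection always exists for a smooth vector bundle by the standard partition-of-unity argument, and this is exactly where the hypothesis that $P$ is a module of \emph{sections} — as opposed to an arbitrary faithful module — is used.

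Granting surjectivity, injectivity is formal. Suppose $\overline{\omega}=0$. For arbitrary $X_1,\dots,X_k\in D(A)$ I would choose lifts $\overline{X}_i:=\nabla_{X_i}\in\mathrm{Der}(P)$ so that $\sigma(\overline{X}_i)=X_i$. Then $\omega(X_1,\dots,X_k)=\overline{\omega}(\overline{X}_1,\dots,\overline{X}_k)=0$, and since the $X_i$ range over all of $D(A)$, we conclude $\omega=0$. Hence the map has trivial kernel and is a monomorphism of $A$-modules.

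I expect the main obstacle to be the surjectivity of the symbol map, i.e. the existence of a connection; once this geometric fact is in hand the remaining argument is purely formal. A secondary point to dispatch carefully is that a $k$-form vanishing on every tuple of derivations is itself zero — that is, that $\omega$ is genuinely recovered from the values $\omega(X_1,\dots,X_k)$ — which is clear for alternating multilinear forms on the projective module $D(A)$.
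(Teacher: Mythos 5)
Your proof is correct. The paper states this lemma without giving a proof (it follows the preceding lemma, whose proof only constructs the map $\omega\mapsto\overline{\omega}$), and your argument supplies exactly the intended content: $A$-linearity is immediate from the defining formula $\overline{\omega}(\overline{X}_1,\dots,\overline{X}_k)=\omega(X_1,\dots,X_k)$, and injectivity reduces to the surjectivity of the symbol map $\sigma\colon\mathrm{Der}(P)\to D(A)$, which the paper itself has already asserted via the exactness of the Atiyah sequence (\ref{eqn:DiAt1}) for projective $P$ (equivalently, the existence of a linear connection in $\pi$). Your closing remarks — that $\Gamma(\pi)$ is faithful for a bundle of positive rank, and that a form vanishing on all tuples of vector fields is zero because $\Lambda^k(M)\cong\mathrm{Alt}^k_A(D(A),A)$ for smooth $M$ — correctly dispatch the only two points where the hypothesis $P=\Gamma(\pi)$ is genuinely used.
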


\begin{definition}
\emph{Fat differential} $k$-forms are alternating $A$-linear maps $\overline{\omega}:\mathrm{Der}(P)\times..\times \mathrm{Der}(P)\rightarrow P.$
\end{definition}
We note that the usual definition of the wedge product for differential forms extends immediately to provide a map
$\Lambda^p(P;A)\times \Lambda^q(P;A)\rightarrow \Lambda^{p+q}(P;A),$
as well as a map
$\Lambda^p(P;A)\times \overline{\Lambda}^q(P)\rightarrow \overline{\Lambda}^{p+q}(P).$
They are defined as usual,
$(\omega\wedge \theta)(X_1,..,X_{p+q}):=\sum_{\sigma \in S_{p,q}}(-1)^{|\sigma|}\omega(X_{\sigma}(1),..,X_{\sigma(p)})\cdot \theta(X_{\sigma(p+1)},..,X_{\sigma(p+q)}),$ where each $X_i\in\mathrm{Der}(P).$ In the former case, the product on the right hand side is the $A$-multiplication whereas in the latter case, it uses the $A$-module structure of $P$.
Therefore, $\Lambda(P;A)\in\mathrm{gcAlg}(\mathbb{K}),$ while the fat differential forms are graded modules over semi-fat forms.
In the case where $P$ is faithful, we follow the usual intuition for defining the de Rham differential to obtain the \emph{semi-fat} differential. The reader may find details in \cite{Fat}. The same formal computation as for the usual de Rham differential proves that the square of the semi-fat differential vanishes. This way $\Lambda^{\bullet}(P;A)$ is a complex-the \emph{semi-fat de Rham complex}. 

\subsubsection{Diolic differential forms}
Fix a splitting of the diolic Atyah sequence. Thus we identify $D(\mathscr{A})_0$ with the $A$-module of Der-operators.
Identifying differential $k$-forms of degree $0$ on $\mathscr{A}$ with alternating maps $\bigwedge^k D(\mathscr{A})_0\rightarrow A$, and by applying the (degree zero) de Rham differential to the homogeneous component $A$ which one extends to the entire algebra via the graded Leibniz rule, we deduce the following.
\begin{lem}
By restricting the de Rham differential to $A \subset \mathscr{A}$, we obtain the homogeneous de Rham complex of $\mathscr{A}$ in degree zero coincides with the semi-fat de Rham complex.
\end{lem}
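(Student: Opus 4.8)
The plan is to establish the identification at two levels: first as graded $A$-modules (indeed as graded-commutative algebras under the wedge product), and then as complexes by matching the differentials. Throughout, I keep the splitting of the diolic Atiyah sequence fixed, as in the hypothesis.

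First I would identify the underlying modules. By the remark preceding the statement, a differential $k$-form of degree $0$ on $\mathscr{A}$ is an alternating $A$-linear map $\bigwedge^k D(\mathscr{A})_0 \to A$, the target being $\mathscr{A}_0 = A$ because the form is homogeneous of degree $0$ and is paired with degree-$0$ derivations. Having fixed a splitting of the diolic Atiyah sequence (\ref{eqn:DiAt1}), we have the $A$-module isomorphism $D(\mathscr{A})_0 \cong \mathrm{Der}(P)$. Substituting, a degree-$0$ $k$-form becomes an alternating $A$-linear map $\mathrm{Der}(P) \times \cdots \times \mathrm{Der}(P) \to A$, which is by definition a semi-fat $k$-form. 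Thus $\Lambda^k(\mathscr{A})_0 \cong \Lambda^k(P;A)$ for every $k$, and since the diolic wedge restricted to degree $0$ is just the alternation pairing, this identification is one of graded-commutative $A$-algebras.

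Next I would compare the differentials. The homogeneous de Rham differential in degree $0$ is, by construction, the restriction $d|_A$ of the universal differential to $\mathscr{A}_0 = A$, extended to all of $\Lambda^\bullet(\mathscr{A})_0$ by the graded Leibniz rule. For $k=0$ and $a \in A$, pairing $da$ with $X_0 \in D(\mathscr{A})_0$ gives $X_0(a)$; but a degree-$0$ diolic derivation acts on $A$ precisely through its symbol $X_0^A = \sigma(X_0)$, so under the previous identification this reads $da(\overline{X}) = \sigma(\overline{X})(a)$, which is exactly the $k=0$ semi-fat differential. I would then argue by induction on $k$ that the graded-Leibniz extension reproduces the Cartan formula on $\Lambda^k(P;A)$: the first (derivation) sum uses only the symbols $\sigma(\overline{X}_i)$ acting on the $A$-valued coefficients, while the second (bracket) sum uses the commutator of the $\overline{X}_i$ in $D(\mathscr{A})_0$. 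Here the key input is the commutator lemma for degree-$0$ diolic derivations, whose scalar (diagonal) part shows that the bracket of two Der-operators has symbol equal to the commutator of their symbols; this is precisely the bracket entering the semi-fat Cartan formula, so the two second sums coincide.

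The step I expect to be the main obstacle is the inductive verification that the algebraically defined differential (the graded-Leibniz extension of $d|_A$) coincides on derivations with the Cartan formula, with all commutation factors accounted for. In the ungraded case this is the standard passage between the universal and the Cartan descriptions of the de Rham differential; the only genuinely new bookkeeping is to confirm that restricting to degree $0$ annihilates every contribution from the degree-$1$ parts $X_0^P$ of the derivations. This is ensured because semi-fat forms are $A$-valued and a degree-$0$ derivation acts on $A$ solely through its symbol, so the $X_0^P$ components never enter; once this is observed, the sign computation is identical to the classical one, and the squares of both differentials vanish for the same formal reason, completing the identification of complexes.
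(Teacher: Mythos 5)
Your proposal is correct and takes essentially the same route as the paper, whose entire justification is the sentence preceding the lemma: fix a splitting of the diolic Atiyah sequence so that $D(\mathscr{A})_0\cong \mathrm{Der}(P)$, identify degree-zero $k$-forms with alternating $A$-linear maps $\bigwedge^k D(\mathscr{A})_0\rightarrow A$, and extend $d|_A$ by the graded Leibniz rule. Your write-up merely makes explicit what the paper leaves implicit, namely the $k=0$ matching $da(\overline{X})=\sigma(\overline{X})(a)$ and the Cartan-formula bookkeeping in which degree-zero derivations act on $A$ only through their symbols and the bracket of Der-operators has symbol equal to the bracket of symbols.
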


\paragraph{Fat Forms: The Der-Complex}
In \cite{Rub01} the cohomology of an important algebraic object associated to a vector bundle was computed. We relate this result to the formalism of diolic calculus.

\begin{thm}
The degree one diolic de Rham complex coincides with the Der-complex for the $A$-module $P.$
\end{thm}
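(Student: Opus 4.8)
The plan is to realise the homogeneous degree one component of each module of diolic differential forms $\Lambda^k(\mathscr{A})$ as the module of fat $k$-forms $\overline{\Lambda}^k(P)$, and then to verify that the degree one piece of the universal differential $d$ agrees, under this identification, with the Cartan-type differential of the Der-complex. This is the natural counterpart of the already established degree zero statement, where $\Lambda^k(\mathscr{A})_0$ was identified with the semi-fat forms $\Lambda^k(P;A)$: the passage from $A$-valued (semi-fat) to $P$-valued (fat) forms is precisely the passage from graded degree $0$ to graded degree $1$, since $\mathscr{A}_0=A$ and $\mathscr{A}_1=P$.

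First I would set up the pairing. Because $\Lambda^1(\mathscr{A})$ represents $D(\mathscr{A},-)_{\mathcal{G}}$ and, in the projective finitely generated case, $\Lambda^k(\mathscr{A})$ is the graded $\mathscr{A}$-dual of $\bigwedge^k D(\mathscr{A})$, a homogeneous form $\omega\in\Lambda^k(\mathscr{A})_1$ sends a homogeneous $k$-vector of total degree $h$ into $\mathscr{A}_{1+h}$. Since $\mathscr{A}_i=0$ for $i\neq 0,1$ and, under the assumption $\mathrm{rank}(P)\neq 1$, there are no degree $-1$ derivations by Lemma \ref{deg-1derivations}, the only arguments on which $\omega$ can be non-zero are $k$-tuples of degree zero derivations, whose images then lie in $\mathscr{A}_1=P$; any evaluation involving a degree one derivation lands in $\mathscr{A}_{\geq 2}=0$ and vanishes. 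Invoking the identification $D(\mathscr{A})_0\cong\mathrm{Der}(P)$, this shows $\omega$ is completely determined by an alternating $A$-linear map $\mathrm{Der}(P)\times\cdots\times\mathrm{Der}(P)\to P$, and conversely every such fat form extends (using projectivity) to a degree one form. Hence $\Lambda^k(\mathscr{A})_1\cong\overline{\Lambda}^k(P)$ as $A$-modules, compatibly with the wedge product.

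I would then match the differentials. The universal differential $d$ is the unique degree zero derivation with $d(da)=0$, extended by the graded Leibniz rule; restricting to the degree one component and evaluating on Der-operators $X_0,\dots,X_k\in\mathrm{Der}(P)$, the standard Cartan-formula manipulation rewrites $(d\omega)(X_0,\dots,X_k)$ as the alternating sum of the Der-operators acting on $\omega(\dots\widehat{X_i}\dots)\in P$, together with the commutator terms $\omega([X_i,X_j],\dots)$. Here the first sum uses the module action of a Der-operator on the $P$-valued form, and the bracket in the second sum is the commutator of degree zero derivations, which again lies in $\mathrm{Der}(P)$ because $D(\mathscr{A})_0$ is closed under the graded commutator. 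This is exactly the differential of the Der-complex of \cite{Rub01}, so the two complexes coincide.

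The main obstacle I anticipate lies in this last step: one must check that the Koszul signs produced by the graded Leibniz rule for $d$ collapse to the ordinary Cartan signs once every argument is of graded degree zero, and that the bracket appearing is the genuine Der-operator bracket rather than the bracket of their symbols. A secondary point needing care is the justification of the graded duality $\Lambda^k(\mathscr{A})\cong\mathrm{Hom}_{\mathscr{A}}\big(\bigwedge^k D(\mathscr{A}),\mathscr{A}\big)$ underlying the identification, which relies on $P$ (equivalently $\pi$) being projective and finitely generated, and confirming that the resulting comparison is an isomorphism of differential complexes and not merely a degreewise isomorphism of $A$-modules.
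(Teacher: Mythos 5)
Your proposal is correct and follows essentially the same route as the paper: after fixing the splitting $D(\mathscr{A})_0\cong\mathrm{Der}(P)$, the paper likewise identifies degree one diolic $k$-forms with fat $k$-forms $\mathrm{Hom}_A\big(\bigwedge^k\mathrm{Der}(P),P\big)$ and then recognizes the degree one restriction of the graded de Rham differential, extended by the graded Leibniz rule, as the Cartan-type differential of the Der-complex of \cite{Rub01}. Your additional care with the degree bookkeeping (ruling out pairings with degree one derivations, and with degree $-1$ derivations via $\mathrm{rank}(P)\neq 1$) and with the duality $\Lambda^k(\mathscr{A})\cong\mathrm{Hom}_{\mathscr{A}}\big(\bigwedge^k D(\mathscr{A}),\mathscr{A}\big)$ under projectivity makes explicit hypotheses that the paper leaves implicit.
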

Let us demonstrate this result by constructing all necessary aspects systematically. 
\begin{lem}
The degree $1$ diolic $1$-forms
coincide with fat differential $1$-forms on $P.$
\end{lem}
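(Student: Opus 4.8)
The plan is to unwind both sides of the claimed identification explicitly and exhibit a natural isomorphism of $A$-modules. First I would recall the two objects. On one side, a degree $1$ diolic $1$-form is an element of $\Lambda^1(\mathscr{A})_1$, the degree $1$ homogeneous component of the universal module from Theorem \ref{RepofDers}; by the representability statement this is dual to $D(\mathscr{A})_1$, which by the earlier theorem is precisely $D(A,P)$, the module of $P$-valued derivations of $A$. On the other side, a fat differential $1$-form is by definition an $A$-linear map $\overline{\omega}\colon\mathrm{Der}(P)\rightarrow P$. So the concrete target of the proof is to produce a natural $A$-module isomorphism between (the degree $1$ piece of) diolic $1$-forms and $\mathrm{Hom}_A(\mathrm{Der}(P),P)$.

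The key steps, in order, are as follows. First I would identify $\Lambda^1(\mathscr{A})_1$ with the dual $\mathrm{Hom}_A(D(\mathscr{A})_0,?)$ appropriately graded, using the universal property: a diolic $1$-form of degree $1$ assigns to each degree $0$ derivation a value, and tracking the grading shows the value lands in $P$. Concretely, since $D(\mathscr{A})_0=\mathrm{Der}(P)$ (after fixing the splitting of the Atiyah sequence, as the surrounding text does) and the pairing $\Lambda^1(\mathscr{A})_1\times D(\mathscr{A})_0\rightarrow \mathscr{A}_{0+1}=P$ must be $A$-linear, a degree $1$ form is exactly an $A$-linear map $\mathrm{Der}(P)\rightarrow P$. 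Second, I would check that this pairing is nondegenerate and surjective onto all such $A$-linear maps, i.e. that every $\overline{\omega}\in\mathrm{Hom}_A(\mathrm{Der}(P),P)$ arises from a unique diolic $1$-form; this uses that $\Lambda^1(\mathscr{A})$ is generated by $d(\mathscr{A})$ together with the universal factorization $\Delta=h^{\Delta}\circ d$. Third, I would verify the correspondence is $A$-linear and natural, so that it is an isomorphism of $A$-modules and not merely a bijection of sets.

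The main obstacle I expect is the careful bookkeeping of the grading in the duality, specifically confirming that the evaluation pairing of a degree $1$ form against a degree $0$ derivation genuinely produces a $P$-valued ($A$-linear, alternating in the single-argument case trivially) map, and that this matches the definition of fat forms with target $P$ rather than the semi-fat target $A$. The subtlety is that the universal module $\Lambda^1(\mathscr{A})$ mixes contributions from both $A$ and $P$ through the graded Leibniz relations in $\mathscr{I}_0$, so I must isolate the degree $1$ component correctly and confirm that the generators $d(p)$ for $p\in P$ pair with $\mathrm{Der}(P)$ to recover the full Der-operator action landing in $P$. Once the degree counting is pinned down, the identification is forced and the remaining verifications are the routine $A$-linearity checks, which I would state but not belabor.
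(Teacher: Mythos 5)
Your proposal is correct and is essentially the paper's own (largely implicit) argument: fix a splitting of the diolic Atiyah sequence so that $D(\mathscr{A})_0\cong\mathrm{Der}(P)$, observe that evaluating a degree $1$ one-form on a degree $0$ derivation lands in $\mathscr{A}_{1+0}=P$, and conclude that degree $1$ diolic $1$-forms are exactly the $A$-linear maps $\mathrm{Der}(P)\rightarrow P$, i.e.\ fat $1$-forms $\overline{\Lambda}^1(P)=\mathrm{Hom}_A(\mathrm{Der}(P),P)$. One slip to fix: your opening claim that $\Lambda^1(\mathscr{A})_1$ is ``dual to $D(\mathscr{A})_1\cong D(A,P)$'' is wrong, since a degree $1$ form paired with a degree $1$ derivation lands in $\mathscr{A}_2=0$, so that pairing is trivial; the relevant duality is with $D(\mathscr{A})_0$, which is precisely what your subsequent steps (correctly) use.
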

By consider 
$\overline{\Lambda}^k(P):=\bigwedge^k\bar{\Lambda}^1(P),$ which is the module of fat $k$-forms given as
$\text{Hom}_A\big(\bigwedge^k\text{Der}(P),P\big),$ we see that in the graded setting of diolic algebras, they are precisely those alternating $A$-multilinear maps on $\text{Der}(P)$ of degree $1$. Denoting them accordingly by $\text{Alt}_A^k(\text{Der}(P);P),$ we find the  \emph{Der-sequence} of $P$ denoted by $\Lambda\mathfrak{D}\text{er}(P)^{\bullet}:$
\begin{equation}
\label{eqn:dercomplex}
0\rightarrow P\xrightarrow{\partial^0}\text{Hom}_A(\text{Der}(P),P)\xrightarrow{\partial^1}...\rightarrow\text{Alt}_A^i(\text{Der}(P),P)\xrightarrow{\partial^i}\text{Alt}_A^{i+1}(\text{Der}(P),P)\rightarrow...
\end{equation}
The differential is defined according to the rule,
\begin{equation*}
\partial^0:P\rightarrow \text{Hom}_A\big(\text{Der}(P),P)\big),
\hspace{1mm} p\mapsto \partial^0p
\end{equation*}
where $
\partial^0 p:\text{Der}(P)\rightarrow P,
\Delta\longmapsto \partial^0p(\Delta):=\Delta(p),$
and the higher order differentials are defined by extending linearly via the graded-Leibniz rule, yielding
\begin{eqnarray}
\partial^k\omega\big(\Delta_1,...,\Delta_{k+1}\big)&=&\sum_{i=1}^{k+1}(-1)^{i-1}\Delta_i\big(\omega(\Delta_1,..,\hat{\Delta}_i,..,\Delta_{k+1})\big) \nonumber
\\
&+&\sum_{i< j}(-1)^{i+j}\omega\big([\Delta_i,\Delta_j],\Delta_1,..,\hat{\Delta_i},..,\hat{\Delta_j},..,\Delta_{k+1}\big)
\end{eqnarray}
for $\omega\in \text{Alt}_A^k(\text{Der}(P),P),\Delta_1,..,\Delta_{k+1}\in \text{Der}(P).$

From an entirely similar computation to the case of the ordinary de Rham differential, one may verify that $\partial^2=0,$ so the Der-sequence for $P$ is in fact a complex, often called the Der-complex.
Denote the cohomology of (\ref{eqn:dercomplex}) by $H_{\text{Der}}(P).$
While providing a careful cohomological analysis of the natural complexes associated to diolic algebras is outside the scope of the present paper, we will describe some aspects of the cohomology theory associated to this degree $1$ diolic de Rham complex (\ref{eqn:dercomplex}).
\subsubsection{Der-cohomology}
The cohomology of \ref{eqn:dercomplex} may be computed by noting that in particular, $\text{Der}(P)$ is a Lie algebroid and the general theory of Lie algebroid cohomology is well-understood. Specifically, one may introduce a natural filtration and compute the cohomology by using a suitable spectral sequence.
To this end, we associate a natural filtration to the module $\Lambda\mathfrak{D}\text{er}^{\bullet}(P)$, and show the resulting spectral sequence converges to the above cohomology $H_{Der}(P).$
Let $\overline{X}_i\in\text{End}(P)\subset \text{Der}(P)$ and suppose that $\Omega\in\Lambda\mathfrak{D}\text{er}^{p+q}(P).$  We define the above mentioned filtration by declaring $\Omega$ to lie in the $p$'th piece, written $\Lambda_{(p)}\mathfrak{D}\text{er}^{p+q}(P)$ if $
    i_{\overline{X}_1}\circ..\circ i_{\overline{X}_{q+1}}\big(\Omega\big)=0,$
where $i_{\overline{X}}$ is the insertion operator. Clearly, one has a sequence
$..\Lambda_{(p+1)}\mathfrak{D}\text{er}^{p+q}(P)\subset \Lambda_{(p)}\mathfrak{D}\text{er}^{p+q}(P)\subset...$
The zero'th term of the corresponding spectral sequence is
\begin{equation}
\label{eqn:DerSpecSeq}
    E_0^{p,q}(P):=\frac{\Lambda_{(p)}^{p+q}\mathfrak{D}\text{er}^{p+q}(P)}{\Lambda_{(p+1)}^{p+q}\mathfrak{D}\text{er}^{p+q}(P)}.
\end{equation}
\begin{lem}
The associated spectral sequence to the filtration defined above is such that $E_0^{p,q}(P)\cong \Lambda^p(A)\otimes_A \Lambda^q(\mathrm{End}(P)),$ while $E_1^{p,q}(P)\cong \Lambda^p(A)\otimes_AH^q\big(\mathrm{End}(P)\big),$ and $E_2^{p,q}(P)\cong H_{dR}^p\big(A;H^q(\mathrm{End}(P))\big).$
\end{lem}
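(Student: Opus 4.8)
The plan is to recognize the filtered complex $\big(\Lambda\mathfrak{D}\text{er}^{\bullet}(P),\Lambda_{(\bullet)}\big)$ as the Hochschild--Serre type spectral sequence attached to the ideal $\text{End}(P)$ sitting inside the transitive Lie algebroid $\text{Der}(P)$, whose anchor is the symbol map $\sigma$ of the Atiyah sequence (\ref{eqn:DiAt1}). First I would fix a splitting of that sequence, that is, a linear connection $\nabla\colon D(A)\to\text{Der}(P)$, which gives an $A$-module decomposition $\text{Der}(P)\cong D(A)\oplus\text{End}(P)$ into a \emph{horizontal} and a \emph{vertical} summand. Every fat $(p+q)$-form then decomposes by bidegree according to how many horizontal versus vertical arguments it receives, and the defining condition $i_{\overline{X}_1}\circ\cdots\circ i_{\overline{X}_{q+1}}(\Omega)=0$ for $\overline{X}_i\in\text{End}(P)$ says precisely that $\Omega$ has vertical degree at most $q$. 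Hence the quotient $E_0^{p,q}(P)$ in (\ref{eqn:DerSpecSeq}) isolates the pure bidegree $(p,q)$ component, which under the splitting is $\text{Alt}_A^p\big(D(A),\text{Alt}_A^q(\text{End}(P),P)\big)$. Invoking projectivity and finite generation of $D(A)$ and $P$ (legitimate in the geometric vector-bundle setting by the $C^{\infty}$-Swan theorem) this rewrites as $\Lambda^p(A)\otimes_A\Lambda^q(\text{End}(P))$, where $\Lambda^q(\text{End}(P))$ is shorthand for the Chevalley--Eilenberg cochains $\text{Alt}_A^q(\text{End}(P),P)$; this yields the claimed $E_0$ page.

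Next I would pin down the induced differentials. With respect to the bidegree, the Der-differential $\partial$ splits as a sum of a bidegree-$(0,1)$ part $d_0$, a bidegree-$(1,0)$ part $d_1$, and a connection-curvature part of bidegree $(2,-1)$. Only $d_0$ survives on $E_0$, and unwinding the explicit formula for $\partial^k$ on arguments lying in $\text{End}(P)$ shows that $d_0$ is exactly the Chevalley--Eilenberg differential of the isotropy Lie algebra $\text{End}(P)$ with values in its defining representation $P$. Taking cohomology in the vertical direction therefore produces $E_1^{p,q}(P)\cong\Lambda^p(A)\otimes_A H^q(\text{End}(P))$, the coefficient module $H^q(\text{End}(P))$ inheriting a canonical $A$-module structure from $P$ and $\text{End}(P)$ that is independent of the chosen $\nabla$.

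The differential $d_1$ induced on $E_1$ is the bidegree-$(1,0)$ part of $\partial$, and on the quotient it becomes the de Rham (Lie algebroid) differential of $D(A)$ with coefficients in $H^q(\text{End}(P))$. The adjoint action of $\text{Der}(P)$ on its ideal $\text{End}(P)$ descends to a representation of $D(A)=\text{Der}(P)/\text{End}(P)$ on $H^q(\text{End}(P))$, and this representation is \emph{flat}: the curvature of $\nabla$ is $\text{End}(P)$-valued and hence acts trivially on the vertical cohomology. Consequently $d_1$ is identified with the de Rham differential of a genuine local system, and computing its cohomology gives $E_2^{p,q}(P)\cong H_{dR}^p\big(A;H^q(\text{End}(P))\big)$.

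The main obstacle I anticipate is the identification at the second step: verifying that the associated-graded differential $d_0$ coincides with the Chevalley--Eilenberg differential independently of the auxiliary connection, while simultaneously showing that all $\nabla$-dependent and curvature contributions are confined to $d_1$ and the curvature part, so that they vanish upon passage to the relevant page and leave behind an honest de Rham differential with flat coefficients. Keeping track of how the insertion operators, the bracket terms in $\partial^k$, and the curvature of $\nabla$ interact across the bidegree decomposition is the delicate bookkeeping at the heart of the proof; the remaining identifications reduce to standard, if tedious, manipulations.
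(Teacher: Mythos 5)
Your proposal is correct and is precisely the argument the paper is invoking: the paper states this lemma without proof, deferring to the ``well-understood'' general theory of Lie algebroid cohomology (and to \cite{Rub01}), and the intended proof is exactly the Hochschild--Serre/Mackenzie spectral sequence for the transitive Lie algebroid $\mathrm{Der}(P)$ with isotropy ideal $\mathrm{End}(P)$ acting on the coefficient module $P$. Your execution of it --- splitting the Atiyah sequence by a connection to identify the associated graded, decomposing $\partial$ into bidegree $(0,1)$, $(1,0)$, $(2,-1)$ parts, recognizing $d_0$ as the fiberwise Chevalley--Eilenberg differential, and obtaining flatness of the induced $D(A)$-action on $H^q\big(\mathrm{End}(P)\big)$ from the fact that the $\mathrm{End}(P)$-valued curvature acts by inner (hence cohomologically trivial) operators --- is the standard proof of that result.
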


\begin{exm}
Let $\mathcal{P}$ be a principle $G$-bundle over $M$, with associated bundle $V,$ and let $\mathscr{A}_{\mathcal{P}}$ denote the artificial diole defined by this associated bundle. It follows from \cite{Rub01}, that the cohomology of the Der-complex $H_{Der}(P,A)$ is isomorphic to the algebra of de Rham cohomology $H(\mathcal{P})$ of this principle bundle.
\end{exm}

It is possible to generalize this situation slightly to allow coefficients in an arbitrary module $Q.$ To do this, we must make sense of the action of derivations of $P$ on $Q.$
This is done by introducing the notion of a \emph{derivation representation}, the details of which are available in \cite{Rub01}, for instance.
The following is a diolic rephrasing of this notion. 
Suppose that we have two diolic algebras, $\mathscr{A}_1:=A\oplus P_1$ and $\mathscr{A}_2:=A\oplus P_2,$ where $P_1,P_2\in\mathrm{Mod}(A)$.
\begin{definition}
A \emph{diolic derivation representation}\footnote{We may sometimes, in light definition \ref{diolicdiffrep} below, call these \textit{first order representations}} \emph{of} $D(\mathscr{A}_1)_{\mathcal{G}}$ in the diolic algebra $\mathscr{A}_2$ is a morphism of graded Lie algebras $f:D(\mathscr{A}_1)\rightarrow D(\mathscr{A}_2)$ of degree zero inducing a morphism of diolic Atiyah sequences.
\end{definition}
In particular, this definition implies that $f=f_0\oplus f_1$ with $f_0:D(\mathscr{A}_1)_0\rightarrow D(\mathscr{A}_2)_0$ a Lie algebra morphism such that the diagram of $A$-modules
\[
\begin{tikzcd}
0\arrow[r,]& \text{End}_A(P_1)\arrow[d, "f|_{\text{End}}"] \arrow[r,] & D(\mathscr{A}_1)_0 \arrow[d, "f"] \arrow[r,] & D(A)\arrow[d,"id"] \arrow[r,] & 0
\\
0\arrow[r,]& \text{End}_A(P_2)\arrow[r,] & D(\mathscr{A}_2)_0 \arrow[r,] & D(A) \arrow[r,] & 0
\end{tikzcd}
\]
commutes.
Fixing splittings of the Atiyah sequences identifies the middle terms with the modules of Der-operators in $P_1,$ and $P_2$ respectively, so that the above definition coincides with the usual one as in \cite{Rub01}.
In this case, the construction of the above spectral sequence holds, and coincides with Lie-algebroid cohomology with values in a representation, $Q.$ 
\subsection{Diolic bi-derivations and canonical structures}
\label{ssec:DioleCanStrctr}
We will now compute the multi-derivations for the diolic algebra $\mathscr{A}.$ Specifically, we study the bi-derivations, using the functorial definition (\ref{eqn:BiDer}).

\begin{rmk}
We fix our notation for a graded bi-derivation of grading $i$, as $\Pi_i$ which acts on homogeneous components $j,k$ with component $\Pi_{i}^{j,k}.$ We will envoke proposition \ref{biderdual} and always denote a bi-derivation as $\tilde{\Pi}$ and the corresponding to it derivation as $\Pi$ under the rule
$\tilde{\Pi}(a,b)=\big(\Pi(a)\big)(b).$ 
\end{rmk}
By definition of the diolic algebra, there will be diolic biderivations only in degrees $-2,-1,0,1.$ This can easily be justified by the functorial definition of $D_{2}^{\mathcal{G}}.$

\begin{lem}
The graded biderivations of degree $1$ of $\mathscr{A}$ are given by $P$-valued bi-derivations of $A$.
\end{lem}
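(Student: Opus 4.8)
The plan is to combine the functorial definition (\ref{eqn:BiDer}) with a degree count, and then read off the defining identities from Lemma \ref{biderdual}. A degree $1$ graded biderivation $\Pi_1$ acts on a pair of homogeneous inputs of degrees $j,k \in \{0,1\}$ through its component $\Pi_1^{j,k}$, which lands in $\mathscr{A}_{j+k+1}$. Since $\mathscr{A}_i = 0$ for $i \neq 0,1$, the output is nonzero only when $j+k+1 \in \{0,1\}$, forcing $j = k = 0$. Hence the entire datum of a degree $1$ biderivation is carried by a single bilinear map $\tilde{\Pi}_1 := \tilde{\Pi}_1^{0,0} : A \times A \to P$, all of whose other graded components vanish for degree reasons.

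First I would specialize the three identities of Lemma \ref{biderdual} to this component. Because both arguments lie in $A$ (degree $0$), every sign of the form $(-1)^{a_i \cdot a_j}$ collapses to $1$, and the factor carrying the degree of $\Pi$, being paired against a degree-zero input, also contributes trivially. The derivation-in-first-entry identity therefore reduces to $\tilde{\Pi}_1(a_1 a_2, b) = a_1 \tilde{\Pi}_1(a_2, b) + \tilde{\Pi}_1(a_1, b) a_2$, and symmetrically in the second slot, where the products $a \cdot (-)$ and $(-) \cdot a$ are interpreted via the $A$-module structure on $P$. The skew-symmetry identity becomes $\tilde{\Pi}_1(a,b) = -\tilde{\Pi}_1(b,a)$. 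These are precisely the axioms defining a skew-symmetric $P$-valued biderivation of $A$, which establishes one direction.

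For the converse I would take an arbitrary $P$-valued biderivation of $A$ and extend it by zero on all remaining graded components, then verify that this extension satisfies the full graded Leibniz rules of Lemma \ref{biderdual} rather than merely their degree-zero restriction. The only nonobvious checks occur when a product $a \cdot p$ with $a \in A$, $p \in P$ is inserted into $\tilde{\Pi}_1$; here the left side must vanish, since it has degree $2$, and I expect this to be the main point requiring care. Expanding the first-entry Leibniz rule on $\tilde{\Pi}_1(ap, b)$ produces one term containing $\tilde{\Pi}_1(p,b)$ (of degree $2$, hence zero) and one term of the form $\tilde{\Pi}_1(a,b) \cdot p$, which lies in $P \cdot P = 0$ by the defining square-zero condition of a diolic algebra. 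Thus both terms vanish and the extension is consistent; the square-zero property of $\mathscr{A}_1 = P$ is exactly the structural feature that guarantees this. Assembling the two directions yields the claimed identification of degree $1$ graded biderivations of $\mathscr{A}$ with $P$-valued biderivations of $A$.
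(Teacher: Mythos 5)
Your proposal is correct and is essentially the argument the paper intends: the paper states this lemma without a written proof, remarking only that the degree restrictions ``can easily be justified by the functorial definition of $D_2^{\mathcal{G}}$'' in (\ref{eqn:BiDer}), and your degree count (forcing the only surviving component to be $\tilde{\Pi}_1^{0,0}\colon A\times A\rightarrow P$) together with the specialization of the identities of Lemma \ref{biderdual} is exactly that justification made explicit. Your converse check --- that the extension by zero is consistent because the offending Leibniz terms land either in a vanishing graded component or in $P\cdot P=0$ --- is a detail the paper leaves implicit, and it is handled correctly.
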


Turning now to compute those graded-bi-derivations of degree $-2$, we see by definition of such operators, we must be in the situation when $P$ is a module of rank $1.$ In this case, we have the following un-graded description of degree $-2$ Poisson structures.
\begin{lem}
A Poisson structure of degree $-2$ on the algebra of dioles is nothing but an $A$-bilinear, $A$-valued skew-symmetric form on $P.$
If $P$ is not of rank $1$, there are no such graded derivations.
\end{lem}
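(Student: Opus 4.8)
The plan is to combine a grading count with the biderivation Leibniz rules of Lemma \ref{biderdual}, and then to extract the rank-one constraint by exactly the mechanism of Lemma \ref{deg-1derivations}. First I would pin down which homogeneous components of a degree $-2$ biderivation can survive. A degree $-2$ graded biderivation $\tilde{\Pi}$ sends $\mathscr{A}_j\times\mathscr{A}_k$ into $\mathscr{A}_{j+k-2}$; since $\mathscr{A}$ is concentrated in degrees $0$ and $1$, the inputs satisfy $j,k\in\{0,1\}$ and the output is non-trivial only when $j+k-2\in\{0,1\}$, i.e. only when $(j,k)=(1,1)$, giving output degree $0$. Hence the whole biderivation is carried by a single component $\tilde{\Pi}^{1,1}\colon P\times P\to A$, so in particular it is automatically $A$-valued. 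Equivalently, reading $\Pi$ through the functorial definition (\ref{eqn:BiDer}) as a degree $-2$ derivation $\Pi\colon\mathscr{A}\to D(\mathscr{A})$, one has $\Pi(A)\subseteq D(\mathscr{A})_{-2}=0$ and $\Pi(P)\subseteq D(\mathscr{A})_{-1}$, so $\Pi$ is determined by $\Pi|_P$.

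Next I would establish $A$-bilinearity and the symmetry of this form. Testing the first-entry derivation rule of Lemma \ref{biderdual} on a product $a\cdot p_1\in P$ (with $a\in A$, $p_1,p_2\in P$) and using that the mixed evaluation $\tilde{\Pi}(a,p_2)$ would land in $\mathscr{A}_{-1}=0$, the Leibniz terms collapse to $\tilde{\Pi}(ap_1,p_2)=a\,\tilde{\Pi}(p_1,p_2)$; the second slot is identical, so $\tilde{\Pi}^{1,1}$ is $A$-bilinear. The graded skew-symmetry $\tilde{\Pi}(p_1,p_2)=-(-1)^{p_1\cdot p_2}\tilde{\Pi}(p_2,p_1)$ of Lemma \ref{biderdual}, applied to the odd elements of $P$, is precisely the skew-symmetry claimed for the form, the sign $-(-1)^{1\cdot 1}$ reflecting that $P$ sits in odd degree. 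I would also record that the Poisson (canonical) condition $[\![\Pi,\Pi]\!]=0$ is automatic in this degree: the bracket would live in $D_3(\mathscr{A})_{-4}$, and any degree $-4$ triderivation sends even the top input triple $(1,1,1)$ into $\mathscr{A}_{-1}=0$, so it vanishes identically. Thus being a degree $-2$ Poisson structure imposes no integrability condition beyond being such a biderivation.

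Finally comes the rank-one dichotomy, which is the crux. I would feed the square-zero relation $P\cdot P=0$ into the first-entry derivation rule: for $q_1,q_2,p\in P$ the left side $\tilde{\Pi}(q_1q_2,p)$ vanishes, while the right side reads $q_1\,\tilde{\Pi}(q_2,p)-\tilde{\Pi}(q_1,p)\,q_2$ (the two Koszul signs being $(-1)^{-2}$ and $(-1)^{-1}$), whence $q_1\,\tilde{\Pi}(q_2,p)=\tilde{\Pi}(q_1,p)\,q_2$ as an identity in $P$. Writing $q_1=u_\alpha$, $q_2=u_\beta$ in a local basis $\{u_\gamma\}$ of $P$ and invoking linear independence for $\alpha\neq\beta$ forces $\tilde{\Pi}(u_\alpha,p)=0$ whenever $\mathrm{rank}(P)\ge 2$; this is the same contraction that produced the factor $(n-1)$ in Lemma \ref{deg-1derivations}, and it matches the vanishing $D(\mathscr{A})_{-1}=0$ established there. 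Hence $\tilde{\Pi}\equiv 0$ unless $P$ has rank $1$, in which case $P=A\cdot u$ and the form is freely determined by the single value $\tilde{\Pi}(u,u)\in A$.

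The only genuine obstacle is this last step, namely getting the rank constraint to drop out of $P\cdot P=0$; everything else is a grading bookkeeping together with the routine sign tracking in the Leibniz rules. That obstacle, however, is dispatched verbatim by the argument of Lemma \ref{deg-1derivations}, since the degree count already confines $\Pi(P)$ to $D(\mathscr{A})_{-1}$, which is nonzero exactly in the rank-one case.
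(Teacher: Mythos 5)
Your proposal is correct and follows essentially the same route as the paper's (largely implicit) argument: a grading count through the functorial definition (\ref{eqn:BiDer}) showing the biderivation is carried entirely by $\tilde{\Pi}^{1,1}\colon P\times P\to A$, the rank-one constraint imported from Lemma \ref{deg-1derivations} (equivalently your direct basis/contraction computation, which is the same mechanism), and the observation that $[\![\Pi,\Pi]\!]\in D_3(\mathscr{A})_{-4}$ vanishes tautologically for degree reasons. One small point worth recording: the sign you compute, $-(-1)^{1\cdot 1}=+1$, actually makes the form on $P$ symmetric in the ungraded sense, so the ``skew-symmetry'' of the statement must be read in the graded sense of Lemma \ref{biderdual} (graded skew-symmetric on odd entries), exactly as you have it.
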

Consequently, we see in the special instance of treating line bundles, the formalism for diolic calculus permits graded biderivations of degree $-2.$ 

Passing to the next grading, we see that diolic bi-derivations of degree $-1$ deserve special attention.

\begin{lem}
A graded biderivation of degree $-1$ on the algebra of dioles is a $\mathrm{Der}(P)$-valued diolic derivation of degree $-1$. That is $\Pi_{-1}\in \mathrm{Hom}_A\big(P,\mathrm{Der}(P)\big).$
\end{lem}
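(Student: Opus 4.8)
The plan is to extract the claim from the functorial definition (\ref{eqn:BiDer}) together with our description of $D(\mathscr{A})$. By definition a degree $-1$ biderivation is an element $\Pi_{-1}\in D_2^{\mathbb{Z}}(\mathscr{A})_{-1}$, i.e.\ a graded derivation $\Pi_{-1}\colon\mathscr{A}\rightarrow D(\mathscr{A})\subset\mathrm{Diff}_1^>(\mathscr{A})$ of degree $-1$ whose image consists of derivations. First I would split $\Pi_{-1}$ according to the homogeneous pieces of $\mathscr{A}=A\oplus P$. Since $\Pi_{-1}$ lowers graded degree by one, the restriction $\Pi_{-1}|_A$ takes values in $D(\mathscr{A})_{-1}$ while $\phi:=\Pi_{-1}|_P$ takes values in $D(\mathscr{A})_0=\mathrm{Der}(P)$. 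By Lemma \ref{deg-1derivations} the component $D(\mathscr{A})_{-1}$ is zero unless $P$ is projective of rank $1$, so for $\mathrm{rank}(P)\neq 1$ the sole datum is the map $\phi\colon P\rightarrow\mathrm{Der}(P)$, and the task reduces to showing $\phi$ is $A$-linear.

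The central step is to feed a product $a\cdot p$ (with $a\in A$, $p\in P$) into the graded Leibniz rule of Lemma \ref{biderdual}(1), writing $\tilde{\Pi}_{-1}(x,y)=\big(\Pi_{-1}(x)\big)(y)$. Inserting the degrees $\deg(a)=0$, $\deg(p)=1$, $\deg(\Pi_{-1})=-1$, I expect the prefactors to reduce the identity to $\tilde{\Pi}_{-1}(ap,y)=a\,\tilde{\Pi}_{-1}(p,y)-\tilde{\Pi}_{-1}(a,y)\,p$ for homogeneous $y\in\mathscr{A}$. The correction $\tilde{\Pi}_{-1}(a,y)=\big(\Pi_{-1}(a)\big)(y)$ is controlled by $\Pi_{-1}(a)\in D(\mathscr{A})_{-1}$; when $\mathrm{rank}(P)\neq 1$ this vanishes and we obtain $\phi(ap)=a\,\phi(p)$ in $\mathrm{Der}(P)$, where $a$ acts by the usual left multiplication $(aX)(y)=a\,X(y)$. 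Hence $\phi\in\mathrm{Hom}_A\big(P,\mathrm{Der}(P)\big)$, which is exactly the assertion that $\Pi_{-1}$ is a $\mathrm{Der}(P)$-valued diolic derivation of degree $-1$.

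To finish I would record the converse and handle the residual rank-$1$ piece. For any $\phi\in\mathrm{Hom}_A(P,\mathrm{Der}(P))$ one reverses the above identities to reconstruct a degree $-1$ graded biderivation, checking the derivation law in each slot and graded skew-symmetry (Lemma \ref{biderdual}(3)); this gives the bijection. In the rank-$1$ case the extra component $\Pi_{-1}|_A\colon A\rightarrow P^{\vee}$ is not independent: since $\deg(a)=0$, skew-symmetry forces $\tilde{\Pi}_{-1}(a,p')=-\tilde{\Pi}_{-1}(p',a)=-\big[\sigma\phi(p')\big](a)$, so $\Pi_{-1}|_A$ is pinned down as minus the symbol of $\phi$, and the biderivation is again encoded by the single homomorphism $\phi$.

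I expect the main obstacle to be the sign bookkeeping, and in particular reconciling the right-module structure carried by $\mathrm{Diff}_1^>(\mathscr{A})$ with the left multiplication defining the $A$-action on $\mathrm{Der}(P)$: one must confirm that the Leibniz correction term genuinely lives in the degree $-1$ slot $D(\mathscr{A})_{-1}$ (hence is governed by Lemma \ref{deg-1derivations}) rather than producing an honest failure of $A$-linearity inside $\mathrm{Der}(P)$.
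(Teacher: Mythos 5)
The paper states this lemma with no proof at all, so the only thing to measure your proposal against is the paper's own machinery (the functorial definition (\ref{eqn:BiDer}) and Lemma \ref{biderdual}), which is exactly what you use. Your forward argument for $\mathrm{rank}(P)\neq 1$ — split $\Pi_{-1}$ into $\Pi_{-1}|_A\colon A\rightarrow D(\mathscr{A})_{-1}$ and $\phi:=\Pi_{-1}|_P\colon P\rightarrow D(\mathscr{A})_0=\mathrm{Der}(P)$, kill the first component by Lemma \ref{deg-1derivations}, and read off $A$-linearity from Lemma \ref{biderdual}(1) with the signs you computed — is the natural route and correctly establishes the literal membership claim $\phi\in\mathrm{Hom}_A\big(P,\mathrm{Der}(P)\big)$.

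The genuine gap is in the bijection you assert, and it is precisely the worry you flag in your last paragraph, which resolves \emph{against} the proposal. Elements of $D_2^{\mathcal{G}}(\mathscr{A})_{-1}$ automatically satisfy the graded skew-symmetry of Lemma \ref{biderdual}(3), and you never impose it on the two pairs where it has content. First, combined with $\Pi_{-1}|_A=0$ it gives $\big[\phi(p)\big](a)=\tilde{\Pi}_{-1}(p,a)=-\tilde{\Pi}_{-1}(a,p)=0$, i.e.\ every value $\phi(p)$ has vanishing symbol, so $\phi$ in fact lands in $\mathrm{End}_A(P)\subset\mathrm{Der}(P)$. Equivalently: by the Leibniz rule in the right-module structure of (\ref{eqn:BiDer}), $\Pi_{-1}(ap)$ is forced to be the operator $\phi(p)\circ a$, and this lies in $D(\mathscr{A})$ — as the definition of $D_2$ requires — only when $\sigma\big(\phi(p)\big)=0$; the correction is an honest zeroth-order, non-derivation piece, not something living in $D(\mathscr{A})_{-1}$. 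Second, on $P\times P$ skew-symmetry reads $\tilde{\Pi}_{-1}(p,q)=-(-1)^{1\cdot 1}\tilde{\Pi}_{-1}(q,p)=+\tilde{\Pi}_{-1}(q,p)$, a symmetry constraint. Hence $D_2(\mathscr{A})_{-1}$ is identified with the \emph{symmetric} $A$-bilinear maps $P\times P\rightarrow P$, a proper subspace of $\mathrm{Hom}_A\big(P,\mathrm{Der}(P)\big)$: an arbitrary $\phi$ with nonvanishing symbols, or without the symmetry, cannot be reconstructed into a biderivation, so your converse fails. The same oversight affects your rank-one remark: there $\tilde{\Pi}_{-1}(a,q)\neq 0$, so $\phi(ap)(q)=a\phi(p)(q)+\sigma\big(\phi(q)\big)(a)\,p$, and $\phi$ is not an $A$-module homomorphism at all. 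Part of this tension is inherited from the paper — its statement, read against its own Lemma \ref{biderdual}, needs these restrictions, while the Lie-algebroid interpretation in Lemma \ref{DioleLieAlgbd} tacitly uses the degree-shifted sign convention under which the bracket on $P\times P$ is antisymmetric and anchors survive — but a proof must commit to one convention, and under the one you (and Lemma \ref{biderdual}) use, the asserted equivalence does not hold.
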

This result admits an interesting interpretation as follows.
Recall that a Poisson bracket on a graded algebra $\mathscr{A}$ is a Lie bracket which is additionally a bi-derivation. Let $g$ denote the degree of the bracket so that its homogeneous action is given as $\big\{.,.\big\}_g:\mathscr{A}_i\times \mathscr{A}_j\rightarrow \mathscr{A}_{i+j+g}.$
With this in mind, we see the following.
\begin{lem}
\label{DioleLieAlgbd}
Let $\mathscr{A}$ be a diolic algebra. Let $\{.,.\}_{-1}$ be a degree $-1$ Poisson bracket on $\mathscr{A}.$ Let $\alpha:P\rightarrow D(A),$ be the $A$-homomorphism defined by $p\longmapsto \alpha(p):=\{p,.\}_{-1}\big|_A.$
Then the triple 
$\big(\mathscr{A},\big\{.,.\big\}_{-1},\alpha\big),$
is a Lie algebroid.
\end{lem}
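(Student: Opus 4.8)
The plan is to read off the three pieces of Lie algebroid data directly from the homogeneous components of the bracket and to verify the axioms by specializing the graded identities of a degree $-1$ canonical structure to the degrees $0$ and $1$ that occur in $\mathscr{A}=A\oplus P$. Following the notation of Lemma \ref{biderdual}, a degree $-1$ Poisson bracket is a biderivation $\Pi$ of $\mathcal{G}$-degree $-1$ with $[\![\Pi,\Pi]\!]=0$, and $\{a,b\}_{-1}=\tilde\Pi(a,b)=(\Pi(a))(b)$. Since $\{.,.\}_{-1}$ maps $\mathscr{A}_i\times\mathscr{A}_j$ into $\mathscr{A}_{i+j-1}$, the only surviving components are $\{P,P\}_{-1}\subseteq P$ and $\{P,A\}_{-1},\{A,P\}_{-1}\subseteq A$, while $\{A,A\}_{-1}\subseteq\mathscr{A}_{-1}=0$ vanishes for degree reasons. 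I would set $[p,q]_P:=\{p,q\}_{-1}$, making $(P,[\cdot,\cdot]_P)$ a $\mathbb{K}$-bilinear bracket, and identify the anchor with $\alpha(p)=\{p,\cdot\}_{-1}|_A$. The crucial observation is that $\Pi(p)$ is a graded derivation of degree $|p|+|\Pi|=0$, so $\{p,\cdot\}_{-1}=\Pi(p)$ is a genuine degree-zero derivation of $\mathscr{A}$ whose restriction to $A$ lands in $D(A)$; this is exactly where the biderivation hypothesis (rather than mere bilinearity) is used.

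First I would dispatch the two axioms that do not involve the Jacobi identity. Skew-symmetry of $[\cdot,\cdot]_P$ follows from the graded antisymmetry of the odd bracket, whose sign exponent is governed by the shifted degrees $|a|-1$; on two degree-$1$ elements $p,q$ the exponent collapses and one gets $[p,q]_P=-[q,p]_P$. The Leibniz rule $[p,aq]_P=\alpha(p)(a)\,q+a\,[p,q]_P$ is then immediate from the degree-zero derivation property of $\Pi(p)$ applied to the product $aq\in P$, giving $\{p,aq\}_{-1}=\{p,a\}_{-1}\,q+a\,\{p,q\}_{-1}$ with no sign. Finally, $A$-linearity $\alpha(ap)=a\,\alpha(p)$ of the anchor is obtained by combining the antisymmetry with the derivation property of $\Pi(b)$ and the vanishing $\{A,A\}_{-1}=0$, which together force $\{ap,b\}_{-1}=a\,\{p,b\}_{-1}$.

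The remaining two axioms come from the single condition $[\![\Pi,\Pi]\!]=0$, which encodes the graded Jacobi identity of the bracket. Specializing all three arguments to sections $p,q,r\in P$ makes every commutation factor trivial and reproduces verbatim the Jacobi identity for $[\cdot,\cdot]_P$. Specializing instead to two sections $p,q\in P$ and a function $a\in A$, and translating each inner and outer bracket via $\{p,\cdot\}_{-1}|_A=\alpha(p)$ and $\{p,q\}_{-1}=[p,q]_P$, yields the identity $\alpha(p)\alpha(q)(a)-\alpha(q)\alpha(p)(a)=\alpha([p,q]_P)(a)$, that is $\alpha([p,q]_P)=[\alpha(p),\alpha(q)]$, so the anchor is a morphism of Lie algebras. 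These four properties are precisely the defining axioms of a Lie algebroid structure on $P$ with anchor $\alpha$, and the construction is visibly reversible, which gives the announced identification.

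The main obstacle is the sign bookkeeping intrinsic to an odd (degree $-1$) bracket: the graded antisymmetry and graded Jacobi conventions are controlled by the shifted degrees $|a|-1$ coming from the $\mathcal{G}\oplus\mathbb{Z}$ bidegree of $\Pi$, and one must check carefully that on the only available degrees $0$ and $1$ all these commutation factors reduce to $+1$, so that the graded axioms degenerate to their classical ungraded counterparts. The one other point deserving explicit care is the identification of $\Pi(p)|_A$ with an element of $D(A)$, which uses that a degree-zero graded derivation of $\mathscr{A}$ preserves $A$ and restricts there to an honest derivation.
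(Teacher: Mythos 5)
Your proof is correct and follows essentially the same route as the paper's: read off the nonzero homogeneous components of the degree $-1$ bracket from the grading, use the biderivation property to get the anchor's derivation property, the Leibniz rule and $A$-linearity, and specialize the graded Jacobi identity $[\![\Pi,\Pi]\!]=0$ to arguments in $P$ and in $A$ to obtain the Lie bracket on $P$ and the fact that $\alpha$ is a Lie-algebra morphism. Your version is merely more explicit than the paper's terse proof (notably on the sign bookkeeping and on verifying $\alpha(ap)=a\,\alpha(p)$, which the paper states as a goal but does not spell out), so no substantive difference in approach.
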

\begin{proof}
We need to show that $\alpha(bp)=b\alpha(p)$ for all $b\in A$ so that $\alpha\in\text{Hom}_A\big(P,D(A)\big)$ is really an $A$-homomorphism, that we have the Leibniz rule
$\{p,bq\}_{-1}=b\{p,q\}_{-1}+\{p,b\}_{-1},$
 with $\alpha(p)=\{p,\}_{-1}|_A,$ and that $\alpha$ is a homomorphism of Lie algebras. The latter condition obviously means
$\alpha\big(\{p_1,p_2\}_{-1}\big)(b)=\big[\alpha(p_1),\alpha(p_2)\big](b),$
for all $p_1,p_2\in P,b\in A.$ 
We get these by defining 
$\alpha(p):=\big\{p,.\big\}_{-1}\big|_A:A\rightarrow A,$
and using the bi-derivation property of the Poisson bracket to show this is a derivation. Similarly the Jacobi identity for the Poisson bracket shows that this is in fact a Lie algebra homomorphism. Moreover by setting $\{.,.\}_{-1}\big|_P:=[.,.]_P,$ we recover the formula for the Leibniz rule for Lie algebroids with anchor $\alpha.$
\end{proof}
This result tells us that Lie algebroids are nothing but Poisson
structures on algebras of dioles of degree $-1$, so we have that the theory of algebroids finds a natural home in the
context of differential calculus over diole algebras.

We now compute the final component of $D_{2}(\mathscr{A})_{\mathcal{G}}.$

\begin{lem}
Degree zero graded bi-derivations of $\mathscr{A}$ are given by pairs  $\Pi_0=(\Pi_{0}^{AA},\Pi_{0}^{AP}),$
where $\Pi_{0}^{AA}\in D_2(A)$ is an ordinary bi-vector, and $\Pi_{0}^{AP}:A\times P\rightarrow P$ with the two operators related by the following Der-Leibniz like rule,
$$\Pi_{0}^{AP}(a,bp)=\Pi_{0}^{AA}(a,b)p+b\Pi_{0}^{AP}(a,p),$$
where $a,b\in A,p\in P.$
\end{lem}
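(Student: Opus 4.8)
The plan is to reduce everything to the bilinear picture via Lemma \ref{biderdual} and then read off the constraints forced by the $\mathbb{Z}$-grading together with the square-zero condition $P\cdot P=0$. First I would replace the degree zero biderivation $\Pi_0\in D_2^{\mathcal{G}}(\mathscr{A})_0$ by its associated graded skew-symmetric bilinear map $\tilde{\Pi}_0\colon\mathscr{A}\times\mathscr{A}\to\mathscr{A}$, $\tilde{\Pi}_0(a,b)=(\Pi_0(a))(b)$, which by Lemma \ref{biderdual} is a graded biderivation in each slot and is graded skew-symmetric. Since $\tilde{\Pi}_0$ has total degree $0$, it sends $\mathscr{A}_i\times\mathscr{A}_j$ into $\mathscr{A}_{i+j}$, so its only possibly nonzero homogeneous components are the four maps $A\times A\to A$, $A\times P\to P$, $P\times A\to P$ and $P\times P\to\mathscr{A}_2$. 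Because $\mathscr{A}_2=0$ the last component vanishes identically, and graded skew-symmetry (item (3) of Lemma \ref{biderdual}, with $(-1)^{0\cdot1}=1$) gives $\tilde{\Pi}_0(a,p)=-\tilde{\Pi}_0(p,a)$, so the $P\times A$ component is determined by the $A\times P$ one. Hence the independent data is exactly the pair $\Pi_0^{AA}:=\tilde{\Pi}_0|_{A\times A}$ and $\Pi_0^{AP}:=\tilde{\Pi}_0|_{A\times P}$.

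Next I would identify these two pieces. Restricting items (1)--(3) of Lemma \ref{biderdual} to $A\times A$ shows $\Pi_0^{AA}$ is a skew-symmetric, $A$-valued biderivation of $A$, that is an ordinary bivector $\Pi_0^{AA}\in D_2(A)$; all Koszul signs are trivial here because every argument lies in degree $0$. For $\Pi_0^{AP}$, the functorial definition (\ref{eqn:BiDer}) already guarantees that $\tilde{\Pi}_0(a,-)=\Pi_0(a)$ is a genuine graded derivation of $\mathscr{A}$, and since $\deg(a)=0$ this derivation sits in $D(\mathscr{A})_0$; its restriction to $A$ is $b\mapsto\tilde{\Pi}_0(a,b)=\Pi_0^{AA}(a,b)$ and its restriction to $P$ is $\Pi_0^{AP}(a,-)$.

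The compatibility relation is then obtained by expanding $\tilde{\Pi}_0(a,bp)$ through the Leibniz rule in the second entry (item (2) of Lemma \ref{biderdual}) applied to the factorization $bp$ with $b\in A$ of degree $0$ and $p\in P$ of degree $1$. As $a,b,\Pi_0$ all have degree $0$, each sign $(-1)^{b(\Pi_0+a)}$ and $(-1)^{p(b+\Pi_0+a)}$ collapses to $1$, leaving
\[
\Pi_0^{AP}(a,bp)=b\,\Pi_0^{AP}(a,p)+\Pi_0^{AA}(a,b)\,p,
\]
which is precisely the asserted Der-Leibniz rule, once one recalls that $\Pi_0^{AA}(a,b)\in A$ acts on $p\in P$ through the module structure. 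Conceptually this is just the statement that the degree zero derivation $\Pi_0(a)\in D(\mathscr{A})_0$ satisfies the Der-Leibniz rule of our earlier description of $D(\mathscr{A})_0$, with symbol $\Pi_0^{AA}(a,-)$. Conversely, I would verify that any pair $(\Pi_0^{AA},\Pi_0^{AP})$ with $\Pi_0^{AA}\in D_2(A)$ and $\Pi_0^{AP}$ obeying this relation reassembles, together with its skew-symmetric $P\times A$ partner and the zero $P\times P$ part, into a well-defined degree zero graded biderivation, closing the correspondence. I do not anticipate a genuine obstacle: the entire content is in the degree bookkeeping, and the one point that deserves care is checking that the square-zero and grading constraints really annihilate the $P\times P$ term while imposing \emph{no} rank hypothesis on $P$ --- in sharp contrast to the degree $-2$ case --- so that the stated description is valid for $P$ of arbitrary rank.
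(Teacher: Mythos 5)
Your proof is correct, and it follows exactly the route the paper's framework dictates: the paper states this lemma without an explicit proof, but its surrounding discussion (the functorial definition (\ref{eqn:BiDer}), Lemma \ref{biderdual}, and the remark immediately after the lemma interpreting $\tilde{\Pi}_0(a,-)$ as a Der-operator with symbol $\{a,-\}_0$) is precisely what you flesh out via degree bookkeeping and the square-zero condition. Your sign computations, the elimination of the $P\times P$ component, and the observation that no rank hypothesis is needed (unlike the degree $-2$ case) are all accurate.
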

Let us give an interesting interpretation of the
above degree $0$ diolic biderivations. 
Write $\tilde{\Pi}_0^{A-}:=\{a,-\}_0,$ in terms of a Poisson bracket. Then, $\tilde{\Pi}_0^{A,-}(a,-)\in \text{Der}(P),$ which is to say, a Der-operator in the second entry. The corresponding Der-Leibniz rule, yields 
$$\tilde{\Pi}_0^{AP}(a,bp)=\{a,b\}p+b\tilde{\Pi}_0^{AP}(a,p),$$
where we wrote $\tilde{\Pi}_0^{AA}(a,b)=\Pi_0^A(a)(b)=X_a(b),$ for $X_a:=\{a,-\}_0$ a Hamiltonian vector field.
Since Der-operators are defined in terms of the splitting of the Atiyah sequence via a flat connection, these bi-derivations are thus determined by a Poisson structure $\{-.-\}_0$ on $A$ and a \textit{flat Hamiltonian connections}; those Der-operators with Hamiltonian symbols.

\subsubsection{Multi-derivations}
We quickly present the result for arbitrary order derivations. That is, we describe what objects of 
 $D_{k+1}(-):=D\big(D_k(-)\subset \mathfrak{D}^>_k(-)\big),$ look like for the diolic algebra $\mathscr{A},$ in the geometric setting where $A=C^{\infty}(M)$ for a smooth $n$-dimensional manifold $M$, and where $P$ is the module of sections of vector bundle $E\rightarrow M.$ 
\begin{lem}
An arbitrary graded $k$-multiderivation of the diolic algebra $\mathscr{A}$ has components, 
$\Delta_{-k-i+1}^{\mathsmaller{A,..,A,P,..,P}}\in D_i\big(\mathrm{Der}_{k-i}(P)\big)\cong \mathrm{Sym}^i(T_M)\otimes_{C^{\infty}(M)}D_{k-i}(E)$ where $A$ appears $k$ times $P$ appears $(i-1)$-times, and 
$\Delta_{-k-i+1}^{\mathsmaller{A,..,A,P,.,P}}\in D_i\big(A,\mathrm{Alt}_{k-i}(P,A)\big)\cong \mathrm{Sym}^i(T_M)\otimes_{C^{\infty}(M)}\mathrm{Sym}^k(E^{\vee})$ where $A$ appears $i$-times and $P$ appears $k-1$-times.
\end{lem}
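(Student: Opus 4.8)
The plan is to proceed by induction on $k$, exploiting the inductive functorial definition $D_{k}(\mathscr{P})=D\big(\mathscr{A},D_{k-1}(\mathscr{P})\subset\mathfrak{D}_{k-1}^{>}(\mathscr{P})\big)$ together with the module-valued derivation results already established (in particular the computation of $D(\mathscr{A},\mathscr{R})_{0}$ and $D(\mathscr{A},\mathscr{R})_{1}$ for a truncated diolic module $\mathscr{R}$, and the biderivation lemmas for degrees $-2,\ldots,1$). The base cases $k=1,2$ are precisely the content of those earlier lemmas. For the inductive step I would view a graded $k$-multiderivation as a graded derivation $\Delta\colon\mathscr{A}\to D_{k-1}(\mathscr{A})$, decompose it by its graded degree, and then restrict to the two homogeneous pieces $\mathscr{A}_{0}=A$ and $\mathscr{A}_{1}=P$. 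Each such restriction peels off one argument, replacing the classification of $k$-multiderivations by that of degree-shifted $(k-1)$-multiderivations valued in either $A$ or $P$, which the inductive hypothesis controls.

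The conceptual heart of the argument is the interplay of three facts that I would isolate first. First, the super-grading (with $P$ odd and $A$ even) forces the graded skew-symmetry of $\tilde{\Delta}$ to act as ordinary antisymmetry among the even $A$-slots but as ordinary \emph{symmetry} among the odd $P$-slots, by the Koszul sign rule of Lemma \ref{biderdual}. Second, the square-zero relation $P\cdot P=0$ degenerates the graded Leibniz rule in every $P$-direction: the derivation property there collapses to plain $A$-(multi)linearity, exactly as in the degree $-1$ and degree $-2$ biderivation computations. Third, and this is what produces the symmetric powers, because the surviving components are valued in the odd module $P$, the parity shift (d\'ecalage) converts the exterior symmetry one would expect among derivations of $A$ into a symmetric one; this is the same mechanism by which symbols of order-$i$ operators land in $\mathrm{Sym}^{i}(T_{M})$ rather than $\Lambda^{i}(T_{M})$. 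Making this d\'ecalage precise, and checking that no spurious signs survive under the inductive peeling, is the step I expect to be the main obstacle.

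With these facts in hand I would assemble the homogeneous components according to the type of the output. When the output lies in $P$ I would show the $A$-slots organize into an $i$-fold multiderivation and the $P$-slots into a Der-operator datum on $P$, identifying the component with $D_{i}\big(\mathrm{Der}_{k-i}(P)\big)$; when the output lies in $A$ the $P$-slots instead assemble into an alternating $A$-valued form, giving $D_{i}\big(A,\mathrm{Alt}_{k-i}(P,A)\big)$. In both cases the rank-one phenomenon already seen for biderivations reappears in the lowest degrees, and I would treat those boundary components separately, invoking Lemma \ref{deg-1derivations} and the degree $-2$ biderivation lemma to control precisely when they are nonzero.

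Finally I would pass to the geometric setting $A=C^{\infty}(M)$, $P=\Gamma(E)$ with $E\to M$ a vector bundle, where $P$ and $D(A)$ are projective and finitely generated, and invoke the $C^{\infty}$-version of Swan's theorem to replace the abstract functors by their geometric models. The symbol calculus then identifies the $i$ symmetrized derivation-directions with $\mathrm{Sym}^{i}(T_{M})$ and the $(k-i)$-fold Der-data with $D_{k-i}(E)$, yielding $D_{i}\big(\mathrm{Der}_{k-i}(P)\big)\cong\mathrm{Sym}^{i}(T_{M})\otimes_{C^{\infty}(M)}D_{k-i}(E)$; dualizing the alternating forms on $P$ and again applying the d\'ecalage gives $D_{i}\big(A,\mathrm{Alt}_{k-i}(P,A)\big)\cong\mathrm{Sym}^{i}(T_{M})\otimes_{C^{\infty}(M)}\mathrm{Sym}^{k}(E^{\vee})$. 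The remaining work is the routine but sign-sensitive verification that these identifications are $A$-linear and compatible with the inductive peeling, which I would carry out in local coordinates using the basis description of Der-operators supplied in the Coordinates subsection.
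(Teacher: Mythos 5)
The paper contains no proof of this lemma at all: it is asserted as a direct unwinding of the inductive definition $D_{k+1}(-)=D\big(D_k(-)\subset\mathfrak{D}_k^>(-)\big)$ together with the previously computed diolic derivations, module-valued derivations and biderivations, and is only illustrated by the $D_3(\mathscr{A})$ example. Your skeleton — induction on $k$, peeling one argument at a time by restricting a derivation $\mathscr{A}\to D_{k-1}(\mathscr{A})$ to the homogeneous pieces $A$ and $P$, using $P\cdot P=0$ to collapse the Leibniz rule in $P$-slots to $A$-multilinearity, treating the rank-one boundary components via lemma \ref{deg-1derivations}, and passing to the geometric model by projectivity — is exactly that implicit route. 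Note also that the statement itself is garbled (``$A$ appears $k$ times, $P$ appears $(i-1)$ times'' does not give $k$ arguments), so any honest proof must first repair the indexing from the $D_3$ example; your proposal does not flag this.

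The genuine gap is your third ``fact'', which you yourself identify as the conceptual heart. You claim that because a component is $P$-valued, a parity shift (d\'ecalage) converts the expected antisymmetry among the $A$-slots into symmetry, and that this is ``the same mechanism by which symbols land in $\mathrm{Sym}^i(T_M)$.'' This is incorrect, and it contradicts your own first fact. First, by lemma \ref{biderdual}(3) the symmetry type in a pair of slots is governed solely by the parities of those two \emph{inputs}: swapping two $A$-inputs gives $-(-1)^{0\cdot 0}=-1$, swapping two $P$-inputs gives $-(-1)^{1\cdot 1}=+1$, independently of where the output lands; d\'ecalage is a bookkeeping convention and cannot alter the symmetry of a given multilinear map. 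Second, the paper's own $k=2$ computation already refutes your mechanism: the degree-zero component $\Pi_0^{AA}$ is an \emph{ordinary bivector}, an element of $D_2(A)$, hence alternating — if the $A$-slots became genuinely symmetric your induction would contradict that lemma at its base case. Third, the analogy with symbols fails: $\mathrm{Smbl}_k(P,Q)\cong S^k\big(D_1(A)\big)\otimes_A\mathrm{Hom}_A(P,Q)$ is symmetric because $\delta_a\circ\delta_b=\delta_b\circ\delta_a$ modulo lower-order operators, i.e.\ because one quotients by the order filtration, not because of any parity shift; Schouten-type multiderivations are not symbols. The correct reading is that the $A$-slots stay alternating (so the factor written $\mathrm{Sym}^i(T_M)$ can only be understood as the graded symmetric power of the shifted tangent module, classically $\Lambda^i(T_M)$, or as a misprint), while it is the $P$-slots that become honestly symmetric by your fact one, whence the factor $\mathrm{Sym}(E^{\vee})$ — whose exponent should be $k-i$, the number of $P$-arguments, not $k$. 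As written, your ``sign-sensitive verification'' would be attempting to prove symmetry exactly where the calculus forces antisymmetry, so it cannot close; the repair is to discard fact three, keep facts one and two, and record the $A$-slot components as alternating multiderivations $D_i(A,-)$.
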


\begin{exm}
A $3$-derivation $\Delta\in D_3(\mathscr{A})$ is given by a decomposition
$\Delta=\Delta_{-3}+\Delta_{-2}+ \Delta_{-1}+ \Delta_0+ \Delta_1,$
with homogeneous components
$$
 \scalemath{0.83}{
\begin{cases}
\Delta_{-3}^{PPP}\in\text{Alt}_3(P,A)
\\
\Delta_{-2}^{PPP}\in\text{Der}_3(P),
\\
\Delta_{-2}^{APP}\in D\big(A,\text{Alt}_2(P,A)\big)
\\
\Delta_{-1}^{APP}\in D_2(A,P^{\vee}),
\\
\Delta_{0}^{AAA}\in D_3(A,A)
\\
\Delta_0^{AAP}\in D_2\big(A,\text{Der}(P)\big) 
\\
\Delta_1^{AAA}\in D_3(A,P),
\end{cases}}.
$$
\end{exm}
\subsection{Diolic canonical structures}
We now turn our attention to study those bi-derivations of the diolic algebra with the property that $[\![\Pi,\Pi]\!]=0.$
The latter is an element of $D_3(\mathscr{A})$, and by the description of such graded $3$-derivations, we see that this relation is tautologically satisfied for degree reasons when $\Pi$ is of degree $-2.$ By similar degree reasons, the non-trivial grading which we must focus on is for those operators in degree $0.$

\subsubsection{Diolic Poisson structures of degree $0$}
Applying the inductive definition of the graded Schouten bracket, we can deduce those diolic Poisson structures of interest.
\begin{thm}
\label{diolePDE}
Let $\Pi_0=(\Pi_0^{AA},\Pi_0^{AP})\in\text{D}_{2}(\mathscr{A})_0,$ be a degree $0$ diolic-bi-derivation. Then it is a diolic Poisson structure, ie. $[\![\Pi_0,\Pi_0]\!]=0,$ if and only if $[\![\Pi_0^{AA},\Pi_0^{AA}]\!]=0,$ and
$
 \Pi_0^{AP}\big(a_0,\Pi_0^{AP}(a_1,p_0)\big)-\Pi_0^{AP}\big(a_1,\Pi_0^{AP}(a_0,p_0)\big)-\Pi_0^{AP}\big(\Pi_0^{AA}(a_0,a_1),p_0\big)=0,$
for all $a,b\in A,$ and $p\in P.$
\end{thm}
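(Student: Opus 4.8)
The plan is to translate the vanishing of the Schouten square $[\![\Pi_0,\Pi_0]\!]$ into the graded Jacobi identity for the associated bracket, and then to exploit the fact that $\Pi_0$ has degree $0$ to split this identity according to the $\mathbb{Z}$-grading of $\mathscr{A}=A\oplus P$. Writing $\{a,b\}:=\tilde{\Pi}_0(a,b)$ for the bracket determined by $\Pi_0$ via Lemma \ref{biderdual}, the theory of graded canonical structures (the master equation) identifies $[\![\Pi_0,\Pi_0]\!]$, an element of $D_3(\mathscr{A})_0$, with twice the graded Jacobiator of $\{-,-\}$; concretely, unwinding the inductive Schouten bracket on a triple yields $\tfrac12[\![\Pi_0,\Pi_0]\!](a_0,a_1,a_2)=\{\{a_0,a_1\},a_2\}+\text{(graded cyclic permutations)}$. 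Hence $[\![\Pi_0,\Pi_0]\!]=0$ if and only if $\{-,-\}$ satisfies the graded Jacobi identity on all homogeneous triples.

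Because $\Pi_0$ is homogeneous of degree $0$, the bracket preserves the grading, $\{\mathscr{A}_i,\mathscr{A}_j\}\subseteq\mathscr{A}_{i+j}$, so that $\{A,A\}\subseteq A$ is governed by $\Pi_0^{AA}$, $\{A,P\}\subseteq P$ by $\Pi_0^{AP}$, and crucially $\{P,P\}\subseteq\mathscr{A}_2=0$. I would then test the Jacobi identity on each type of homogeneous triple. On $(A,A,A)$ only $\Pi_0^{AA}$ appears and the identity is exactly the ordinary Jacobi identity for the bivector on $A$, i.e.\ $[\![\Pi_0^{AA},\Pi_0^{AA}]\!]=0$, giving the first condition. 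On $(A,A,P)$, writing the Jacobi identity in its Leibniz (derivation) form $\{a_0,\{a_1,p_0\}\}=\{\{a_0,a_1\},p_0\}+\{a_1,\{a_0,p_0\}\}$ and substituting $\{A,A\}$ and $\{A,P\}$ by $\Pi_0^{AA}$ and $\Pi_0^{AP}$ yields precisely
$$\Pi_0^{AP}\big(a_0,\Pi_0^{AP}(a_1,p_0)\big)-\Pi_0^{AP}\big(a_1,\Pi_0^{AP}(a_0,p_0)\big)-\Pi_0^{AP}\big(\Pi_0^{AA}(a_0,a_1),p_0\big)=0,$$
which is the second condition. Finally, every triple containing two or more elements of $P$ produces only terms of the form $\{P,P\}\subseteq\mathscr{A}_2=0$ (or brackets thereof), so the Jacobi identity holds automatically and imposes nothing further; this is where the square-zero hypothesis $P\cdot P=0$ is essential and where the degree reasoning invoked for the degree $-2$ case reappears.

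The delicate step is the first one: justifying carefully that unwinding the inductive definition of the Schouten bracket turns $[\![\Pi_0,\Pi_0]\!]$ into the graded Jacobiator, while keeping the Koszul signs straight. The degree-$0$ assumption is what makes this clean: on the $(A,A,P)$ triple both $a_0,a_1$ are even, so the commutation factors $(-1)^{a_0\cdot a_1}$ are trivial and the Leibniz form of Jacobi delivers the stated relation with exactly the displayed signs and no spurious factors. As a consistency check one confirms that the two surviving conditions are genuinely independent: the first constrains $\Pi_0^{AA}$ alone, while the second couples $\Pi_0^{AP}$ to $\Pi_0^{AA}$ and, via the earlier interpretation $\{a,-\}_0\in\mathrm{Der}(P)$, expresses the flatness of the associated Hamiltonian connection.
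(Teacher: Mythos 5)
Your proposal is correct and follows essentially the same route as the paper: the paper likewise unwinds the inductive definition of the graded Schouten bracket (the master equation identifying $[\![\Pi_0,\Pi_0]\!]$ with the graded Jacobiator), observes that for degree reasons a degree-$0$ element of $D_3(\mathscr{A})$ is determined by its values on triples of type $(A,A,A)$ and $(A,A,P)$, and reads off the ordinary Jacobi identity for $\Pi_0^{AA}$ and the Leibniz-form compatibility equation coupling $\Pi_0^{AP}$ to $\Pi_0^{AA}$. Your handling of the triples with two or more entries from $P$ via $\mathscr{A}_2=0$ matches the paper's degree argument exactly.
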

This result tells us that a degree zero diolic-biderivation is a Poisson structure if and only if its components satisfy a pair of non-linear PDEs.
\subsubsection{Coordinates}
Consider for simplicity the case of a trivial rank $m$-bundle on an $n$-dimensional manifold $M.$ Let $x^i$ be coordinates for $A$ and $u_{\alpha}$ be those for $P$. Then the module $\text{Der}(P)$ is generated by $\big<\nabla_i, u_{\alpha}^{\beta}\big>.$ That is, for $p=p^{\alpha}e_{\alpha}\in P$, we have basis elements in $\text{Der}(E)$ defined by $\nabla_i(p)=\partial_i p^{\alpha}e_{\alpha}$ and similarly, $u_{\beta}^{\alpha}:=p^{\alpha}e_{\beta}$, all of course locally near a point $x\in M.$ We find a diolic bi-derivation of degree zero locally is given by
$\Pi_0^A=\big(\Pi^{i,j}\nabla_j+\Pi_{\beta}^{i\alpha}u_{\alpha}^{\beta}\big)\otimes \frac{\partial}{\partial x_i}.$

Computing the quantity $[\![\Pi_0,\Pi_0]\!](a,b,c)=0$ on triples $a,b,c\in A,$ one finds for
$\overline{\Pi_0}:=\Pi_0^{AA}$ the zero component of $D_{2}(\mathscr{A})_0,$ which is to say $D_2(A),$ the `usual' Poisson PDE
\begin{equation}
    \sum_{\ell=1}^m\bigg(\overline{\Pi}^{i\ell}\partial_{\ell}\overline{\Pi}^{jk}+\overline{\Pi}^{j\ell}\partial_{\ell}\overline{\Pi}^{ki}+\overline{\Pi}^{k\ell}\partial_{\ell}\overline{\Pi}^{ij}\bigg)=0,
    \end{equation}
    where $\partial_{\ell}$ is of course shorthand for $\frac{\partial}{\partial x_{\ell}}.$
    
    Computing in a similar way the quantity $[\![\Pi_0,\Pi_0]\!](a,b,p)=0,$ which reads 
  $\Pi_0^A(a)\big(\Pi_0^A(b)(p)\big)-\Pi_0^A(b)\big(\Pi_0^A(a)(p)\big)=\Pi_0^A\big(\overline{\Pi_0}(a,b)\big(p),$
for isntance, in the case when $a=x^kb=x^{\ell}p=u_{\gamma},$ yields the following PDE 
\begin{equation}
\label{eqn:DiolePoissPDE}
\sum_{i=1}^n\bigg(\Pi_{\beta}^{i\alpha}\frac{\partial}{\partial x_i}\Pi^{jk}+\Pi^{ij}\frac{\partial}{\partial x_i}\Pi_{\beta}^{k\alpha}-\Pi^{ik}\frac{\partial}{\partial x_i}\Pi_{\beta}^{j\alpha}\bigg)=0.
\end{equation}
We call (\ref{eqn:DiolePoissPDE}) the \emph{diolic Poisson equation}, and any degree zero diolic bi-derivation whose components satisfy the usual Poisson-PDE and the diolic Poisson equation simultaneously, is a diolic Poisson structure.

\begin{rmk}
Note that any (graded) canonical structure of Poisson type defines a differential $\partial_{\Pi}:=[\![\Pi,-]\!]$ which naturally determines a complex $\big(D_{*}(\mathscr{A}),\partial_{\Pi}\big)$, referred to as the \emph{Poisson complex} of $\mathscr{A}$. One may then continue the analysis of diolic Poisson structures by studying the associated cohomology theory and computing the so-called \emph{diolic Poisson cohomologies}. 
\end{rmk}

\subsection{Diolic differential operators}
We now compute the graded modules of diolic differential operators.
\begin{thm}
\label{diolediffops}
Let $\mathscr{A}$ be a diolic algebra. A degree $0$ and order $k$, graded differential operator from $\mathscr{A}$ to $\mathscr{A}$ is a pair $\Box_0= (\Box_0^A,\Box_0^P)$ of operators with $\Box_0^A\in \mathrm{Diff}_k(A,A),\Box_0^P\in\mathrm{Diff}_k(P,P)$ satisfying 
$\delta_a^k(\Box_0^A)=\delta_a^k(\Box_0^P)$
for all $a\in A$ (ie. they have the same scalar-type symbol).
Those graded differential operators of degree $1$ and order $k$ are precisely $P$-valued differential operators of order $k$ on $A$. That is, we have an isomorphism of $A$-modules,
  $\mathrm{Diff}_{k}(\mathscr{A})_1\cong \mathrm{Diff}_k(A,P).$
  Moreover, when $P$ is a projective module of rank $1$ there exists degree $-1$ diolic differential operators which are elements of $\mathrm{Diff}_k(P,A).$
\end{thm}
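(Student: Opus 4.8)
The plan is to exploit that $\mathscr{A}=A\oplus P$ is concentrated in degrees $0$ and $1$, so that a homogeneous graded operator $\Box_g\colon\mathscr{A}\to\mathscr{A}$ is completely determined by its action on the two components $A=\mathscr{A}_0$ and $P=\mathscr{A}_1$. Since $\Box_g(\mathscr{A}_h)\subseteq\mathscr{A}_{g+h}$ and $\mathscr{A}_h=0$ for $h\neq 0,1$, only $g\in\{-1,0,1\}$ can produce something nonzero: a degree $0$ operator is a pair $\Box_0=(\Box_0^A,\Box_0^P)$ with $\Box_0^A\colon A\to A$ and $\Box_0^P\colon P\to P$; a degree $1$ operator reduces to a single map $\Box_1^A\colon A\to P$ (with $\Box_1^P=0$, as $\Box_1(P)\subseteq\mathscr{A}_2=0$); and a degree $-1$ operator reduces to $\Box_{-1}^P\colon P\to A$ (with $\Box_{-1}^A=0$). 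First I would record these decompositions, and then analyse the order condition of Definition \ref{diffopdef}, namely $\delta_{c_0,\dots,c_k}(\Box_g)=0$ for all $c_i\in\mathscr{A}$, by sorting the arguments $c_i$ according to whether they lie in $A$ or in $P$.

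For the degree $0$ statement, taking all $c_i\in A$ makes each $\delta_{c_i}$ act diagonally on the two summands (an element of $A$ preserves the grading), so the condition splits into $\Box_0^A\in\mathrm{Diff}_k(A,A)$ and $\Box_0^P\in\mathrm{Diff}_k(P,P)$. The point is that the square--zero relation $P\cdot P=0$ controls the remaining arguments: a direct computation shows $\delta_{p'}\circ\delta_p(\Box_0)=0$ for any $p,p'\in P$, so at most one $P$--argument survives, and the sole extra condition comes from one $P$--argument $p$. Here $\delta_p(\Box_0)$ restricted to $A$ is $a\mapsto p\cdot\Box_0^A(a)-\Box_0^P(pa)$, and requiring the remaining $k$ commutators with elements of $A$ to annihilate it says precisely that this $A\to P$ map has order $\le k-1$. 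Using the product rule $\delta_a(\Delta\circ\nabla)=\delta_a(\Delta)\circ\nabla+(-1)^{a\cdot\nabla}\Delta\circ\delta_a(\nabla)$, this is equivalent to the coincidence of the top symbols, i.e. $\delta_a^k(\Box_0^A)=\delta_a^k(\Box_0^P)$ for all $a\in A$, which is the asserted scalar--symbol condition.

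The degree $1$ case is the easiest and I would treat it next: a short computation gives $\delta_p(\Box_1)=0$ identically, since left multiplication by $p\in P$ sends the image $\Box_1(A)\subseteq P$ into $\mathscr{A}_2=0$, while right multiplication feeds $P$ into $\Box_1$, on which it vanishes. Hence the $P$--arguments impose no condition at all, the order filtration is governed entirely by the $A$--arguments, and one obtains the isomorphism $\mathrm{Diff}_k(\mathscr{A})_1\cong\mathrm{Diff}_k(A,P)$.

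The degree $-1$ case is where the real work lies, and I expect it to be the main obstacle. Restricting to $A$--arguments identifies $\Box_{-1}^P$ with a differential operator $P\to A$, but now, in contrast to degree $1$, a $P$--argument can raise degree back into the nonzero part of $\mathscr{A}$, so these arguments produce genuine constraints. The crucial one comes from two $P$--arguments: evaluating $\delta_{p''}\circ\delta_{p'}(\Box_{-1})$ and using $P\cdot P=0$ produces the relation built from $p''\cdot\Box_{-1}(p'x)-p'\cdot\Box_{-1}(p''x)$, which upon passing to a local basis $\{u_\alpha\}$ of $P$ and contracting indices yields a factor $(\operatorname{rank}P-1)$ multiplying the symbol, exactly as in the proof of Lemma \ref{deg-1derivations}. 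This forces $\operatorname{rank}(P)=1$ for any nonzero degree $-1$ operator; conversely, when $P$ has rank $1$ these constraints are consistent and the resulting operators are realised as differential operators $P\to A$. The delicate points to get right are the order bookkeeping across the degree shift and the verification that three or more $P$--arguments contribute nothing, so that the rank obstruction is the only genuinely new relation.
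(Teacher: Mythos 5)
Your analysis of degrees $0$ and $1$ is correct and is essentially the paper's own argument: the paper phrases it as an induction on the order $k$, but its engine is exactly your observation that at most one argument from $P$ can appear nontrivially in $\delta_{c_0,\dots,c_k}(\Box_0)$, and your reformulation of the single-$P$-argument condition as equality of top symbols via the product rule is a more explicit rendering of the paper's relation $p\,\delta_a(\Box_0^A)=\delta_a(\Box_0^P)(p)$. Note, however, that the paper's proof stops after the degree $1$ case (``The rest is immediate''), so for degree $-1$ you are supplying an argument the paper omits entirely.

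That supplied argument has a genuine error, located precisely at the ``order bookkeeping across the degree shift'' you flagged. For an operator of order $\le k$, the two-$P$-argument condition is \emph{not} that $a\mapsto p''\Box_{-1}(p'a)-p'\Box_{-1}(p''a)$ vanishes; it is that this operator is killed by the remaining $k-1$ commutators with elements of $A$, i.e.\ has order $\le k-2$. Only for $k=1$ does this mean vanishing, which is why the index-contraction of Lemma \ref{deg-1derivations} forces rank $1$ there; for $k\ge 2$ the same contraction only kills the top-order part of the components and cannot force $\Box_{-1}=0$. Concretely, let $P$ be free of rank $2$ with basis $e_1,e_2$, and define $\Box_{-1}$ by $\Box_{-1}|_A=0$ and $\Box_{-1}|_P=\varphi$, where $\varphi(f_1e_1+f_2e_2):=f_1$. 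Since $\varphi$ is $A$-linear, every $\delta_a$ annihilates $\Box_{-1}$; each $\delta_p(\Box_{-1})$ is the degree-$0$ operator with $A$-linear components $a\mapsto a\varphi(p)$ and $q\mapsto p\,\varphi(q)$; each $\delta_{p'}\delta_p(\Box_{-1})$ is multiplication by the element $p'\varphi(p)-p\,\varphi(p')\in P$, again $A$-linear; and any three $P$-arguments give zero identically. Hence $\delta_{c_0,c_1,c_2}(\Box_{-1})=0$ for all homogeneous $c_i$, so $\Box_{-1}$ is a nonzero degree $-1$ diolic differential operator of order exactly $2$ (it is not of order $1$, since $\delta_{e_2}\delta_{e_1}(\Box_{-1})$ is multiplication by $e_2\neq 0$) on an algebra with $\mathrm{rank}(P)=2$. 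So the rank-$1$ dichotomy is special to derivations: what higher rank costs at order $k$ is two units of order in the component ($\mathrm{Diff}_k(\mathscr{A})_{-1}\cong\mathrm{Diff}_{k-2}(P,A)$ for free $P$ of rank $\ge 2$), not existence. The same bookkeeping also shows that even in the rank-$1$ case the single-$P$-argument condition, applied with $p$ a generator, forces $\Box_{-1}^P$ to have order $\le k-1$, so the identification there is with $\mathrm{Diff}_{k-1}(P,A)$ rather than $\mathrm{Diff}_k(P,A)$ --- consistent with $k=1$ recovering $P^{\vee}=\mathrm{Diff}_0(P,A)$ as in Lemma \ref{deg-1derivations}.
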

\begin{proof}
The proof is by induction on the order $k.$ 
Consider the degree zero case for order $1.$ Such an operator is a degree zero $\mathbb{K}$-linear morphism $\Box_0:\mathscr{A}\rightarrow \mathscr{A}$ determined by its restriction to homogeneous components of $\mathscr{A},$ such that it satisfies the definition of being a first order graded differential operators, which means
$$\delta_{a_0,a_1}(\Box_0)=0,$$
for homogeneous $a_0,a_1\in \mathscr{A}.$ 
The components, explicitly are written as
  $\Box_{0}^A:A\rightarrow A$ and $\Box_{0}^P:P\rightarrow P$ and we compute the admissable cases for $\delta_{a_0,a_1}.$
  For $a,b\in A$ the expression $\delta_{a,b}(\Box_0)$ acting on $1_A\in A$ or $p\in P$, yields $\Box_0^A,\Box_0^P$ are ordinary differential operators $A\rightarrow A$ and $P\rightarrow P,$ respectively.
  The condition we see above comes from computing 
  $\delta_{a,p}(\Box_0)(b)=0,$
  which we can, in particular, computed for $b=1_A,$ which yields exactly that $p\delta_a(\Box_0^A)=\delta_a(\Box_0^P)(p),$
  where on the left hand side of this expression, we interpret $p:A\rightarrow P$ as the degree $1$ order zero differential operator of `multiplication by $p\in P.$
  Proceeding inductively one finds our result. It is based on the fact that $\delta_a^{k+1}=\delta_{a}\circ \delta^k$ and for degree purposes we can have at most one degree $1$ homogeneous element $p\in P$ in the tuple of elements $\delta_{a_0,a_1,..,a_{k-1},p}$. In particular, we assume the case $a_0=a_1=..=a\in A.$ 
The above description of degree one differential operators is easily established. Simply note that for degree reasons such an operator is determined by its components
$\Box_{1}^A:A\rightarrow P$ and $\Box_{1}^P:P\rightarrow 0$
 with $\Box_1^P\equiv 0$ identically zero. The rest is immediate.
\end{proof}

In the same way a degree zero derivation of a diole algebra coincided geometrically with a Der-operator, suitably interpreted as a covariant derivative associated to a connection, the description of degree zero differential operators in theorem \ref{diolediffops} suggests that a degree zero diolic differential operator is really some kind of operator which behaves like a higher order covariant differential. To better formalize this, we turn our attention to finding an analogue of the Atiyah sequence for differential operators of order $k>1$.
Indeed, for a degree zero differential operator $\Box_0$, we have, in analogy with the symbol map $\text{Der}(P)\xrightarrow{\sigma}D(A),$ a projection 
$$\varsigma:\text{Diff}_{k}(\mathscr{A})_0\rightarrow \text{Diff}_k(A),$$
which sends $\Box_0=(\Box_{0}^A,\Box_{0}^P)$ to its component $\Box_{0}^A.$

\begin{lem}
There is an isomorphism of $A$-modules
$\ker(\varsigma)\cong \mathrm{Diff}_{k-1}(P,P).$
\end{lem}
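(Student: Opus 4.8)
The plan is to unwind the kernel of $\varsigma$ using the explicit description of degree-zero diolic differential operators from Theorem \ref{diolediffops}, and then to recognize the defining condition of the kernel as precisely a lowering of order by one, via a polarization argument available in characteristic zero. By Theorem \ref{diolediffops}, an element of $\mathrm{Diff}_k(\mathscr{A})_0$ is a pair $\Box_0=(\Box_0^A,\Box_0^P)$ with $\Box_0^A\in\mathrm{Diff}_k(A,A)$ and $\Box_0^P\in\mathrm{Diff}_k(P,P)$ subject to the symbol-matching relation $\delta_a^k(\Box_0^A)=\delta_a^k(\Box_0^P)$ for all $a\in A$. Since $\varsigma(\Box_0)=\Box_0^A$, an element lies in $\ker(\varsigma)$ exactly when $\Box_0^A=0$, in which case the compatibility relation degenerates to
\[
\delta_a^k(\Box_0^P)=0\qquad\text{for all }a\in A.
\]
Thus $\ker(\varsigma)$ is identified set-theoretically with the collection of $\Box_0^P\in\mathrm{Diff}_k(P,P)$ killed by $\delta_a^k$, via the assignment $(0,\Box_0^P)\mapsto\Box_0^P$, and it remains only to recognize this collection as $\mathrm{Diff}_{k-1}(P,P)$.

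The heart of the argument is the equivalence between $\delta_a^k(\Box_0^P)=0$ for all $a$ and membership in $\mathrm{Diff}_{k-1}(P,P)$. By definition $\Box_0^P$ has order $\le k-1$ iff $\delta_{a_0,\dots,a_{k-1}}(\Box_0^P)=0$ for all $a_0,\dots,a_{k-1}\in A$, a condition involving $k$ subscripts. First I would record that the operators $\delta_a$ are $\mathbb{K}$-linear and additive in the subscript $a$ (this is the relation $\delta_{a+b}=\delta_a+\delta_b$, $\delta_{\lambda a}=\lambda\delta_a$ for $\lambda\in\mathbb{K}$), and that any two such operators commute, since $\delta_a\delta_b-\delta_b\delta_a$ corresponds by the graded Jacobi identity to $\delta_{[a,b]}=\delta_0=0$ as $A$ is commutative. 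Consequently $(a_0,\dots,a_{k-1})\mapsto\delta_{a_0,\dots,a_{k-1}}(\Box_0^P)$ is a symmetric $\mathbb{K}$-multilinear map, and $\delta_a^k(\Box_0^P)$ is its restriction to the diagonal $a_0=\dots=a_{k-1}=a$. Over a field of characteristic zero, the standard polarization identity recovers a symmetric multilinear form from its diagonal, so vanishing of the diagonal $\delta_a^k(\Box_0^P)=0$ for all $a$ is equivalent to vanishing of the full form $\delta_{a_0,\dots,a_{k-1}}(\Box_0^P)=0$, i.e. to $\Box_0^P\in\mathrm{Diff}_{k-1}(P,P)$.

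Conversely, every $\Box_0^P\in\mathrm{Diff}_{k-1}(P,P)\subset\mathrm{Diff}_k(P,P)$ satisfies $\delta_a^k(\Box_0^P)=0$ automatically, so the pair $(0,\Box_0^P)$ meets the kernel conditions; this shows the assignment $(0,\Box_0^P)\mapsto\Box_0^P$ is a bijection of $\ker(\varsigma)$ onto $\mathrm{Diff}_{k-1}(P,P)$. To finish I would check it is $A$-linear: since the $A$-component of any element of $\ker(\varsigma)$ is zero, both the left and right $A$-module actions on $\mathrm{Diff}_k(\mathscr{A})_0$ act only on the $P$-component $\Box_0^P$, so the correspondence intertwines the module structures and is an isomorphism of $A$-modules. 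The main obstacle is the polarization step: one must verify carefully that the $\delta_a$ are $\mathbb{K}$-linear in $a$ and mutually commuting (so that the diagonal genuinely determines the multilinear form), and that the characteristic-zero hypothesis on $\mathbb{K}$ is in force; everything else is a direct reading-off from Theorem \ref{diolediffops}.
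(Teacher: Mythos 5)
Your proposal is correct and follows the route the paper itself implies: the lemma is stated there without proof, as an immediate consequence of the characterization of degree-zero diolic operators in Theorem \ref{diolediffops}, and you unwind exactly that characterization, observing that $\varsigma(\Box_0)=0$ forces $\Box_0^A=0$ and reduces the compatibility condition to $\delta_a^k(\Box_0^P)=0$ for all $a\in A$. Your explicit polarization step (using commutativity and $\mathbb{K}$-linearity of the $\delta_a$'s and $\mathrm{char}\,\mathbb{K}=0$ to pass from the diagonal condition $\delta_a^k(\Box_0^P)=0$ to the full multi-index condition $\delta_{a_0,\dots,a_{k-1}}(\Box_0^P)=0$, i.e.\ to $\Box_0^P\in\mathrm{Diff}_{k-1}(P,P)$) is a detail the paper leaves implicit, and it is handled correctly.
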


We thus find the analogous sequence for higher order degree zero graded diolic operators, which in the case when $P$ is a projective module, is short exact:
\begin{equation}
\label{eqn:diAtk}
0\rightarrow \text{Diff}_{k-1}(P,P)\hookrightarrow \text{Diff}_{k}(\mathscr{A},\mathscr{A})_0\xrightarrow{\varsigma}\text{Diff}_k(A,A)\rightarrow 0.
\end{equation}
We call these the \emph{diolic Atiyah sequences of order} $k$, and denote them by $\text{At}_k(\mathscr{A}).$
Note that for each $\ell\leq k$, we have an embedding $\text{At}_{\ell}(\mathscr{A})\hookrightarrow \text{At}_k(\mathscr{A}).$
The \emph{infinite order diolic Atiyah sequence} $\text{At}(\mathscr{A})$
is the direct limit of natural embeddings $..\subset \text{At}_k(\mathscr{A})\subset \text{At}_{k+1}(\mathscr{A})\subset..$

The utility of such sequences for computation purposes is apparent. Just as we used a splitting of $\text{At}_1(\mathscr{A})$ to find a coordinate description of Der-operators in $P$, we may look for similar splittings of $\text{At}_k(\mathscr{A})$ for each $k$.

\subsubsection{Higher order connections in an algebra of dioles}
In the spirit of the Atiyah sequence (for order $k=1$), where the space of splittings is the space of linear connections in the module $P$, we can analyze the space of splittings for higher order Atiyah sequences (\ref{eqn:diAtk}). Consider $\mathscr{Q}$ a graded $\mathscr{A}$-module. In the left $\mathscr{A}$-module $\text{Diff}_k^<(\mathscr{A},\mathscr{Q})_{\mathcal{G}}$ there is a graded-submodule with respect to this structure denoted 
$$\text{Dif}_k^<(\mathscr{A},\mathscr{Q})_{\mathcal{G}}:=\big\{\Box\in \text{Diff}_k(\mathscr{A},\mathscr{Q})_{\mathcal{G}}|\Box(1_\mathscr{A})=0\big\}.$$
Notice that in the $k=1$ case, we recover precisely graded derivations.
\begin{rmk}
These functors play a role in studying higher order analogues of graded differential forms, and provide a conceptual environment for studying the so-called \textit{higher order de Rham complexes}, as studied in the ungraded case in \cite{VezVin}. Roughly speaking, using such functors leads us to a formalism for calculus in which the de Rham differential need not be a first order differential operator.
\end{rmk}

If we 'restrict' the above diolic Atiyah sequence of order $k$ to the submodules of the form $\mathrm{Dif}_k^<(\mathscr{A})_0\subset \mathrm{Diff}_k(\mathscr{A})_0$ we recover exactly the appropriate notion of linear connections for the $k=1$ case.

This is an obvious consequence of our formalism for differential calculus over diolic algebras. 
It is immediate from our characterization of order one diolic differential operators of degree $0$ and that our diolic Atiyah sequence of order $1$ coincides with the Atiyah sequence of the vector bundle $P$, whose space of splittings is precisely the space of linear connections in $P.$ 

\begin{rmk}
A more satisfactory categorical way to view this sub-object is given by viewing it as an end-functor $\text{Dif}_k^<(-)_{\mathcal{G}}\in \text{Fun}\big(\text{Mod}^<(\mathscr{A})_{\mathcal{G}}, \text{Mod}^<(\mathscr{A})_{\mathcal{G}}\big)$, together with a short exact sequence in this category, as
$0\rightarrow \text{Dif}_k(-)_{\mathcal{G}}\xrightarrow{i_k}\text{Diff}_k^<(-)_{\mathcal{G}}\xrightarrow{\mathcal{D}_k}\mathbb{I}_{\text{Mod}(\mathscr{A})}\rightarrow 0,$
where the natural transformation $\mathcal{D}_k$ is, given on a fixed $\mathscr{A}$-module $\mathscr{P}$ as the morphism $\mathcal{D}_k(\mathscr{P})\colon \Delta\in \text{Diff}_k(\mathscr{A},\mathscr{P})\rightarrow \Delta(1)\in \mathscr{P}.$
\end{rmk}
Togther with the functors of differential calculus, the following is a natural definition in the diolic formalism.

\begin{definition}
A \emph{diolic connection of order} $k$, or simply a \emph{diolic} $k$\emph{-connection}, is an $\mathscr{A}_0$-linear splitting of the $k$'th order diolic Atiyah sequence $(\ref{eqn:diAtk}),$ which factors through the sub-object $\mathrm{Dif}_k.$
\end{definition}
Therefore, the definition of a diolic $k$-connection amounts to an $A$-linear homomorphism,
$\nabla^{(k)}:\text{Diff}_k(A,A)\rightarrow \text{Diff}_k(\mathscr{A},\mathscr{A})_0,$
such that $\nabla^{(k)}(f^<\Box+g^<\Box')=f^<\nabla_{\Box}^{(k)}+g^<\nabla_{\Box'}^{(k)}, f,g\in A, \Box,\Box'\in \text{Diff}_k(A),$ and this map is a splitting, ie. $\varsigma\circ \nabla^{(k)}=id_{\text{Diff}_k(A,A)}.$
Moreover, the property of factoring through the sub-object amounts to requiring that we have $\nabla^{(k)}(\Box)(1)\equiv 0.$
The condition of being a splitting means that for all $\Box\in \text{Diff}_k(A,A)$ we have $\varsigma\big(\nabla_{\Box}^{(k)}\big)=\Box,$ and so we call the operator $\nabla_{\Box}^{(k)}$ the \textit{diolic covariant differential of order} $k$ along $\Box$ with respect to the diolic $k$-connection.
If we work with $\text{Dif}$ rather than $\text{Diff}$ we may omit the last condition. In the case of implementing a suitable restriction, it is rather clear that every linear connection in a vector bundle $\pi:E\rightarrow M$ is a diolic connection.

\begin{rmk}
Obviously a naive definition of a diolic $k$-connection which is \emph{flat} does not make sense as there is no Lie algebra structure on $\text{Diff}_k;$ however, in the direct limit $\text{Diff}$, one may try to formalize this notion using the graded Lie algebra structure given by the graded commutator.
\end{rmk}
The existence of higher order diolic Atiyah sequences suggests also a natural extension of the definition of a derivation representation. That is, with such a sequence at our disposal it is natural to make the following definition within the formalism of diolic differential calculus for two such diolic algebras $\mathscr{A}_1,\mathscr{A}_2$.

\begin{definition}
\label{diolicdiffrep}
A \emph{differential representation} of $\mathrm{Diff}(\mathscr{A}_1,\mathscr{A}_1)_0$ in the diolic algebra $\mathscr{A}_2$ is a commutative diagram
\[
\begin{tikzcd}
0\arrow[r,]& \mathrm{Diff}(P_1,P_1)\arrow[d, "f"] \arrow[r,] & \mathrm{Diff}(\mathscr{A}_1)_0 \arrow[d, "f"] \arrow[r,] & \mathrm{Diff}(A)\arrow[d,"id"] \arrow[r,] & 0
\\
0\arrow[r,]& \mathrm{Diff}(P_2,P_2)\arrow[r,] & \mathrm{Diff}(\mathscr{A}_2)_0 \arrow[r,] & \mathrm{Diff}(A) \arrow[r,] & 0
\end{tikzcd}
\]
such that $f$ is a Lie-algebra morphism as well as a morphism of $A$-modules.
\end{definition}
In the diagram appearing in the above definition, the left most vertical arrow is again an appropriate restriction map.  

\subsubsection{Module structures}
In this section we will investigate the module structures which naturally exist on $\mathrm{Diff}(\mathscr{A})$, the collection of all diolic differential operators. 

For computation purposes it is useful to view in components the module structure as well as how composition of diolic operators acts.
Clearly as each homogeneous component of degree $0,1$ given as $\text{Diff}(\mathscr{A},\mathscr{A})_0\cong \text{Diff}(P,P)\oplus \text{Diff}(A,A),$ we have a well-defined $A$-module structure, since each component is an $A$-module.  Similarly for $\text{Diff}(\mathscr{A},\mathscr{A})_1=\text{Diff}(A,P).$ 
We turn our attention to the $\mathscr{A}_1=P$-module structure.
Let $\Box_0=(\Box_0^A,\Box_0^P)$ be a degree zero diolic operator.

We want to understand the quantity $p^<\cdot \Box.$ Clearly, this will be a diolic operator of degree $1.$ We see that such an operator acts trivially on the component $P$, so we find the required left $P$-module structure
$$(p^<\cdot \Box_0)(a):=p\cdot \Box_{0}^A(a).$$
In this expression, we interpret $p$ as the degree $1$ graded differential operator of order $0.$ 
The right module-structure is similarly defined
$(p^>\cdot \Box)(a):=(-1)^{p\cdot \Box}(\Box_0^P\circ p)(a)=\Box_0^P(pa).$
We note that the differential operator $\Delta_0^P\circ p$ is again an operator of order $k.$
\begin{lem}
The module of diolic differential operators is a diolic module. 
\end{lem}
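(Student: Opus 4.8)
The plan is to exploit the fact, recorded in Section~\ref{sec:Calc}, that for any $\mathcal{G}$-graded algebra the totality $\text{Diff}(\mathscr{A},\mathscr{A})_{\mathcal{G}}$ is already a graded $\mathscr{A}$-bimodule, with left action $a^{<}\Box := a_{\mathscr{A}}\circ \Box$. Since a diolic module is by definition nothing more than a graded left $\mathscr{A}$-module, the entire content of the lemma is to check that this left action is compatible with the $\mathbb{Z}$-grading on $\text{Diff}(\mathscr{A})$ exhibited in Theorem~\ref{diolediffops}, in the manner prescribed by the definition of a diolic module: namely that $\mathscr{A}_0\cdot \text{Diff}(\mathscr{A})_g\subseteq \text{Diff}(\mathscr{A})_g$ and $\mathscr{A}_1\cdot \text{Diff}(\mathscr{A})_g\subseteq \text{Diff}(\mathscr{A})_{g+1}$ for all occupied degrees $g$ (including the degree $-1$ component present in the rank-one case).

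First I would treat the degree-zero part of the action. Composition with the order-zero multiplication operator $a_{\mathscr{A}}$ preserves the order filtration, so $a^{<}\Box$ is again a differential operator of the same order; and because $\deg(a)=0$ for $a\in A=\mathscr{A}_0$, it lies in the same graded component. This recovers exactly the observation that each homogeneous piece $\text{Diff}(\mathscr{A})_g$ is an $A$-module. Next I would turn to the $P=\mathscr{A}_1$ action. Using $\deg(p)=1$ together with the additivity of degree under composition, $p^{<}\Box = p_{\mathscr{A}}\circ\Box$ raises the grading by one, carrying $\text{Diff}(\mathscr{A})_g$ into $\text{Diff}(\mathscr{A})_{g+1}$. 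Concretely, for a degree-zero operator $\Box_0=(\Box_0^A,\Box_0^P)$ the explicit formula $(p^{<}\Box_0)(a)=p\cdot \Box_0^A(a)$ shows that $p^{<}\Box_0$ is $P$-valued on $A$ and vanishes on $P$, since its $P$-component $p\circ\Box_0^P$ lands in $P\cdot P=0$; hence $p^{<}\Box_0\in \text{Diff}(A,P)\cong \text{Diff}(\mathscr{A})_1$, in agreement with Theorem~\ref{diolediffops}.

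Finally I would verify the module axioms, which are inherited from associativity of composition: $(ab)^{<}=a^{<}b^{<}$ and $(ap)^{<}=a^{<}p^{<}$ follow from $(ab)_{\mathscr{A}}=a_{\mathscr{A}}\circ b_{\mathscr{A}}$, while the essential square-zero compatibility $p^{<}\big((p')^{<}\Box\big)=(pp')^{<}\Box=0$ is forced by $P\cdot P=0$ in $\mathscr{A}$; in particular the $P$-action annihilates degree-one operators, consistent with $\mathscr{A}_2=0$. The one place demanding genuine care is confirming that the action always lands in the correct graded component and respects order: this rests entirely on the composition property $\text{Diff}_0\circ \text{Diff}_k\subseteq \text{Diff}_k$ together with the additivity of the grading under composition, both established in Section~\ref{sec:Calc}. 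Once these bookkeeping checks are assembled, $\text{Diff}(\mathscr{A})$ is a graded left $\mathscr{A}$-module, that is, a diolic module.
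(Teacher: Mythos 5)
Your proof is correct and takes essentially the same approach as the paper: both define the $P$-action by left composition $p^{<}\Box := p_{\mathscr{A}}\circ\Box$ with $p$ viewed as a degree-$1$, order-$0$ operator, observe that this kills the $\mathrm{Diff}(P,P)$-component (via $P\cdot P=0$) and lands in $\mathrm{Diff}(A,P)\cong\mathrm{Diff}(\mathscr{A})_1$, and treat the $A$-action componentwise. Your explicit verification of the module axioms from associativity of composition and the square-zero compatibility $p^{<}\bigl((p')^{<}\Box\bigr)=0$ is left implicit in the paper, but this is a matter of thoroughness rather than a different argument.
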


Let us provide an explicit description of the diolic structure morphism. It is the $A$-module homomorphism 
$$\beta_{\text{Diff}_k}:P\otimes_A\text{Diff}_k(\mathscr{A},\mathscr{A})_0\rightarrow \text{Diff}_k(A,P),\hspace{2mm} \beta_{\text{Diff}}(p\otimes \Delta_0):=p\circ \Delta_0.$$ For such a degree zero operator $\Delta_0=(\Delta_0^A,\Delta_0^P)$ this composition is defined to be $p\circ \Delta_0^A.$ The latter is clearly a $P$-valued differential operator on $A$ of order $k.$ 

\subsubsection{Operators with values in Diolic modules}
We compute precisely in the same manner as above, the more general case of $\mathscr{R}$-valued diolic differential operators.
They are described as above but with an additional twisting of the symbol maps by the diolic structure morhism.
\begin{thm}
A $k$th order diolic differential operator of order zero with values in the truncated diolic module $(\mathscr{R},\beta)$ is a pair of operators $\Box_0=(\Box_0^A,\Box_0^P),$ with $\Box_0\in \mathrm{Diff}_k(A,R_0),$ while $\Box_0^P\in\mathrm{Diff}_{k}(P,R_1),$ such that the following relation holds for all $p\in P,$
$\beta\big(p\otimes \delta_a^k(\Box_0^A(1_A)\big)=\delta_a^k(\Box_0^P)(p).$
Degree $1$ operators of order $k$ are those $\Delta_1^A\in \mathrm{Diff}_k(A,R_1).$
\end{thm}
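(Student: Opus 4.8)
The plan is to run the same induction on the order $k$ that underlies the proof of Theorem \ref{diolediffops}, the only genuinely new ingredient being that the cross-relation tying together the two components of a degree-zero operator is now twisted by the structure map $\beta$. First I would decompose a homogeneous $\mathbb{K}$-linear map by its action on the homogeneous pieces of $\mathscr{A}$: a degree-zero map $\Box_0\colon\mathscr{A}\to\mathscr{R}$ is determined by $\Box_0^A:=\Box_0|_A\colon A\to R_0$ and $\Box_0^P:=\Box_0|_P\colon P\to R_1$, since $\mathscr{R}$ is truncated; a degree-one map $\Delta_1$ is determined by $\Delta_1^A\colon A\to R_1$ together with $\Delta_1|_P\colon P\to\mathscr{R}_2=0$, the latter forced to vanish.

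The heart of the argument is to analyze the defining condition $\delta_{a_0,\dots,a_k}(\Box_0)=0$ according to how many of the homogeneous entries $a_i$ lie in $P$. I would observe that a tuple containing two or more entries from $P$ produces an operator of degree $\geq 2$, which is automatically zero because $\mathscr{R}_j=0$ for $j\geq 2$; hence those conditions are vacuous. A tuple consisting entirely of elements of $A$ restricts on $A$ and on $P$ to the usual conditions $\delta_{a_0,\dots,a_k}(\Box_0^A)=0$ and $\delta_{a_0,\dots,a_k}(\Box_0^P)=0$, placing $\Box_0^A\in\mathrm{Diff}_k(A,R_0)$ and $\Box_0^P\in\mathrm{Diff}_k(P,R_1)$. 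The remaining case is a tuple with exactly one entry $p\in P$: since $\mathbb{K}$ has characteristic zero I would reduce by polarization to the diagonal $a_0=\dots=a_{k-1}=a$, and, using that the $\delta$'s for $A$ and for $P$ commute (as $ap=pa$ in $\mathscr{A}$), rewrite the condition as $\delta_p\big(\delta_a^{k}\Box_0\big)=0$. Evaluating this degree-one operator at $1_A$ extracts precisely the stated relation $\beta\big(p\otimes\delta_a^k(\Box_0^A)(1_A)\big)=\delta_a^k(\Box_0^P)(p)$.

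For the converse I would show that the order conditions together with this single relation at $1_A$ force $\delta_p\big(\delta_a^k\Box_0\big)$ to vanish on all of $\mathscr{A}$. Writing $\Psi:=\delta_a^k\Box_0$ with components $\Psi^A,\Psi^P$, the order hypotheses make $\Psi^A$ and $\Psi^P$ order-zero, hence $A$-linear; on an element $b\in A$ one then computes $p\cdot\Psi^A(b)=b\,\beta(p\otimes\Psi^A(1_A))=b\,\Psi^P(p)=\Psi^P(pb)$ using the relation at $1_A$, while on $q\in P$ the term $p\cdot\Psi^P(q)\in\mathscr{R}_2=0$ and $\Psi(pq)=\Psi(0)=0$, so the operator vanishes identically. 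The degree-one statement is the easy part: every tuple containing an entry of $P$ again lands in degree $\geq 2$ and is vacuous, so the condition collapses to $\Delta_1^A\in\mathrm{Diff}_k(A,R_1)$.

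I expect the main obstacle to be the converse bootstrap, namely verifying that imposing the $\beta$-twisted relation only at the unit $1_A$ — rather than at every argument — is genuinely sufficient to recover the full differential-operator condition; this is exactly where the truncation $\mathscr{R}_{\geq 2}=0$ and the $A$-linearity of the order-zero reductions $\delta_a^k\Box_0^A$ and $\delta_a^k\Box_0^P$ must be used in tandem. Everything else is the routine bookkeeping of the nested commutators $\delta_{a_0,\dots,a_k}$ already carried out in the proof of Theorem \ref{diolediffops}, now transported verbatim with the module action $P\otimes_A R_0\to R_1$ inserted wherever the old proof used multiplication by $p$.
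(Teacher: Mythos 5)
Your proposal is correct and takes essentially the same approach as the paper: for this theorem the paper gives no separate argument, saying only that one computes ``precisely in the same manner'' as Theorem \ref{diolediffops}, and your analysis of the nested commutators $\delta_{a_0,\dots,a_k}$ by the number of entries from $P$, the polarization reduction to diagonal tuples, and the evaluation at $1_A$ with multiplication by $p$ replaced by $\beta(p\otimes -)$ is exactly that computation transported to the $\mathscr{R}$-valued setting. Your converse bootstrap, showing that the relation imposed only at $1_A$ together with the $A$-linearity of $\delta_a^k\Box_0^A$ and $\delta_a^k\Box_0^P$ forces $\delta_p\big(\delta_a^k\Box_0\big)$ to vanish on all of $\mathscr{A}$, is a detail the paper's sketch leaves implicit, and you carry it out correctly.
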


We obtain the twisted symbol map $\sigma_k^{\beta}:\text{Diff}_k(\mathscr{A},\mathscr{R})_0\rightarrow \text{Diff}_k(A,R_1),$ which sends $\Box_0$ to $\beta_p^{\sharp}\circ \Box_0^A.$ This holds for all $p$ and consequently we work with the projection map $\sigma_k:\mathrm{Diff}_k(\mathscr{A},\mathscr{R})_0\rightarrow \mathrm{Diff}_k(A,R_0).$ 
\begin{lem}
Suppose $\mathscr{R}$ is such that the structure morphism $\beta$ is non-degenerate. Then
$\ker(\sigma_k)\cong \mathrm{Diff}_{k-1}(P,R_1)$ and we have a sequence
$0\rightarrow \mathrm{ker}(\sigma_k^{\beta})\cong \mathrm{Diff}_{k-1}(P,R_1)\rightarrow \mathrm{Diff}_k(\mathscr{A},\mathscr{R})_0\rightarrow\mathrm{Diff}_k(A,R_0),$
for all $\Box_0^A$ operator such that $\delta_a^k(\Box_0^A(1_A))$ is not contained in $\mathrm{ker}(\beta^{\sharp}).$
\end{lem}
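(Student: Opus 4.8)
The plan is to read everything off the characterization of degree-zero, order-$k$ operators with values in $(\mathscr{R},\beta)$ established in the preceding theorem: such an operator is a pair $\Box_0=(\Box_0^A,\Box_0^P)$ with $\Box_0^A\in\mathrm{Diff}_k(A,R_0)$ and $\Box_0^P\in\mathrm{Diff}_k(P,R_1)$ constrained by the symbol-compatibility $\beta\big(p\otimes\delta_a^k(\Box_0^A)(1_A)\big)=\delta_a^k(\Box_0^P)(p)$. Since $\sigma_k$ is the projection $\Box_0\mapsto\Box_0^A$, I would first compute $\ker(\sigma_k)$ directly. An element lies in the kernel exactly when $\Box_0^A=0$, and then the compatibility relation forces $\delta_a^k(\Box_0^P)(p)=\beta\big(p\otimes\delta_a^k(0)(1_A)\big)=0$ for all $a\in A$ and $p\in P$. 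Because vanishing of the diagonal $k$-th symbol $\delta_a^k$ is, over a field of characteristic zero, exactly the statement that an operator has order $\leq k-1$, this gives $\Box_0^P\in\mathrm{Diff}_{k-1}(P,R_1)$. Conversely, any such $\Box_0^P$ paired with the zero operator satisfies the compatibility trivially, so $\Box_0^P\mapsto(0,\Box_0^P)$ furnishes the $A$-module isomorphism $\mathrm{Diff}_{k-1}(P,R_1)\cong\ker(\sigma_k)$. This is the $\beta$-twisted analogue of the earlier untwisted identification $\ker(\varsigma)\cong\mathrm{Diff}_{k-1}(P,P)$.

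The second step is to invoke non-degeneracy in order to identify $\ker(\sigma_k^{\beta})$ with $\ker(\sigma_k)$, so that the two symbol maps are interchangeable. One inclusion is automatic: if $\Box_0^A=0$ then $\beta_p^{\sharp}\circ\Box_0^A=0$ for every $p$, whence $\ker(\sigma_k)\subseteq\ker(\sigma_k^{\beta})$. For the reverse inclusion I would use that non-degeneracy makes $\beta^{\sharp}\colon P\to\mathrm{Hom}_A(R_0,R_1)$ an isomorphism, so that $\beta$ is a perfect pairing: the vanishing of $\beta\big(p\otimes\Box_0^A(f)\big)=\beta_p^{\sharp}\big(\Box_0^A(f)\big)$ for all $p$ forces $\Box_0^A(f)$ to be annihilated by every element of $\mathrm{Hom}_A(R_0,R_1)$, and in the projective (geometric) setting this separation forces $\Box_0^A(f)=0$ for all $f$, i.e. $\Box_0^A=0$. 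With the two kernels thereby identified, the sequence $0\to\mathrm{Diff}_{k-1}(P,R_1)\to\mathrm{Diff}_k(\mathscr{A},\mathscr{R})_0\xrightarrow{\sigma_k}\mathrm{Diff}_k(A,R_0)$ is exact at its first two terms, the first map being $\Box_0^P\mapsto(0,\Box_0^P)$.

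Finally I would analyze the image of $\sigma_k$ and the role of the clause on $\delta_a^k(\Box_0^A)(1_A)$. An operator $\Box_0^A$ lies in the image of $\sigma_k$ precisely when the prescribed right-hand symbol $p\mapsto\beta\big(p\otimes\delta_a^k(\Box_0^A)(1_A)\big)$ is realizable as the top symbol of an honest operator $\Box_0^P\in\mathrm{Diff}_k(P,R_1)$, and for this to yield a genuinely order-$k$ partner the element $\delta_a^k(\Box_0^A)(1_A)\in R_0$ must not be killed by the pairing, i.e. must not lie in the kernel of the $R_0$-side adjoint of $\beta$; this is the stated condition. \textbf{The main obstacle} I expect is exactly this lifting step combined with the separation argument above: passing from an equality of symbols to the existence of an actual order-$k$ operator $\Box_0^P$ requires splitting the order filtration, which is why one restricts to the projective case and uses the local splittings of the order-$k$ diolic Atiyah sequence just as in the untwisted lemma. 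Because the $\beta$-twist makes the target symbol $\beta$-dependent, surjectivity onto $\mathrm{Diff}_k(A,R_0)$ can only hold on the sub-locus of operators whose symbol survives the pairing, which explains the absence of a terminal arrow $\to 0$ in the stated sequence.
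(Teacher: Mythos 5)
Your proposal is correct and follows essentially the route the paper intends: the paper states this lemma without a proof of its own, presenting it as an immediate consequence of the preceding characterization theorem for $\mathrm{Diff}_k(\mathscr{A},\mathscr{R})_0$ (the section opens with ``we compute precisely in the same manner as above''), and that is exactly how you derive both the identification $\ker(\sigma_k)\cong\mathrm{Diff}_{k-1}(P,R_1)$ and the exactness of the sequence, mirroring the untwisted case $\ker(\varsigma)\cong\mathrm{Diff}_{k-1}(P,P)$. Your two refinements --- the polarization argument in characteristic zero equating vanishing of the diagonal symbols $\delta_a^k$ with having order $\leq k-1$ (which the paper uses implicitly throughout), and the observation that deducing $\ker(\sigma_k^{\beta})\subseteq\ker(\sigma_k)$ from non-degeneracy of $\beta^{\sharp}$ additionally requires $\mathrm{Hom}_A(R_0,R_1)$ to separate points of $R_0$, as it does in the paper's projective/geometric setting --- are sound and, if anything, more careful than the paper's implicit treatment.
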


Moving forward, we will need also the most general situation. Consider $(\mathscr{R},\beta)$ and $(\mathscr{S},\gamma)\in \tau\text{Mod}(\mathscr{A}).$ For simplicity assume these are non-degenerate truncated diolic modules. Then we have the following description of $\text{Diff}\big(\mathscr{R},\mathscr{S})_{\mathcal{G}}.$

\begin{lem}
A $k$'th order diolic differential operator $\Delta_0:(\mathscr{R},\beta)\rightarrow (\mathscr{S},\gamma),$ between two truncated diolic modules is a pair of operators $\Delta_0:=\big(\Delta_0^{R_0},\Delta_0^{R_1}\big)$ where $\Delta_0^{R_0}\in \mathrm{Diff}_k(R_0,S_0),$ is an ordinary differential operator between the $A$-modules $R_0,S_0,$ and where $\Delta_0^{R_1}\in \mathrm{Diff}_k(R_1,S_1)$ which satisfies the relation
$\gamma_p^{\sharp}\circ \delta_a^{k}\Delta_0^{R_0}=\delta_a^{k}\Delta_0^{R_1}\circ \beta_p^{\sharp}.$
\end{lem}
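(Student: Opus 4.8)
The plan is to follow the same inductive scheme used for Theorem \ref{diolediffops}, now carrying along the two structure maps $\beta$ and $\gamma$ simultaneously. Since $\Delta_0$ has degree $0$ it must preserve the grading, hence is completely determined by its two restrictions $\Delta_0^{R_0}:=\Delta_0|_{R_0}\colon R_0\to S_0$ and $\Delta_0^{R_1}:=\Delta_0|_{R_1}\colon R_1\to S_1$; these are the only nonzero components for degree reasons. I would then analyse the defining condition $\delta_{a_0,\dots,a_k}(\Delta_0)=0$ by splitting the homogeneous tuples $(a_0,\dots,a_k)$ according to how many entries lie in $P$. Because the operators $\delta_{a_i}$ graded-commute and $A$ sits in degree $0$, such tuples may be reordered freely, and over the characteristic-zero field $\mathbb{K}$ it suffices by polarization to test the ``diagonal'' tuples in which the $A$-entries are all equal to a single $a$.

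First I would dispose of the two extreme cases. When every $a_i\in A$, restricting $\delta_{a,\dots,a}(\Delta_0)$ to $R_0$ and to $R_1$ separately yields exactly $\Delta_0^{R_0}\in\mathrm{Diff}_k(R_0,S_0)$ and $\Delta_0^{R_1}\in\mathrm{Diff}_k(R_1,S_1)$, since $A$ acts on each component as an ordinary $A$-module. When two or more entries lie in $P$, the resulting commutator vanishes identically: multiplication by a product of two elements of $P$ lands in degree $\geq 2$, which is zero in both $\mathscr{R}$ and $\mathscr{S}$ by truncation, while $P\cdot P=0$ in $\mathscr{A}$. Thus the only constraint beyond membership in $\mathrm{Diff}_k(R_0,S_0)$ and $\mathrm{Diff}_k(R_1,S_1)$ comes from tuples containing exactly one $P$-entry $p$.

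The heart of the argument is this remaining case. Computing $\delta_p(\Delta_0)$ on $r_0\in R_0$, using $p\cdot s_0=\gamma(p\otimes s_0)=\gamma_p^\sharp(s_0)$ on the target and $p\cdot r_0=\beta_p^\sharp(r_0)$ on the source, together with $\deg\Delta_0=0$ (so all intervening signs are trivial), gives the map $R_0\to S_1$
\[
\delta_p(\Delta_0)|_{R_0}=\gamma_p^\sharp\circ\Delta_0^{R_0}-\Delta_0^{R_1}\circ\beta_p^\sharp,
\]
the restriction of $\delta_p(\Delta_0)$ to $R_1$ being zero, again by truncation. Since $\beta_p^\sharp$ and $\gamma_p^\sharp$ are $A$-linear, one checks $\delta_a(\gamma_p^\sharp\circ\Box)=\gamma_p^\sharp\circ\delta_a(\Box)$ and $\delta_a(\Box\circ\beta_p^\sharp)=\delta_a(\Box)\circ\beta_p^\sharp$, so applying $\delta_a$ a total of $k$ times and commuting it past the structure maps yields $\delta_a^k\big(\delta_p(\Delta_0)|_{R_0}\big)=\gamma_p^\sharp\circ\delta_a^k\Delta_0^{R_0}-\delta_a^k\Delta_0^{R_1}\circ\beta_p^\sharp$. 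The order-$k$ hypothesis forces the left-hand side to vanish, since it is the restriction to $R_0$ of $\delta_{a,\dots,a,p}(\Delta_0)$, and this is precisely the asserted identity $\gamma_p^\sharp\circ\delta_a^k\Delta_0^{R_0}=\delta_a^k\Delta_0^{R_1}\circ\beta_p^\sharp$.

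For the converse I would read this identity as the statement that the two order-$k$ parts cancel, so that the mixed operator $\gamma_p^\sharp\circ\Delta_0^{R_0}-\Delta_0^{R_1}\circ\beta_p^\sharp$, a priori of order $\leq k$, in fact drops to order $\leq k-1$; then applying the remaining $k$ operators $\delta_a$ annihilates it, so every mixed commutator vanishes and $\Delta_0$ is genuinely of order $\leq k$. I expect the main obstacle to be the bookkeeping in this one-$P$ case: one must justify the reordering of the nested commutators and the passage from the diagonal relation $\delta_a^k(\cdots)$ to the order bound (the polarization step, valid in characteristic zero), and one must track precisely how the cancellation encoded by the relation lowers the order by exactly one so that the $(k+1)$-fold commutator closes. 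Everything else is the routine induction already carried out for Theorem \ref{diolediffops}, the new feature being merely that the symbol is now \emph{twisted on both sides} by the structure maps $\beta$ and $\gamma$.
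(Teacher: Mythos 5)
Your proposal is correct and follows essentially the approach the paper intends: the paper states this lemma without a written proof, deferring to the ``same manner as above,'' i.e.\ the commutator analysis behind Theorem~\ref{diolediffops}, and your argument is precisely that analysis with the order-$k$ identity twisted on both sides by the structure maps $\beta_p^{\sharp}$ and $\gamma_p^{\sharp}$. The key steps --- reduction to tuples with at most one $P$-entry by truncation and square-zero, the identity $\delta_p(\Delta_0)|_{R_0}=\gamma_p^{\sharp}\circ\Delta_0^{R_0}-\Delta_0^{R_1}\circ\beta_p^{\sharp}$, commuting $\delta_a$ past the $A$-linear maps $\beta_p^{\sharp},\gamma_p^{\sharp}$, and the polarization argument valid over the characteristic-zero field $\mathbb{K}$ --- all check out, so your write-up in fact supplies the details the paper leaves implicit.
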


\subsection{Degree $0$ Diolic Jacobi structures}
We now perform a similar analysis of sub-section \ref{ssec:DioleCanStrctr}, but for the functors bi-differential operators (\ref{eqn:BiDiffops}).
\vspace{1mm}
\begin{definition}
A \emph{diolic Jacobi structure} is a first order bi-differential operator  $\Delta_g$ such that $[\![\Delta_g,\Delta_g]\!]^{SJ}=0.$
\end{definition}
Let $\Box_0\in\text{Diff}_1^{(2)}(\mathscr{A},\mathscr{A})_0.$ This means that 
$\Box_0\in\text{Diff}_1\big(\mathscr{A},\text{Diff}_1(\mathscr{A},\mathscr{A})\big)_0,$ which amounts to a description via homogeneous components as
$$\scalemath{0.85}{\Box_0:=\begin{cases}
\Box_0^A:A\rightarrow \text{Diff}_1(\mathscr{A},\mathscr{A})_0\in \text{Diff}_1\big(A,\text{Diff}_1(\mathscr{A})_0\big),
\\
\Box_0^P:P\rightarrow \text{Diff}_1(A,P)\in \text{Diff}_1\big(P,\text{Diff}_1(A,P)\big),
\end{cases}}
$$
which are related by 
$\beta_{diff}\big(p\otimes \delta_a(\Box_0^A(1_A))\big)=\delta_a(\Box_0^P)(p),$ for $a\in A,p\in P.$
Putting this relation aside for now, we see the above operators are equivalent to 
$\tilde{\Box}_0^{AA}:A\rightarrow \text{Diff}_1(A,A),$ and 
$\tilde{\Box}_0^{AP}:A\rightarrow \text{Diff}_1(P,P),$
and $\tilde{\Box}_0^{PA}:P\times A\rightarrow P.$
Equivalently, these are operators
$\tilde{\Box}_0^{AA}:A\times A\rightarrow A,$ and $\tilde{\Box}_0^{AP}:A\times P\rightarrow P,$ with the property that they have the same scalar-type symbol in the first entry, and by skew-symmetricity, in the second also.
By computing $[\![\Box_0,\Box_0]\!]^{SJ}(a_1,a_2,a_3)=2\text{Jac}_{\Box_0}^{3}(a_1,a_2,a_3)=0,$ for $a_1,a_2,a_3\in A,$ we find the Jacobi identity for $\tilde{\Box}_0^{A,A}$.

This means that $\tilde{\Box}_0^{AA}$ is an ordinary Jacobi structure on the manifold $M=\text{Spec}(A).$ In other words, it determines a Jacobi-structure on the trivial rank $1$ line bundle $\mathbb{R}_M$.
For $\tilde{\Box}_0^{AP}$ we have something a little different. Computing the quantity, $\text{Jac}_{\Box_0}^3(a,b,p)\equiv 0$, one finds
 \begin{equation}
 \label{DiolicJacobiPDE}
 \tilde{\Box}_0^{AP}\big(\tilde{\Box}_0^{AA}(a,b),p\big)-\tilde{\Box}_0^{AP}\big(a,\tilde{\Box}_0^{AP}(b,p)\big)+\tilde{\Box}_0^{AP}\big(b,\tilde{\Box}_0^{AP}(a,p)\big)=0.
 \end{equation}
In conclusion we have the following result.
\begin{lem}
A degree zero first order-bi-differential operator on $\mathscr{A},\Box_0$ is a  diolic Jacobi structure if and only if it satisfies the system of PDE's arising from the pair of Jacobiators
$\text{Jac}_{\Box_0}^3(a,b,c)=0,
\text{Jac}_{\Box_0}^3(a,b,p)=0,$
for all $a,b\in A,p\in P.$
\end{lem}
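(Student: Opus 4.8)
The plan is to reduce the vanishing of the Schouten--Jacobi self-bracket to a finite list of homogeneous-component conditions by exploiting the $\mathbb{Z}$-grading of $\mathscr{A}$ together with its square-zero property. By the master equation (\ref{eqn:SchJac}), $\Box_0$ is a diolic Jacobi structure precisely when $\text{Jac}(\tilde{\Box}_0)=0$ as an element of $\text{Diff}_1^{(3)}(\mathscr{A},\mathscr{A})_0$. Since this Jacobiator is a graded tri-differential operator of degree $0$, its vanishing is equivalent to its vanishing on every triple of homogeneous elements $(x_1,x_2,x_3)$, with each $x_i\in A$ or $x_i\in P$; this is the reduction that makes the statement tractable.

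First I would record the degree bookkeeping. As $\tilde{\Box}_0$ has degree $0$, the element $\tilde{\Box}_0(x,y)$ is homogeneous of degree $\deg(x)+\deg(y)$, so $\text{Jac}(\tilde{\Box}_0)(x_1,x_2,x_3)$ is homogeneous of degree $\deg(x_1)+\deg(x_2)+\deg(x_3)$ whenever it is nonzero. Because $\mathscr{A}$ is concentrated in degrees $0$ and $1$, only triples whose degrees sum to $0$ or $1$ can possibly contribute.

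Next I would dispose of the triples with two or three entries drawn from $P$. For such a triple the total degree is at least $2$, and I claim each of the three nested terms of $\text{Jac}(\tilde{\Box}_0)$ vanishes termwise. Indeed, inspecting the arrangements (for instance $(a,p,q)$ with $a\in A$ and $p,q\in P$) one sees that each term either applies $\tilde{\Box}_0$ to a pair of total degree $2$ --- landing in $\mathscr{A}_2=0$ by the square-zero relation $P\cdot P=0$ --- or applies $\tilde{\Box}_0$ to an already-vanishing argument; the graded skew-symmetry $\tilde{\Box}_0(a,b)=-(-1)^{ab}\tilde{\Box}_0(b,a)$ then covers the remaining placements of the $P$-entries. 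Hence the Jacobiator is automatically zero on all such triples and imposes no condition.

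It remains to treat the two surviving families. The all-$A$ triples yield exactly $\text{Jac}_{\Box_0}^3(a,b,c)=0$, which by the earlier computation is the Jacobi identity for the ordinary Jacobi structure $\tilde{\Box}_0^{AA}$ on $A$; the triples with exactly one $P$-entry all reduce, via the graded (cyclic) symmetry of the Jacobiator, to the single normal form $\text{Jac}_{\Box_0}^3(a,b,p)=0$, which unwinds to the diolic equation (\ref{DiolicJacobiPDE}). Assembling these observations gives $[\![\Box_0,\Box_0]\!]^{SJ}=0$ if and only if both displayed Jacobiators vanish, as claimed. The one point demanding care --- and the main potential obstacle --- is verifying that the terms with at least two $P$-entries vanish \emph{termwise} rather than merely landing in a zero component, and checking with the correct Koszul signs that the three one-$P$-entry placements collapse to a single independent condition; both are routine once the degree and square-zero constraints are tracked carefully.
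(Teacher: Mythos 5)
Your proposal is correct and follows essentially the same route as the paper: reduce the master equation $[\![\Box_0,\Box_0]\!]^{SJ}=0$ to the vanishing of the Jacobiator on homogeneous triples, observe that only the all-$A$ triples and the one-$P$-entry triples can contribute, and identify these with the ordinary Jacobi identity for $\tilde{\Box}_0^{AA}$ and the diolic Jacobi PDE (\ref{DiolicJacobiPDE}), respectively. Your explicit termwise check that triples with two or more $P$-entries impose no condition (via $\tilde{\Box}_0^{PP}=0$ from the square-zero property) is a detail the paper leaves implicit, and it is a worthwhile addition.
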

This tells us that $\Box_0$ is a Jacobi structure in a diolic algebra if and only if the component $\Box_0^{AA}$ determines a Lie algebra structure in $A$, interpreted as a Jacobi bracket on the trivial line bundle $\mathbb{R}_M$, and if the components $\Box_0^{AA}$ and $\Box_0^{AP}$ satisfy the compatibility equation  (\ref{DiolicJacobiPDE}), which we call the \textit{Diolic-Jacobi PDE}.

\subsubsection{Degree $-1$ Diolic bi-differential operators and Jacobi geometry}
Suppose we are in the geometric situation when $P$ is given by the module of sections of a line bundle $L\rightarrow M.$ Then by lemma \ref{deg-1derivations}, our diolic calculus admits degree $-1$ differential operators. They admit an un-graded description as those operators $P\rightarrow A,$ which are additionally $A$-linear. 
\begin{lem}
A diolic bi-differential operator of degree $-1$ is an operator
$\tilde{\Box}_{-1}^{PP}:=\{-,-\}_{-1}:P\times P\rightarrow P,$ that is a skew-symmetric and bi-differential of first order in each entry with the property that $\{p,-\}_{-1}$ is a first order differential operator on $P$ (so a derivation) with scalar-type symbol.
\end{lem}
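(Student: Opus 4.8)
The plan is to unpack the functorial definition of a degree $-1$ bi-differential operator and read off its homogeneous components using the already-established description of diolic differential operators. By definition such an operator is an element $\Box_{-1}\in\text{Diff}_1^{(2)}(\mathscr{A},\mathscr{A})_{-1}=\text{Diff}_1\big(\mathscr{A},\text{Diff}_1(\mathscr{A},\mathscr{A})\big)_{-1}$, and by lemma \ref{equivbidiffop} it is equivalent to a bilinear graded map $\tilde{\Box}_{-1}(a,b)=[\Box_{-1}(a)](b)$ that is first order in each entry. Since we are in the line-bundle situation, $\text{rank}(P)=1$, lemma \ref{deg-1derivations} together with theorem \ref{diolediffops} guarantees that the degree $-1$ diolic differential operators are nontrivial, so this functor does not vanish. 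First I would carry out the degree bookkeeping: the value $\tilde{\Box}_{-1}(a,b)$ sits in degree $\deg(a)+\deg(b)-1$, which is nonzero only when $\deg(a)+\deg(b)\in\{1,2\}$, leaving exactly three candidate components, namely $(A,P)\to A$, $(P,A)\to A$ and $(P,P)\to P$.

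Next I would identify these components by restricting the outer operator to the homogeneous pieces of $\mathscr{A}$ and invoking theorem \ref{diolediffops}. Restricting to $P$ gives $\Box_{-1}|_P\colon P\to\text{Diff}_1(\mathscr{A})_0$, and theorem \ref{diolediffops} describes $\text{Diff}_1(\mathscr{A})_0$ as pairs $(\Box^A,\Box^P)$ with $\Box^A\in\text{Diff}_1(A,A)$, $\Box^P\in\text{Diff}_1(P,P)$ sharing a common scalar-type symbol; restricting to $A$ gives $\Box_{-1}|_A\colon A\to\text{Diff}_1(\mathscr{A})_{-1}\cong\text{Diff}_1(P,A)$. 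Writing $\Box_{-1}(p)=(\Box^A_p,\Box^P_p)$, the top component $\tilde{\Box}_{-1}^{PP}\colon P\times P\to P$ is then exactly $(p,q)\mapsto\Box^P_p(q)$. The remaining $(A,P)$ and $(P,A)$ components are interchanged by graded skew-symmetry and constitute the lower-degree, symbol-level data; this mirrors the way the anchor appeared in the Lie-algebroid description of lemma \ref{DioleLieAlgbd}, so I would record them but present the bracket $\{-,-\}_{-1}:=\tilde{\Box}_{-1}^{PP}$ as the governing structure.

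Finally I would verify the three asserted properties. Bidifferentiality of first order in each entry is inherited verbatim from membership in $\text{Diff}_1\big(\mathscr{A},\text{Diff}_1(\mathscr{A})\big)$. For skew-symmetry I would apply the graded skew formula $\tilde{\Delta}_g(a,b)=-(-1)^{(a+g)(b+g)}\tilde{\Delta}_g(b,a)$ of the $\text{Diff}_1^{(\bullet)}$ calculus with $g=-1$ and $a,b\in P$ of degree $1$; the exponent $(a+g)(b+g)=0\cdot 0$ is even, so the bracket is genuinely anti-symmetric, $\tilde{\Box}_{-1}^{PP}(p,q)=-\tilde{\Box}_{-1}^{PP}(q,p)$. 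The step I expect to be the crux is the scalar-type-symbol claim: I must show $\{p,-\}_{-1}=\Box^P_p$ lies in $\text{Der}(P)$. This follows precisely because theorem \ref{diolediffops} forces the pair $(\Box^A_p,\Box^P_p)\in\text{Diff}_1(\mathscr{A})_0$ to have a common symbol, and that symbol is a derivation of $A$, i.e. scalar-type; a first order operator $P\to P$ with scalar-type symbol is by definition a Der-operator. Thus the main obstacle is not a hard estimate but the careful combination of the grading constraint with the equal-symbol condition of theorem \ref{diolediffops}, which is exactly what upgrades a general first-order operator to a Der-operator and fixes the geometric meaning of the degree $-1$ bracket as a Kirillov-type local Lie bracket on the line bundle.
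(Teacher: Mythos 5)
Your proposal is correct and is essentially the argument the paper intends: the paper states this lemma without proof, and your method — unpacking $\Box_{-1}\in\mathrm{Diff}_1\big(\mathscr{A},\mathrm{Diff}_1(\mathscr{A},\mathscr{A})\big)_{-1}$ into homogeneous components, restricting to $P$ to land in $\mathrm{Diff}_1(\mathscr{A})_0$ and to $A$ to land in $\mathrm{Diff}_1(\mathscr{A})_{-1}\cong\mathrm{Diff}_1(P,A)$, and invoking theorem \ref{diolediffops} for the equal-scalar-symbol condition — is precisely the template the paper itself uses for the degree-zero Jacobi analysis immediately preceding this lemma, transposed to degree $-1$. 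Your degree bookkeeping, the sign computation $(a+g)(b+g)=(1-1)(1-1)=0$ giving genuine antisymmetry on $P\times P$, and the upgrade of $\{p,-\}_{-1}$ to a Der-operator via the common-symbol condition all match what the paper's formalism yields.
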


Let us provide an alternative description of this result. Recall that a representation of a Lie algebroid $(A,[-,-]_A,\rho_A)$ in a  vector bundle $E$ is an $\mathbb{R}$-linear map $\Gamma(A)\rightarrow \text{Diff}_1(E,E)$ which satisfies $\nabla_{f\alpha}(e)=f\nabla_{\alpha}(e)$ and $\nabla_{\alpha}(fe)=\rho(\alpha)(f)e+f\nabla_{\alpha}(e),$ and is moreover, a Lie algebra morphism. 
A \emph{Jacobi algebroid} $(A,L)$ is the data of a Lie algebroid $A=\big([-,-],\rho\big)$ and a line bundle $L$ with a representation $\nabla$ of $A$, often referred to by the triple $\big([-,-],\rho,\nabla\big)$.

We then have the following interpretation of diolic Jacobi structure of degree $-1.$
\begin{lem}
Let $\{-,-\}_{-1}$ be a degree $-1$ Lie bracket on $\mathscr{A}:=C^{\infty}(M)\oplus \Gamma(L)$, with $L$ a line bundle. Suppose further that $\{-,-\}_{-1}$ is additionally a first order bi-differential operator (ie. a Jacobi bracket). Then this datum equivalently defines a unique Jacobi algebroid structure on the pair $(J^1L,L),$ which satisfies $\big[j_1(p_1),j_1(p_2)\big]=j_1\big(\nabla_{j_1p_1}p_2\big).$
\end{lem}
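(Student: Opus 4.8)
The plan is to use the universal property of the first jet to convert the degree $-1$ Jacobi bracket into a representation of $J^1L$ on $L$, and then to transport its graded Jacobi identity into the Lie algebroid axioms on $J^1L$. By the preceding lemma the given datum is equivalently a bracket $\{-,-\}\colon\Gamma(L)\times\Gamma(L)\to\Gamma(L)$ that is skew-symmetric, bi-differential of first order in each entry, and such that each $\{p,-\}$ is a Der-operator on $L$ (which is automatic here, since for a line bundle every first order operator is a derivation). First I would construct the representation: as $q\mapsto\{p,q\}$ is first order and $p\mapsto\{p,-\}$ is first order in $p$ as an operator into $\mathrm{Der}(L)$, the universal factorization through $j_1\colon\Gamma(L)\to\Gamma(J^1L)$ yields a \emph{unique} $A$-linear map $\nabla\colon\Gamma(J^1L)\to\mathrm{Der}(L)$ with $\nabla_{j_1p}q=\{p,q\}$. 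I would then take the anchor to be its symbol, $\rho:=\sigma\circ\nabla\colon\Gamma(J^1L)\to D(A)$, so that $\rho(j_1p)$ is exactly the scalar-type symbol of $\{p,-\}$ provided by the preceding lemma (this is the Jacobi refinement of the anchor $\alpha$ of Lemma \ref{DioleLieAlgbd}).

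Next I would define the Lie algebroid bracket on $\Gamma(J^1L)$. Because the holonomic jets $\{j_1p:p\in\Gamma(L)\}$ generate $\Gamma(J^1L)$ as an $A$-module $-$ they surject onto $\Gamma(L)$, and their defects $f\,j_1p-j_1(fp)=-df\otimes p$ exhaust $\Gamma(T^*M\otimes L)$ $-$ it suffices to prescribe the compatibility condition $[j_1p_1,j_1p_2]:=j_1(\{p_1,p_2\})=j_1(\nabla_{j_1p_1}p_2)$ on generators and extend by the Leibniz rule $[\phi,f\psi]:=(\rho(\phi)f)\psi+f[\phi,\psi]$. The step requiring care is \emph{well-definedness}: one must verify that this prescription respects the single relation $j_1(fp)=f\,j_1p+df\otimes p$ among the generators. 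This is precisely where the first order (rather than merely derivation) nature of $\{p,-\}$ is used, the discrepancy produced by the non-$A$-linearity of $j_1$ being absorbed into the symbol term carried by $\rho$.

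It then remains to check the axioms and the equivalence. Skew-symmetry and the Leibniz rule over the anchor hold by construction; the two substantive identities $-$ the Jacobi identity for $[-,-]$ on $\Gamma(J^1L)$ and the flatness $\nabla_{[\phi,\psi]}=[\nabla_\phi,\nabla_\psi]$ of the representation $-$ I would extract by evaluating the graded master equation $[\![\Box_{-1},\Box_{-1}]\!]^{SJ}=0$ on triples of homogeneous elements, using the inductive definition of the Schouten--Jacobi bracket. Evaluating on three elements of $P$ reproduces the Jacobi identity of $\{-,-\}$, which upon applying $j_1$ and invoking the generation property yields the Jacobi identity on $\Gamma(J^1L)$; evaluating on two elements of $P$ and one of $A$ gives the compatibility of $\rho$ with the bracket together with the flatness of $\nabla$. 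Uniqueness is then immediate, since the compatibility $[j_1p_1,j_1p_2]=j_1(\nabla_{j_1p_1}p_2)$ and the Leibniz rule pin down the bracket on all generators. For the converse, given a Jacobi algebroid structure on $(J^1L,L)$ obeying the stated compatibility I would set $\{p,q\}:=\nabla_{j_1p}q$ and verify that skew-symmetry, first order bi-differentiality, and $[\![\{-,-\}_{-1},\{-,-\}_{-1}]\!]^{SJ}=0$ are recovered, closing the equivalence.

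The hard part will be the translation in the last paragraph: matching, term by term, the expansion of the graded Schouten--Jacobi master equation against the Lie algebroid Jacobi identity and the flatness condition, while tracking the defect $df\otimes p$ that measures the failure of $j_1$ to be $A$-linear. Everything else reduces to bookkeeping with the universal property of jets and the generation of $\Gamma(J^1L)$ by holonomic sections.
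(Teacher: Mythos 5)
The paper itself contains no proof of this lemma: it is stated as an interpretation immediately after the characterization of degree $-1$ diolic bi-differential operators, and the text proceeds directly to the coordinate discussion. So there is no argument of the paper to compare against; your proposal supplies one, and the route you take is the standard jet-theoretic one, which is correct. Its three pillars are sound: (a) the co-universal property of the first jet module converts the first-order dependence on the first slot into a unique $C^\infty(M)$-linear map $\nabla\colon\Gamma(J^1L)\to\mathrm{Der}(L)$ with $\nabla_{j_1p}=\{p,-\}_{-1}$, and the anchor $\rho=\sigma\circ\nabla$ is well defined because $\Gamma(L)$ is a faithful $C^\infty(M)$-module, so symbols of Der-operators are unique; (b) holonomic jets generate $\Gamma(J^1L)$ over $C^\infty(M)$, which both forces the bracket (hence uniqueness) and lets you define it; (c) the graded master equation evaluated on $(p_1,p_2,p_3)$ and on $(p_1,p_2,a)$ yields precisely the two identities (Jacobi identity for the bracket component, and compatibility of the scalar-type symbols) that must be transported to $J^1L$. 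Two details you should be sure to execute when writing this up: the well-definedness you rightly flag amounts to checking that the forced value $[j_1p,\,df\otimes q]=d\big(\rho(j_1p)f\big)\otimes q+df\otimes\{p,q\}_{-1}$ respects the relations among the defects $df\otimes q$; and when passing the Jacobi identity from holonomic generators to arbitrary sections you must first establish the anchor compatibility $\rho([\phi,\psi])=[\rho(\phi),\rho(\psi)]$, since only then is the Jacobiator $C^\infty(M)$-trilinear and hence zero once it vanishes on generators. Finally, in the converse direction, skew-symmetry of $\{p,q\}:=\nabla_{j_1p}q$ is not automatic from the algebroid data alone: it follows from skew-symmetry of $[-,-]$, the stated compatibility $[j_1p_1,j_1p_2]=j_1(\nabla_{j_1p_1}p_2)$, and injectivity of $j_1$, which is worth stating explicitly.
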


\subsubsection{Coordinates}
\label{sssec:Coords}
Let us assume we are in the smooth geometric situation with $\pi:E\rightarrow M$ a rank $m$ vector bundle over an $n$-dimensional manifold $M.$ If $(U,x_1,..,x_n)$ is a local coordinate system on $M$ with $\pi$ locally trivial with typical fiber $V$ with corresponding basis $e_{\alpha}$ where $\alpha=1,..,m$ we see that $P=\Gamma(\pi)$ is necessarily projective over $C^{\infty}(M)=A$ and so our sequence above is exact and the local splitting implies that for $\Box_0\in\text{Diff}_{k}(\mathscr{A})_0,$ we have
\begin{equation}
    \Box_{0}=\pmb{\Box}_{0}^A+||\Box_{i,j}||
    \end{equation}
where $\pmb{\Box}_{0}^A=\text{diag}(\Box_{0}^A,..,\Box_{0}^A)$ is a diagonal matrix of scalar operators $\Box_{0}^A=:\Box,$ and in explicit matrix form
\begin{equation}
 \scalemath{0.83}{
\Box_0=\begin{pmatrix}
\Box & 0&..&0\\
0& \Box&..& 0\\
..&..&..&..\\
0& 0&..&\Box\end{pmatrix} + \begin{pmatrix}
\Box_{1,1} & \Box_{1,2}&..&\Box_{1,m}\\
\Box_{2,1}& \Box_{2,2}&..& \Box_{2,m}\\
..&..&..&..\\
\Box_{k,1}& \Box_{k,2}&..&\Box_{k,m}\end{pmatrix}},
\end{equation}

where $\Box_{i,j}:A\rightarrow A$ are scalar operators of order $k-1,$ so are given by
$\Box_{i,j}:=\sum_{|\sigma|\leq k-1}A_{ij}^{\sigma}\partial_{\sigma}$ for functions $A_{ij}^{\sigma}\in A.$
The matrix operator $\tilde{\Box}:=||\Box_{i,j}||:P\rightarrow P$ acts on an element $p=p^{\alpha}e_{\alpha}\in P$ as 
$\tilde{\Box}(p)=\sum_{\beta}^m\sum_{\alpha=1}^{m}\sum_{|\sigma|\leq k}A_{\beta\alpha}^{\sigma}\partial_{\sigma}(p^{\alpha})e_{\beta}.$

\begin{rmk}
When $k=1$, since $\text{Diff}_0(P,P)=\text{End}_{\mathbb{K}}(P),$ one recovers the local coordinate description of a Der-operator $X_0\in D(\mathscr{A})_0 \cong \text{Der}(P).$ 
\end{rmk}

We now compute explicit formulas for the Lie algebra structure on diolic differential operators. Let us first determine the homogeneous degree zero components,
$\big[.,.\big]:\text{Diff}_{k}(\mathscr{A})_0\times \text{Diff}_{\ell}(\mathscr{A})_0\rightarrow \text{Diff}_{k+\ell-1}(\mathscr{A})_0.$
In particular, for $\Box_0=\pmb{\Box}_{0}^A+||\Box_{i,j}||\in\text{Diff}_{k}(\mathscr{A})_0,$ and for $\nabla_0=\pmb{\nabla}_{0}^A+||\nabla_{i,j}||\in\text{Diff}_{\ell}(\mathscr{A})_0,$ we find, using the alternative notation for the matrices $\Box_0^A\mathbb{I}:=\pmb{\Box}_0^A$ where $\mathbb{I}$ is the identity \footnote{The notation $\Box_0^A\mathbb{I}$ does not mean the matrix with entries $\text{diag}(\Box_0^A(1),..,\Box_0^A(1)),$ but rather that with entries $\text{diag}(\Box_0^A,..,\Box_0^A)$.}
\begin{equation}
\big[\Box_0,\nabla_0\big]=\underbrace{\big(\big[\Box_{0}^A,\nabla_{0}^A\big]\mathbb{I}\big)}_{\mathscr{O}\leq k+\ell-1}+\underbrace{\big|\big|[\Box_{0}^A,\nabla_{i,j}]\big|\big|}_{\mathscr{O}\leq k+\ell-2}
-\underbrace{\big|\big|[\nabla_{0}^A,\Box_{i,j}]\big|\big|}_{\mathscr{O}\leq k+\ell-2}+\underbrace{\big[||\Box_{i,j}||,||\nabla_{i,j}||\big]}_{\mathscr{O}\leq k+\ell-2}.
\end{equation}
The right hand side is again the sum of two differential operators, the first of which has order $k+\ell-1$ and the second of order $(k+\ell-1)-1.$ This is consistent with the splitting of degree zero diolic differential operators, so indeed the commutator of two such degree zero operators is again a degree zero operator. 

\begin{rmk}
It is worth noticing that if the module $P$ is one dimensional, then the final commutator will be of order $\leq (k-1)+(\ell-1)-1.$
\end{rmk}

Turning our attention to degree $1$ diolic differential operators, we have $\Box_1\in\text{Diff}_{k}(\mathscr{A})_1\cong\text{Diff}_{k}(A,P),$ we have
\begin{equation*}
\Box_1:A\rightarrow P, \hspace{1mm}
 f\longmapsto \Box_1^A(f):=\big(
\overline{\Box}_1^A(f),
\overline{\Box}_2^A(f),..,
\overline{\Box}_m^{A}(f)\big)^T
\end{equation*}
where $\overline{\Box}^{\alpha}\in\text{Diff}_k(A,A)$ which will be specified in coordinates as
$\overline{\Box}^{\alpha}(f):=\sum_{|\sigma|\leq k}B_{\sigma}^{\alpha}\partial^{\sigma}(f).$ Note that $\overline{B}_j^{\sigma}\in A$ are coefficient functions determining the scalar differential operator $\overline{\Box}_j^A.$
We can write the entire operator $\Box_1$, acting on $f\in A$ as
$\Box_1(f)=\sum_{\alpha=1}^m\overline{\Box}^{\alpha}(f)e_{\alpha}=\sum_{\alpha=1}^m\sum_{|\sigma|\leq k}B_{\sigma}^{\alpha}\partial^{\sigma}(f)e_{\alpha}.$
From this, it is evident that the operator $\Box_1\in\text{Diff}_k(A,P)$ is determined by the $\text{rank}(P)$=tuple of functions $(B_{\sigma}^1,...,B_{\sigma}^{rk(P)}).$
We can readily verify the commutator between a degree $1$ and a degree $0$ graded diolic differential operator, acting on an element $a\in A,$ as 
\begin{equation}
\label{eqn:Degzeroonecommutator}
 \scalemath{0.85}{
[\Box_0,\Delta_1]^{(gr)}(a)=\begin{pmatrix}
[\Box_{0}^A,\overline{\Delta}_{1}^A](a)\\
.\\
.\\
[\Box_{0}^A,\overline{\Delta}_m^A](a)
\end{pmatrix}+
\begin{pmatrix}
\Box_{1,1} & \Box_{1,2}&..&\Box_{1,m}\\
\Box_{2,1}& \Box_{2,2}&..& \Box_{2,m}\\
..&..&..&..\\
\Box_{k,1}& \Box_{m,2}&..&\Box_{k,m}\end{pmatrix}\begin{pmatrix}
\overline{\Delta}_1^A(a)\\
.\\
.\\
\overline{\Delta}_m^A(a)
\end{pmatrix}=\begin{pmatrix}
[\Box_{0}^A,\overline{\Delta}_{1}^A](a)\\
.\\
.\\
[\Box_{0}^A,\overline{\Delta}_m^A](a)
\end{pmatrix}+\begin{pmatrix}
\sum_{\beta=1}^m\Box_{1,\beta}\big(\overline{\Delta}_{\beta}^A(a)\big)
\\
.
\\
.
\\
\sum_{\beta=1}^m\Box_{m,\beta}\big(\overline{\Delta}_{\beta}^A(a)\big)
\end{pmatrix}}.
\end{equation}
\vspace{1mm}

Consequently, the above $P$-valued operator determined by the graded commutator is a the differential operator $A\rightarrow P$ of order $k+\ell-1.$
\begin{rmk}
It is of order $k+\ell-1$ since $\Box_0^A$ is of order $k$, $\overline{\Delta}_1^A$ are scalar of order $\ell$ so their commutator arising in the first factor is of order $k+\ell-1,$ while the operators $\Box_{1,\beta}$ in the secon factor are of order $k-1$. Thus the composition of the two scalar operators $\Box_{j,\beta}\circ \overline{\Delta}_{\beta}^A$ will be another scalar operator of order $(k-1)+\ell.$
\end{rmk}

\section{The Diolic Hamiltonian formalism: Diolic symbols}
Consider the $k$'th order diolic Atiyah sequence (\ref{eqn:diAtk}).
We saw above that this sequence is filtered in the sense that we have sub-sequences for all $\ell\leq k,$ induced from the natural order filtration in differential operators. Consequently, we may consider the corresponding quotient sequences $\text{At}_{k-1}(\mathscr{A})\hookrightarrow \text{At}_k(\mathscr{A})\rightarrow \text{At}_k(\mathscr{A})/\text{At}_{k-1}(\mathscr{A}).$ 
Owing to the existence of these order filtrations, by passing to quotients we find
$$0\rightarrow \frac{\text{Diff}_{k-1}(P,P)}{\text{Diff}_{k-2}(P,P)}\rightarrow \frac{\text{Diff}_{k}(\mathscr{A},\mathscr{A})_0}{\text{Diff}_{k-1}(\mathscr{A},\mathscr{A})_0}\rightarrow\frac{\text{Diff}_{k+1}(A)}{\text{Diff}_{k}(A)}\rightarrow 0,$$
for each $k\geq 0,$ (where we set $\text{Diff}_{-k}:=0$)
which by definition of the algebra of symbols, yields the corresponding $k$'th order \emph{diolic symbol sequence},
\begin{equation}
0\rightarrow\text{Smbl}_{k-1}(P,P)\rightarrow \text{Smbl}_{k}(\mathscr{A},\mathscr{A})_0\rightarrow \text{Smbl}_k(A)\rightarrow 0.
\end{equation}

In this way we see that $\text{Smbl}_{k,0}(\mathscr{A})$ is a `compound' constructed from $\text{Smbl}_k(A)$, which is the usual Hamiltonian formalism, and $\text{Smbl}_{k-1}(P,P).$ 
Moreover, if $P$ is one dimensional, then $\text{Smbl}(P,P)$ is commutative.
We have the following result.

\begin{lem}
Let $\mathscr{A}$ be a diolic algebra with $P$ a projective module of finite rank with fixed splittings of all diolic Atiyah sequences. Then $\mathrm{Smbl}_{k}(\mathscr{A},\mathscr{A})_0\cong\mathrm{Smbl}_k(A,A)\oplus \mathrm{Smbl}_{k-1}(P,P),$ for all $k$. 
\end{lem}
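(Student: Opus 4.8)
The plan is to recognize the claim as a splitting statement for the $k$'th order \emph{diolic symbol sequence}
$$0\rightarrow\text{Smbl}_{k-1}(P,P)\rightarrow \text{Smbl}_{k}(\mathscr{A},\mathscr{A})_0\xrightarrow{\overline{\varsigma}} \text{Smbl}_k(A)\rightarrow 0$$
established immediately above, which is a short exact sequence of $A$-modules. By the standard criterion, any $A$-linear section of $\overline{\varsigma}$ exhibits the middle term as the direct sum of the outer two, yielding $\text{Smbl}_{k}(\mathscr{A},\mathscr{A})_0\cong\text{Smbl}_k(A,A)\oplus\text{Smbl}_{k-1}(P,P)$. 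The section will be manufactured from the fixed splitting $\nabla^{(k)}\colon\text{Diff}_k(A,A)\rightarrow\text{Diff}_k(\mathscr{A},\mathscr{A})_0$ of the order-$k$ diolic Atiyah sequence (\ref{eqn:diAtk}) by passing to associated graded objects, so the isomorphism becomes explicit in terms of the chosen splitting.

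The first and essential step is to verify that $\nabla^{(k)}$ is compatible with the order filtration, i.e. that $\nabla^{(k)}\big(\text{Diff}_{k-1}(A,A)\big)\subseteq \text{Diff}_{k-1}(\mathscr{A},\mathscr{A})_0$. This turns out to be automatic, and it is the heart of the argument. Indeed, for $\Box\in\text{Diff}_{k-1}(A,A)$ the image $\nabla^{(k)}(\Box)$ is by construction a degree-zero pair with $A$-component $\varsigma\big(\nabla^{(k)}(\Box)\big)=\Box$, which has order $\leq k-1$. By the structural relation of Theorem \ref{diolediffops}, the two components of a degree-zero diolic operator share their top symbol, $\delta_a^{k}\big(\nabla^{(k)}(\Box)^A\big)=\delta_a^{k}\big(\nabla^{(k)}(\Box)^P\big)$ for every $a\in A$; since the left-hand side vanishes (the $A$-component has order $<k$), the right-hand side vanishes for all $a$, forcing the $P$-component to have order $\leq k-1$ as well. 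Hence $\nabla^{(k)}(\Box)\in\text{Diff}_{k-1}(\mathscr{A},\mathscr{A})_0$, as required.

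With filtration-compatibility in hand, $\nabla^{(k)}$ descends to a well-defined $A$-linear map $\overline{\nabla}^{(k)}\colon\text{Smbl}_k(A)\rightarrow\text{Smbl}_k(\mathscr{A},\mathscr{A})_0$, sending $\text{smbl}_k(\Box)$ to $\text{smbl}_k\big(\nabla^{(k)}(\Box)\big)$: independence of the representative $\Box$ is precisely the content of the previous paragraph, since altering $\Box$ by an operator of order $\leq k-1$ alters $\nabla^{(k)}(\Box)$ by an element of $\text{Diff}_{k-1}(\mathscr{A},\mathscr{A})_0$. I would then check that $\overline{\varsigma}\circ\overline{\nabla}^{(k)}=\mathrm{id}_{\text{Smbl}_k(A)}$, which is immediate from $\varsigma\circ\nabla^{(k)}=\mathrm{id}$ at the level of operators together with the compatibility of $\varsigma$ with the symbol projections. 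This makes $\overline{\nabla}^{(k)}$ a section of $\overline{\varsigma}$, so the diolic symbol sequence splits and the decomposition follows for every $k$.

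The main obstacle is exactly the descent in the second step: guaranteeing that the given connection respects the order filtration so that it induces a genuine map on symbols. As shown, this reduces cleanly to the symbol-matching relation of Theorem \ref{diolediffops}, while the finite rank and projectivity of $P$ are what ensure (\ref{eqn:diAtk}) is short exact and admits the splitting in the first place. As a cross-check one may bypass the explicit section entirely: since $P$ is projective of finite rank over the smooth algebra $A$, both $\text{Smbl}_k(A)$ and $\text{Smbl}_{k-1}(P,P)$ are projective $A$-modules, and projectivity of the quotient term $\text{Smbl}_k(A)$ forces the short exact symbol sequence to split abstractly, recovering the same isomorphism. I would present the explicit route, recording the dependence of the splitting on the chosen diolic connection.
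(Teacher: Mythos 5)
Your main route has a genuine gap at precisely the step you call its heart: filtration-compatibility of the splitting $\nabla^{(k)}$ is \emph{not} automatic. From $\delta_a^{k}\big(\nabla^{(k)}(\Box)^A\big)=\delta_a^{k}\big(\nabla^{(k)}(\Box)^P\big)=0$ you may indeed conclude (by polarization, using characteristic zero) that both components have order $\leq k-1$; but membership in $\mathrm{Diff}_{k-1}(\mathscr{A},\mathscr{A})_0$ demands more. By Theorem \ref{diolediffops} applied at order $k-1$, the pair must also satisfy the \emph{next} matching condition $\delta_a^{k-1}\big(\nabla^{(k)}(\Box)^A\big)=\delta_a^{k-1}\big(\nabla^{(k)}(\Box)^P\big)$, and this is exactly the information that is lost once the top-order matching degenerates to the vacuous identity $0=0$. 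The failure is already visible at $k=1$: fix a connection $\nabla$ in the projective module $P$, fix $0\neq B\in\mathrm{End}_A(P)$, and define
$\nabla^{(1)}(X+f):=\big(X+f,\;\nabla_X+f(\mathrm{id}_P+B)\big)$ for $X\in D(A)$, $f\in A$. This is an $A$-linear section of $\varsigma$ with image in $\mathrm{Diff}_1(\mathscr{A},\mathscr{A})_0$ (the order-one matching condition holds because $\delta_a$ annihilates the $A$-linear summand $f(\mathrm{id}_P+B)$), yet $\nabla^{(1)}(f)=\big(f,f(\mathrm{id}_P+B)\big)$ does \emph{not} lie in $\mathrm{Diff}_0(\mathscr{A},\mathscr{A})_0$, whose elements are exactly the pairs $\big(f,f\cdot\mathrm{id}_P\big)$. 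Consequently your induced map $\overline{\nabla}^{(1)}$ is not well defined: $\mathrm{smbl}_1(f)=0$ while $\mathrm{smbl}_1\big(\nabla^{(1)}(f)\big)\neq 0$. So an arbitrary splitting of (\ref{eqn:diAtk}) need not descend to symbols; to make your explicit route work one must \emph{impose} that the fixed splittings respect the order filtration (for instance, that $\nabla^{(k)}$ restricted to $\mathrm{Diff}_{k-1}(A,A)$ agrees with $\nabla^{(k-1)}$ followed by the inclusion), which is an additional choice, not a consequence of Theorem \ref{diolediffops}.

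By contrast, the argument you relegate to a cross-check is sound and is, in effect, the paper's proof: the lemma is stated immediately after exhibiting the short exact diolic symbol sequence, and the hypotheses (smooth $A$, $P$ projective of finite rank) guarantee via Lemma \ref{symsmbl} that the quotient term $\mathrm{Smbl}_k(A)\cong\mathrm{Sym}^k\big(D(A)\big)$ is a projective $A$-module, so the sequence of $A$-modules splits and the direct sum decomposition follows for every $k$. You should promote that paragraph to the main argument, and either drop the claim that descent of $\nabla^{(k)}$ is automatic or repair it by adding the filtration-compatibility requirement on the chosen splittings.
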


Next, let us observe that generally, for $\Delta\in\text{Diff}(Q,Q)$ such that $[\Delta]\in\text{Smbl}(Q,Q),$ for some order, and $[\nabla]\in\text{Smbl}(P,P),$ we have for $[\Box]\in\text{Smbl}(P,Q)$ two well-defined actions,
$$[\Delta]\star^<[\Box]:=[\Delta\circ \Box], [\nabla]\star^>[\Box]:=[\Box\circ\nabla],$$
where the $\star$ operation is the obvious one defined as in \ref{symbolalgebrastructure}.
In other words, we obtain a left $\text{Smbl}(Q,Q)$ and right $\text{Smbl}(P,P)$-module structure in $\text{Smbl}(P,Q).$ 
Consequently, $\text{Smbl}(A,P)$ is indeed a module over both $\text{Smbl}(A,A)$ and $\text{Smbl}(P,P)$ in a well-defined manner. Therefore, $\text{Smbl}(\mathscr{A})_1$ is indeed a $\text{Smbl}(\mathscr{A})_0$-module.
As we found above, we have no degree $g>1$ components of the module $\text{Diff}(\mathscr{A},\mathscr{A}),$ so we immediately find a diolic version of lemma \ref{symbolalg}.
\begin{thm}
The algebra of diolic symbols $\mathrm{Smbl}(\mathscr{A})_{\mathcal{G}}$ is a diolic algebra.
\end{thm}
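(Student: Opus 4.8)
The plan is to check the three defining properties of a diolic algebra (Definition \ref{DioleDef}) for $\mathrm{Smbl}(\mathscr{A})_{\mathcal{G}}$ with respect to its $\mathcal{G}=\mathbb{Z}$-grading, regarding the order grading $\mathbb{Z}_{\geq 0}$ as an internal grading carried by each $\mathcal{G}$-homogeneous component. Thus I must show that the $\mathcal{G}$-grading is concentrated in degrees $0$ and $1$, that the $\star$-product makes the degree-$0$ part a commutative algebra over which the degree-$1$ part is a module, and that the degree-$1$ part squares to zero.

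For the grading concentration I would pass to the associated graded of the order filtration (\ref{eqn:orderfil}). By Theorem \ref{diolediffops}, for $\mathrm{rank}(P)\neq 1$ the module $\mathrm{Diff}_k(\mathscr{A},\mathscr{A})$ has nonzero $\mathcal{G}$-homogeneous components only in degrees $0$ and $1$, namely $\mathrm{Diff}_k(\mathscr{A})_1\cong\mathrm{Diff}_k(A,P)$ and, via the order-$k$ diolic Atiyah sequence (\ref{eqn:diAtk}), a degree-$0$ part sitting in an extension of $\mathrm{Diff}_k(A,A)$ by $\mathrm{Diff}_{k-1}(P,P)$. Since the symbol modules $\mathrm{Smbl}_k=\mathrm{Diff}_k/\mathrm{Diff}_{k-1}$ are formed degree-wise in $\mathcal{G}$, this concentration is inherited, giving $\mathrm{Smbl}(\mathscr{A})_{\mathcal{G}}=\mathrm{Smbl}(\mathscr{A})_0\oplus\mathrm{Smbl}(\mathscr{A})_1$ with $\mathrm{Smbl}(\mathscr{A})_0\cong\mathrm{Smbl}(A,A)\oplus\mathrm{Smbl}(P,P)$ under fixed splittings of the Atiyah sequences. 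By Lemma \ref{symbolalg} the product $\star$ of (\ref{symbolalgebrastructure}) makes $\mathrm{Smbl}(\mathscr{A})_{\mathcal{G}}$ graded commutative, and since composition of operators adds $\mathcal{G}$-degrees, $\star$ respects the $\mathcal{G}$-grading; restricting to it exhibits $\mathrm{Smbl}(\mathscr{A})_0$ as a commutative algebra (the commutativity of $\mathrm{Smbl}(P,P)$ following from the order drop $[\Delta,\nabla]\in\mathrm{Diff}_{k+\ell-1}$ of commutators, so that the top symbol vanishes) over which $\mathrm{Smbl}(\mathscr{A})_1$ is a module.

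The one step with genuine content is the square-zero property $\mathrm{Smbl}(\mathscr{A})_1\star\mathrm{Smbl}(\mathscr{A})_1=0$, which I would prove at the level of operators. A $\mathcal{G}$-degree-$1$ operator $\Delta\colon\mathscr{A}\to\mathscr{A}$ satisfies $\Delta(\mathscr{A}_g)\subseteq\mathscr{A}_{g+1}$, so because $\mathscr{A}_2=0$ it sends $A=\mathscr{A}_0$ into $P=\mathscr{A}_1$ and annihilates $\mathscr{A}_1$ (this is exactly the vanishing $\Box_1^P\equiv 0$ recorded in Theorem \ref{diolediffops}). Hence for any two such operators $\Delta,\nabla$ the composite $\Delta\circ\nabla$ is identically zero, since $(\Delta\circ\nabla)(\mathscr{A}_0)\subseteq\Delta(\mathscr{A}_1)=0$ and $(\Delta\circ\nabla)(\mathscr{A}_1)=\Delta(0)=0$. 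By the definition (\ref{symbolalgebrastructure}) of $\star$ this yields $\mathrm{smbl}(\Delta)\star\mathrm{smbl}(\nabla)=\mathrm{smbl}(\Delta\circ\nabla)=0$. Combining the three observations presents $\mathrm{Smbl}(\mathscr{A})_{\mathcal{G}}$ as a diolic algebra with base $\mathrm{Smbl}(\mathscr{A})_0$ and degree-$1$ module $\mathrm{Smbl}(\mathscr{A})_1$; I expect the only real care needed is to keep the diolic grading and the internal order grading disentangled throughout, and (should one wish to include the $\mathrm{rank}(P)=1$ case) to note that there the grading extends by a degree-$(-1)$ piece, so the conclusion must be read accordingly.
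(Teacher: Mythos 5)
Your proposal is correct and follows essentially the same route as the paper: concentration of the $\mathcal{G}$-grading of $\mathrm{Diff}(\mathscr{A},\mathscr{A})$ in degrees $0$ and $1$, the $\star$-product making $\mathrm{Smbl}(\mathscr{A})_0$ a commutative algebra with $\mathrm{Smbl}(\mathscr{A})_1\cong\mathrm{Smbl}(A,P)$ a module over it, and the resulting square-zero property. The only differences are presentational: the paper leaves the square-zero step implicit (it follows from the absence of degree-$2$ components), whereas you derive it explicitly from the vanishing of composites of degree-$1$ operators --- an equivalent observation --- and your caveat about the $\mathrm{rank}(P)=1$ case, where a degree-$(-1)$ piece appears and the statement must be read accordingly, flags a point the paper itself glosses over.
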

\subsubsection{A characterization of Diolic symbols of degree $0$}
We now give a useful characterization of degree $0$-diolic symbols in the geometric case where $C^{\infty}(M)\oplus \Gamma(\pi)$ for some rank $m$ bundle $E\rightarrow M$.
We need a standard preliminary result which can be found in \cite{KraVerb}.
\begin{lem}
\label{symsmbl}
Let $A$ be a smooth algebra and $P,Q$ be projective $A$-module. Then
 If $A$ is an algebra over the field of rational numbers, then we have for each $k\geq 0$, that  $\mathrm{Smbl}_k(P,Q)\cong S^k\big(D_1(A)\big)\otimes_A \mathrm{Hom}_A(P,Q).$
 Thus we interpret these as symmetric $k$-multiderivations of $A$ with values in $\mathrm{Hom}_A(P,Q).$ In particular, $\mathrm{Smbl}_*(A,P)\cong P\otimes_A \mathrm{Smbl}_*(A),$ while the algebra of symbols itself enjoys the following description
$\mathrm{Smbl}_*(A)\cong \mathrm{Sym}^*\big(\mathrm{Smbl}_1(A)\big)=\mathrm{Sym}^*\big(D_1(A)\big).$
 \end{lem}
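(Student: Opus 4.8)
The plan is to reduce the entire statement to the single computation of the symbol module $\mathrm{Smbl}_k(A,A)$ and then bootstrap to arbitrary projective $P,Q$ using the $\delta$-operator calculus recorded above. First I would construct the \emph{symbol-to-multiderivation} comparison map. Given $\Delta\in\mathrm{Diff}_k(P,Q)$, the definition of order $\leq k$ says exactly that $\delta_{a_0,\dots,a_k}(\Delta)=0$, which means the assignment $(a_1,\dots,a_k)\mapsto\delta_{a_1,\dots,a_k}(\Delta)$ takes values in $A$-linear maps, i.e. in $\mathrm{Hom}_A(P,Q)$. Because $A$ is commutative the operators $\delta_a$ mutually commute (the Jacobi identity gives $[\delta_a,\delta_b]=\delta_{[a,b]}=0$), so this assignment is symmetric in its $k$ arguments. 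Moreover, since the fully contracted operator $\delta_{a_1,\dots,a_{k-1},\,\cdot}(\Delta)$ is already $A$-linear, the two module structures $a^<$ and $a^>$ coincide on it, and the mixed Leibniz rule $\delta_{bc}=b\delta_c+(-1)^{b\cdot c}\delta_b c$ collapses into an honest derivation law in each slot. Thus we obtain an $A$-module homomorphism from $\mathrm{Diff}_k(P,Q)$ into the module of symmetric $k$-multiderivations of $A$ valued in $\mathrm{Hom}_A(P,Q)$, and it annihilates $\mathrm{Diff}_{k-1}(P,Q)$ by the very definition of the order filtration, hence descends to $\mathrm{Smbl}_k(P,Q)$.

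Next I would identify the target with $S^k\big(D_1(A)\big)\otimes_A\mathrm{Hom}_A(P,Q)$ and show the descended map is an isomorphism. Here smoothness enters: $D_1(A)=\mathrm{Der}(A)$ is finitely generated projective, so symmetric $k$-multiderivations valued in a module $M$ are precisely $S^k\big(D_1(A)\big)\otimes_A M$, the value module $M=\mathrm{Hom}_A(P,Q)$ splitting off by projectivity of $P$ and $Q$. Injectivity is essentially tautological: vanishing of $\delta_{a_1,\dots,a_k}(\Delta)$ for all tuples is exactly the condition $\Delta\in\mathrm{Diff}_{k-1}$, i.e. triviality in $\mathrm{Smbl}_k$. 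Surjectivity is the substantive point: every product $X_1\cdots X_k$ of derivations $X_i\in D_1(A)$ is realized as the symbol of the composite $X_1\circ\cdots\circ X_k\in\mathrm{Diff}_k$, and finite generation plus projectivity of $D_1(A)$ guarantee that such products generate $S^k\big(D_1(A)\big)$. The hypothesis that $A$ is a $\mathbb{Q}$-algebra is used precisely here, to polarize and symmetrize (divide by $k!$) so that an arbitrary symmetric tensor is matched by an actual composite of first-order operators.

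Finally the two stated specializations follow formally. Taking source $A$ and target $P$, the identification $\mathrm{Hom}_A(A,P)\cong P$ yields $\mathrm{Smbl}_k(A,P)\cong S^k\big(D_1(A)\big)\otimes_A P\cong P\otimes_A\mathrm{Smbl}_k(A)$. For the algebra statement $\mathrm{Smbl}_*(A)\cong\mathrm{Sym}^*\big(D_1(A)\big)$ I would check that the star product $\star$ of (\ref{symbolalgebrastructure}), which adds orders and multiplies symbols along composition of operators, corresponds under the comparison map to the symmetric product of multiderivations; the graded commutativity of $\mathrm{Smbl}_{\bullet,*}$ established above forces this product to be the commutative symmetric one, and since the degree-one part is $\mathrm{Smbl}_1(A)=D_1(A)$ and the surjectivity argument shows it generates the whole algebra, one gets the free symmetric algebra $\mathrm{Sym}^*\big(D_1(A)\big)$. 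The main obstacle throughout is the surjectivity/realization step: that every symmetric multiderivation genuinely arises as the symbol of a bona fide differential operator. This is exactly where smoothness (projectivity of $D_1(A)$) and characteristic zero (the symmetrization) are indispensable, and a careful treatment would either invoke the local realization of symbols by compositions of vector fields or, equivalently, the splitting of the jet filtration via the exact sequences $0\to S^k\Lambda^1(A)\otimes_A P\to J^k(P)\to J^{k-1}(P)\to 0$ dualized against projective modules.
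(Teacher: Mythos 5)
The paper itself contains no proof of this lemma: it is invoked as a standard fact with a pointer to \cite{KraVerb}, so there is no internal argument to compare against, and your proposal should be judged against the standard proof in that reference --- which it essentially reconstructs, correctly. Your three ingredients (the $\delta$-calculus comparison map into symmetric $\mathrm{Hom}_A(P,Q)$-valued multiderivations, tautological injectivity from the definition of the order filtration, and realization of symbols by composites of first-order operators) are exactly the standard route, and your closing alternative via the jet sequences $0\to S^k\Lambda^1(A)\otimes_A P\to J^k(P)\to J^{k-1}(P)\to 0$ is the formulation \cite{KraVerb} actually emphasize. One step needs tightening: for general $P,Q$ a composite $X_1\circ\cdots\circ X_k$ of vector fields is an operator $A\to A$, not $P\to Q$, so to realize the tensor $X_1\cdots X_k\otimes h$ you must first lift each $X_i$ to a first-order operator on $P$ with symbol $X_i\otimes\mathrm{id}_P$, i.e. choose a connection $\nabla$ on $P$ --- which exists precisely because $P$ is projective --- and take $h\circ\nabla_{X_1}\circ\cdots\circ\nabla_{X_k}$; independence of the choice of $\nabla$ holds because two connections differ by an $\mathrm{End}_A(P)$-valued form, which only perturbs lower-order terms. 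With that lift, the cleanest way to close your argument is to observe that the composite of the realization map $S^k\big(D_1(A)\big)\otimes_A\mathrm{Hom}_A(P,Q)\to\mathrm{Smbl}_k(P,Q)$ with your $\delta$-map is the canonical pairing into $\mathrm{Hom}_A\big(S^k\Lambda^1(A),\mathrm{Hom}_A(P,Q)\big)$, which is an isomorphism exactly under the two hypotheses (smoothness gives finitely generated projective $\Lambda^1(A)$, and the $\mathbb{Q}$-algebra hypothesis resolves the dual-of-symmetric-power versus symmetric-power-of-dual discrepancy); since the $\delta$-map is injective and the composite is onto, both maps are isomorphisms. This also corrects a slight misattribution in your write-up: characteristic zero is not needed for products of derivations to generate $S^k\big(D_1(A)\big)$ (degree-one products generate any symmetric power by definition), it is needed for this duality identification.
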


This generalizes straightforward to the setting of graded geometry or even the differential graded, or $\mathbb{N}Q$-geometric setting.
\begin{exm}
Let $\mathscr{A}$ be the $DG$ algebra of smooth functions on a $DG$ manifold $\mathscr{M}$ with homological vector field $d$, and consider $D(\mathscr{A})$ with $\partial:D(\mathscr{A})\rightarrow D(\mathscr{A}),$ the derivation $[d,-].$
The universal enveloping algebra, $\mathcal{U}\big(D(\mathscr{A})\big),$ identifies with $\mathrm{Diff}(\mathscr{A},\mathscr{A}).$ The differential is again given by $[d,-].$ It follows that $\text{Gr}\big(\mathcal{U}\big(D(\mathscr{A})\big)\big)$
identifies with $\mathrm{Sym}_{\mathscr{A}}^{\bullet}\big(D(\mathscr{A})\big),$
which is precisely the the DG Poisson algebra of fiber-wise polynomial functions on $T^*\mathscr{M}.$
\end{exm}
Returning to the diolic formalism, in particular, the degree $1$ symbols, lemma \ref{symsmbl} tells us the following. For $\Delta$ a degree $1$ diolic operator with symbol $\text{smbl}_k(\Delta),$ under the isomorphism above which is defined by setting
$\gamma_k^P\big(\text{smbl}_k(\Delta)\big)(da_1\cdot..\cdot da_k):=\big(\delta_{a_1}\circ ...\circ\delta_{a_k}\big)(\Delta),$
where $d a_i\cdot da_j$ denotes the multiplication in $\text{Sym}^k(\Lambda^1),$ we have an isomorphism of $A$-modules  
$\mu_P:P\otimes_A \text{Sym}^k\big(D(A)\big)\rightarrow \text{Smbl}_k(A,P),p\otimes X_1\odot X_2\odot..\odot X_k\longmapsto \frac{1}{k!}\text{smbl}_k\big(1_p\circ X_1\circ X_2\circ...\circ X_k\big),$
where $1_p\in \text{Hom}_A(A,P)$ is the degree zero operator of multiplication $1_p(a):=ap,$
is such that $\gamma_P\circ \mu_P=id.$ 
\begin{rmk}
This description is to be interpreted as  characterizing the degree $1$ functions on the diolic phase space $T^*\mathscr{A}.$
\end{rmk}

Let us turn our attention to finding an analogous characterization in the degree zero case. To this end, consider the following diagram with short exact columns, where each row is the corresponding diolic Atiyah sequence

\begin{equation}
\begin{tikzcd}
0 \arrow[r, ]&\text{Diff}_{k-1}\big(P,P\big)\arrow[d, ""] \arrow[r, ""]
&\text{Diff}_{k}(\mathscr{A})_0\arrow[d, "" ] \arrow[r, ""]  &\text{Diff}_k(A) \arrow[d, ""] \arrow[r, ""]&  0\\
0 \arrow[r, ]&\text{Diff}_{k}\big(P,P\big)\arrow[d, ""] \arrow[r, ""]
&\text{Diff}_{k+1}(\mathscr{A})_0\arrow[d, "" ] \arrow[r, ""]  &\text{Diff}_{k+1}(A)\arrow[d, ""] \arrow[r, ""]&  0 \\
0 \arrow[r, ""] & \text{Smbl}_k(P,P) \arrow[r, ""] & \text{Smbl}_{k+1}(\mathscr{A})_0\arrow[r, ""] & \text{Smbl}_{k+1}(A)\arrow[r, ""] & 0, 
\end{tikzcd}
\end{equation}
To completely characterize the degree zero graded symbols of order $k$,
we require a surjective morphism $\lambda^{k+1}:\text{Diff}_{k+1}(\mathscr{A})_0\rightarrow \text{Smbl}_{k+1}(\mathscr{A})_0$ such that $\ker(\lambda^{k+1})\cong \text{Diff}_{k}(\mathscr{A})_0.$
Using lemma \ref{symsmbl}, we rewrite the bottom row as
\begin{equation}
    \label{eqn:degzerosymbol}
0\rightarrow \text{Sym}^{k}\big(D(A)\big)\otimes \text{End}(P)\rightarrow \mathcal{S}^{k+1}(\mathscr{A})\rightarrow \text{Sym}^{k+1}\big(D(A)\big)\rightarrow 0,
\end{equation}
where we write $\mathcal{S}^{k+1}(\mathscr{A})$ as the would-be placeholder under the equivalence of symbols with tensor fields. The following characterizes this module in the spirit of lemma \ref{symsmbl}.
\newpage 

\begin{thm}
$\mathcal{S}^k(\mathscr{A})=\{\Delta:A\times...\times A\rightarrow\mathrm{Der}(P)\}$ coincides with $\mathrm{Der}(P)$-valued symmetric $(k-1)$-derivations with symmetric multi-symbols.
This means $\Delta(f_1,..,f_{k-1},gp)=\sigma_{\Delta}(f_1,..,f_{k-1},g)p+g\Delta(f_1,..,f_{k-1},p).$
\end{thm}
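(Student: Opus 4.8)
The plan is to identify $\mathcal{S}^k(\mathscr{A}) = \mathrm{Smbl}_k(\mathscr{A})_0$ with the module $T$ of $\mathrm{Der}(P)$-valued symmetric $(k-1)$-derivations carrying a symmetric multi-symbol by building one explicit morphism and then comparing short exact sequences via the five lemma. The comparison target is the symbol sequence (\ref{eqn:degzerosymbol}), which (after replacing $k+1$ by $k$) reads $0 \to \mathrm{Sym}^{k-1}(D(A)) \otimes \mathrm{End}(P) \to \mathcal{S}^k(\mathscr{A}) \to \mathrm{Sym}^k(D(A)) \to 0$, where the identification of the outer terms uses Lemma \ref{symsmbl}.

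First I would define the candidate map $\Phi \colon \mathrm{Smbl}_k(\mathscr{A})_0 \to T$ by the iterated-commutator-then-symbol recipe used in the proof of Lemma \ref{symsmbl}: for a representative $\Box_0 = (\Box_0^A,\Box_0^P)$ of a degree-$0$, order-$k$ symbol class and for $f_1,\dots,f_{k-1} \in A$, set
\[
\Phi([\Box_0])(f_1,\dots,f_{k-1}) := \mathrm{smbl}_1\big(\delta_{f_1} \circ \cdots \circ \delta_{f_{k-1}}(\Box_0)\big).
\]
Since each $\delta_{f_i}$ lowers order by one, the argument is a degree-$0$ diolic operator of order $1$, and the diolic symbol sequence in order $1$ is exactly the diolic Atiyah sequence, whence $\mathrm{Smbl}_1(\mathscr{A})_0 \cong \mathrm{Der}(P)$; thus the right-hand side is a genuine Der-operator. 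Well-definedness on symbol classes is immediate, for if $\Box_0 - \Box_0' \in \mathrm{Diff}_{k-1}(\mathscr{A})_0$, then applying $k-1$ commutators yields an operator of order $0$, whose order-$1$ symbol vanishes.

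Next I would verify that $\Phi([\Box_0])$ really lies in $T$. Symmetry in $f_1,\dots,f_{k-1}$ and symmetry of the induced multi-symbol both follow from graded-commutativity of the symbol algebra established before Lemma \ref{symbolalg} (the $\delta_{f_i}$ commute modulo lower order), while the derivation property in each $A$-slot, valued in the $A$-module $\mathrm{Der}(P)$, follows from the identity $\delta_{ab} = a\delta_b + (-1)^{ab}\delta_a b$ recorded after Definition \ref{diffopdef}. The stated relation $\Delta(f_1,\dots,f_{k-1},gp) = \sigma_\Delta(f_1,\dots,f_{k-1},g)p + g\Delta(f_1,\dots,f_{k-1},p)$ is then nothing but the Der-operator Leibniz rule for $\delta_{f_1}\cdots\delta_{f_{k-1}}(\Box_0)$ applied to $gp$, with $\sigma_\Delta$ recording the scalar symbol; here the constraint $\delta_a^k(\Box_0^A) = \delta_a^k(\Box_0^P)$ from Theorem \ref{diolediffops} is precisely what couples $\sigma_\Delta$ to the $\mathrm{Der}(P)$-valued part.

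To conclude I would fit $T$ into its own short exact sequence and apply the five lemma. The multi-symbol defines $\sigma \colon T \to \mathrm{Sym}^k(D(A))$, which is surjective by lifting along any splitting of the Atiyah sequence (using projectivity of $P$), and whose kernel consists of those $\Delta$ that are $A$-linear in the final $P$-argument, i.e. symmetric $(k-1)$-derivations valued in $\mathrm{End}(P)$; by Lemma \ref{symsmbl} this kernel is $\mathrm{Sym}^{k-1}(D(A)) \otimes_A \mathrm{End}(P)$. This is exactly the sequence (\ref{eqn:degzerosymbol}) re-indexed, and $\Phi$ is compatible with both inclusions and projections, inducing the identity on the two outer terms; the five lemma then forces $\Phi$ to be an isomorphism. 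I expect the main obstacle to be the bookkeeping in the middle step — confirming simultaneously that the iterated-commutator construction yields a \emph{symmetric} multi-symbol, is a derivation in each $A$-argument, and reduces to the Der-operator Leibniz rule in the $P$-argument — since all three hinge on using the symbol relation of Theorem \ref{diolediffops} with care.
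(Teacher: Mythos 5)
Your proposal is correct, and its core construction coincides with the paper's: your map $\Phi([\Box_0])(f_1,\dots,f_{k-1}) := \mathrm{smbl}_1\big(\delta_{f_1}\circ\cdots\circ\delta_{f_{k-1}}(\Box_0)\big)$ is exactly the paper's morphism $\lambda^k$ once one unwinds the identification $\mathrm{Smbl}_1(\mathscr{A})_0\cong\mathrm{Der}(P)$, which sends a class $[(\Box^A,\Box^P)]$ to the Der-operator $\Box^P-\Box^A(1_A)$. (In fact the paper's displayed definition $\mathcal{P}_{\Delta}(a_1,\dots,a_{k-1})=(\delta_{a_1}\circ\cdots\circ\delta_{a_{k-1}})\Delta_P$ omits the correction term $-(\delta_{a_1}\cdots\delta_{a_{k-1}}\Delta_A)(1_A)$, which however reappears in the paper's own kernel computation and is needed for the map to annihilate $\mathrm{Diff}_{k-1}(\mathscr{A})_0$; your $\mathrm{smbl}_1$ formulation builds this correction in automatically.) Where you genuinely diverge is the finishing argument. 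The paper stays at the level of operators: it computes $\ker(\lambda^k)\cong\mathrm{Diff}_{k-1}(\mathscr{A})_0$ directly and then declares that the map descends to an isomorphism of quotients, leaving surjectivity onto $\mathcal{S}^k(\mathscr{A})$ essentially implicit. You instead work with symbol classes from the outset and close with the five lemma, comparing the diolic symbol sequence with the multi-symbol sequence for $T$; this buys an explicit surjectivity argument (lifting a symmetric $k$-derivation through a connection, which exists by projectivity of $P$), so your route is, if anything, more complete than the paper's. The price is that you must have exactness of both short exact sequences in hand (exactness of the diolic symbol sequence itself already requires projectivity of $P$), and you should justify $\mathrm{Smbl}_1(\mathscr{A})_0\cong\mathrm{Der}(P)$ by exhibiting the map $(\Box^A,\Box^P)\mapsto\Box^P-\Box^A(1_A)$ rather than by noting that the order-one symbol sequence and the Atiyah sequence have the same outer terms --- two extensions with identical ends need not be isomorphic, so the explicit map is what makes that step, and hence the five-lemma comparison, legitimate.
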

\begin{proof}
Let us construct the morphism $\lambda^k:\text{Diff}_{k}(\mathscr{A},\mathscr{A})_0\rightarrow \mathcal{S}^k(\mathscr{A}).$ We define this morphism for $\Delta_0=(\Delta_A,\Delta_P)$ such that $\delta_a^k(\Delta_A)=\delta_a^k(\Delta_P),$ as
$\lambda^k(\Delta_0):=\mathcal{P}_{\Delta},$
where
$\mathcal{P}_{\Delta}(a_1,..,a_{k-1}):=\big(\delta_{a_1}\circ...\circ\delta_{a_{k-1}}\big)\Delta_P\in\text{Der}(P).$
It enjoys the following relation
$\mathcal{P}_{\Delta}(a_1,..,a_{k-1})(ap)-a\mathcal{P}_{\Delta}(a_1,..,a_{k-1})(p)=\big(\delta_a\delta_{a_1}..\delta_{a_{k-1}}\Delta_P\big)(p).$
We can see that if $\Delta$ is of order $k-1$ then the right hand side necessarily vanishes and we have that this morphism descends to quotients (ie. symbols) because, by definition of $\lambda^k$, this operation `kills' all parts in levels $k-1$ and lower.

We want to show that $\ker(\lambda^k)\cong\text{Diff}_{k-1}(\mathscr{A})_0.$ Suppose that $\Delta_0\in\text{ker}(\lambda^k).$ Then we have $\lambda^k(\Delta_0)=\mathcal{P}_{\Delta}=0,$ and consequently
$$\lambda^k(\Delta_0)(a_1,..,a_{k-1},bp)=\lambda^k(\Delta_0)(a_1,...,a_{k-1})(bp)=\big(\delta_{a_1},..,\delta_{a_{k-1}}\Delta_P)(bp)-\big(\delta_{a_1}...\delta_{a_{k-1}}\Delta_A)(1_A)(bp),$$
where in the last term we remark that $(\delta_{a_1},..,\delta_{a_{k-1}}\Delta_A)(1_A)$ is the left multiplication of an element of $A$ on the element $bp\in P,$ as any such operator is determined by its value on $1_A.$ Now since $\Delta_0$ is in the kernel of $\lambda^k,$ by assumption, we have 
$$\delta_{a_1}..\delta_{a_{k-1}}\Delta_P=\big(\delta_{a_1}..\delta_{a_{k-1}}\Delta_A)(1_A).$$
But this means that $(\delta_a..\delta_{a_{k-1}}\Delta_P)(p)=0,$ for all $p\in P$ by the above relation. Thus this is just stating that $\Delta_P\in\text{Diff}_{k-1}(P,P).$ Applying $\delta_a$ to the relation directly above, we find that 
$\Delta_{A}\in\text{Diff}_{k-1}(A).$ Moreover, we find the condition $\delta_a^{k-1}\Delta_A=\delta_a^{k-1}\Delta_P.$ Consequently the pair $(\Delta_A,\Delta_P)$ defines an element in $\text{Diff}_{k-1}(\mathscr{A})_0.$

So in fact, we have demonstrated that the column $\text{Diff}_{k-1}(\mathscr{A})_0\rightarrow \text{Diff}_{k}(\mathscr{A})_0\rightarrow \text{Smbl}_{k}(\mathscr{A})_0$ of the above diagram is isomorphic to 
$\ker(\lambda^k)\rightarrow \text{Diff}_{k}(\mathscr{A})_0\rightarrow \mathcal{S}^k(\mathscr{A}),$ and by definition of $\lambda^k,$ it descends to quotients and we have the isomorphism 
$\text{Smbl}_{k}(\mathscr{A})_0\cong \mathcal{S}^k(\mathscr{A}).$
\end{proof}
The final aspect of the diolic symbol algebra we wish to discuss is the canonical Poisson bracket; however, as this is immediate from sub-section \ref{sssec:Coords}, we mention it only briefly.
\subsubsection{The Diolic Poisson bracket}
Making use of the explicit computations for the commutators of diolic differential operators given above, in particular the computation (\ref{eqn:Degzeroonecommutator}), we may determine the local expression of the diolic Poisson bracket. For instance, the diolic Poisson bracket determined by the commutator $[\Box_0,\Delta_1]$ is computed as
\begin{eqnarray*}
\big\{\text{smbl}_{k,0}(\Box_0),\text{smbl}_{\ell,1}(\Delta_1)\big\}
&=&\begin{pmatrix}
\text{smbl}_{k+\ell-1}\big([\Box_0^A,\overline{\Delta}_1^A]\big)
\\
.
\\
.
\\
\text{smbl}_{k+\ell-1}\big([\Box_0^A,\overline{\Delta}_m^A]\big)
\end{pmatrix}+\begin{pmatrix}
\sum_{\beta=1}^m\text{smbl}_{k+\ell-1}\big(\Box_{1,\beta}\circ \overline{\Delta}_{\beta}^A\big)
\\
.
\\
.
\\
\sum_{\beta=1}^m\text{smbl}_{k+\ell-1}\big(\Box_{m,\beta}\circ \overline{\Delta}_{\beta}^A\big)
\end{pmatrix}
\\
&=&
\begin{pmatrix}
\big\{\text{smbl}_{k}(\Box_0^A),\text{smbl}_{\ell}(\overline{\Delta}_1^A)\big\}
\\
.
\\
.
\\
\big\{\text{smbl}_{k}(\Box_0^A),\text{smbl}_{\ell}(\overline{\Delta}_m^A)\big\}
\end{pmatrix}+\begin{pmatrix}
\sum_{\beta=1}^m\text{smbl}_{k-1}(\Box_{1,\beta}) \star \text{smbl}_{\ell}(\overline{\Delta}_{\beta}^A)
\\
.
\\
.
\\
\sum_{\beta=1}^m\text{smbl}_{k-1}(\Box_{m,\beta})\star \text{smbl}_{\ell}(\overline{\Delta}_{\beta}^A)
\end{pmatrix}.
\end{eqnarray*}
In the last line we use the definition of the (ungraded) Poisson bracket of symbols, as well as the definition of the commutative algebra multiplication $\star$ on the commutative $A$-algebra $\text{Smbl}(A,A).$
The remaining graded components of this bracket are similarly established.
\begin{rmk}
This result shows that the graded Poisson bracket of a degree zero and degree $1$ diolic symbol is determined entirely by the Poisson bracket $\{-,-\}$ on the ungraded algebra of symbols $\text{Smbl}(A)$ as well as the commutative algebra multiplication $\star$ on this algebra, in the basis defined by $P$.
\end{rmk}

\section*{Concluding remarks}
In this work we provided an introduction to a novel purely algebraic formalism for studying traditional differential geometry of vector bundles in the language of diolic differential calculus. Along the way we indicated several possible interesting generalizations for more general graded structures, for instance, triolic algebras. The study of these will appear in a sequel work and in the authors PhD thesis \cite{Kry}. 

Building on the results obtained in this work, one may further explore the diolic formalism by understanding what are quantizations of the diole algebra, in the sense of \cite{Lych}. This is straightforwardly achieved using our description of diolic differential operators and their symbols and the discussion of these aspects is treated in a short note soon to appear.

\bibliographystyle{alpha}
\bibliography{Bibliography.bib}

\begin{thebibliography}{DPV09}

\bibitem[Ati57]{Ati}
Michael Atiyah.
\newblock Complex analytic connections in fibre bundles.
\newblock {\em Transactions of the American Mathematical Society},
  85(1):181--207, 1957.

\bibitem[Bru17]{BruzzoLie}
Ugo Bruzzo.
\newblock Lie algebroid cohomology as a derived functor.
\newblock {\em Journal of Algebra}, 483:245--261, 2017.

\bibitem[DPV09]{Fat}
Alessandro De~Paris and Alexandre Vinogradov.
\newblock {\em Fat manifolds and linear connections}.
\newblock World Scientific, 2009.

\bibitem[KK95]{KerKra02}
Joseph Krasil’shchik and Paul Kersten.
\newblock Graded differential equations and their deformations: a computational
  theory for recursion operators.
\newblock In {\em Geometric and Algebraic Structures in Differential
  Equations}, pages 167--191. Springer, 1995.

\bibitem[KK13]{KerKra01}
Joseph Krasil'shchik and Paul Kersten.
\newblock {\em Symmetries and recursion operators for classical and
  supersymmetric differential equations}, volume 507.
\newblock Springer Science \& Business Media, 2013.

\bibitem[Kra97]{Kra01}
Joseph Krasil’shchik.
\newblock Calculus over commutative algebras: a concise user guide.
\newblock {\em Acta Applicandae Mathematica}, 49(3):235--248, 1997.

\bibitem[Kry20]{Kry}
Jacob Kryczka.
\newblock Cohomological aspects of the hamiltonian formalism: steps towards
  quantum observability.
\newblock {\em Doctoral thesis (To appear), University of Angers, France},
  2020.

\bibitem[KV98]{KraVerb}
Joseph Krasil'shchik and Alexander Verbovetsky.
\newblock {\em Homological methods in equations of mathematical physics}.
\newblock Advanced Texts in Mathematics, Open Education \& Sciences, Opava,
  1998.

\bibitem[Lyc99]{Lych}
Valentin Lychagin.
\newblock Quantum mechanics on manifolds.
\newblock {\em Acta Applicandae Mathematica}, 56(2-3):231--251, 1999.

\bibitem[Nes06]{Nes}
Jet Nestruev.
\newblock {\em Smooth manifolds and observables}, volume 220.
\newblock Springer Science \& Business Media, 2006.

\bibitem[PV13]{PorVez}
Mauro Porta and Gabriele Vezzosi.
\newblock Infinitesimal and square-zero extensions of simplicial algebras.
\newblock {\em arXiv preprint arXiv:1310.3573}, 2013.

\bibitem[Rub80]{Rub01}
Vladimir Rubtsov.
\newblock The cohomology of the der complex.
\newblock {\em Russian Mathematical Surveys}, 35(4):190, 1980.

\bibitem[Ver96]{Ver}
Alexander Verbovetsky.
\newblock Lagrangian formalism over graded algebras.
\newblock {\em Journal of Geometry and Physics}, 18(3):195--214, 1996.

\bibitem[Vin72]{Vin01}
Alexandre Vinogradov.
\newblock The logic algebra for the theory of linear differential operators.
\newblock In {\em Doklady Akademii Nauk}, volume 205, pages 1025--1028. Russian
  Academy of Sciences, 1972.

\bibitem[Vin86]{Vin04}
Alexandre Vinogradov.
\newblock Geometric singularities of solutions of nonlinear partial
  differential equations.
\newblock {\em Differential geometry and its applications (Brno, 1986)},
  27:359--379, 1986.

\bibitem[Vin94]{Vin02}
Alexandre Vinogradov.
\newblock From symmetries of partial differential equations towards secondary
  (“quantized”) calculus.
\newblock {\em Journal of Geometry and Physics}, 14(2):146--194, 1994.

\bibitem[Vin13]{Vin03}
Alexandre Vinogradov.
\newblock What are symmetries of nonlinear pdes and what are they themselves?
\newblock {\em arXiv preprint arXiv:1308.5861}, 2013.

\bibitem[VK75]{KraVin}
Alexandre Vinogradov and Joseph Krasil'shchik.
\newblock What is the hamiltonian formalism?
\newblock {\em Russian Mathematical Surveys}, 30(1):177, 1975.

\bibitem[VV95]{VezVin}
Gabriele Vezzosi and Alexandere Vinogradov.
\newblock On higher analogs of the de rham complex.
\newblock {\em Preprint ESI}, 202, 1995.

\end{thebibliography}

\end{document}